\numberwithin{equation}{section}
\newtheorem{theorem}[equation]{Theorem}
\newtheorem{proposition}[equation]{Proposition}
\newtheorem{definition}[equation]{Definition}
\newtheorem{conjectures}[equation]{Conjectures}
\newtheorem{conjecture}[equation]{Conjecture}
\newtheorem{lemma}[equation]{Lemma}
\newtheorem{corollary}[equation]{Corollary}
\newtheorem{notation}[equation]{Notation}
\theoremstyle{definition}
\newtheorem{example}[equation]{Example}
\renewcommand{\part}{
\@startsection {part}{0}\z@ {\linespacing \@plus \linespacing }
{.5\linespacing }{\Large \bfseries \centering }}
\numberwithin{equation}{section}
\newcommand\cB{{\mathcal B}}
\newcommand\cC{{\mathcal C}}
\newcommand\cD{{\mathcal D}}
\newcommand\cE{{\mathcal E}}
\newcommand\cH{{\mathcal H}}
\newcommand\cI{{\mathcal I}}
\newcommand\cJ{{\mathcal J}}
\newcommand\cL{{\mathcal L}}
\newcommand\cO{{\mathcal O}}
\newcommand\cP{{\mathcal P}}
\newcommand\cS{{\mathcal S}}
\newcommand\bB{{\mathbf B}}
\newcommand\bG{{\mathbf G}}
\newcommand\bI{{\mathbf I}}
\newcommand\bJ{{\mathbf J}}
\newcommand\bK{{\mathbf K}}
\newcommand\bL{{\mathbf L}}
\newcommand\bP{{\mathbf P}}
\newcommand\bQ{{\mathbf Q}}
\newcommand\bS{{\mathbf S}}
\newcommand\bT{{\mathbf T}}
\newcommand\bU{{\mathbf U}}
\newcommand\bV{{\mathbf V}}
\newcommand\bW{{\mathbf W}}
\newcommand\bX{{\mathbf X}}
\newcommand\ba{{\mathbf a}}
\newcommand\bb{{\mathbf b}}
\newcommand\bc{{\mathbf c}}
\newcommand\bh{{\mathbf h}}
\newcommand\bs{{\mathbf s}}
\newcommand\bt{{\mathbf t}}
\newcommand\bu{{\mathbf u}}
\newcommand\bv{{\mathbf v}}
\newcommand\bw{{\mathbf w}}
\newcommand\bx{{\mathbf x}}
\newcommand\by{{\mathbf y}}
\newcommand\bz{{\mathbf z}}
\newcommand\us{{\underline s}}
\newcommand\uw{{\underline w}}
\newcommand\uB{{\underline B}}
\newcommand\BC{{\mathbb C}}
\newcommand\BF{{\mathbb F}}
\newcommand\BN{{\mathbb N}}
\newcommand\BQ{{\mathbb Q}}
\newcommand\BR{{\mathbb R}}
\newcommand\BZ{{\mathbb Z}}
\newcommand\Sgot{{\mathfrak S}}
\newcommand\inv{^{-1}}
\newcommand\bpi{{\boldsymbol\pi}}
\newcommand\bchi{{\boldsymbol\chi}}
\newcommand\lexp[2]{\kern\scriptspace\vphantom{#2}^{#1}\kern-\scriptspace#2}
\newcommand\Qlbar{{\overline\BQ_\ell}}
\newcommand\Zlbar{{\overline\BZ_\ell}}
\newcommand\conjd[2]{#1\xrightarrow{#2}\ud}
\DeclareMathOperator\GL{\mathrm{GL}}
\DeclareMathOperator\Obj{\mathrm{Obj}}
\DeclareMathOperator\ad{\mathrm{ad}}
\DeclareMathOperator\Conj{\mathrm{Conj}}
\DeclareMathOperator\cyc{\mathrm{cyc}}
\DeclareMathOperator\Id{\mathrm{Id}}
\DeclareMathOperator\End{\mathrm{End}}
\DeclareMathOperator\Hom{\mathrm{Hom}}
\DeclareMathOperator\Irr{\mathrm{Irr}}
\DeclareMathOperator\Ker{\mathrm{Ker}}
\DeclareMathOperator\lcm{\mathrm{lcm}}
\DeclareMathOperator\lS{\mathrm{lg}_\cS}
\DeclareMathOperator\Res{\mathrm{Res}}
\DeclareMathOperator\Ind{\mathrm{Ind}}
\DeclareMathOperator\Sh{\mathrm{Sh}}
\DeclareMathOperator\St{\mathrm{St}}
\DeclareMathOperator\bSt{\mathbf{St}}
\DeclareMathOperator\Trace{\mathrm{Trace}}
\DeclareMathOperator\supp{\mathrm{supp}}
\DeclareMathOperator\target{\mathrm target}
\DeclareMathOperator\source{\mathrm source}
\newcommand\LwF{{\bL_I^{t(\bw\phi)}}}
\newcommand\Fpbar{{\overline\BF_p}}
\newcommand\genby[1]{\langle #1\rangle}
\newcommand\Isom[1]{#1^\times}
\newcommand\rightad[2]{#2^{#1}}
\newcommand\CCCi{\Isom\cC}
\newcommand\eqir{=^\times}
\newcommand\FConj{F\text{-}\Conj}   
\newcommand\Fcyc{F\text{-}\cyc}   
\newcommand\DI{\cD^\cI}
\newcommand\nnode[1]{{\kern -0.6pt\mathop\bigcirc\limits_{#1}\kern -1pt}}
\newcommand\edge{{\vrule width10pt height3pt depth-2pt}}
\newcommand\vertbar[2]{\rlap{\kern4pt\vrule width1pt height17.3pt depth-7.3pt}
 \rlap{\raise19.4pt\hbox{$\kern -0.4pt\bigcirc\scriptstyle#2$}}
                 \nnode{#1}}
\newcommand\overbar[1]{\kern -1.5pt\mathop\edge\limits^{#1}\kern -2pt}
\author[F.~Digne]{Fran\c cois Digne}
\address{LAMFA, CNRS UMR 7352, Universit\'e de Picardie-Jules Verne}
\email{digne@u-picardie.fr}
\author[J.~Michel]{Jean Michel}
\address{IMJ, CNRS UMR 7586, Universit\'e Paris VII}
\email{jmichel@math.jussieu.fr}
\thanks{This work was partially supported by the ``Agence Nationale pour la Recherche''
project ``Th\'eories de Garside'' (number ANR-08-BLAN-0269-03)}
\newcommand\f[1]{{f\inv(Y_{\ge #1})}}
\newcommand\dash{{\hbox{---}}}
\newcommand\ud{\hbox{-}}
\newcommand\scal[3]{\langle#1,#2\rangle_{#3}}
\begin{document}
\title{Parabolic Deligne-Lusztig varieties.}
\begin{abstract}
Motivated  by the Brou\'e  conjecture on blocks  with abelian defect groups
for   finite  reductive  groups,  we  study  ``parabolic''  Deligne-Lusztig
varieties  and construct on those which  occur in the Brou\'e conjecture an
action of a braid monoid, whose action on their $\ell$-adic cohomology will
conjecturally  factor  through  a  cyclotomic  Hecke  algebra.  In  order to
construct  this action, we need to enlarge the set of varieties we consider
to  varieties attached  to a  ``ribbon category'';  this category has a {\em
Garside family}, which plays an important role in our constructions, so we
devote the  first  part  of  our  paper  to  the  necessary  background on
categories with Garside families.
\end{abstract}
\maketitle
\section{Introduction}
In this paper, we study ``parabolic'' Deligne-Lusztig varieties, one of the
main motivations being the Brou\'e conjecture on blocks with abelian defect
groups for finite reductive groups.

Let  $\bG$  be  a  connected  reductive  algebraic  group over an algebraic
closure $\Fpbar$ of the prime field $\BF_p$ of characteristic $p$. Let $F$ be
an  isogeny  on  $\bG$  such  that  some  power  $F^\delta$  is a Frobenius
endomorphism   attached  to  a  split   structure  over  the  finite  field
$\BF_{q^\delta}$;
this defines a positive real number $q$ such that $q^\delta$ is an
integral  power of  $p$. When  $\bG$ is  quasi-simple, any isogeny $F$ such
that the group of fixed points $\bG^F$ is finite is of the above form; such
a  group $\bG^F$ is called a ``finite reductive group'' or a ``finite group
of Lie type''.

Let  $\bL$ be an $F$-stable Levi subgroup of a (non necessarily $F$-stable)
parabolic  subgroup  $\bP$  of  $\bG$.  Then,  for  $\ell$  a  prime number
different  from $p$, Lusztig has  constructed a ``cohomological induction''
$R_\bL^\bG$   which  associates  with   any  $\Qlbar\bL^F$-module  a  virtual
$\Qlbar\bG^F$-module.  We study the particular case $R_\bL^\bG(\Id)$, which
is given by the alternating sum of the $\ell$-adic cohomology groups of the
variety  $$\bX_\bP=\{g\bP\in\bG/\bP\mid g\bP\cap F(g\bP)\ne\emptyset\}$$ on
which $\bG^F$ acts on the left. We will construct a monoid of endomorphisms
$M$  of  $\bX_\bP$  related  to  the  braid group, which conjecturally will
induce in some cases an action of a cyclotomic Hecke algebra on the cohomology of
$\bX_\bP$.  To construct  $M$ we  need to  enlarge the  set of varieties we
consider,  to  include  varieties  attached  to  morphisms  in  a  ``ribbon
category''  ---  the  ``parabolic Deligne-Lusztig varieties''
of this paper; $M$ corresponds to  the endomorphisms in the ``conjugacy
category'' of this ribbon category of the object attached to $\bX_\bP$.

The relationship with Brou\'e's conjecture for the principal block comes as
follows: assume, for some prime number $\ell\ne p$, that a Sylow $\ell$-subgroup
$S$  of $\bG^F$ is abelian. Then Brou\'e's conjecture \cite{Br1} predicts in this special
case  an equivalence of  derived categories between  the principal block of
$\Zlbar\bG^F$  and that of $\Zlbar  N_{\bG^F}(S)$. Now $\bL:=C_\bG(S)$ is a
Levi  subgroup of a  (non $F$-stable unless  $\ell|q-1$) parabolic subgroup
$\bP$; restricting  to unipotent  characters and
discarding  an eventual torsion  by changing coefficients  from $\Zlbar$ to
$\Qlbar$,  this translates after refinement (see \cite{BM})
into  conjectures about  the cohomology of $\bX_\bP$ (see
\ref{conjecture}); these conjectures  predict that the
image  in the  cohomology of  our monoid  $M$ is a cyclotomic Hecke algebra.

The  main feature of  the ribbon categories  we consider is  that they have
{\em  Garside  families}.  This  concept  has  appeared  in  recent work to
understand  the ordinary and dual monoids  attached to the braid groups; in
the  first part of this paper, we recall its basic properties and go as far
as  computing the centralizers  of ``periodic elements'',  which is what we
need in the applications. The reader who wants to avoid the general theory
of Garside families can try to read only Section \ref{application} where
we spell out the results in the case of Artin monoids.

In the second part, we first define the parabolic Deligne-Lusztig varieties
which  are  the  aim  of  our  study,  and  then  go  on to establish their
properties. We extend to this setting in particular all the material in
\cite{BM} and \cite{BR2}.

We  thank C\'edric Bonnaf\'e and Rapha\"el  Rouquier for discussions and an
initial  input which started  this work, and  Olivier Dudas for a careful
reading and many suggestions for improvement.

After this paper was written, we received a preprint from Xuhua He and Sian
Nie  (see \cite{he-nie}) where, amidst other interesting results, they also
prove Theorem \ref{bonne racine}.
\part*{I. Garside families}
This  part collects some prerequisites on  categories with Garside families.
It is mostly self-contained apart from the next section where the proofs are
omitted; we refer for them to \cite{Article} or the book \cite{Livre} in
preparation.
\section{Basic results on Garside families}\label{basic results}
Given a category $\cC$, we write $f\in\cC$ to say that $f$ is a morphism of
$\cC$,  and we write $\cC(x,y)$ (resp.\ $\cC(x,\ud)$, resp.\ $\cC(\ud,y)$)
for  the set of  morphisms from $x\in\Obj\cC$ to
$y\in\Obj\cC$ (resp.\ the set of morphisms with source $x$, resp.\ the set
of morphisms with target $y$).
We write $fg$ for the composition of $f\in\cC(x,y)$ and
$g\in\cC(y,z)$, and $\cC(x)$ for $\cC(x,x)$.
By $\cS\subset\cC$ we mean that $\cS$ is a set of morphisms in $\cC$.

Recall that a category  is  cancellative if each one of the
relations $hf=hg$ or $fh=gh$ implies $f=g$;
equivalently every morphism is a monomorphism and an
epimorphism.  We  say   that  $f$   left-divides  $g$,  or equivalently that
$g$ is a right-multiple of $f$, written
$f\preccurlyeq g$, if there exists $h$ such that $g=fh$; in this situation 
since the category is cancellative $h$ is uniquely defined by $g$ and $f$
and we write $h=f\inv g$.
Similarly  we say that  $f$ right-divides  $g$, or that $g$ is a left-multiple
of $h$ and write  $g\succcurlyeq f$ if there exists $h$ such that $g=hf$.

We denote by $\Isom\cC$ the set of invertible morphisms of $\cC$, and write
$f\eqir g$ if there exists $h\in\CCCi$ such that $fh=g$ (or equivalently
there exists $h\in\CCCi$ such that $f=gh$). 
\begin{definition}\label{famgar}In a cancellative category $\cC$
a Garside family is  a subset $\cS\subset\cC$ such that;
\begin{enumerate}
\item $\cS$ together with $\Isom\cC$   generates  $\cC$, and
$\Isom\cC\cS\subset \cS\Isom\cC\cup\CCCi$.
\item  For every  product $fg$  with $f,g\in\cS$,  we can write $fg=f_1g_1$
with $f_1, g_1\in\cS$ such  that if for $k\in\cC$    and $h\in\cS$ we have
$h\preccurlyeq  kfg$ then
$h\preccurlyeq kf_1$.
\end{enumerate}
\end{definition}
If  item (ii) of the above definition holds we say that the 2-term sequence
$(f_1,g_1)$ is an $\cS$-normal decomposition of $fg$. We extend this notion
first  to the case where $f,g\in\cS\Isom\cC\cup\CCCi$ by requiring the same
condition   but  with  $f_1,g_1\in\cS\Isom\cC\cup\CCCi$;   we  extend  then
$\cS$-normal    decompositions   to   longer   lengths   by   saying   that
$(x_1,\dots,x_n)$ is an $\cS$-normal decomposition of $x=x_1\dots x_n$ if
for each $i$ the sequence $(x_i,x_{i+1})$ is an $\cS$-normal decomposition.
We finally extend it to elements $x\in\cS\Isom\cC\cup\CCCi$ by saying that
$(x)$ is an $\cS$-normal decomposition.

In  a cancellative category with a  Garside family every element $x$ admits
an $\cS$-normal decomposition. We will just say ``normal decomposition'' if
$\cS$  is clear from the context. A normal decomposition $(x_1,\dots,x_n)$
is  {\em strict} if no entry is  invertible and all entries excepted possibly
$x_n$  are in $\cS$. In a cancellative  category with a Garside family every
non-invertible element admits a strict $\cS$-normal decomposition.

Normal decompositions are unique up to invertible elements, precisely
\begin{lemma}[{\cite[2.11]{Article}}] \label{deformation}
If   $(x_1,\dots,x_n)$   and   $(x'_1,\dots,x'_{n'})$  with $n\le n'$
are   two  normal decompositions  of  $x$  then  for  any $i\le n$ we have 
$x_1\dotsm x_i\eqir x'_1\dotsm x'_i$ and for $i>n$ we have $x'_i\in\CCCi$.
\end{lemma}

\subsection*{Head functions}

\begin{definition} 
Let  $\cC$ be a cancellative category  and let $\cS\subset\cC$. Then we say
that  a function $\cC-\CCCi\xrightarrow H\cS$  is an $\cS$-head function if
for  any $h\in\cS$, we have $ h\preccurlyeq g \Leftrightarrow h\preccurlyeq
H(g)$.
\end{definition}

We say that a subset $\cS\subset\cC$ is closed under right-divisor if
$f\succcurlyeq g$ with $f\in\cS$ implies $g\in\cS$.
We have the following criterion to be Garside:
\begin{proposition}[{see \cite[3.10 and 3.34]{Article}}]\label{critereGarside}
Assume  that  $\cC$  is  a  cancellative  category and that $\cS\subset\cC$
together  with  $\Isom\cC$  generates  $\cC$.  
Consider  the  following  property  for  an  $\cS$-head function: $$\forall
f\in\cC,\forall g\in\cC-\CCCi, H(fg)\eqir H(fH(g)).\leqno(\cH)$$ Then $\cS$
is  Garside if  there exists  an $\cS$-head  function satisfying  ($\cH$) or
there  exists an $\cS$-head function and $\cS\CCCi\cup\CCCi$ is closed under
right-divisor.  Conversely if $\cS$ is  Garside then $\cS\CCCi\cup\CCCi$ is
closed under right-divisor and any $\cS$-head function satisfies ($\cH$).
\end{proposition}
An   $\cS$-head  function  $H$   computes  the  first   term  of  a  normal
decomposition  in  the  sense  that  if  $(x_1,\dots,  x_n)$  is  a normal
decomposition   of  $x\in\cC-\CCCi$  then   $H(x)\eqir  x_1$.  Further  any
$x\in\cC-\CCCi$  has a strict normal decomposition $(x_1,\dots, x_n)$ with
$H(x)=x_1$.

Let $\cC$ be a cancellative category with a Garside family $\cS$.
For  $f\in\cC$  we  define  $\lS(f)$  to  be the minimum number $k$ of
morphisms  $s_1,\dots,s_k\in\cS$  such  that  $s_1\dotsm  s_k\eqir f$, thus
$\lS(f)=0$  if  $f\in\CCCi$;  if  $f\notin\CCCi$ then $\lS(f)$ is
also  the number of terms in a strict normal decomposition of $f$. 

The  following shows that $\cS$ ``determines''  $\cC$ up to invertible elements; we
say  that a subset  $\cC_1$ of $\cC$  is closed under  right-quotient if an
equality $f=gh$ with $f,g\in\cC_1$ implies $h\in\cC_1$.
\begin{lemma}[{\cite[VII 2.13]{Livre}}]\label{Garside in right-quotient closed subcategory}
Let $\cC_1$  be a subcategory of
$\cC$    closed   under   right-quotient   which   contains   $\cS$.   Then
$\cC=\cC_1\CCCi\cup\CCCi$ and $\cS$ is a Garside family in $\cC_1$.
\end{lemma}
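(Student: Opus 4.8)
The whole proof rests on one elementary remark: since $\cC_1$ contains $\cS$ and is closed under right quotient, whenever $a=bc$ in $\cC$ with $a,b\in\cC_1$ (in particular whenever $a\in\cC_1$ and $b\in\cS$) the quotient $c$ lies in $\cC_1$. Applied to $\Id=uu\inv$ this gives $\Isom{\cC_1}=\cC_1\cap\CCCi$, and hence $\cS-\Isom{\cC_1}=\cS-\CCCi$; so for a morphism already known to lie in $\cC_1$, being in $\Isom{\cC_1}$ is the same as being invertible in $\cC$. (As the asserted equality $\cC=\cC_1\CCCi$ forces, ``subcategory'' is read here as containing all objects of $\cC$, so that $\Id\in\cC_1$ at every object.)

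For $\cC=\cC_1\CCCi$ I would induct on $\lS(f)$ for $f\in\cC$, using an $\cS$-head function $H$ from Proposition \ref{critereGarside}. If $\lS(f)=0$ then $f\in\CCCi=\Id\cdot\CCCi\subset\cC_1\CCCi$. If $\lS(f)\ge 1$, write $f=H(f)f'$: then $H(f)\in\cS\subset\cC_1$ and $\lS(f')<\lS(f)$ by Lemma \ref{S-sup}, so by induction $f'=g'u$ with $g'\in\cC_1$ and $u\in\CCCi$, whence $f=(H(f)g')u\in\cC_1\CCCi$.

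Then I would check the three clauses of Definition \ref{famgar} for $\cS$ in $\cC_1$. Generation of $\cC_1$ by $\cS\cup\Isom{\cC_1}$ is again an induction on $\lS$: for $f\in\cC_1$ with $\lS(f)\ge 1$ one has $f=H(f)f'$ with $H(f)\in\cS$, and $f'\in\cC_1$ by the remark, with $\lS(f')<\lS(f)$. The clause $\Isom{\cC_1}\cS\subset\cS\Isom{\cC_1}\cup\Isom{\cC_1}$ is got by restricting the same clause for $\cC$: for $u\in\Isom{\cC_1}$ and $s\in\cS$ we have $us\in\cS\CCCi\cup\CCCi$, in both cases $us\in\cC_1$, and the remark puts the trailing invertible factor into $\cC_1\cap\CCCi=\Isom{\cC_1}$.

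The delicate clause, and the step I expect to cause the only real difficulty, is the normal-decomposition clause. Fix composable $f,g\in\cS-\CCCi$. If $fg\in\cS\CCCi$ in $\cC$, say $fg=sv$ with $s\in\cS$, then $v\in\cC_1\cap\CCCi=\Isom{\cC_1}$, so $(fg)$ is $\cS$-normal in $\cC_1$. Otherwise take the $\cC$-decomposition $fg=f_1g_1$ with $f_1\in\cS$, $g_1\in\cS\CCCi-\CCCi$, enjoying the property that $h\preccurlyeq kf_1g_1$ with $h\in\cS$ forces $h\preccurlyeq kf_1$. By the remark $g_1\in\cC_1$, so writing $g_1=s_1w$ with $s_1\in\cS$ gives $w\in\Isom{\cC_1}$ and $g_1\in\cS\Isom{\cC_1}-\Isom{\cC_1}$ as needed. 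The point is then to transport the maximality property to $\cC_1$: given $h\in\cS$ and $k\in\cC_1$ with $h\preccurlyeq_{\cC_1}kf_1g_1$, this is in particular a division in $\cC$, so $kf_1=hh'$ for some $h'\in\cC$; but $kf_1\in\cC_1$ and $h\in\cS\subset\cC_1$, so $h'\in\cC_1$ by the remark, i.e.\ $h\preccurlyeq_{\cC_1}kf_1$. The subtlety is exactly that left-divisibility inside the smaller category $\cC_1$ is a priori a finer relation than in $\cC$; closure under right quotient is precisely what forces the two to agree on pairs of $\cC_1$-morphisms, and that is what legitimises the transfer. Combining the three verifications shows $\cS$ is a Garside family in $\cC_1$.
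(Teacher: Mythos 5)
The paper itself does not prove this lemma: it sits in the section whose proofs are deferred to the book [Livre], so there is no internal argument to compare against. Checking your proposal on its own merits, it is correct. The organising observation $\Isom{\cC_1}=\cC_1\cap\CCCi$ (from closure under right quotient applied to $\Id=uu\inv$) is exactly what makes the three clauses of Definition \ref{famgar} transfer: it identifies invertibility in $\cC_1$ with invertibility in $\cC$ for morphisms already in $\cC_1$, so $\cS-\Isom{\cC_1}=\cS-\CCCi$ and left-divisibility between $\cC_1$-morphisms in $\cC_1$ coincides with left-divisibility in $\cC$. The inductions on $\lS$ using the head function $H$ and Lemma \ref{S-sup} for both $\cC=\cC_1\CCCi$ and the generation clause are sound, and the transfer of the $\cS$-normal decomposition is handled at the right level of care: you correctly identify that the only genuine issue is whether $\preccurlyeq_{\cC_1}$ could be strictly finer than $\preccurlyeq_\cC$, and the closure-under-right-quotient hypothesis is precisely what rules that out. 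One stylistic quibble: in the sentence ``this is in particular a division in $\cC$, so $kf_1=hh'$ for some $h'\in\cC$,'' the ``so'' silently invokes the $\cC$-maximality property of the pair $(f_1,g_1)$; that step should be named explicitly, though the logic is unambiguous.
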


\subsection*{Categories with automorphism}
Most  categories we  want to  consider will  have no non-trivial invertible
element,  which simplifies Definition  \ref{famgar}(i) to ``$\cS$ generates
$\cC$''.  The  only  source  of  invertible  elements  will be the following
construction.

An  automorphism of a category $\cC$ is  a functor $F:\cC\to \cC$ which has
an  inverse.  Given  such an  automorphism we define

\begin{definition}\label{semi-direct}
The semi-direct product category $\cC\rtimes\genby F$ is the category whose
objects  are the objects of  $\cC$ and whose morphisms are
the  pairs  $(g,F^i)$,  which  will  be  denoted  by $gF^i$, where $g\in\cC$
and $i$ is an integer. The source of $gF^i$ is $\source(g)$ and the
target of $gF^i$ is $F^{-i}(\target(g))$.
The  composition  rule  is  given  by  $gF^i\cdot hF^j=gF^i(h)F^{i+j}$ when
$\source(h)=\target(gF^i)$.
\end{definition} 
Note that we do {\em not} identify $(g,F^i)$ and $(g,F^j)$ even
when $F^{i-j}$ is the identity functor --- it will be convenient in our
semi-direct products to have the cyclic group generated by $F$ to be infinite
even though $F$ acts via a finite order automorphism.

The  conventions  on  $F$  are  such  that the composition rule is natural.
However,  they  imply  that  the  morphism  $(\Id,F)$  of the semi-direct product
category  represents the functor $F\inv$: it  is a morphism from the object
$F(A)$ to the object $A$ and we have the commutative diagram:
$$\xymatrix{F(A)\ar[r]^{F(f)}\ar[d]^F&F(B)\ar[d]^F\\A\ar[r]^f&B}$$

$\cC$ embeds in $\cC\rtimes\genby F$ by identifying $g$ and $(g,F^0)$.
\begin{lemma}[{\cite[VIII 1.34 (ii)]{Livre}}]\label{garside in CsF}
If  $\cS$ is a Garside  family in the cancellative category $\cC$, 
and  $F$ an automorphism of $\cC$ preserving $\cS$,  then $\cS$ is also a
Garside family in $\cC\rtimes\genby F$.
\end{lemma}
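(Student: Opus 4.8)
The plan is to derive the statement from the Garside criterion of Proposition~\ref{critereGarside}, applied to $\cC_F:=\cC\rtimes\genby F$, in the form ``there exists a function $\cC_F-\cC_F^\times\to\cS$ satisfying (i) and (ii), and (iv) holds''. I would deliberately avoid verifying Definition~\ref{famgar} by hand: transported to $\cC_F$ its third bullet forces one to drag a twist by a power of $F$ through the divisibility clause $h\preccurlyeq kf_1g_1\Rightarrow h\preccurlyeq kf_1$, which is awkward, whereas clause (iv) only mentions the set $\cS\CCCi\cup\CCCi$, which $F$ visibly respects.

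The preliminary step is to describe $\cC_F$ concretely. Writing a morphism as $gF^i=(gF^0)(\Id\,F^i)$ with $\Id\,F^i$ invertible shows that $gF^i$ is invertible in $\cC_F$ precisely when $g\in\CCCi$, so $\cC_F^\times=\{uF^i\mid u\in\CCCi\}$; since $\cS\cup\CCCi$ generates $\cC$ and the morphisms $uF^i$ (with $u\in\CCCi$) are invertible, the set $\cS\cup\cC_F^\times$ generates $\cC_F$, which is the standing hypothesis of Proposition~\ref{critereGarside}. From the composition rule $gF^i\cdot hF^j=gF^i(h)F^{i+j}$, comparing $\cC$-parts and $F$-parts of a putative factorization yields the two bookkeeping facts the rest relies on: (a)~for $h\in\cS$, viewed as $hF^0$, one has $h\preccurlyeq gF^i$ in $\cC_F$ if and only if $h\preccurlyeq g$ in $\cC$; (b)~$yF^m$ right-divides $gF^i$ in $\cC_F$ if and only if $F^{i-m}(y)$ right-divides $g$ in $\cC$. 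One also checks $\cS\,\cC_F^\times\cup\cC_F^\times=\{vF^i\mid v\in\cS\CCCi\cup\CCCi\}$.

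Finally, fixing an $\cS$-head function $H$ for $\cC$, I would put $H'(gF^i):=H(g)$, which is well defined since $gF^i\notin\cC_F^\times$ forces $g\notin\CCCi$. Conditions (i) and (ii) of Proposition~\ref{critereGarside} for $H'$ are then immediate from those of $H$ via fact~(a): $H(g)\preccurlyeq g$ gives $H'(gF^i)\preccurlyeq gF^i$, and $h\in\cS$ with $h\preccurlyeq gF^i$ gives $h\preccurlyeq g$, hence $h\preccurlyeq H(g)=H'(gF^i)$. The step requiring genuine care is (iv): if $yF^m$ right-divides $vF^i$ with $v\in\cS\CCCi\cup\CCCi$, then by fact~(b) $F^{i-m}(y)$ right-divides $v$ in $\cC$, so $F^{i-m}(y)\in\cS\CCCi\cup\CCCi$ because (iv) holds for $(\cC,\cS)$ ($\cS$ being Garside); applying $F^{m-i}$ --- which preserves $\cS\CCCi$ since $F$ is a Garside automorphism and preserves $\CCCi$ since $F$ is an automorphism --- gives $y\in\cS\CCCi\cup\CCCi$, i.e.\ $yF^m\in\cS\,\cC_F^\times\cup\cC_F^\times$. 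Thus (iv) holds for $\cC_F$ and Proposition~\ref{critereGarside} gives the claim. The one place I expect to have to be careful is getting facts (a) and (b) exactly right --- in particular the exponent in $F^{i-m}(y)$, since the composition rule twists the $\cC$-part of the second factor by the $F$-part of the first, so right-division in $\cC_F$ becomes right-division in $\cC$ only after undoing that twist; everything else is a formal transfer along $\cC_F\to\genby F$ together with the two $F$-invariance properties of a Garside automorphism.
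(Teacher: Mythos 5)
Your proof is correct. The bookkeeping facts are right: from $gF^i=kF^j\cdot yF^m=kF^j(y)F^{j+m}$ one does get $j=i-m$ and $g=kF^{i-m}(y)$, so the exponent $i-m$ in (b) is as it should be, and the reduction of left-divisibility by an element of $\cS$ (viewed as $hF^0$) to left-divisibility in $\cC$ is also correct since no twist intervenes on that side. The transfer of (i) and (ii) through $H'(gF^i):=H(g)$, and the use of the identity $F(\cS\CCCi)=\cS\CCCi$ (which follows from $F(\cS\CCCi)\subseteq\cS\CCCi$ and $F$ having finite order) to push right-divisor-closedness across the twist, are all fine, and Proposition~\ref{critereGarside} in the form (i)+(ii)+(iv) then applies. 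Note that the paper does not actually supply a proof of Lemma~\ref{garside in CsF}: it is in the section whose proofs are explicitly deferred to the book \cite{Livre}, so there is no in-paper argument to compare against. The only remark worth making is that the paper's statement immediately after the lemma, namely that a normal decomposition $(f_1,\ldots,f_k)$ of $f$ yields one $(f_1,\ldots,f_kF^i)$ of $fF^i$, suggests the intended route may be a direct verification of Definition~\ref{famgar}; your route via criterion (iv) is cleaner precisely because it avoids dragging the $F$-twist through the normality clause, as you say.
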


If $(f_1,\dots  f_k)$ is an $\cS$-normal
decomposition  of $f\in \cC$ then  $(f_1,\dots,f_kF^i)$ is an $\cS$-normal
decomposition  of $fF^i\in\cC\rtimes\genby  F$. 
Note  that if  $\cC$ has no
non-trivial invertible element, then the only invertible elements   in
$\cC\rtimes\genby  F$ are  $\{F^i\}_{i\in\BZ}$. In  general, if $a,b\in
\cC$ then $aF^i\preccurlyeq b F^j$ if and only if $a\preccurlyeq b$.

We have the following property

\begin{proposition}[{\cite[VII 4.4]{Livre}}] \label{fixed points} 
Assume that the cancellative category $\cC$ has a Garside family $\cS$ and has 
no non-trivial invertible morphisms. Let $F$ be an automorphism of
$\cC$ preserving $\cS$. Then the subcategory of fixed objects and 
morphisms $\cC^F$ has a
Garside family which consists of the fixed points $\cS^F$.
\end{proposition}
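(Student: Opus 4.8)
The plan is to verify that $\cS^F$ satisfies the three conditions of Definition \ref{famgar} inside $\cC^F$, using the criterion of Proposition \ref{critereGarside} to organize the work. Since $\cC$ has no non-trivial invertibles, neither does $\cC^F$, so all the $\CCCi$-decorations collapse and the conditions to check are: $\cS^F$ generates $\cC^F$; and for every product $fg$ of non-identity elements of $\cS^F$ the normal decomposition (of length $1$ or $2$) again lies in $\cS^F$. The first step is to fix, by Proposition \ref{critereGarside}, an $\cS$-head function $H:\cC-\{\mathrm{id}\}\to\cS$; because $F$ is a Garside automorphism it preserves $\cS$, and by uniqueness of normal decompositions up to invertibles (Lemma \ref{deformation}) — here genuine uniqueness — we get $H(F(g))=F(H(g))$. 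Hence $H$ restricts to a function $\cC^F-\{\mathrm{id}\}\to\cS^F$: if $g=F(g)$ then $H(g)=H(F(g))=F(H(g))$, so $H(g)\in\cS^F$.

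Next I would show $\cS^F$ generates $\cC^F$. Given $g\in\cC^F$ non-trivial, iterate the head function: set $g_0=g$ and $g_{i+1}=g_i'$ where $g_i=H(g_i)g_i'$. By Lemma \ref{S-sup} the quantity $\lS(g_i)$ strictly decreases, so after finitely many steps we reach the identity and obtain $g=H(g_0)H(g_1)\cdots H(g_{n-1})$. Each factor $g_i$ is $F$-fixed by induction (if $g_i=F(g_i)$ then $F(g_i')=F(g_i)F(H(g_i))^{-1}$ in the sense that $F$ applied to the defining equation $g_i=H(g_i)g_i'$ gives $g_i=H(g_i)F(g_i')$ by the previous paragraph, whence $g_i'=F(g_i')$ by cancellativity), so each $H(g_i)\in\cS^F$. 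Thus $g$ is a product of elements of $\cS^F$, and the normal decomposition of $g$ computed in $\cC$ is $(H(g_0),\ldots,H(g_{n-1}))$, a sequence of $F$-fixed morphisms.

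For the normality condition, take $f,g\in\cS^F$ non-trivial and let $(f_1,g_1)$ be the $\cS$-normal decomposition of $fg$ in $\cC$ (or the $1$-term decomposition $(fg)$). Applying $F$ and using that $F$ preserves $\cS$ and $\cS$-normality (it is a Garside automorphism, and the divisibility condition $h\preccurlyeq kf_1g_1\Rightarrow h\preccurlyeq kf_1$ is transported by the automorphism), $(F(f_1),F(g_1))$ is also an $\cS$-normal decomposition of $F(fg)=fg$; by uniqueness $f_1=F(f_1)$ and $g_1=F(g_1)$, so $f_1,g_1\in\cS^F$. The same argument handles the $1$-term case. Therefore the $\cS$-normal decomposition of any product $fg$ with $f,g\in\cS^F$ has all terms in $\cS^F$, which is exactly the third bullet of Definition \ref{famgar} for $\cS^F\subset\cC^F$ (the divisibility clause, being an instance of the same clause in $\cC$ restricted to the subcategory, passes down for free since $\cC^F$ is a full subcategory on morphisms as well). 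Combined with the generation statement, $\cS^F$ is a Garside family in $\cC^F$.

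The only genuine subtlety — and the step I would write most carefully — is the interaction between $H$ and $F$: one must make sure the chosen head function commutes with $F$, which is not automatic for an arbitrary choice of $H$ but follows from the uniqueness part of Lemma \ref{deformation} in the no-invertibles setting, since the head is then literally the first term of the unique normal decomposition and $F$ carries normal decompositions to normal decompositions. Everything else is bookkeeping with cancellativity and the iteration of $H$.
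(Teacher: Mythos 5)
The paper gives no proof of this proposition — the introduction to Part~I explicitly states that the proofs of Section~1 are omitted and refers to \cite{Livre} — so there is nothing to compare against directly. Your argument is correct, and it is the expected one: since $\cC$ has no nontrivial invertibles, the $\cS$-head function $H$ is genuinely unique, hence commutes with any Garside automorphism $F$; therefore $H$ restricts to a map $\cC^F\setminus\{\mathrm{id}\}\to\cS^F$, iterating it via Lemma~\ref{S-sup} shows that $\cS^F$ generates $\cC^F$, and applying $F$ to a normal decomposition together with the uniqueness of Lemma~\ref{deformation} shows that the terms of a normal decomposition of an $F$-fixed element are themselves $F$-fixed.

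One small point to tighten: the parenthetical claim that the normality clause ``passes down for free since $\cC^F$ is a full subcategory on morphisms'' is misphrased and, read literally, false --- $\cC^F$ is not a full subcategory of $\cC$, since a morphism between $F$-fixed objects need not be $F$-fixed. The property you actually need is that $\cC^F$ is closed under left and right quotient in $\cC$: if $f,g\in\cC^F$ and $f=gh$ in $\cC$, then applying $F$ and left-cancelling $g$ gives $h=F(h)\in\cC^F$. This is exactly what guarantees that left-divisibility between $F$-fixed morphisms computed in $\cC^F$ coincides with left-divisibility in $\cC$, which is what the transfer of the clause $h\preccurlyeq kf_1g_1 \Rightarrow h\preccurlyeq kf_1$ relies on, and also what you use implicitly to show $g'=F(g')$ in the generation step. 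Once that is made explicit the proof is complete; alternatively, having shown that the restriction of $H$ lands in $\cS^F$, you could conclude a bit more quickly by checking conditions (i)--(iii) of Proposition~\ref{critereGarside} for the restricted $H$ (all inherited from $\cC$ by the same quotient-closure observation) rather than re-deriving the third bullet of Definition~\ref{famgar}; your route does have the virtue of making explicit the useful fact that $\cS$-normal decompositions of $F$-fixed morphisms are $\cS^F$-normal decompositions.
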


\subsection*{Gcds and lcms, Noetherianity}
We  call {\em right-lcm} of a family $\cC_1\subset\cC$ a right-multiple $f$
of  all morphisms in $\cC_1$ such  that for any other common right-multiple
$f'$  we have $f\preccurlyeq f'$; this corresponds to the categorical notion
of a pullback. Similarly  a {\em left-gcd} of the family
$\cC_1$  is  a  common  left-divisor  $f$  such  that  for any other common
left-divisor  $f'$ we have $f'\preccurlyeq f$; it corresponds to the notion of
a pushout. Left-lcms and right-gcds are
defined in the same way exchanging left and right.

The  existence of left-gcds and right-lcms  are related when the cancellative category
$\cC$ is right-Noetherian, which means that there is no infinite sequence 
$f_0\succcurlyeq f_1\succcurlyeq
\dots\succcurlyeq  f_n \succcurlyeq \cdots$  where $f_{i+1}$ is a {\em
proper}  right-divisor of  $f_i$, that is we do not have $f_i\eqir f_{i+1}$.
It means equivalently since  $\cC$ is cancellative 
that there is no infinite sequence $g_0\preccurlyeq g_1\preccurlyeq\dots
\preccurlyeq  g_n \preccurlyeq \dots\preccurlyeq g$  where $g_i$ is a
proper  left-divisor of  $g_{i+1}$. The equivalence is obtained by 
$f_i=g_i\inv g$ and $g=f_0$. In a right-Noetherian category any element is
right-divisible by an \emph{atom}, which is an element which cannot be written as the
product of two non-invertible elements. If the category is Noetherian (that
is, both left and right-Noetherian) we have:

\begin{proposition}[{\cite[II 2.64]{Livre}}]
A cancellative and Noetherian category is generated by its atoms and its invertible
elements.
\end{proposition}

We say that $\cC$ admits  conditional right-lcms if, whenever $f$ and $g$ have a
common right-multiple,  they have a right-lcm.
We then have:
\begin{proposition}[{\cite[II 2.41]{Livre}}]\label{lcm=>gcd} 
If $\cC$ is cancellative, right-Noetherian and admits conditional right-lcms,
then any family of morphisms of $\cC$ with the same source has a left-gcd.
\end{proposition}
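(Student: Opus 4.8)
The plan is to realise the left gcd of a family $(f_i)_{i\in I}$ sharing a source $x$ as a $\preccurlyeq$-maximal common left divisor. Local right lcms will force the collection of common left divisors to be directed, so that a maximal such divisor is automatically a greatest one, while right Noetherianity will guarantee that a maximal one exists. Concretely, we may assume $I\ne\emptyset$, fix $i_0\in I$ and write $f=f_{i_0}$; let $D$ be the set of morphisms $d$ with source $x$ such that $d\preccurlyeq f_i$ for all $i\in I$. Then $D\ne\emptyset$ (it contains $1_x$, which left-divides any morphism with source $x$) and every element of $D$ left-divides $f$.

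The first real step is to check that $D$ is closed under right lcms. Let $d,e\in D$. For each $i$ the morphism $f_i$ is a common right multiple of $d$ and $e$ (since $d\preccurlyeq f_i$ and $e\preccurlyeq f_i$); in particular $d$ and $e$ have a common right multiple, so by hypothesis they admit a right lcm $\ell$, which satisfies $d\preccurlyeq\ell$ and $e\preccurlyeq\ell$. As $\ell$ left-divides every common right multiple of $d$ and $e$, it left-divides each $f_i$, so $\ell\in D$. Thus any two elements of $D$ have an upper bound in $D$, namely their right lcm.

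Next I would use right Noetherianity to extract a $\preccurlyeq$-maximal element $d$ of $D$. If no such element existed, then, starting from an arbitrary $d_0\in D$ and repeatedly passing to a properly larger element of $D$, we would obtain an infinite sequence $d_0\preccurlyeq d_1\preccurlyeq\cdots$ with each $d_n$ a proper left divisor of $d_{n+1}$ and with all $d_n\preccurlyeq f$ --- precisely what right Noetherianity forbids. Finally, this $d$ is a left gcd: given any $e\in D$, let $\ell\in D$ be the right lcm of $d$ and $e$; since $d\preccurlyeq\ell$ and $d$ is maximal, $d\eqir\ell$, hence $e\preccurlyeq\ell\eqir d$. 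So $d\in D$ and $e\preccurlyeq d$ for every common left divisor $e$, which is the defining property of a left gcd.

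The points left implicit --- that a left divisor of $f_i$ has source $x$, that $\eqir$ is compatible with $\preccurlyeq$, and uniqueness of the gcd up to invertible morphisms --- are routine in a left- and right-cancellative category, and I expect no genuine obstacle. The only delicate point is the extraction of a maximal element of $D$: it works because the whole ascending chain is confined to the left divisors of the \emph{single} morphism $f=f_{i_0}$, so the Noetherian hypothesis applies directly. It is worth noting that the argument combines at most two elements of $D$ at a time, so the index set $I$ need not be finite.
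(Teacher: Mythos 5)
The paper itself does not give a proof of Proposition~\ref{lcm=>gcd}: it sits in the section whose proofs are deferred to the book \cite{Livre}, so there is nothing internal to compare against. On its own merits, your argument is correct and is the expected one.

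The structure is sound. You fix $f=f_{i_0}$, form the set $D$ of common left divisors with source $x$ (nonempty since $1_x\in D$), and show $D$ is directed under $\preccurlyeq$: any $d,e\in D$ have a common right multiple (each $f_i$), hence a right lcm $\ell$ by hypothesis, and $\ell\preccurlyeq f_i$ for all $i$ since $\ell$ left-divides every common right multiple of $d$ and $e$, so $\ell\in D$. Right Noetherianity in the paper's formulation is precisely the ascending-chain condition on proper left divisors bounded by a single morphism; since every element of $D$ left-divides the fixed $f$, the standard dependent-choice argument yields a $\preccurlyeq$-maximal $d\in D$. Directedness then upgrades maximality to being a greatest element: for any $e\in D$ the right lcm $\ell$ of $d$ and $e$ lies in $D$, satisfies $d\preccurlyeq\ell$, hence $d\eqir\ell$ by maximality, and so $e\preccurlyeq\ell\eqir d$. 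Your side remarks are also accurate: the argument only ever takes binary lcms, so the family may be infinite, and the Noetherian hypothesis applies because the whole chain is confined to left divisors of the single $f$. This is exactly the proof one would expect and it goes through without gaps.
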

If $\cC$ admits conditional right-lcms we say that a subset $X\subset\cC$
is  closed (resp.\ weakly closed)
under right-lcm if whenever two elements of $X$ have a right-lcm in
$\cC$ this lcm is in $X$ (resp.\ in $X\CCCi$). If further $X$ is closed under
right-quotient an lcm in $\cC$ which is in $X$ is also an lcm in $X$.
The following is proved in \cite[Proposition 3.25]{Article} (where there is a
Noetherianity assumption not used in the direct part of the proof).

\begin{lemma}\label{garside family lcm-closed}
If $\cS$ is a Garside family in a category which admits conditional right-lcms
then $\cS\CCCi$ is closed under right-lcm.
\end{lemma}

Here is a general situation when a Garside family of a subcategory can be
determined.
\begin{lemma}[{\cite[VII 1.10]{Livre}}]\label{Garside subfamily}
Let  $\cS$ be a Garside family  in $\cC$ assumed cancellative, 
right-Noetherian and having
conditional  right-lcms.  Let  $\cS_1\subset  \cS$  be  a  subfamily  such  that
$\cS_1\CCCi\cup\CCCi$ is as a subset of $\cS\CCCi\cup\CCCi$ closed under right-lcm and
right-quotient; then $\cS_1$ is a Garside family in the subcategory $\cC_1$
generated  by $\cS_1\CCCi$. Moreover $\cC_1$  is a subcategory closed under
right-quotient.
\end{lemma}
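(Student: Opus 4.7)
The plan is to construct an $\cS_1$-head function $H_1$ on $\cC_1$, verify the criterion of Proposition~\ref{critereGarside} via properties (i), (ii), (iv), and then deduce the right-quotient closure of $\cC_1$ by induction on $\cS_1$-length.

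For $g\in\cC_1-\CCCi$, let $D(g)=\{s\in\cS_1\CCCi : s\preccurlyeq g\}$. The set $D(g)$ is non-empty: combining invertible leading factors of a decomposition of $g$ in $\cS_1\CCCi$ produces a non-invertible left-divisor in $\cS_1\CCCi$. Given $s,t\in D(g)$, the common right-multiple $g$ yields a right-lcm $s\vee t$ in $\cC$ by local right-lcms; the right-lcm closure hypothesis places $s\vee t\in\cS_1\CCCi$, and $s\vee t\preccurlyeq g$, so $s\vee t\in D(g)$. By right-Noetherianity, $D(g)$ has a maximum up to invertibles, and we pick a representative $H_1(g)\in\cS_1$. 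Properties (i) and (ii) of Proposition~\ref{critereGarside} hold by construction.

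For property (iv), take $f\in\cS_1\CCCi$ and a right-divisor $r$ of $f$ in $\cC_1$, so $f=qr$ with $q\in\cC_1$. Writing $q=y_1\cdots y_k$ in $\cS_1\CCCi$, consider the successive tails $t_j=y_{j+1}\cdots y_k r$ with $t_0=f$ and $t_k=r$. Each $t_j$ is a right-divisor of $t_{j-1}=y_j t_j$. Proceeding by induction on $j$, assume $t_{j-1}\in\cS_1\CCCi\subset\cS\CCCi$: property~(iv) for $\cS$ gives $t_j\in\cS\CCCi\cup\CCCi$, and in the non-invertible case the closure of $\cS_1\CCCi$ under right-quotient in $\cS\CCCi$ applied to $t_{j-1}=y_j t_j$ places $t_j\in\cS_1\CCCi$. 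Iterating yields $r=t_k\in\cS_1\CCCi\cup\CCCi$. Proposition~\ref{critereGarside} then gives that $\cS_1$ is Garside in $\cC_1$.

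For the moreover statement, given $f=gh$ with $f,g\in\cC_1$, induct on the length of the $\cS_1$-normal decomposition of $g$ (now available). When $g\in\CCCi$, the identity $1=g\cdot g^{-1}\in\cS_1\CCCi$ combined with right-quotient closure of $\cS_1\CCCi$ yields $g^{-1}\in\cC_1$, hence $h=g^{-1}f\in\cC_1$. Otherwise, write $g=H_1(g)\cdot g'$ and $f=H_1(f)\cdot f'$ using Lemma~\ref{S-sup} for $\cS_1$ in $\cC_1$; property~(ii) of $H_1$ gives $H_1(g)\preccurlyeq H_1(f)$, and property~(iv) yields $H_1(f)=H_1(g)\cdot z$ with $z\in\cS_1\CCCi\cup\CCCi\subset\cC_1$. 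Left-cancelling $H_1(g)$ in $H_1(f)\cdot f'=H_1(g)\cdot g'\cdot h$ reduces to $zf'=g'h$, an equation of the same form with $g'$ of smaller $\cS_1$-length, and induction concludes. The main obstacle is the iterative verification of (iv): at each step the tail must be seen to remain in $\cS\CCCi$ before the closure of $\cS_1\CCCi$ can be invoked, which requires repeated application of property~(iv) for the ambient Garside family $\cS$ and careful tracking of the invertible/non-invertible dichotomy.
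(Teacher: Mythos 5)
The paper does not prove this lemma (all proofs in Section 2 are deferred to the book \cite{Livre}), so there is nothing in the paper to compare against; I can only judge the argument on its own merits. Your strategy is the right one: build an $\cS_1$-head function $H_1$ from the $\cS_1\CCCi$-divisors of $g$ (using local right lcms, Noetherianity and the two closure hypotheses), verify conditions (i), (ii), (iv) of Proposition~\ref{critereGarside}, and then obtain the ``moreover'' by induction on $\lS_1$.

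However, the treatment of invertibles leaves genuine gaps. First, the non-emptiness of $D(g)$ is asserted by ``combining invertible leading factors'' of a decomposition of $g$ in $\cS_1\CCCi$; but this needs $\CCCi\cS_1\CCCi\subset\cS_1\CCCi\cup\CCCi$, which is not one of the hypotheses. The Garside axiom for $\cS$ only gives $\CCCi\cS\subset\cS\CCCi\cup\CCCi$, and nothing in ``closed under right-lcm and right-quotient'' forces the product to land back in $\cS_1\CCCi$. Without this, $D(g)$ could be empty and the whole construction of $H_1$ collapses. Second, in the base case $g\in\CCCi$ of the ``moreover'' argument you write $1=g\cdot g^{-1}\in\cS_1\CCCi$, but an identity lies in $\cS_1\CCCi$ only when $\cS_1$ contains an invertible; and right-quotient closure of $\cS_1\CCCi$ requires $g\in\cS_1\CCCi$, not merely $g\in\cC_1\cap\CCCi$. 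This case is in fact exactly what is needed to identify $\Isom{\cC_1}$ with $\CCCi\cap\cC_1$, an identification you rely on implicitly elsewhere (for example when you assert $\cS_1\CCCi\cup\CCCi\subset\cC_1$ in the inductive step). All of these problems vanish if $\cC$ has no non-trivial invertibles, which covers the paper's applications, but the lemma is stated in full generality, and a complete proof has to deal with the invertibles explicitly --- probably by establishing the right-quotient closure of $\cC_1$ and the description of $\Isom{\cC_1}$ before, or simultaneously with, the verification of the Garside axioms, rather than after it.
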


\begin{lemma}[{\cite[VII 1.18]{Livre}}]\label{alphaI}
Let  $M$ be a cancellative right-Noetherian monoid which admits conditional
right-lcms and let $M'$ be a submonoid of $M$ closed under right-quotient
and weakly closed under right-lcm.
Then any $u\in M$ has a unique (up to right-multiplication by
$M^{\prime\times}$) maximal left-divisor in $M'$.
\end{lemma}

\subsection*{Garside maps}
An  important special case  is when a Garside family
$\cS$  is attached to  a Garside map. A
Garside map is a map $\Obj\cC\xrightarrow\Delta\cC$ where
$\Delta(x)\in\cC(x,\ud)$ such that the map $x\mapsto\target(\Delta(x))$ is
injective and such that $\cS\CCCi\cup\CCCi$ is both the set of elements that
left-divide some $\Delta(x)$ and the set of elements that right-divide
some $\Delta(x)$.

This definition of a Garside map agrees with \cite[V 2.30]{Livre}
if we take in account that, using the notation of loc.\ cit., the fact that $\cS^\#$ is
the set of left- and right-divisors of $\Delta$ implies that the Garside family
$\cS$ is bounded.

A Garside map allows to  define a  functor $\Phi$,  first on  objects by taking for
$\Phi(x)$  the  target  of  $\Delta(x)$,  then  on morphisms, first on morphisms
$s\in\cS$  by, if $s\in\cC(x,\ud)$ defining $s'$ by $ss'=\Delta$ (we omit
the  source of $\Delta$ if it is clear from the context) and then $\Phi(s)$
by   $s'\Phi(s)=\Delta$.   We   then   extend   $\Phi$  by  using  normal
decompositions;  it can  be shown  that this  is well-defined and defines a
functor such that for any $f\in\cC$ we have $f\Delta=\Delta\Phi(f)$. It can
also be shown that the cancellativity of $\cC$ implies that $\Phi$ is
an automorphism.

The automorphism $\Phi$ is a typical automorphism of $\cC$ preserving 
$\cS$ that we  will call  the \emph{Garside automorphism}\index{Garside automorphism}.

If $\cS$ is attached to a Garside map, we then have the
following properties:

\begin{proposition}\label{second domino}
\begin{enumerate}
\item  If $f\preccurlyeq g$ then $\lS(f)\le\lS(g)$.
\item  Assume $f,g,h\in\cS$ and $(f,g)$ is $\cS$-normal; then
$\lS(fgh)\le 2$ implies $gh\in\cS\CCCi$.
\item   For  $f\in   \cC(x,\ud)$,  the   first  term   of  an  $\cS$-normal
decomposition of $x$ is a left-gcd of $f$ and $\Delta(x)$.
\end{enumerate}
\end{proposition}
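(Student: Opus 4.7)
For (i), I would induct on $n = \lS(g)$. The base $n=0$ is immediate: if $g \in \CCCi$ then every $f \preccurlyeq g$ is invertible by cancellativity of $\cC$, so $\lS(f) = 0$. For the inductive step, assume $f$ noninvertible and set $f_1 = H(f)$; by property~\ref{critereGarside}(ii), $f_1 \preccurlyeq H(g)$, so $H(g) = f_1 v$ with $v \in \cS\CCCi \cup \CCCi$ (using~\ref{critereGarside}(iv)). Writing $f = f_1 f'$ (so $\lS(f') = \lS(f) - 1$ by Lemma~\ref{S-sup}) and $g = H(g) g'$ (so $\lS(g') = n-1$), left-cancelling $f_1$ from $g = fh$ gives $f' \preccurlyeq v g'$. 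The technical point is that $\lS(vg')$ may still be $n$ when $v$ is a noninvertible simple, so a naive induction does not close. The cleanest remedy appeals to the Garside-map characterization (obtained by iterating Lemma~\ref{S-sup} together with the fact that $\cS\CCCi\cup\CCCi$ consists of the divisors of $\Delta$) that $\lS(x) \le m$ iff $x$ left-divides the $m$-fold composite $\Delta^m$ threaded via the canonical automorphism $\Phi$; transitivity of $\preccurlyeq$ then concludes.

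For (ii), I argue by contradiction. Suppose $gh \notin \cS\CCCi$; then $gh$ has a $2$-term $\cS$-normal decomposition $(\gamma_1,\gamma_2)$, and greediness applied to the simple $g \preccurlyeq gh$ yields $g \preccurlyeq \gamma_1$; write $\gamma_1 = g\nu$. Property~\ref{critereGarside}(iii) gives $H(fgh) \eqir H(f\gamma_1)$. If $f\gamma_1 \in \cS\CCCi$, then $fg \preccurlyeq fg\nu = f\gamma_1 \in \cS\CCCi$, so part~(i) yields $\lS(fg) \le 1$, contradicting $\lS(fg) = 2$ from the normality of $(f,g)$. Otherwise $f\gamma_1$ has a $2$-term normal decomposition $(\alpha_1, \alpha_2)$, and Lemma~\ref{S-sup} gives $\lS(fgh) = 1 + \lS(\alpha_2\gamma_2)$; if $\alpha_2\gamma_2 \notin \cS\CCCi$ then $\lS(fgh) \ge 3$, contradicting the hypothesis. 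Otherwise $\alpha_2\gamma_2 \in \cS\CCCi$, and writing $\alpha_1 = fu'$ (from $f \preccurlyeq \alpha_1$), the case $u' \in \CCCi$ forces $\alpha_2\gamma_2 \eqir gh$, contradicting $gh \notin \cS\CCCi$, while the case $u' \in \cS\CCCi \setminus \CCCi$ is ruled out by comparing (in the divisor lattice of $\gamma_1$) the position of $g$ with the simple part $s_u$ of $u'$: configurations $g \preccurlyeq s_u$ yield $\lS(fg) \le 1$ by part~(i), and configurations $s_u \preccurlyeq g$ force, via the uniqueness of normal decompositions (Lemma~\ref{deformation}) applied to the two factorisations of $fg$, the conclusion $u' \in \CCCi$, a contradiction.

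The main obstacle is this last subcase: the ``incomparable'' configuration of $g$ and $s_u$ needs the lattice structure on $\cS\CCCi\cup\CCCi$ arising from Proposition~\ref{lcm=>gcd}, together with a careful use of the interplay between the Garside complement of $f$ and $g$ controlled by the normality of $(f,g)$. Once these tools are in place, each of the three configurations reduces, through part~(i) and Lemma~\ref{deformation}, to a length contradiction, completing the proof.
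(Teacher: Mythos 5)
Your diagnosis in (i) of why a naive induction on $\lS(g)$ stalls is exactly right, but the remedy you propose is asserted rather than proved. The characterization you invoke --- $\lS(x)\le m$ if and only if $x\preccurlyeq\Delta^m$ --- is indeed the right tool, but its hard direction ($x\preccurlyeq\Delta^m\Rightarrow\lS(x)\le m$) is as delicate as the statement under proof. Iterating Lemma~\ref{S-sup} tells you that $x'=H(x)\inv x$ has $\lS(x')<\lS(x)$; it does \emph{not} tell you that $x'\preccurlyeq\Delta^{m-1}$. If you write $\Delta(X)=H(x)\,w$ for the cofactor $w\in\cS\CCCi$ and cancel $H(x)$ from $\Delta^m(X)=xy$, you only get $x'\preccurlyeq w\,\Delta^{m-1}(\Phi X)=\Delta^{m-1}(Y)\,\Phi^{m-1}(w)$, and the dangling simple $\Phi^{m-1}(w)$ is precisely the obstruction you flagged in the naive induction, reappearing one level down. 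Discharging it --- showing $x'$ in fact divides $\Delta^{m-1}$ --- is where the substance of the argument lies, and neither Lemma~\ref{S-sup} nor transitivity of $\preccurlyeq$ supplies it. As written, (i) is not established.

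For (ii) the case split you build the argument around is off-target. Unwinding your notation: $\alpha_1=fu'$ with $u'\in\cS\CCCi$, and $\alpha_1\preccurlyeq\Delta$ forces $u'\preccurlyeq f\inv\Delta$. In the Garside-map setting, normality of $(f,g)$ says precisely that no noninvertible simple left-divides both $g$ and $f\inv\Delta$; in particular $g$ and $u'$ have no common noninvertible left-divisor. Since you are in the case $u'\notin\CCCi$ and $g$ is noninvertible, \emph{neither} $g\preccurlyeq u'$ \emph{nor} $u'\preccurlyeq g$ can occur: the two configurations you analyse in detail are vacuous, and the ``incomparable'' configuration is not a leftover subcase to be fixed up but the only one that ever happens. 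Your sketch for it is a promissory note, not an argument, and I do not see how to complete it along the lines you indicate.

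A complete and shorter proof of (ii) goes through the Garside map directly, bypassing the head-function recursion. From $\lS(fgh)\le2$ one gets $fgh\preccurlyeq\Delta^2(X)$ with $X=\source(f)$: take a normal decomposition of $fgh$ of length at most $2$, note each term divides $\Delta$ at the appropriate object, and use $a\Delta=\Delta\Phi(a)$ to stitch them into a divisor of $\Delta^2$. Write $f^*:=f\inv\Delta(X)$ and $g^*:=g\inv\Delta(Y)$ with $Y=\target(f)$. Then
$$\Delta^2(X)=f\,f^*\,\Delta(\Phi X)=f\,\Delta(Y)\,\Phi(f^*)=f\,g\,g^*\,\Phi(f^*),$$
so cancelling $fg$ gives $h\preccurlyeq g^*\,\Phi(f^*)$. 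From $g\,g^*\,(g^*)^*=g\Delta=\Delta\Phi(g)$ one gets $(g^*)^*=\Phi(g)$. Now if $H\bigl(g^*\Phi(f^*)\bigr)=g^*v$, then $v\preccurlyeq\Phi(f^*)$ (cancel $g^*$) and $v\preccurlyeq (g^*)^*=\Phi(g)$ (since $g^*v$ is simple); applying $\Phi\inv$ produces a common left-divisor of $f^*$ and $g$, which is invertible by normality of $(f,g)$, so $v\in\CCCi$ and $H\bigl(g^*\Phi(f^*)\bigr)\eqir g^*$. The simple $h\preccurlyeq g^*\Phi(f^*)$ therefore satisfies $h\preccurlyeq g^*$, whence $gh\preccurlyeq g\,g^*=\Delta(Y)$; as $g$ is noninvertible so is $gh$, giving $gh\in\cS\CCCi$. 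This argument uses only the definition of the Garside map and the automorphism $\Phi$, needs no case analysis, and does not presuppose part (i).
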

\begin{proof}
(i) is \cite[V 2.39 (v)]{Livre}, (iii) is \cite[V 1.14]{Livre}.
(ii) is \cite[IV 1.38]{Livre} using \cite[2.15]{Livre} which says, with the
notation as in loc.\ cit., that $\cS^\#$ est left-comultiple-closed.
\end{proof}

We  will write $\Delta^p$ for the map which associates with an object $x$ the
morphism   $\Delta(x)\Delta(\Phi(x))\dotsm\Delta(\Phi^{p-1}(x))$.  For  any
$f\in\cC(x,\ud)$ there exists $p$ such that $f\preccurlyeq\Delta^p(x)$.

\begin{proposition}[{\cite[III 1.37 and V 2.14]{Livre}}]\label{power of Delta}
If $\cS$ is a Garside family attached to a Garside map $\Delta$
then  for any positive integer $p$, $\Delta^p$ is a Garside map and
$\{f_1f_2\cdots f_p\mid f_i\in\cS\}$ is a Garside family attached to $\Delta^p$.
\end{proposition}

\section{The conjugacy category}

The  context for  this section  is a  cancellative category $\cC$.

\begin{definition}
Given   a  category   $\cC$,  we   define  the  \index{conjugacy  category}
\emph{conjugacy  category} $\Conj \cC$  of $\cC$ as  the category whose objects
are  the  endomorphisms  of  $\cC$  and  where,  for  $w\in\cC(A)$ and
$w'\in\cC(B)$   we  set   $\Conj  \cC(w,w')=\{x\in\cC(A,B)\mid
xw'=wx\}$.  We  say  that  $x$  \emph{conjugates}  $w$  to  $w'$  and  call
\index{centralizer}  \emph{centralizer} of $w$ the set $\Conj \cC(w)$.
The  composition of  morphisms in  $\Conj\cC$ is  given by the composition in
$\cC$, which is compatible with the defining relation for $\Conj\cC$.
\end{definition}
Note that it is the formula for $\Conj\cC(w,w')$ that forces the objects
of $\Conj \cC$ to be endomorphisms of $\cC$.

Since  $\cC$  is  cancellative,  the  data  $x$ and $w$ determine $w'$
(resp.\ $x$ and $w'$ determine $w$). This
allows  us to write $\rightad xw$ for $w'$ (resp.\ $\lexp x w'$ for $w$);
this  illustrates that our category $\Conj\cC$ is a right-conjugacy category;
we call left-conjugacy category the opposed category.

A   proper  notation  for  an  element  of  $\Conj\cC(w,\ud)$  is  a  triple
$w\xrightarrow  x \rightad x w$ (that we will abbreviate often to $\conjd x
w$),  since $x$ by itself does not specify its source; but we will use just $x$
when  the context makes clear which source $w$ is meant (or which target is
meant). The forgetful functor which   sends  $w\in\Obj(\Conj   \cC)$  to   $\source(w)$  and
$\conjd w x$ to $x$ is  faithful, though not injective on objects; it
allows us to  identify $\Conj\cC(w,\ud)$ with the
subset  $\{x\in  \cC(\source(w),\ud)  \mid  x\preccurlyeq  wx\}$; similarly
we may identify $\Conj\cC(\ud,w)$ with the subset $\{x\in\cC(\ud,\source(w))\mid
xw\succcurlyeq x\}$.

It  follows that the  category $\Conj \cC$  inherits automatically from $\cC$
properties  such as cancellativity  or Noetherianity. The  forgetful functor maps
$\Isom{(\Conj\cC)}$  surjectively  to  $\CCCi$,  so  in particular the subset
$\Conj\cC(w,\ud)$    of    $\cC(\source(w),\ud)$    is   closed   under
multiplication  by $\CCCi$.  In the  proofs and  statements which follow we
identify  $\Conj\cC$ with a subset of  $\cC$ and $\Isom{(\Conj\cC)}$ to $\CCCi$; for the statements obtained about
$\Conj\cC$ to make sense, the reader has to check that the sources and target
of morphisms viewed as morphisms in $\Conj\cC$ make sense.

\begin{lemma}\label{FAd C is lcm}
\begin{enumerate}
\item  The subset $\Conj\cC$ of $\cC$ is closed under right-quotient.
\item  The subset $\Conj\cC(w,\ud)$ of  $\cC(\source(w),\ud)$ is closed under
right-lcm. In particular if $\cC$ admits conditional
right-lcms then so does $\Conj \cC$.
\end{enumerate}
Similarly  $\Conj\cC(\ud,w)$ is  a subset  of $\cC(\ud,\source(w))$ closed
under left-lcm and left-quotient.
\end{lemma}
\begin{proof}
We  show  (i).  If  $y=xz$ with 
$y\in \Conj\cC(w,w')$, $x\in\Conj\cC(w,\ud)$ and $z\in\cC(\ud,\source(w'))$
we have $x\preccurlyeq wx$ and $yw'=wy$. By cancellation, let us
define  $w''$ by  $xw''=wx$.
Now   since  $y=xz$   the  equality   $yw'=wy$  gives
$xzw'=wxz=xw''z$ which gives by cancellation that $zw'=w''z$ showing
that $z\in\Conj\cC(\ud,w')$.

We  now  show (ii). If $x,y\in  \Conj\cC(w,\ud)$  then
$x\preccurlyeq  wx$ and  $y\preccurlyeq wy$.  Suppose now  that $x$ and $y$
have  a right-lcm $z$ in $\cC$.  Then $x\preccurlyeq wz$ and $y\preccurlyeq
wz$   from  which  it  follows  that  $z\preccurlyeq  wz$,  that  is  $z\in
\Conj\cC(w,\ud)$,  thus $z$ is the image by the forgetful functor of a 
right-lcm of $x$ and $y$ in $\Conj \cC$.

The proof of the second part is just a mirror symmetry of the above proof.
\end{proof}
\begin{proposition}\label{FAd Garside}
Assume  that $\cS$ is a Garside family  in $\cC$; then $\Conj\cC\cap\cS$ is a
Garside  family in $\Conj\cC$ and $\cS$-normal decompositions of an element of
$\Conj\cC$ are $\Conj\cC\cap\cS$-normal decompositions.
\end{proposition}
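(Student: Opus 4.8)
The plan is to verify the axioms of Definition \ref{famgar} for $\cS':=\Ad\cC\cap\cS$ inside $\cC':=\Ad\cC$, working with the identifications of the preceding discussion (so $\Isom{(\Ad\cC)}$ is identified with $\CCCi$, and divisibility in $\Ad\cC$ matches divisibility in $\cC$ by the faithfulness of $I$ together with Lemma \ref{FAd C is lcm}). The strategy is to transport the head function of $\cS$ to $\Ad\cC$ and apply the criterion of Proposition \ref{critereGarside}; the content is that the $\cS$-head of a morphism lying in $\Ad\cC$ again lies in $\Ad\cC$, and this is exactly where Lemma \ref{FAd C is lcm} does the work.

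First I would fix an $\cS$-head function $H$ on $\cC$. Given $w\xrightarrow{x}w'$ in $\Ad\cC$ with $x\notin\CCCi$, I claim $H(x)\in\Ad\cC(w,-)$, i.e.\ $H(x)\preccurlyeq wH(x)$. Indeed $x\preccurlyeq wx$, so by property (ii) of $H$ applied to $g=wx$ and $h=H(x)\in\cS$ (note $H(x)\preccurlyeq x\preccurlyeq wx$) we get $H(x)\preccurlyeq H(wx)$. On the other hand $w H(x)$ is a divisor of $wx$ having the form $w$ times an element of $\cS$; I would argue that $H(wx)\preccurlyeq wH(x)$ using that $H(wx)\in\cS$ and property (iii): $H(wx)\eqir H(wH(x))\preccurlyeq wH(x)$ by (i). Combining, $H(x)\preccurlyeq H(wx)\eqir H(wH(x))\preccurlyeq wH(x)$, so $H(x)\in\Ad\cC(w,-)$, and its target is the appropriate conjugate $w''$ of $w$. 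Thus $H$ restricts to a function $\Ad\cC-\CCCi\to\Ad\cC\cap\cS$. Properties (i) and (ii) of Proposition \ref{critereGarside} for this restriction are inherited verbatim from the ones for $H$ on $\cC$, since divisibility and the relevant morphisms all live in $\Ad\cC$ once we know $H$ preserves it; property (iii) likewise transports, using that if $f\in\Ad\cC(w_0,w)$ and $g\in\Ad\cC(w,w')$ then $fg, fH(g)\in\Ad\cC$ and the equality $H(fg)\eqir H(fH(g))$ holds already in $\cC$. Finally, for (iv) I would use the first part of Lemma \ref{FAd C is lcm}: $\Ad\cC$ is closed under right-quotient in $\cC$, and $\cS\CCCi\cup\CCCi$ is closed under right-divisor in $\cC$ by Proposition \ref{critereGarside}(iv), so $(\Ad\cC\cap\cS)\CCCi\cup\CCCi = \Ad\cC\cap(\cS\CCCi\cup\CCCi)$ is closed under right-divisor in $\Ad\cC$. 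Hence Proposition \ref{critereGarside} applies and $\Ad\cC\cap\cS$ is Garside in $\Ad\cC$.

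It remains to check the generation axiom and the identity $\Isom{(\Ad\cC)}\,\cS'\subset\cS'\Isom{(\Ad\cC)}\cup\Isom{(\Ad\cC)}$ — but these are part of the hypotheses of Proposition \ref{critereGarside} rather than conclusions, so I must establish them separately. Generation: given $w\xrightarrow{x}w'$, write an $\cS$-normal decomposition $x=x_1\cdots x_n$ in $\cC$; since $x\in\Ad\cC$ and $H(x)\eqir x_1$ lies in $\Ad\cC$ by the claim above, and the residual $x'$ with $x=H(x)x'$ lies in $\Ad\cC$ by right-quotient closure (Lemma \ref{FAd C is lcm}), with $\lS(x')<\lS(x)$ by Lemma \ref{S-sup}, an induction on $\lS$ shows every morphism of $\Ad\cC$ is a product of elements of $(\Ad\cC\cap\cS)\cup\CCCi$; this simultaneously proves that each $x_i\in\Ad\cC$, giving the last assertion of the proposition that $\cS$-normal decompositions of elements of $\Ad\cC$ are $\Ad\cC\cap\cS$-normal (the defining divisibility condition for normality is the same relation read in $\cC$). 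The second axiom follows because $\CCCi\cS\subset\cS\CCCi\cup\CCCi$ in $\cC$ and both sides intersected with $\Ad\cC$ give the desired inclusion, using once more that $\CCCi=\Isom{(\Ad\cC)}$ under our identification.

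The main obstacle is the claim that $H$ preserves $\Ad\cC$, i.e.\ $H(x)\preccurlyeq wH(x)$ whenever $x\preccurlyeq wx$; the delicate point is controlling $H(wx)$ from both sides, and one must be a little careful that the ``$\eqir$'' ambiguities in (iii) and in $H(x)\eqir x_1$ do not obstruct the divisibility relations — but since $\Ad\cC(w,-)$ is closed under right-multiplication by $\CCCi$ (as noted before the lemma, because $I$ surjects $\Isom{(\Ad\cC)}$ onto $\CCCi$), replacing $H(x)$ by an invertible multiple stays inside $\Ad\cC$, so these ambiguities are harmless. Everything else is bookkeeping with the identifications, and the mirror statements for $\Ad\cC(-,w)$ are not needed here since we only use the right-handed Garside structure.
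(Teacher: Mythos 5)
Your proof is correct and follows essentially the same route as the paper: both transport an $\cS$-head function $H$ to $\Ad\cC$, with the crucial step being the chain $H(x)\preccurlyeq H(wx)\eqir H(wH(x))\preccurlyeq wH(x)$ showing $H$ preserves $\Ad\cC$, and both establish generation by induction on $\lS$ using Lemma \ref{S-sup} together with the right-quotient closure from Lemma \ref{FAd C is lcm}. Your additional verification of the axiom $\Isom{(\Ad\cC)}\cS'\subset\cS'\Isom{(\Ad\cC)}\cup\Isom{(\Ad\cC)}$ is harmless but not needed, since Proposition \ref{critereGarside} requires only generation plus (i)--(iii) for $H$.
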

\begin{proof}
We    will   use   Proposition   \ref{critereGarside}   by   showing   that
$(\Conj\cC\cap\cS)\cup\CCCi$  generates  $\Conj\cC$  and  exhibiting  a
$\Conj\cC\cap\cS$-head function
$H:\Conj\cC-\CCCi\to     \Conj\cC\cap\cS$    satisfying ($\cH$).

Let  $H$  be  a  $\cS$-head  function  in  $\cC$.  We  first  show that the
restriction  of $H$ to $\Conj\cC$ takes its values in $\Conj\cC\cap\cS$. Indeed
if   $x\preccurlyeq   wx$   then   $H(x)\preccurlyeq   H(wx)\eqir  H(wH(x))
\preccurlyeq wH(x)$ where the middle $\eqir$ is by ($\cH$). 

We  now  deduce  by  induction  on  $\lS$  that  $(\Conj\cC\cap\cS)\cup\CCCi$
generates   $\Conj\cC$.  The induction starts with elements of length 0 which
are exactly the elements of $\CCCi$.
Assume now  that $x\in \Conj\cC$ is such
that  $\lS(x)=n>0$ and define $x'$ by $x=H(x)x'$; since $H(x)$ can be taken as 
the first term of a strict normal decomposition we have $\lS(x')=n-1$.
Since we proved $H(x)\in \Conj\cC$,  we deduce  by Lemma  
\ref{FAd C  is lcm}(i)  that $x'\in \Conj\cC$, whence the result by induction. 

It  is  straightforward that  the  restriction  of $H$ to $\Conj\cC-\CCCi$ is
still a head function satisfying ($\cH$), 
which proves that $\Conj\cC\cap\cS$  is a Garside family. The
assertion about normal decompositions follows. \end{proof}

\subsection*{Simultaneous conjugacy}
A  straightforward generalization of the conjugacy category is the ``simultaneous
conjugacy  category'',  where  objects   are  families  of  morphisms
$w_1,\dots,w_n$   with  same  source  and  target,  and  morphisms  verify
$x\preccurlyeq  w_i x$ for all $i$.  Most statements have a straightforward
generalization to this case.

\subsection*{$F$-conjugacy}
We  want to consider ``twisted conjugation'' by an automorphism, which will
be  useful  for  applications  to  Deligne-Lusztig  varieties, but also for
internal  applications, with  the automorphism  being the  one induced by a
Garside map. 

\begin{definition}\label{F-centralizer}
Let $F$ be an automorphism of finite order of the category $\cC$.
We  define the $F$-conjugacy category of  $\cC$, denoted by $\FConj \cC$, as the
category  whose  objects  are  the  morphisms in some $\cC(A,F(A))$ and
where,   for  $w\in\cC(A,F(A))$   and  $w'\in\cC(B,F(B))$   we  set
$\FConj  \cC(w,w')=\{x\in \cC\mid xw'=wF(x)\}$. We  say that $x$ \emph{
$F$-conjugates}  $w$  to  $w'$  and  we  call  \emph{$F$-centralizer}  of a
morphism $w$ of $\cC$ the set $\FConj \cC(w)$.
\end{definition}

Note  that $F$-conjugacy  specializes to  conjugacy when  $F=\Id$; again,
it is the formula for $\FConj\cC(w,w')$ which forces the objects of
$\FConj\cC$ to lie in some $\cC(A,F(A))$.

The notion of $F$-conjugacy turns out to be a particular form of conjugacy in
the semi-direct product category $\cC\rtimes\genby F$; 
this is the same as the relation between
twisted conjugacy classes in a group and conjugacy classes in cosets.

Consider the application which sends 
$w\in\cC(A,F(A))\subset\Obj(\FConj\cC)$   to  $wF\in(\cC\rtimes\genby
F)(A)\subset\Obj(\Conj(\cC\rtimes\genby    F))$.   Since   $x(w'F)=(wF)x$   is
equivalent  to $xw'=wF(x)$, this extends to a functor $\iota$ from $\FConj\cC$ to
$\Conj(\cC\rtimes\genby  F)$. This functor is clearly an isomorphism onto its
image.

The image $\iota(\Obj(\FConj\cC))$ is  the subset of  $\cC\rtimes\genby F$ which
consists  of  endomorphisms  which lie  in  $\cC  F$;  and  $\iota(\FConj  \cC)$
identifies via the forgetful functor with the subset 
$\Conj(\cC\rtimes\genby F)\cap\cC$ of $\cC\rtimes\genby F$.

Remark that, since
in $\Conj(\cC\rtimes\genby F)$  there is no  morphism between $gF^i$ and
$g'F^j$ when $i\ne j$, the full subcategory that we will
denote by $\Conj(\cC F)$ of $\Conj(\cC\rtimes\genby F)$ whose objects are in $\cC
F$  is a union  of connected components  of $\Conj(\cC\rtimes\genby F)$; thus
many  properties will transfer automatically from $\Conj(\cC\rtimes\genby F)$
to $\Conj(\cC F)$.

In  particular, if $\cC$  has a Garside  family $\cS$ and  $F$ is a Garside
automorphism, then $\cS$ is still a Garside family for $\cC\rtimes\genby F$
by  \ref {garside  in CsF},  and by  Proposition \ref{FAd  Garside} and the
above remark gives rise to a Garside  family $\cS\cap\Conj(\cC F)$ of $\Conj(\cC F)$.
The image $\iota(\FConj\cC)$ is the subcategory of $\Conj(\cC F)$ consisting 
(via the forgetful functor) of
the   morphisms  in  $\cC$,   thus  satisfies  the   assumptions  of  Lemma
\ref{Garside  in  right-quotient  closed  subcategory}:  it is closed under
right-quotient, because in a relation $fg=h$ if $f$ and $h$ do not involve
$F$  the  same  must  be  true  for  $g$,  and  contains the Garside family
$\cS\cap\Conj(\cC F)$ of $\Conj(\cC F)$.

This   will  allow  to  generally   translate  statements  about  conjugacy
categories  to  statements  about  $F$-conjugacy  categories.  For example,
$\iota\inv(\cS\cap\Conj(\cC F))$ is a Garside family for $\FConj \cC$;
this last family is just $\FConj\cC\cap\cS$ when identifying $\FConj\cC$ with a
subset of morphisms of $\cC$ by the forgetful functor.

The  assumption that  $F$ acts  through an  automorphism of finite order is
used  as follows: since $(xF)^x=Fx=(xF)^{F\inv}$ and  the action of $F$ has
finite  order, two morphisms in $\cC  F$ are conjugate in $\cC\rtimes\genby
F$ if and only if they are conjugate by a morphism of $\cC$.

\subsection*{The cyclic conjugacy category}
A  restricted  form  of  conjugation  called  ``cyclic  conjugacy'' will be
important  in applications. In  particular, it turns  out (a particular case
of Proposition \ref{Ad=Cyc}) that two periodic
braids are conjugate if and only if they are cyclically conjugate.
The context for this subsection is again a cancellative category $\cC$.

\begin{definition}\label{cyclicFconjugacy}
We define the cyclic conjugacy category $\cyc\cC$ of $\cC$ as the subcategory
of $\Conj \cC$ generated by $\cS'=\cup_w\{x\in\Conj\cC(w,\ud)\mid x\preccurlyeq w\}$.
\end{definition}
That  is, $\cyc\cC$ has the  same objects as $\Conj  \cC$ but contains only the
products  of elementary conjugations of the form $w=xy\xrightarrow x yx$.
Note that since $\cC$ is cancellative
$\cup_w\{x\in\Conj\cC(w,w')\mid x\preccurlyeq w\}=
\{x\in\Conj\cC(\ud,w')\mid  w'\succcurlyeq x\}$ so cyclic conjugacy
``from  the left'' and ``from the right'' are  the same. To be
more precise, the functor which is the identity on objects, and when $w=xy$
and  $w'=yx$, sends $x\in\cyc\cC(w,w')$ to $y\in\cyc\cC(w',w)$,
is an isomorphism between $\cyc\cC$ and its opposed category.

\begin{proposition}\label{CFC}  Assume $\cC$ is right-Noetherian and admits
conditional  right-lcms; if $\cS$ is a  Garside  family  in  $\cC$  then 
$\cS'\cap\cS$ is a Garside family in $\cyc\cC$.
\end{proposition}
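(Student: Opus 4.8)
The plan is to deduce the statement from Lemma~\ref{Garside subfamily}, but applied \emph{inside} the conjugacy category $\Ad\cC$ rather than inside $\cC$. By Proposition~\ref{FAd Garside}, $\cS':=\cS\cap\Ad\cC$ is a Garside family of $\Ad\cC$; moreover $\Ad\cC$ inherits right-Noetherianity from $\cC$ and admits local right-lcms by Lemma~\ref{FAd C is lcm}, so Lemma~\ref{Garside subfamily} is available for the pair $(\Ad\cC,\cS')$. Identifying $\Ad\cC$ with a subset of $\cC$ as in the text (so that $\Isom{(\Ad\cC)}=\CCCi$), the set $\cS_1$ is exactly the set of $x\in\cS'$ with $x\preccurlyeq\source(x)$, i.e. the ``elementary cyclic conjugations'' that lie in $\cS$. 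So there are two things to establish: (a) $\cS_1\CCCi$ is, as a subset of $\cS'\CCCi$, closed under right-quotient and under right-lcm; (b) the subcategory $\cC_1$ of $\Ad\cC$ generated by $\cS_1\CCCi$ equals $\cyc\cC$. Granting (a), Lemma~\ref{Garside subfamily} gives that $\cS_1$ is a Garside family of $\cC_1$, and then (b) finishes the proof.

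For the right-quotient part of (a) I would argue by a direct cancellation. If $f=gh$ with $f,g\in\cS_1\CCCi$ and $w:=\source(f)=\source(g)$, then $f\preccurlyeq w$ and $g\preccurlyeq w$ already in $\Ad\cC$ (the quotients lie in $\Ad\cC$ by Lemma~\ref{FAd C is lcm}); writing $w=gp$ and $w=fq$ and cancelling $g$ gives $p=hq$, hence $h\preccurlyeq p$, and since $\source(h)=\rightad gw=pg$ this yields $h\preccurlyeq\source(h)$, so $h$ is an elementary cyclic conjugation. As $h$ right-divides $f\in\cS\CCCi$ and $\cS\CCCi\cup\CCCi$ is stable under right-divisor by Proposition~\ref{critereGarside}(iv), while $\Ad\cC$ is stable under right-quotient, we get $h\in\cS'\CCCi$, hence $h\in\cS_1\CCCi$. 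For the right-lcm part, let $x,y\in\cS_1\CCCi$ with common source $w$ have a right-lcm $m$ in $\Ad\cC$. Since $x,y\preccurlyeq w$ in $\Ad\cC$ and $w$ is itself an endomorphism lying in $\Ad\cC(w)$, the element $w$ is a common right-multiple of $x$ and $y$ in $\Ad\cC$, so $m\preccurlyeq w$ and $m$ is an elementary cyclic conjugation. It remains to see $m\in\cS'\CCCi$, i.e. $\lS(m)\le1$: otherwise, for a fixed $\cS$-head function $H$ (which restricts to an $\cS'$-head function of $\Ad\cC$ by the proof of Proposition~\ref{FAd Garside}), the head $H(m)$ would be a proper left-divisor of $m$; but property~(ii) of a head function, applied to the elements of $\cS$ that are $x$ and $y$ up to an invertible, forces $x,y\preccurlyeq H(m)$, so $H(m)$ is a common right-multiple of $x$ and $y$ strictly dividing $m$ --- contradicting that $m$ is their right-lcm. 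Hence $\lS(m)\le1$ and $m\in\cS_1\CCCi$.

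For (b), the inclusion $\cC_1\subseteq\cyc\cC$ is immediate since $\cS_1$ consists of elementary cyclic conjugations and $\CCCi\subseteq\cyc\cC$. For the reverse inclusion it suffices to show that every elementary cyclic conjugation $x$ (say $x\preccurlyeq w$ with $w=\source(x)$) lies in $\cC_1$, which I would prove by induction on $\lS(x)$. If $\lS(x)=0$ then $x\in\CCCi\subseteq\cC_1$. Otherwise set $s:=H(x)$; then $s\in\cS'$, $s\preccurlyeq x\preccurlyeq w$ and $\source(s)=w$, so $s\in\cS_1$, and writing $x=sx'$ we have $\lS(x')<\lS(x)$ by Lemma~\ref{S-sup}. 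A short cancellation shows $x'$ is again an elementary cyclic conjugation: writing $w=sp$ and $w=xr=sx'r$ gives $p=x'r$, and $\source(x')=\rightad sw=ps=x'rs$, so $x'\preccurlyeq\source(x')$. By induction $x'\in\cC_1$, hence $x=sx'\in\cC_1$.

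The main obstacle is the right-lcm closure in~(a): one has to guarantee that forming the right-lcm of two members of $\cS_1\CCCi$ does not leave $\cS$, i.e. that $\lS(m)\le1$, and this is precisely where the interplay between an $\cS$-head function and lcms is used --- and, through Lemma~\ref{Garside subfamily}, where right-Noetherianity and the existence of local right-lcms are really needed. The remaining verifications --- that cyclic conjugations are preserved under right-quotient and under ``removing the head'' --- are routine cancellations, but they must be done carefully since sources and targets move under conjugation.
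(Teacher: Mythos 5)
Your proof is correct and takes essentially the same route as the paper's: reduce to Lemma~\ref{Garside subfamily} applied inside $\Ad\cC$ with the Garside family $\cS\cap\Ad\cC$ from Proposition~\ref{FAd Garside}, then check the required closure of $\cS_1\CCCi$ and that it generates $\cyc\cC$. The paper carries out these checks more tersely (it takes as known that a Garside family is closed under right-lcm, where you re-derive it via the head-function axioms, and it shows generation by decomposing an elementary conjugation directly into simples rather than by induction on $\lS$), but the substance of the argument is identical.
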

\begin{proof}
Set $\cS_1=\cS'\cap\cS$.
We   first  observe  that  $\cS_1\CCCi\cup\CCCi$   generates  $\cyc\cC$.  Indeed  if
$x\preccurlyeq  w$ and  we choose  a decomposition  $x=s_1\dotsm s_n$  as a
product  of  morphisms  in  $\cS\CCCi\cup\CCCi$  it  is  clear that each $s_i$ is in
$\cyc\cC$, so is in $\cS_1\CCCi\cup\CCCi$.

The proposition then results from Lemma \ref{Garside subfamily}, which
applies to $\cyc\cC$ since
$\cS_1\CCCi\cup\CCCi$  is closed under right-divisor  and right-lcm; this is obvious
for  right-divisor and for right-lcm results from the facts that
$\cS\CCCi\cup\CCCi$
is  closed  under  right-lcm  by Lemma \ref{garside family lcm-closed}
and  that a right-lcm of two divisors of $w$ is a divisor of $w$.
\end{proof}
We see by Lemma \ref{Garside subfamily} that $\cyc\cC$ is closed under
right-quotient in $\Conj\cC$.

We  now prove that $\cS'$ --- which does not depend on the existence of a Garside 
family $\cS$ in $\cC$ --- is a Garside  family attached to a Garside
map;  $\cS'$ is usually larger than the Garside family $\cS'\cap\cS$
of  Proposition \ref{CFC}, since it contains  all left-divisors of $w$ even
if $w$ is not in $\cS$.

\begin{proposition}\label{Fcyc Garside}
Assume  $\cC$ is right-Noetherian and admits
conditional right-lcms; then $\cS'$
is  a  Garside  family  in $\cyc\cC$ attached to the
Garside  map  $\Delta$  such  that  $\Delta(w)=  w\in\cyc\cC(w)$;  the
corresponding Garside automorphism $\Phi$ is the identity functor.
\end{proposition}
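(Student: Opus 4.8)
The plan is to verify the criterion of Proposition~\ref{critereGarside}(iv) together with conditions (i) and (ii) for a well-chosen head function, taking $\Delta(w)=w$ viewed as the morphism $w\xrightarrow w w$ in $\cyc\cC(w)$, and then to identify the set of left divisors of this $\Delta$ with the claimed family $\cS'$. The first thing to check is that $\Delta$ really is a morphism in $\cyc\cC$: indeed $w=w\cdot\Id$ is the elementary conjugation $w=wy\xrightarrow{w}yw=w$ with $y=\Id$, so $w\in\cyc\cC(w,w)$. Next, $x\preccurlyeq\Delta(w)$ in $\cyc\cC$ means there is $z\in\cyc\cC$ with $xz=w$ in $\cC$; since $\cyc\cC$ is a subcategory of $\Ad\cC$ and $I$ is faithful, this is equivalent (using that $\cyc\cC$ is closed under right-quotient in $\Ad\cC$, noted just after Proposition~\ref{CFC}) to $x\in\Ad\cC(w,-)$ with $x\preccurlyeq w$ in $\cC$; that is, $x\in\cS'$. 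So $\cS'\CCCi\cup\CCCi$ is exactly the set of left divisors of $\Delta$, which is the definition of $\cS'$ being attached to the Garside map $\Delta$ --- \emph{provided} we know $\cS'$ is a Garside family at all.

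To establish that, I would run through Proposition~\ref{critereGarside}. First, $\cS'\CCCi$ generates $\cyc\cC$: any elementary conjugation $w=xy\xrightarrow{x}yx$ has $x\preccurlyeq w$ hence $x\in\cS'$, and $\cyc\cC$ is by definition generated by such elementary conjugations. For the head function, given $x\in\cyc\cC(w,-)-\CCCi$, i.e. $x\preccurlyeq w$ in $\cC$ and $x\notin\CCCi$, set $H(x)=x$ itself; this lands in $\cS'$ and trivially satisfies (i) $H(x)\preccurlyeq x$ and (ii) $h\preccurlyeq x\Rightarrow h\preccurlyeq H(x)=x$ for all $h\in\cS'$. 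Then I would check (iii): for $f\in\cyc\cC$ and $g\in\cyc\cC-\CCCi$ one needs $H(fg)\eqir H(fH(g))$; since $H$ is the identity on non-invertibles, $H(fg)$ and $H(fH(g))=H(fg)$ (because $fH(g)=fg$) agree on the nose. Thus (i), (ii), (iii) hold for $H$, so $\cS'$ is a Garside family. Since the left divisors of $\Delta(w)=w$ are precisely $\cS'\CCCi\cup\CCCi$, the family is attached to this Garside map.

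Finally, the canonical Garside automorphism $\Phi$ is computed from $\Delta$ by the recipe in the paragraph preceding Proposition~\ref{second domino}: for $s\in\cS'$ with $s\xrightarrow{s}s'$ one writes $ss'=\Delta$ inside $\cyc\cC$, then $\Phi(s)$ by $s'\Phi(s)=\Delta$. Here, if $s\in\cyc\cC(w,w'')$ with $w=s u$ and $w''=us$ (in $\cC$), the complementary morphism of $s$ in $\Delta(w)=w$ is $u\in\cyc\cC(w'',w)$, and then the complement of $u$ in $\Delta(w'')=w''=us$ is $s$ again, viewed now in $\cyc\cC(w,w'')$; that is $\Phi(s)=s$. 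On objects, $\Phi(w)$ is the target of $\Delta(w)$, which is $w$. So $\Phi=\Id$.

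The step I expect to need the most care is the identification of $\preccurlyeq$ in $\cyc\cC$ with the restriction of $\preccurlyeq$ in $\cC$: one must be sure that a factorization $xz=w$ in $\cC$ with $x\in\cyc\cC(w,-)$ actually forces the complementary factor $z$ to again be a morphism of $\cyc\cC$ (not merely of $\Ad\cC$), so that $x$ genuinely left-divides $\Delta(w)$ \emph{in the subcategory} $\cyc\cC$. This is exactly the closure of $\cyc\cC$ under right-quotient in $\Ad\cC$ recorded after Proposition~\ref{CFC}, which itself rests on Lemma~\ref{Garside subfamily}; once that is invoked cleanly, the rest is bookkeeping. Everything else --- the head-function verifications and the computation of $\Phi$ --- is formal given the machinery already assembled.
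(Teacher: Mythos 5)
The core gap is in your head function. You define $H(x)=x$ for $x\in\cyc\cC(w,-)-\CCCi$, justifying this by asserting ``$x\in\cyc\cC(w,-)$, i.e. $x\preccurlyeq w$ in $\cC$''. That ``i.e.'' is false: a morphism of $\cyc\cC(w,-)$ is by Definition~\ref{cyclicFconjugacy} a \emph{composition} of elementary conjugations, and such a composite need not left-divide $w$. For instance the full conjugation $w\xrightarrow{w}w$ composed with itself gives $w^2\in\cyc\cC(w,w)$, which certainly need not divide $w$. Thus $H(x)=x$ does not land in $\cS'$ in general, so neither condition~(i) nor the very requirement that $H$ be a map $\cyc\cC-\CCCi\to\cS'$ holds, and your verification of Proposition~\ref{critereGarside}(iii) is vacuous only because it is carried out under an incorrect premise. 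This is precisely the point where the right-Noetherian and local-right-lcm hypotheses must enter: the correct head function, as in the paper's proof, is $H(x)=\gcd(x,w)$, whose existence is supplied by Proposition~\ref{lcm=>gcd}, and which always lands in $\cS'$ because a left-gcd with $w$ divides $w$. The paper then checks (i), (ii), (iv) of Proposition~\ref{critereGarside} rather than (iii), using that $\cS'$ is closed under right-divisor.

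The rest of your outline is sound and essentially parallels the paper: your identification of the left-divisors of $\Delta(w)=w$ in $\cyc\cC$ with $\cS'$ via closure of $\cyc\cC$ under right-quotient is correct, your verification that $\Delta$ is a morphism of $\cyc\cC$ is fine, and your computation of $\Phi$ is a more verbose form of the paper's observation that the identity $x\cdot\rightad xw=w\cdot x$ forces $\Phi=\Id$. But the head function must be the gcd with $w$; with that repaired, the argument goes through.
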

\begin{proof}
The set $\cS'$ generates $\cyc\cC$ by definition of $\cyc\cC$. It is closed
under  right-divisors since $xy\preccurlyeq w$ implies $x\preccurlyeq w$ so
that  $\rightad xw$ is defined and $y\preccurlyeq \rightad xw$; since $\cC$
is right-Noetherian and admits conditional right-lcms, any two morphisms of
$\cC$  with same source have a gcd by Proposition \ref{lcm=>gcd}. We define
a  function $H:\cyc \cC-\CCCi\to \cS'$ by  letting $H(x)$ be an arbitrarily
chosen  left-gcd  of  $x$  and  $w$  if $x\in\cyc\cC(w,\ud)$. It is readily
checked  that $H$  is an  $\cS'$-head function.  We conclude by Proposition
\ref{critereGarside} that $\cS'$ is a Garside family for $\cyc\cC$. The set
$\cS'(w,\ud)$  is  the  set  of  left-divisors  of $w=\Delta(w)$; similarly
$\cS'(\ud,w)$ is the set of right-divisors of $w=\Delta(w)$. Hence $\Delta$
is  a Garside map in $\cyc\cC$. The  equation $x\rightad xw=w x$ shows that
$\Phi$ is the identity.
\end{proof}

We say that a subset $X\subset\cC$ is  closed under left-gcd if whenever two
elements of $X$ have a left-gcd in $\cC$ this gcd is in $X$.
\begin{proposition}\label{CFC  stable  gcd}  Assume  $\cC$ is right-Noetherian
and admits conditional right-lcms; then the subcategory $\cyc\cC$ of $\Conj
\cC$ is closed under left-gcd.
\end{proposition}
\begin{proof}
Let  $(x_1,\dots, x_n)$  and $(y_1,\dots,  y_m)$ be $\cS'$-normal 
decompositions respectively of $x\in \cyc\cC(w,\ud)$ and
$y\in\cyc\cC(w,\ud)$.

We first prove that if $\gcd(x_1,y_1)\in\CCCi$ then $\gcd(x,y)\in\CCCi$ (here 
we consider left-gcds in $\Conj \cC$). We proceed by induction on $\inf\{m,n\}$.
We write $\Delta$ for $\Delta(w)$ when there is no ambiguity on the source $w$. 
Since $x_n$ and $y_m$ divide $\Delta$, we get that $\gcd(x,y)$ divides 
\begin{multline*}
\gcd(x_1\dotsm x_{n-1}\Delta,y_1\dotsm y_{m-1}\Delta)\eqir 
\gcd(\Delta x_1\dotsm x_{n-1},
\Delta y_1\dotsm y_{m-1})\\
\hfill\eqir\Delta\gcd(x_1\dotsm x_{n-1},y_1\dotsm y_{m-1})\eqir\Delta=w,
\end{multline*}
where the first equality uses that $\Phi$ is the identity and
the third results from the induction hypothesis.
So we get that $\gcd(x,y)$ divides $w$ thus is in $\cS'$; 
by the property of normal decompositions it thus divides
$x_1$ and $y_1$, thus is in $\CCCi$.

We now prove the proposition. If $\gcd(x_1,y_1)\in\CCCi$ then
$\gcd(x,y)\in\CCCi$ thus is in $\cyc\cC$ and we are done. Otherwise let $d_1$
be  a  gcd  of  $x_1$  and  $y_1$  and  let $x^{(1)},y^{(1)}$ be defined by
$x=d_1x^{(1)}$,  $y=d_1y^{(1)}$. Similarly let $d_2$ be  a gcd of the first
terms  of a normal decomposition of $x^{(1)}$, $y^{(1)}$ and let $x^{(2)}$,
$y^{(2)}$  be the remainders,  etc\dots Since $\cC$  is right-Noetherian the
sequence $d_1, d_1d_2, \dots$ of increasing divisors of $x$ must stabilize
at  some stage $k$, which means that the corresponding remainders $x^{(k)}$
and $y^{(k)}$ have first terms of their normal decomposition coprime, so by
the  first  part  are  themselves  coprime.  Thus $\gcd(x,y)\eqir d_1\dotsm
d_k\in \cyc\cC$.
\end{proof}

We now give a quite general context where cyclic conjugacy coincides with
conjugacy.
\begin{proposition}\label{Ad=Cyc}
Let $\cC$ be a right-Noetherian category with a Garside map
$\Delta$, and let $x$ be an endomorphism of $\cC$ such that for
$n$ large enough we have $\Delta\preccurlyeq x^n$. Then 
we have $\cyc\cC(x,\ud)=\Conj\cC(x,\ud)$.
\end{proposition}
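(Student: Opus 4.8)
The plan is to show the nontrivial inclusion $\Ad\cC(x,y)\subseteq\cyc\cC(x,y)$, the reverse being immediate from Definition \ref{cyclicFconjugacy}. Fix $z\in\Ad\cC(x,y)$, so $zy=xz$ and hence, by induction, $zy^n=x^nz$ for all $n$. Choose $n$ large enough that $\Delta\preccurlyeq x^n$, say $x^n=\Delta u$. Then $zy^n=\Delta uz$, so $\Delta\preccurlyeq zy^n$. The key point is that in $\cyc\cC$ the Garside map of Proposition \ref{Fcyc Garside} is $\Delta(w)=w$, so the statement ``$\Delta\preccurlyeq zy^n$'' must be interpreted with some care: what we really want is a factorization of $z$ through divisors of powers of $x$. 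I would instead argue directly inside $\cC$ using divisibility, and then translate back.

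The core computation I expect to carry out is the following. Since $\Delta\preccurlyeq x^n$, the automorphism property $f\Delta=\Delta\Phi(f)$ (for the canonical Garside automorphism $\Phi$ of $\cC$) together with $zy^n=x^nz$ lets me write $z$ as a product of conjugating elements each of which divides a relevant endomorphism. Concretely, write a normal decomposition $z=z_1\cdots z_k$ (an $\cS$-normal decomposition in $\cC$, which by Proposition \ref{FAd Garside} is $\Ad\cC\cap\cS$-normal). I want to show each $z_i$, viewed as a conjugation $\rightad{z_1\cdots z_{i-1}}x\to\rightad{z_1\cdots z_i}x$, satisfies $z_i\preccurlyeq \rightad{z_1\cdots z_{i-1}}x$, i.e.\ lies in $\cyc\cC$. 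It suffices to treat $i=1$: show that $H(z)$, the first term of the normal decomposition of any $z\in\Ad\cC(x,y)$, left-divides $x$; then $z=H(z)z'$ with $z'\in\Ad\cC$ of strictly smaller $\lS$ (Lemma \ref{S-sup}, via Lemma \ref{FAd C is lcm}), and one finishes by induction on $\lS(z)$. To see $H(z)\preccurlyeq x$: from $zy^n=\Delta u z$ we get $\Delta\preccurlyeq zy^n$ in $\cC$, hence $\Delta\preccurlyeq x^nz$; but also $H(z)\preccurlyeq z\preccurlyeq z y^n = x^n z$, and iterating the head-function identity Proposition \ref{critereGarside}(iii) down the normal form of $x^nz$ shows $H(x^nz)\eqir H(x\cdots)$, ultimately that $H(z)$ divides a bounded product of copies of $x$; combined with $H(z)\in\cS$ and $\cS$ being stable under taking divisors of products one extracts $H(z)\preccurlyeq x$.

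**Where the real work is.** The genuine obstacle is the last sentence of the previous paragraph: extracting ``$H(z)\preccurlyeq x$'' from ``$H(z)\preccurlyeq x^n z$ and $H(z)\in\cS$'' is not automatic — a simple in $\cS$ can of course divide $x^n$ without dividing $x$. The fix is to use that $z$ conjugates $x$ to $y$, so that the divisibility is ``rigid'': from $H(z)\preccurlyeq z$ and $z$ commuting with $x$ in the sense $zy=xz$, the element $d:=\gcd(z,x)$ (which exists by Proposition \ref{lcm=>gcd}, once we note right-Noetherianity plus a Garside map gives local right-lcms) itself lies in $\Ad\cC$ by Lemma \ref{FAd C is lcm}, and one shows $H(z)\preccurlyeq d$ by the head-function inequality applied to $z$ and to $x^nz$ and a descent on $n$ using $f\Delta=\Delta\Phi(f)$ to ``peel off'' one $\Delta$, hence one block dividing $x$, at a time. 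In other words: the intersection of the left-divisors of $z$ with $\cS$ is contained in the left-divisors of $\gcd(z,x)$, because $H(z)$ divides $z$ and divides $x^m$ for the first $m$ with $\Delta\preccurlyeq x^m$, and $H(z)\in\cS$ is already $\preccurlyeq\Delta\preccurlyeq x^{m}$ in a way that, by induction on $m$ using that $x$ conjugates into $\Ad\cC$, descends to $H(z)\preccurlyeq x$. Once $H(z)\preccurlyeq x$ is established the induction on $\lS(z)$ closes immediately, giving $z\in\cyc\cC(x,y)$.
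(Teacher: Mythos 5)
Your central claim is false: it is not true that $H(z)\preccurlyeq x$ for $z\in\Ad\cC(x,y)$, nor that $H(z)\preccurlyeq\gcd(z,x)$, which is what your ``fix'' amounts to. Take $\cC=B_3^+$ with $\cS=\bW$ and $\Delta=aba$, and set $x=ab$; then $x^2=\Delta b$, so the hypothesis $\Delta\preccurlyeq x^n$ holds, and $z:=\Delta^2$ is central, hence $z\in\Ad\cC(x,x)$. But $H(z)=\Delta$ does not left-divide $x=ab$ (lengths $3$ and $2$), and $\gcd(z,x)=ab$ is not divisible by $\Delta$ either. So ``the intersection of the left-divisors of $z$ with $\cS$ is contained in the left-divisors of $\gcd(z,x)$'' fails outright, and the induction on $\lS(z)$ you set up cannot even start: stripping $H(z)$ is not a cyclic conjugation.

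What actually holds, and is all that is needed, is the much weaker statement that $\gcd(z,x)\notin\CCCi$ whenever $z\in\Ad\cC(x,y)$ and $z\notin\CCCi$. Granting this, write $z=u\,z_1$ with $u=\gcd(z,x)$: then $u\preccurlyeq x$ so $u\in\cyc\cC(x,x^u)$, $z_1\in\Ad\cC(x^u,y)$ by Lemma \ref{FAd C is lcm}, the hypothesis $\Delta\preccurlyeq(x^u)^m$ persists after conjugation (a short argument using $u\preccurlyeq\Delta^k$ for some $k$), and one closes by right-Noetherian induction on the strictly increasing chain of left-divisors of $z$ produced by the successive strippings --- not by induction on $\lS(z)$, since $\gcd(z,x)$ may well be shorter than $H(z)$. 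To prove $\gcd(z,x)\notin\CCCi$ one first notes $z\preccurlyeq x^m$ for some $m$ (because $z\preccurlyeq\Delta^k\preccurlyeq x^{nk}$) and then does an induction on that $m$: from $z\preccurlyeq xz$ and $z\preccurlyeq x\cdot x^{m-1}$ one gets $z\preccurlyeq x\gcd(z,x^{m-1})$; if $\gcd(z,x^{m-1})\in\CCCi$ then $z\preccurlyeq x$ and we are done; otherwise $z_1:=\gcd(z,x^{m-1})$ is again in $\Ad\cC(x,-)$, non-invertible, divides $x^{m-1}$, and $\gcd(z_1,x)\preccurlyeq\gcd(z,x)$, so the induction closes. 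Your ``descent on $n$, peeling off one $\Delta$ at a time'' is gesturing at exactly this, but the target must be non-invertibility of a gcd, not a containment of heads which simply is not there.
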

\begin{proof}
We  first show  that the  property $\exists  n,\;\Delta\preccurlyeq x^n$ is
stable by conjugacy. Indeed, if $u\in\Conj\cC(x,\ud)$ then there exists $k$
such    that   $u\preccurlyeq\Delta^k$.   Since   $\Delta^{k+1}\preccurlyeq
x^{n(k+1)}$,    we    have    $u\inv\Delta^k\cdot\Delta\preccurlyeq   u\inv
x^{n(k+1)}$. If $\Phi$ is the Garside automorphism attached to $\Delta$, we
have     $u\inv\Delta^k\cdot\Delta=\Delta\cdot\Phi(u\inv\Delta^k)$     thus
$\Delta\preccurlyeq u\inv x^{n(k+1)}$. We deduce that
$(x^u)^{n(k+1)}=(u\inv   x\cdot  u)^{n(k+1)}=u\inv  x^{n(k+1)}\cdot  u$  is
divisible by $\Delta$.

We  prove then  by Noetherian  induction on  $f$ that $f\in\Conj\cC(x,\ud)$
implies $f\in\cyc\cC(x,\ud)$. This is true if $f$ is invertible. Otherwise,
write  $f=u_1f_1$ with $u_1=\gcd(f,x)$; then $u_1\in\cyc\cC(x,x^{u_1})$. If
we   can   prove   that   if  $f\in\Conj\cC(x,\ud)$,  $f\notin\CCCi$,  then
$\gcd(f,x)\notin\CCCi$,  we will be  done by Noetherian  induction since we
can  write similarly $f_1=u_2f_2,\dots$  and the sequence $u_1,u_2,\dots$
has to exhaust $f$.

Since as observed any $u\in\Conj\cC(x,\ud)$ divides some power of $x$
($x^{nk}$ if $u\preccurlyeq\Delta^k$) it is enough to show that 
if $u\in\Conj\cC(x,\ud)$, $u\notin\CCCi$ and $u\preccurlyeq x^n$, 
then  $\gcd(u,x)\notin\CCCi$. We do this by induction on $n$.
From $u\in\Conj\cC(x,\ud)$ we have $u\preccurlyeq xu$, and from 
$u\preccurlyeq x^n$ we deduce $u\preccurlyeq x\gcd(u,x^{n-1})$. If
$\gcd(u,x^{n-1})\in\CCCi$ then  $u\preccurlyeq x$ and we are done:
$\gcd(x,u)=u$. Otherwise let $u_1=\gcd(u,x^{n-1})$. We have
$u_1\preccurlyeq xu_1$, $u_1\notin\CCCi$ and $u_1\preccurlyeq x^{n-1}$
thus we are done by induction.
\end{proof}
\subsection*{The $F$-cyclic conjugacy}
Let  $F$ be a  finite order automorphism  of the category  $\cC$. We define
$\Fcyc \cC$ as the subcategory of $\FConj \cC$ generated by
$\cup_w\{x\in\FConj\cC(w,\ud)\mid  x\preccurlyeq  w\}$,  or  equivalently, since
$\cC$  is  cancellative, by $\cup_{w'}\{x\in\Conj\cC(\ud,w')\mid
w'\succcurlyeq F(x)\}$. By the functor $\iota$, the morphisms in
$\Fcyc\cC(w,w')$ identify with the morphisms in
$\cyc(\cC\rtimes\genby  F)(wF,w'F)$ which  lie in  $\cC$. To simplify
notation, we will denote by $\cyc\cC(wF,w'F)$ this last set of morphisms.
If $\cC$ is right-Noetherian and admits conditional right-lcms, then so does
$\cC\rtimes\genby  F$. If $\cS$  is a Garside family  in $\cC$ and $F$ is
an automorphism preserving $\cS$, and we
translate Proposition \ref{CFC} to the image of $\iota$ and then to $\Fcyc\cC$,
we get that $\cup_w\{x\in\FConj\cC(w,\ud)\mid x\preccurlyeq w\text{ and }
x\in \cS\}$ is a Garside family in $\Fcyc \cC$.

Similarly Proposition \ref{Fcyc Garside} says that the set
$\cup_w\{x\in\FConj\cC(w,\ud)\mid  x\preccurlyeq w\}$ is a Garside family
in  $\Fcyc  \cC$  attached  to  the  Garside map $\Delta$ which sends the
object  $w$ to  the morphism  $w\in\Fcyc\cC(w, F(w))$; the associated
Garside automorphism is the functor $F$. 

Finally Proposition \ref{CFC stable gcd} says that under the 
assumptions of Proposition \ref{Fcyc Garside} the subcategory $\Fcyc \cC$ of $\FConj \cC$
is closed under left-gcd.

\subsection*{Periodic elements}
\begin{definition}
Let  $\cC$  be  a  cancellative category  with a  Garside family $\cS$
attached to Garside map $\Delta$;  then an  endomorphism $f$  of $\cC$ is
said  to  be  \emph{$(d,p)$-periodic}  if  $f^d\in\Delta^p  \CCCi$ for some
positive integers $d,p$.
\end{definition}

Note  that if $f$ is $(d,p)$-periodic it is also $(nd,np)$-periodic for any
non-zero integer $n$; 
conversely, up to cyclic conjugacy, a $(nd,np)$-periodic element
is $(d,p)$-periodic, see \ref{Bestvina}.  
We call $d/p$ the \index{period}\emph{period} of $f$.
If the Garside automorphism $\Phi$ given by $\Delta$ is  of finite  order, 
then  a conjugate of a periodic
element is periodic of the same period, though the minimal pair $(d,p)$ may
change. 
Our  interest in periodic elements
comes mainly from the fact that one can describe their centralizers,
which is related to the fact that by
Proposition \ref{Ad=Cyc}  two periodic morphisms are conjugate if and
only  if they are cyclically conjugate.

We deal first with the case $p=2$, where 
we show by elementary computations the following:

\begin{lemma}\label{69g}
Let  $f$ be  a $(d,2)$-periodic  element of  $\cC$ and  let $e=\lfloor\frac
d2\rfloor$.  Then $f$ is cyclically conjugate to a $(d,2)$-periodic element
$g$ such that $g^e\in \cS\CCCi$.

Further,  if $g$ is a $(d,2)$-periodic element such that $g^e\in \cS\CCCi$,
then
\begin{itemize}
\item if $d$  is even  $g$ is  $(d/2,1)$-periodic.
\item if $d$ is odd and we
define  $h\in  \cS\CCCi$  by  $g^eh=\Delta$  and  $\varepsilon\in\CCCi$  by
$g^d=\Delta^2\varepsilon$ then $g=h\Phi(h)\varepsilon$.
\end{itemize}
\end{lemma}
\begin{proof}

We  will prove  by increasing  induction on  $i$ that  for $i\le d/2$ there
exists $v\in\cyc\cC$ such that $(\rightad vf)^i\in \cS\CCCi\cup\CCCi$ and $(\rightad
vf)^d\in\Delta^2 \CCCi$. We start the induction with $i=0$ where the result
holds trivially with $v=1$.

We  consider now the  general step: assuming  the result for  $i$ such that
$i+1\le  d/2$, we will prove it for $i+1$. We thus have a $v$ for step $i$,
thus  replacing $f$  by $\rightad vf$ we  may assume that $f^i\in
\cS\CCCi\cup\CCCi$ and $f^d\in\Delta^2\CCCi$; 
we will conclude by finding $v\in \cS$
such  that  $v\preccurlyeq  f$, $(\rightad  vf)^{i+1}\in \cS\CCCi$ and
$(\rightad  vf)^d\in\Delta^2 \CCCi$. If $f^{i+1}\preccurlyeq\Delta$ we have
the  desired result with  $v=1$. We may  thus assume that $\lS(f^{i+1})\ge
2$.  Since $f^{i+1}\preccurlyeq\Delta^2$ we have actually $\lS(f^{i+1})=2$
by Proposition  \ref{second  domino}(i);  since $f^i$ is in $\cS\CCCi$
and divides $f^{i+1}$, a normal decomposition of $f^{i+1}$ can be written
$(f^iv,w)$ with $f^iv, w\in\cS\CCCi$. 
As  $f^ivw\cdot  f^iv\preccurlyeq  f^ivw\cdot  f^ivw=f^{2(i+1)}\preccurlyeq
f^d\eqir\Delta^2$, we still have $2=\lS(f^iv\cdot w\cdot
f^iv)=\lS(f^iv\cdot  w)$.  By  Proposition  \ref{second domino}(ii) we thus
have $w\cdot f^iv\in \cS\CCCi$. Then $\cS\CCCi\owns w
\cdot f^iv=w(vw)^iv=(\rightad vf)^{i+1}$ and $v\preccurlyeq f$.

So  $v$ will  do if $(\rightad vf)^d\in \Delta^2\CCCi$. Write
$f^d=\Delta^2\varepsilon$  with  $\varepsilon\in\CCCi$;  then $f$
commutes   with  $\Delta^2\varepsilon$,   thus  $f^{i+1}$   also,  which can
be written $\Phi^2(f^{i+1})\varepsilon=\varepsilon     f^{i+1}$     or   
equivalently $\Phi^2(f^iv)\Phi^2(w)\varepsilon=\varepsilon f^iv w$. 
Now since $\Phi$ preserves normal decompositions
$(\Phi^2(f^iv),\Phi^2(w)\varepsilon)$  is a normal
decomposition  thus comparing with $(f^iv,w)$  by
Lemma \ref{deformation}    there    exists    $\varepsilon'\in\CCCi$   such   that
$\Phi^2(f^iv)\varepsilon'= \varepsilon f^iv$. Thus
$f^i\Delta^2\Phi^2(v)\varepsilon'=\Delta^2\Phi^2(f^iv)\varepsilon'=
\Delta^2\varepsilon   f^iv=f^i\Delta^2\varepsilon  v$,  the  last  equality
using again that $f$  commutes with  $\Delta^2\varepsilon$. 
Canceling  $f^i\Delta^2$ we
get   $\Phi^2(v)\varepsilon'=\varepsilon  v$, whence $v(\rightad
vf)^d=f^d       v=\Delta^2\varepsilon      v=\Delta^2\Phi^2(v)\varepsilon'=
v\Delta^2\varepsilon'$ whence the result by canceling $v$.

We  prove now  the second  part. Since  $g^e\in \cS\CCCi$  the element $h$ 
defined by $g^eh=\Delta$ is in $\cS\CCCi\cup\CCCi$. Defining $\varepsilon\in\CCCi$ 
by $g^d=\Delta^2\varepsilon$ we get
$g^eh\Delta\varepsilon=\Delta^2\varepsilon=g^d$,   whence  by  cancellation
$h\Delta\varepsilon=g^eg^a$  with $a=1$ if  $d$ is odd  and $a=0$ if $d$ is
even.   Using  $h\Delta\varepsilon=\Delta\Phi(h)\varepsilon=
g^eh\Phi(h)\varepsilon$ and canceling $g^e$ we get $h\Phi(h)\varepsilon=g^a$.

If  $d$ is odd we get the statement of  the lemma, and if $d$ is even we get
$h\Phi(h)\in\CCCi$, so $h\in\CCCi$, so $g^e\in\Delta\CCCi$.
\end{proof}

We  will  need  at  one  stage  the  following  more general statement (see
\cite[VIII,  3.33]{Livre}) whose proof uses an interpretation by Bestvina
of normal decompositions as geodesics.
\begin{theorem}\label{Bestvina}
Let $f_1$ be a $(d_1,k_1)$-periodic element of $\cC$; let
$d=d_1/\gcd(d_1,k_1)$  and $k=k_1/\gcd(d_1,k_1)$; then  $f_1$ is cyclically
conjugate  to  a  $(d,k)$-periodic  element  $f$.  Further,  write an equality
$dk'=1+kd'$  in positive  integers. Then  $f$ is  cyclically conjugate to a
$(d,k)$-periodic element $g$ such that $g^{d'}\preccurlyeq \Delta^{k'}$. If
we    then   define   $g_1\in\cC$    by   $g^{d'}g_1=\Delta^{k'}$   then
$(g_1\Phi^{k'})^d\eqir\Delta$ and $(g_1\Phi^{k'})^k\eqir g$ in
$\cC\rtimes\genby\Phi$.
\end{theorem}
\subsection*{$F$-periodic elements}
Let  us apply Lemma \ref{69g} to the case of a semi-direct product category
$\cC\rtimes\genby  F$ where $\cC$ is a cancellative category with a Garside
family  $\cS$ attached to a Garside map $\Delta$ and $F$ is an automorphism
of  finite order of $\cC$  preserving $\cS$; then $\cS$  is still a Garside
family  of  $\cC\rtimes\genby  F$.  We  assume  further  that  $\cC$ has no
non-trivial   invertible  elements.   Then  a   morphism  $yF\in\cC  F$  is
$(d,p)$-periodic    if   and   only   if   $\target(y)=F(\source(y))$   and
$(yF)^d=\Delta^p F^d$.

From Lemma \ref{69g} we can deduce:
\begin{corollary}\label{70g}
Let $yF\in\cC F$ be $(d,2)$-periodic and let $e=\lfloor\frac d2\rfloor$ and
$\Lambda=\Phi F^{-e}$. Then
\begin{enumerate}
\item  If $d$  is even,  there exists an $(e,1)$-periodic element
$xF\in\cC F$ cyclically conjugate to
$yF$. The centralizer of  $xF$ in $\cC$ identifies with
$\cyc\cC(xF)$.  Further, we may compute this centralizer in the category of
fixed  points $(\cyc\cC)^\Lambda$ since the  morphisms in $\cyc\cC(xF)$ are
$\Lambda$-stable.
\item  If $d$ is odd, there exists a $(d,2)$-periodic element
$xF\in\cC F$ cyclically conjugate to $yF$
such that $(xF)^e\preccurlyeq\Delta F^e$. The element
$s$  defined  by  $(xF)^es=\Delta  F^e$  is  such  that,  in  the  category
$\cC\rtimes\genby\Lambda$,    we    have    $x\Lambda^2=(s\Lambda)^2$   and
$(s\Lambda)^d=\Delta   \Lambda^d$.  The   centralizer  of   $xF$  in  $\cC$
identifies   with  the $F^d\Phi^{-2}$-fixed points of $\cyc\cC(s\Lambda)$. 
\end{enumerate}
\end{corollary}
Note that \ref{fixed points} describes Garside families for the
fixed point categories mentioned above.
\begin{proof}
Lemma   \ref{69g}   shows   that   $yF$   is   cyclically  conjugate  to  a
$(d,2)$-periodic element $xF$ such that $(xF)^e\in\cS F^e$.

If $d$ is even Lemma \ref{69g} says  that  $xF$  is  $(e,1)$-periodic,  and
Proposition   \ref{Ad=Cyc}   says   that   the   centralizer   of  $xF$  is
$\cyc\cC(xF)$. The elements of this centralizer, commuting to $xF$, commute
to $(xF)^e=\Delta F^e$ thus are $\Phi\inv F^e$-stable.

If   $d$  is  odd  Lemma  \ref{69g}  says  that  if  $(xF)^eh=\Delta$  then
$xF=h\Phi(h)F^d$. Since $h=sF^{-e}$ we get
$x=sF^{-e}\Phi(sF^{-e})F^{d-1}=s\Lambda(s)$.    This   can   be   rewritten
$x\Lambda^2=(s\Lambda)^2$.  Now  since  $\Delta  F^e  s\inv=(xF)^e$  we get
$(\Delta   F^e   s\inv)^d=\Delta^{2e}F^{de}$   which   gives  $(\Lambda\inv
s\inv)^d\Delta^d=\Delta^{2e}\Lambda^{-d}$ and finally
$(s\Lambda)^d=\Delta\Lambda^d$.  The elements of  $\Conj\cC(xF)$ commute to
$(xF)^e=\Delta F^e s\inv$ thus commute to $s\Lambda$ thus
$\Conj\cC(xF)\subset\Conj\cC(s\Lambda)$.   Note   that   the   elements  of
$\Conj\cC(xF)$   commute   to   $(xF)^d$   thus  to  $F^d\Phi^{-2}$.  Using
$x\Lambda^2=(s\Lambda)^2$ we get
$\Conj\cC(s\Lambda)\subset\Conj\cC(x\Lambda^2)$; but
$x\Lambda^2=xF(F^d\Phi^{-2})\inv$ so
$\Conj\cC(x\Lambda^2)^{F^d\Phi^{-2}}\subset\Conj\cC(xF)$, whence the result
using that by Proposition \ref{Ad=Cyc} we have
$\Conj\cC(s\Lambda)=\cyc\cC(s\Lambda)$.
\end{proof}
We will apply \ref{Bestvina} in the following particular form
\begin{corollary}\label{F-Bestvina}
Assume  that  $F$  is  of  finite  order  and that $\Phi=\Id$. 
Then any periodic element of $\cC F$ is conjugate to a $(d,k)$-periodic
element $yF\in\cC  F$  where  $k$  is  prime  to  $d$.  
Further for any choice of positive integers $d'$ and $k'$
with   $dk'=1+kd'$, the element  $yF$   is   cyclically  conjugate  to  a
$(d,k)$-periodic    element    $xF$    satisfying    $(xF)^{d'}\preccurlyeq
\Delta^{k'}$.   If  we   then  define   $x_1\in\cC$  by  $(xF)^{d'}  x_1
F^{-d'}=\Delta^{k'}$  then  $(x_1  F^{-d'})^d=\Delta  F^{-dd'}$  and  $(x_1
F^{-d'})^k=(x F) F^{-k'd}$.
\end{corollary}
We have a partial converse:
\begin{lemma}\label{x1 periodic implies x periodic}
Assume that $F$ is of finite order and that $\Phi=\Id$. Let $d,k,d',k'$
be positive integers such that $dk'=1+kd'$ with $d'$ prime to the order of $F$.
If $x_1\in\cC$ satisfies $(x_1F^{-d'})^d=\Delta F^{-dd'}$ then the element 
$xF\in\cC F$ defined by $(x_1F^{-d'})^k=(xF)F^{-k'd}$  
satisfies $(xF)^d=\Delta^kF^d$.
\end{lemma}
\begin{proof}
The element $x_1F^{-d'}$ is $F^{-dd'}$-stable since $\Phi=\Id$
and $(x_1  F^{-d'})^d=\Delta  F^{-dd'}$.
Since $d'$ is prime to the order of $F$ an element $F^{-dd'}$-stable is
$F^d$-stable. Thus, raising the equality $(x_1F^{-d'})^k=(xF)F^{-k'd}$ to the 
$d$-th power
we get $(xF)^d=(x_1F^{-d'})^{dk}F^{k'd^2}=(\Delta
F^{-dd'})^kF^{k'd^2}=\Delta^kF^d$.
\end{proof}
The following lemma shows that we can always choose $d'$ satisfying the assumption
of lemma \ref{x1 periodic implies x periodic}.
\begin{lemma}\label{pas Dirichlet}Given $k$ and $d$ coprime natural integers,
and an integer $\delta$,
there exists natural integers $d',k'$ such that $dk'=1+kd'$ with $d'$ prime to
$\delta$.
\end{lemma} 
\begin{proof}  $k'$ and $d'$ exist since $k$ and $d$ are coprime; we  may  change  $d'$  by  any  multiple  of $d$. Thus it is
sufficient  to show that given coprime integers  $d$ and $d'$,
we may choose $a$ such that $d'+ad$ is prime to any given $\delta$. Let
$p_1,\ldots,p_n$  be the prime  factors of $\delta$.  We have to choose $a$
such  that $d'+ad$ is nonzero mod each $p_i$. If $p_i|d$ this is automatic.
If  $p_i$ is prime to
$d$ we have to avoid $a\equiv-d'/d \pmod{p_i}$;  by the
Chinese remainder theorem
we can choose $a$ to avoid this finite set of congruences.
\end{proof}

\section{An example: ribbon categories}\label{ribbon cat}

An  example of a category with a Garside family is  a {\em Garside monoid},
which is just the case where $\cC$ has one object. In this case we will say
Garside element instead of Garside map. 

\begin{example}\label{artin monoids}
A classical example is given by the
{\em Artin monoid} $(B^+,\bS)$ associated with a Coxeter system $(W,S)$.
If the presentation of $W$ is
$$W=\langle S\mid s^2=1, 
\underbrace{sts\dotsm}_{m_{s,t}}=
  \underbrace{tst\dotsm}_{m_{s,t}}\text{ for $s,t\in S$}\rangle$$
then $B^+$ is defined by the presentation
$B^+=\langle \bS\mid
\underbrace{\bs\bt\bs\dotsm}_{m_{s,t}}=
  \underbrace{\bt\bs\bt\dotsm}_{m_{s,t}}\text{ for $\bs,\bt\in \bS$}\rangle$
where  $\bS$ is a copy of $S$; the  group with the same presentation is the
Artin  group  $B$.  There  is  an  obvious  quotient  $B^+\to  W$ since the
relations  of $B^+$ hold in $W$. Matsumoto's lemma stating that two reduced
expressions  for  an  element  of  $W$  can  be related by using only braid
relations implies that there is a well-defined section $W\mapsto\bW$ of the
quotient  $B^+\to W$ which maps a reduced expression $s_1\dotsm s_n$ to the
product  $\bs_1\dotsm\bs_n\in  B^+$.  The  monoid  $B^+$  is  cancellative,
Noetherian,  admits conditional left-lcms and  right-lcms; the set $\bS$ is
the  set of  atoms of  $B^+$ and  $\bW$ is  a Garside  family in $B^+$ (for
details, see \cite[6.27]{Article}). The Garside family $\bW$ is attached to
a  Garside element if and only if $W$ is finite. In this case we call $B^+$
{\em  spherical}. The Garside element  is the lift to  $\bW$ of the longest
element  $w_0$ of $W$; it will be  written $\bw_0$ or $\Delta$ depending on
the context.

Finally, an automorphism $\phi$ of $(W,S)$ (that is, an automorphism of $W$
which  preserves $S$) extends  naturally to an  automorphism of $(B^+,\bS)$
given by $\bs\mapsto\phi(\bs)$ which preserves the Garside family $\bW$.
\end{example}

\begin{example}
Another  example, attached to the  same Artin braid group  $B$ as the above
example,  is the  {\em dual  braid monoid}  introduced by David Bessis (see
\cite{bessis1}),  whose  construction  can  be  extended  to well-generated
finite complex reflection groups.
\end{example}
The constructions of this section apply to the study, in the semi-direct
product of an Artin monoid
$(B^+,\bS)$ by an automorphism stabilizing $\bS$, of the
conjugates and normalizer of a ``parabolic'' submonoid --- the
submonoid generated by a subset of the atoms $\bS$.
The ``ribbon category'' that we consider occurs, when the automorphism is the
identity, in the work
of  Paris \cite{paris} and Godelle \cite{godelle} on this topic. In Section
\ref{section  8} we will attach  parabolic Deligne-Lusztig varieties to
elements  of the ribbon  category and endomorphisms  of these varieties to
elements of the conjugacy category of this ribbon category.

The next proposition gives a list of properties that spherical Artin monoids
satisfy; the rest of the section describes ribbons in
an arbitrary monoid satisfying the same properties, which includes the
case of the dual braid monoid; this is a motivation for giving the results
in a more general context. Before stating this
proposition, we need a definition.
\begin{definition}
We  say that  a set  $\bI$ of  atoms of  a cancellative  monoid $M$ is {\em
parabolic}  if the  submonoid $M_\bI$  of $M$  generated by $\bI$ is closed
under right-quotient and weakly closed under right-lcm.
\end{definition}
Note that a monoid generated by a set $\bI$ of atoms has no  non-trivial
invertible elements, since such an element would be a product of atoms and an
atom is not invertible. Similarly, since an atom cannot be a product of
several atoms, we see that $\bI$ is the whole set of atoms of the monoid.
\begin{proposition}\label{artin}
Let  $M=B^+\rtimes\genby\phi$ be the semi-direct product of a spherical  Artin
monoid by a  diagram automorphism (see \ref{artin monoids}); then 
\begin{enumerate}
\item \label{artin 0}
$M$ is cancellative, right-Noetherian and admits conditional right-lcms.
\item  \label{artin(i)} There exists a finite set $\bS\subset M$ which is a
transversal  of  the  $\eqir$-classes  of  atoms  in $M$, and together with
$M^\times$ generates $M$.
\item\label{artin(ii)} Any conjugate in $M$ of an element of $\bS$ is in $\bS$.
\item\label{artin(iv)a} $M$ has a Garside family $\cS$ attached to a Garside
element $\Delta$.
\item\label{artin(iii)}  For  any  parabolic  subset  $\bI$  of  $\bS$, the
maximal divisor $\Delta_\bI$ of $\Delta$ given by Lemma \ref{alphaI} (which
is  unique  since  $M_\bI^\times=\{1\}$)  is  a Garside element in $M_\bI$,
and $\cS\cap M_\bI$ is a Garside family attached to $\Delta_\bI$.
\item\label{artin(v)}  For  any  parabolic  subset  $\bI\subset\bS$ and any
$\bs\in\bS-\bI$   there  exists   a  parabolic   subset  $\bJ$   such  that
$\Delta_\bJ$ is the right-lcm of $\bs$ and $\Delta_\bI$.
\end{enumerate}
\end{proposition}
\begin{proof} 
Let  us prove (i).  The monoid $M$  is cancellative since  it embeds in the
semi-direct  product of the  Artin group by  $\phi$. Similarly it inherits
from  $B^+$ Noetherianity and the Garside  family $\bW$, which implies that
it admits conditional right-lcms.

We  prove (ii). Take for $\bS$ the set of atoms of $M$.
An invertible element must  have length 0, hence the powers
of  $\phi$ are the only invertible elements.  The atoms are the elements of
length 1 that is the elements of $\bS\genby\phi$, thus $\bS$ is indeed a
transversal of the atoms.

For  (iii), we have to check that if  we have $\bs f=f\bt$ with $\bs\in\bS$ and
$f$  and $\bt$ in $M$ then $\bt\in\bS$.  Taking lengths we see that the length
of  $\bt$  is  1  so  that  $\bt=\bs'\phi^k$  for  some  integer  $k$ and some
$\bs'\in\bS$. Looking then at the powers of $\phi$ on both sides we get $k=0$.

For  (iv),  take $\Delta=\bw_0$.
We  have  seen  in  Example \ref{artin monoids} that (using the
notation  of loc.\ cit.) the  lift $\bw_0$ to $\bW$  of the longest element
$w_0$  of $W$  is a  Garside element  in $B^+$.  Hence $\Delta=\bw_0$  is a
Garside element in $M$ by Lemma \ref{garside in CsF}. We take $\cS=\bW$; it is
a Garside family attached to $\Delta$.

For  (v) we  notice first that  $M_\bI$, 
being generated  by atoms,  has no non-trivial invertible elements.

Before proving the rest, let us state the following (the fact that this
fails in dual braid monoids is a motivation for defining parabolic
subsets).
\begin{lemma}\label{all parabolic} Any subset of $\bS$ is parabolic.
\end{lemma}
\begin{proof}
We  show  that  $M_\bI$ is closed under
right-quotient.  Since both  sides of  each defining  relation for an Artin
monoid  involve the  same elements  of $\bS$,  two equivalent  words for an
element  $v\in M$ involve the  same subset of the  generating set $\bS$; we
call  this subset the  {\em support} of  $v$. Hence if  $xy=z$ with $x,z\in
M_\bI$  then the power of $\phi$ in $y$ is  $0$ and the support of $y$ is a
subset  of that of $z$, thus a subset  of $\bI$, thus $y$ is in $M_\bI$. 

We  now show  that $M_\bI$  is weakly  closed under right-lcms. Keeping the
notations of \ref{artin monoids}, $B^+$ is associated to the Coxeter system
$(W,S)$. Since $M_\bI$ is a spherical Artin monoid associated with
the  Coxeter subgroup $W_I$ of  $W$ generated by the  image in $W$ of $\bI$
(see  for  example  \cite[3.1]{paris})  two  elements  of  $M_\bI$  have  a
right-lcm  in $M_\bI$.  This right-lcm  is left-divisible  by any  of their
right-lcms in $M$, so has to be equal to one of these lcms since $M_\bI$ is
obviously stable by left-divisor.
\end{proof}

Since  by $M_\bI$  is a  spherical Artin  monoid it  has a  Garside element
$\bw_\bI$,  the lift  of the  longest element  of $W_I$.  The corresponding
Garside  family is $\bW_\bI=\bW\cap M_\bI$, that  is the set of divisors in
$M_\bI$   of  $\Delta$  which   by  definition  of   $\Delta_\bI$  are  the
left-divisors  of $\Delta_\bI$. We get that $\bw_\bI$ and $\Delta_\bI$ have
the same set of left-divisors, so are equal since $M_\bI^\times=\{1\}$.

We  finally show  (vi). We  take $\bJ=\bI\cup\{\bs\}$.  The following lemma
applied  with $\bS=\bI$  (resp.\ $\bS=\bJ$) gives  that $\Delta_\bI$  is a
right-lcm of $\bI$ (resp.\ $\Delta_\bJ$ is a right-lcm of $\bJ$).
We thus get the result by associativity of the lcm.
\begin{lemma}
The Garside element $\Delta=\bw_0$ of $B^+$ is the right-lcm of $\bS$.
\end{lemma}
\begin{proof}
By \cite[6.27]{Article} a common multiple of $\bS$
in $\bW$ corresponds to an element $w\in W$ such that $l(sw)<l(w)$ for all
$s\in S$. It is well known that only $w_0$ satisfies this, so $\Delta=\bw_0$
is the only element of $\bW$ multiple of all the atoms.
\end{proof}
\end{proof}

\subsection*{The category $\Conj(M,\cI)$.}
Until  the  end  of  Section  \ref{ribbon  cat},  we fix a monoid $M$ and a
transversal  $\bS$ of its  set of atoms;  we assume that  $M$ has a Garside
family  $\cS$ associated with a Garside  element $\Delta$ so that these data
satisfy properties (i) to (vi) of Proposition \ref{artin}.

The reader only interested in internal  applications to this paper can 
assume that we are in the case
$M=B^+\rtimes\genby\phi$, the semi-direct product of a spherical 
Artin monoid with a diagram automorphism (with $\bS$ the usual atoms and
Garside family  $\cS=\bW$).
Our results apply also to the case of dual Artin monoids, but
this will not be used in this paper.

We  fix also the conjugacy  class $\cI$ under $M$  of a subset of $\bS$. By
property  \ref{artin(ii)} any  element of  $\cI$ is  a subset  of $\bS$. We
assume all elements of this class are parabolic subsets (which is automatic
in the ordinary Artin monoid case where all subsets are parabolic).

Let $\Conj(M,\cI)$ be the connected component of the simultaneous conjugacy
category  of $M$  whose objects  are the  elements of  $\cI$. A morphism in
$\Conj(M,\cI)$ with source $\bI\in\cI$ is given by $\bb\in M$ such that for
each  $\bs\in \bI$ we have $\bs^\bb\in M$, which by property \ref{artin(ii)}
implies $\bs^\bb\in\bS$. We denote such a morphism in
$\Conj(M,\cI)(\bI,\ud)$  by  $\conjd\bI\bb$,  or  if  we  want  to
specify   the  target   we  denote   it  by  $\bI\xrightarrow\bb\bJ$  where
$\bJ=\{\bs^\bb   \mid  \bs\in\bI\}$,   and  in   this  situation  we  write
$\bJ=\bI^\bb$.

By Proposition \ref{FAd Garside} the set
$\{\conjd\bI\bb\mid\bb\in\cS\}\cap\Conj(M,\cI)$ is
a Garside family in $\Conj(M,\cI)$.

\subsection*{The ribbon category}

For $\bb\in M$ we denote by $\alpha_\bI(\bb)$ the maximal left-divisor of $\bb$ 
in $M_\bI$ given by Lemma \ref{alphaI}, which is unique since $M_\bI^\times=\{1\}$.
We denote by $\omega_\bI(\bb)$ the element defined by 
$\bb=\alpha_\bI(\bb)\omega_\bI(\bb)$.
We say that $\bb\in M$ is {\em $\bI$-reduced} if it is left-divisible by
no element of $\bI$, or equivalently if $\alpha_\bI(\bb)=1$.

\begin{definition}\label{defribbon}
We   define  the   ribbon  category   $M(\cI)$  as   the  subcategory  of
$\Conj(M,\cI)$  obtained by restricting the morphisms to the
$\conjd\bI\bb$ such that $\bb$ is $\bI$-reduced.
\end{definition}

This  makes  sense  since  the  above  class  of  morphisms  is  stable  by
composition  by  (ii)  in  the  next  proposition;  assertion  (i)  of  the  next
proposition  is a motivation  for restricting to  such morphisms by showing
that we ``lose nothing'' in doing so.

\begin{proposition}\label{ribbon}
\begin{enumerate}
\item\label{ribbon(i)}    Given   $\bI\in\cI$    and   $\bb\in    M$   then
$\conjd\bI\bb\in\Conj(M,\cI)$  if  and  only if $\bI^{\alpha_\bI(\bb)}=\bI$
and $\conjd\bI{\omega_{\bI}(\bb)}\in M(\cI)$.
\item\label{ribbon(ii)} If $\bI\xrightarrow\bb\bJ\in M(\cI)$ then
for any $\bb'\in M$ we have $\alpha_\bJ(\bb')=\alpha_\bI(\bb\bb')^\bb$.
In particular if  $(\bI\xrightarrow\bb\bJ)\in M(\cI)$ and
$(\bJ\xrightarrow{\bb'}\bK)\in\Conj(M,\cI)$ then
$(\bI\xrightarrow{\bb\bb'}\bK)\in M(\cI)$ if and only if
$(\bJ\xrightarrow{\bb'}\bK)\in M(\cI)$.
\item\label{alphalcm}   
If two morphisms in $M(\cI)$ admit a right-lcm in
$\Conj(M,\cI)$, then this lcm is in $M(\cI)$.
\end{enumerate}
\end{proposition}
Note   that   if   $\conjd\bI\bc$   is   the  right-lcm  of  two  morphisms
$\conjd\bI\bb$  and $\conjd\bI{\bb'}$ as in (iii)  then by Lemma \ref{FAd C
is lcm} $\bc$ is the right-lcm in $M$ of $\bb$ and $\bb'$.
\begin{proof}
Let us prove (i). 
We prove that if $\bs\in\bI$ and
$\bs^\bb\in  M$ then $\bs^{\alpha_\bI(\bb)}\in\bI$. This  will prove (i) in
one  direction ---we use  that $\bI$ is  finite, see \ref{artin(i)}, so that
$\bI^{\alpha_\bI(\bb)}\subset\bI$ implies $\bI^{\alpha_\bI(\bb)}=\bI$. The
converse is obvious.

By property \ref{artin(ii)} we  have $\bs\bb=\bb\bt$ for some
$\bt\in \bS$. If $\bs\preccurlyeq\bb$ we
write $\bb=\bs^k\bb'$ for some $k$ and $\bb'$ such that $\bs$ does not left-divide $\bb'$.
We have $\bs\bb'=\bb'\bt$ and
$\alpha_\bI(\bb)=\bs^k\alpha_\bI(\bb')$  and  we  are reduced to the case where
$\bs$  does not left-divide $\bb$. Then any right-lcm of $\bs$ and $\alpha_\bI(\bb)$
left-divides $\bs\bb=\bb\bt$ and there is such a right-lcm in $M_\bI$ since
$M_\bI$  is weakly closed under right-lcm (\ref{artin(iii)}). We write this lcm
$\bs\bv=\alpha_\bI(\bb)\bu$, with $\bv$ and $\bu$  in  $M_\bI$ since
$M_\bI$ is closed under right-quotient (\ref{artin(iii)}) and
$\bv,\bu\ne 1$ since $\bs\not\preccurlyeq\bb$.
Since $\bs\bv\preccurlyeq\bs\bb$
we get that $\bv$ left-divides $\bb$, so left-divides $\alpha_\bI(\bb)$,
thus $\alpha_\bI(\bb)=\bv\ba$ for some $\ba\in M_\bI$.  
We get  $\bs\bv=\alpha_\bI(\bb)\bu=\bv\ba\bu$.
By property \ref{artin(ii)}  we have $\ba\bu\in\bS$, thus
$\bu$ is an atom which is in $M_\bI$, hence $\bu\in\bI$ and $\ba=1$ since
$\bS$ is a transversal for $\eqir$. We get
$\bs^{\alpha_\bI(\bb)}=\bs^\bv=\bu\in\bI$, which gives the result.

Let us prove  (ii).  For  $\bs\in  \bI$  let $\bs'=\bs^\bb\in \bJ$.
Since $\bI\xrightarrow{\bb}\bJ\in M(\cI)$ we have $\bs\not\preccurlyeq\bb$.
Then $\bb\bs'=\bs\bb$ is  a  common multiple of $\bs$ and
$\bb$  which has  to be an lcm since $\bs'$ is an  atom. So for
$\bs\in\bI$ we have $\bs\preccurlyeq \bb\bb'$ if and only if
$\bb\bs'\preccurlyeq\bb\bb'$,  that is, $\bs^\bb\preccurlyeq\bb'$ whence
the result. 

To prove (iii) we show first the statement that if for $\bb,\bc\in  M$
we have $\bb\preccurlyeq\bc$ and $\conjd\bI\bb\in M(\cI)$, then
$\bb\preccurlyeq\omega_\bI(\bc)$.
We write $\bc=\bb\bb'$ and $\bJ=\bI^\bb$.
By (ii) we have $\alpha_\bI(\bc)^\bb=\alpha_\bJ(\bb')$, whence
$\alpha_\bI(\bc)\bb=\bb\alpha_\bJ(\bb')\preccurlyeq
\bb\bb'=\bc=\alpha_\bI(\bc)\omega_\bI(\bc)$. Left-canceling
$\alpha_\bI(\bc)$ we get $\bb\preccurlyeq\omega_\bI(\bc)$.

Now (iii) is a particular case of the above statement since
if $\bc$ is the right-lcm of $\bb$ and $\bb'$ where $\conjd\bI\bb$
and $\conjd\bI{\bb'}$ are in $M(\cI)$,
we  get that  $\omega_\bI(\bc)$ is  a common right-multiple of $\bb$ and $\bb'$,
thus   $\bc\preccurlyeq\omega_\bI(\bc)$,   which implies $\alpha_\bI(\bc)=1$.
\end{proof}

Note that by Proposition \ref{ribbon}(i) a morphism in $M(\cI)$ with source
$\bI$  corresponds by the forgetful functor to an element  $\bb\in M$ such that $\alpha_\bI(\bb)=1$ and
such  that  for  each  $\bs\in\bI$  we  have  $\bs^\bb\in  M$. We will thus
sometimes just denote by $\bb$ such a morphism in $M(\cI)$ when the context
makes its source clear.

The next proposition shows that $(\cS\cap M(\cI))\cup M^\times$ generates $M(\cI)$. 
Note any element of $M^\times$ gives rise to an element of $M(\cI)$.

\begin{proposition}  \label{normal in MI}
All the terms of a normal decomposition 
in $\Conj(M,\cI)$ of a morphism of $M(\cI)$ are in $M(\cI)$.
\end{proposition}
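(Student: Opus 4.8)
The plan is to work by induction on the number of terms $n$ of the $\cS$-normal decomposition $(\bb_1,\ldots,\bb_n)$ of a morphism $\bI\xrightarrow{\bb}\bJ$ of $M(\cI)$ in the conjugacy category $\Ad(M,\cI)$. Recall that by Proposition~\ref{FAd Garside} such a normal decomposition has all its terms in $\Ad(M,\cI)\cap\cS$; what must be shown is the extra property that each $\bb_i$ is reduced with respect to the appropriate subset, i.e.\ that $\alpha_{\bI_{i-1}}(\bb_i)=1$ where $\bI_0=\bI$ and $\bI_i=\bI_{i-1}^{\bb_i}$. The case $n=1$ is contained in the observation that $\bb$ itself is $\bI$-reduced by hypothesis. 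For the inductive step, write $\bb=\bb_1\bb'$ with $\bb'$ of normal length $n-1$; by Proposition~\ref{ribbon}(i) applied to $\bb$ it suffices to see that $\bb_1$ is $\bI$-reduced, for then $\bI_1=\bI^{\bb_1}$ and $\bb'$ is a morphism $\bI_1\xrightarrow{\bb'}\bJ$ of $\Ad(M,\cI)$ which, again by Proposition~\ref{ribbon}(i), lies in $M(\cI)$ precisely when $\alpha_{\bI_1}(\bb')=1$; granting that, $(\bb_2,\ldots,\bb_n)$ is a normal decomposition of a morphism of $M(\cI)$ with $n-1$ terms and the induction hypothesis finishes the argument.

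So the crux is: the head $\bb_1=H(\bb)$ of an $\bI$-reduced morphism $\bb$ is itself $\bI$-reduced, i.e.\ $\alpha_\bI(\bb_1)=1$. Here I would use the characterization of the head as the largest simple left-divisor together with the structure of $\alpha_\bI$. Suppose $\bu:=\alpha_\bI(\bb_1)\ne 1$; then $\bu\in M_\bI$ and $\bu\preccurlyeq\bb_1\preccurlyeq\bb$, so $\bu\preccurlyeq\alpha_\bI(\bb)=1$ by maximality of $\alpha_\bI(\bb)$ among left-divisors of $\bb$ lying in $M_\bI$ — contradiction. This is in fact immediate: $\alpha_\bI$ is defined (Lemma~\ref{alphaI}) as the \emph{maximal} left-divisor in $M_\bI$, and any left-divisor of $\bb_1$ is a left-divisor of $\bb$, hence $\alpha_\bI(\bb_1)\preccurlyeq\alpha_\bI(\bb)=1$, so $\alpha_\bI(\bb_1)=1$. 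Thus $\bb_1$ is $\bI$-reduced, so $\bb_1\in M(\cI)$, and $\bI_1=\bI^{\bb_1}$ is a genuine object of $\cI$.

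It remains to check that $\bb'$, with $\bb=\bb_1\bb'$, is again a morphism of $M(\cI)$ with source $\bI_1$. That it is a morphism of $\Ad(M,\cI)(\bI_1,\bJ)$ is automatic from composability in $\Ad(M,\cI)$ (and from the fact, recorded after Lemma~\ref{FAd C is lcm}, that $\Ad$ is closed under right-quotient). That it is $\bI_1$-reduced is exactly Proposition~\ref{ribbon}(ii): since $\bb_1$ is $\bI$-reduced and $\bb=\bb_1\bb'$ is $\bI$-reduced, the ``in particular'' clause of that statement — which says that for $\bI\xrightarrow{\bb_1}\bI_1$ in $M(\cI)$ one has $\bI_1\xrightarrow{\bb'}\bJ$ in $M(\cI)$ iff $\bI\xrightarrow{\bb_1\bb'}\bJ$ is in $M(\cI)$ — gives $\alpha_{\bI_1}(\bb')=1$. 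Hence $(\bb_2,\ldots,\bb_n)$ is the $\cS$-normal decomposition of a morphism of $M(\cI)$ on $n-1$ terms, and by the induction hypothesis all of $\bb_2,\ldots,\bb_n$ lie in $M(\cI)$; together with $\bb_1\in M(\cI)$ this is the assertion. The only point requiring any genuine care — and the one I would write out carefully — is the compatibility of heads with $\alpha_\bI$, but as indicated this reduces to the maximality clause in the definition of $\alpha_\bI$, so I expect no real obstacle; the proof is essentially an orchestration of Lemma~\ref{alphaI}, Proposition~\ref{ribbon}(i)--(ii), and Proposition~\ref{FAd Garside}.
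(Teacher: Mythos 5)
Your proof is correct. It reaches the same conclusion as the paper's, and the underlying mechanism is the same: an element of $M_\bI$ left-dividing $\bb$ must be trivial because $\alpha_\bI(\bb)=1$. The paper argues directly for an arbitrary index $i$: it conjugates $\alpha_{\bI_i}(\bw_i)$ back by $(\bw_1\cdots\bw_{i-1})\inv$ to land in $M_\bI$, notes $\lexp{\bw_1\cdots\bw_{i-1}}\alpha_{\bI_i}(\bw_i)\preccurlyeq\bw_1\cdots\bw_{i-1}\alpha_{\bI_i}(\bw_i)\preccurlyeq\bb$, and concludes $\alpha_{\bI_i}(\bw_i)=1$ in one shot. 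You instead run an induction on the number of factors, peeling off the head: the observation that $\alpha_\bI(\bb_1)\preccurlyeq\alpha_\bI(\bb)=1$ disposes of the first term, and the ``in particular'' clause of Proposition~\ref{ribbon}(ii) — which encapsulates precisely the conjugation identity $\alpha_\bJ(\bb')=\alpha_\bI(\bb\bb')^\bb$ that the paper manipulates by hand — gives $\bI_1$-reducedness of the tail. This is organizationally tidier and leans a bit more on already-established machinery, at the cost of an induction that the paper avoids; both are perfectly sound, and there is no gap.
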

\begin{proof} Let $\conjd\bI\bb\in M(\cI)$ and let
$\bb=\bb_1\dotsm\bb_k$ be a normal decomposition in $M$,
which gives a normal decomposition of $\conjd\bI\bb$ in
$\Conj(M,\cI)$ by Proposition \ref{FAd Garside}. 
We proceed by induction on $k$. We have
$\alpha_{\bI}(\bb_1)\preccurlyeq\alpha_{\bI}(\bb)=1$ thus
$\alpha_{\bI}(\bb_1)=1$ and $\bI\xrightarrow{\bb_1}\bI^{\bb_1}\in M(\cI)$.
This is the first step of the induction. Now, by
\ref{ribbon(ii)} we get $\conjd{\bI^{\bb_1}}{\bb_2\dotsm\bb_k}\in M(\cI)$
which concludes by induction.
\end{proof}

\begin{corollary}\label{Garside C(I)}
The set $\cS\cap M(\cI)=\{\conjd\bI\bw\in \Conj(M,\cI)\mid\bw\in\cS\text{ and } 
\alpha_\bI(\bw)=1\}$ is a Garside family in $M(\cI)$.
\end{corollary}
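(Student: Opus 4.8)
The plan is to invoke Lemma \ref{Garside subfamily} directly, feeding it the Garside family $\cS\cap\Ad(M,\cI)$ of the ambient category $\Ad(M,\cI)$ (which exists by Proposition \ref{FAd Garside}, applied in the simultaneous-conjugacy setting) and the candidate subfamily $\cS_1=\cS\cap M(\cI)$. The hypotheses of that lemma that must be checked are: first, that $\Ad(M,\cI)$ is right-Noetherian and has local right-lcms; second, that $\cS_1\Isom{(\Ad(M,\cI))}$, regarded as a subset of $(\cS\cap\Ad(M,\cI))\Isom{(\Ad(M,\cI))}$, is closed under right-lcm and under right-quotient. The conclusion of the lemma will then be exactly that $\cS_1$ is a Garside family in the subcategory it generates, and that this subcategory is closed under right-quotient. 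So the remaining work is (a) identify that subcategory with $M(\cI)$, and (b) verify the closure conditions.

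First I would dispose of the Noetherianity and local-lcm hypotheses: $M$ is right-Noetherian with local right-lcms by our standing assumption in this section, and $\Ad\cC$ inherits these from $\cC$ by the remarks following Lemma \ref{FAd C is lcm} (which note that $\Ad\cC(w,-)$ is closed under right-lcm and that $\Ad\cC$ inherits cancellativity and Noetherianity); the same reasoning applies verbatim to the simultaneous-conjugacy variant. Next, the closure of $\cS\cap M(\cI)$ under right-quotient inside $\cS\cap\Ad(M,\cI)$: this is precisely Proposition \ref{ribbon}(\ref{alpha(b1b2)}), which says that if $\bI\xrightarrow\bb\bJ$ lies in $M(\cI)$ and $\bb\bb'$ is again a morphism of $M(\cI)$ with the appropriate source, then $\bb'$ is itself $\bJ$-reduced, hence a morphism of $M(\cI)$. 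Closure under right-lcm is Proposition \ref{ribbon}(\ref{alphalcm}): the right-lcm in $\Ad(M,\cI)$ of two morphisms of $M(\cI)$ with common source is computed by taking the right-lcm $\bc$ in $M$ of the underlying elements, and if both are $\bI$-reduced then so is $\bc$. (One also notes that since $M(\cI)\subset\Ad(M,\cI)\subset M$ and $M$ has no nontrivial invertibles when it is a monoid with the homogeneous or additive-length property — but in any case the factor $\Isom{(\Ad(M,\cI))}$ just adds invertibles, and $M(\cI)$ is closed under multiplication by invertibles for the same reason $\Ad(M,\cI)$ is — the $\Isom{}$-bookkeeping in Lemma \ref{Garside subfamily} causes no trouble.)

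Finally I would check that the subcategory of $\Ad(M,\cI)$ generated by $\cS_1\Isom{}$ is all of $M(\cI)$: one inclusion is clear since $\cS_1\subset M(\cI)$ and $M(\cI)$ is a subcategory, and the reverse inclusion is exactly Proposition \ref{normal in MI}, which says that every term of the $\Ad(M,\cI)$-normal decomposition of a morphism of $M(\cI)$ again lies in $M(\cI)$ — so any morphism of $M(\cI)$ is a product of morphisms in $\cS\cap M(\cI)=\cS_1$ (up to invertibles). With all three inputs — Propositions \ref{ribbon}, \ref{normal in MI}, and Lemma \ref{Garside subfamily} — in place, the corollary follows, and the alternative description of $\cS\cap M(\cI)$ as $\{(\bI\xrightarrow\bw\bJ)\mid\bw\in\cS,\ \alpha_\bI(\bw)=1\}$ is just the unwinding of Definition \ref{defribbon} together with the remark after Proposition \ref{ribbon} that a morphism of $M(\cI)$ with source $\bI$ is the same as an $\bI$-reduced $\bb\in M$ with $\bs^\bb\in M$ for all $\bs\in\bI$. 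The only mild subtlety — and the step I would watch most carefully — is making sure the hypotheses of Lemma \ref{Garside subfamily} are read with the correct ambient category ($\Ad(M,\cI)$, not $M$) and that "closed under right-lcm and right-quotient as a subset of $\cS\Isom{}$" is matched against Proposition \ref{ribbon}(ii)–(iii) with the $\Isom{}$-factors handled; once that bookkeeping is straight, the proof is a two-line citation.
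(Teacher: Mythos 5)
Your proof matches the paper's: both apply Lemma \ref{Garside subfamily} inside the ambient category $\Ad(M,\cI)$, using Proposition \ref{ribbon}(ii)--(iii) for closure of $M(\cI)$ under right-quotient and right-lcm (hence of $\cS\cap M(\cI)$ inside $\cS\cap\Ad(M,\cI)$) and Proposition \ref{normal in MI} to identify the generated subcategory with $M(\cI)$. You have only spelled out more of the bookkeeping (Noetherianity of $\Ad(M,\cI)$, the invertibles factor) than the paper's compressed one-line citation.
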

\begin{proof}
By \ref{ribbon(ii)} and \ref{alphalcm} the  subcategory  $M(\cI)$  of
$\Conj(M,\cI)$   is  closed under  right-quotient  and   right-lcm, hence 
the subfamily $\cS\cap M(\cI)$ is closed under right-quotient and right-lcm in
$\cS\cap\Conj(M,\cI)$.
Thus Lemma \ref{Garside subfamily} gives the result since $(\cS\cap M(\cI))\cup
M^\times$ generates $M(\cI)$ by Proposition \ref{normal in MI}.
\end{proof}

Our aim now is Proposition \ref{atoms C(I)} which gives
a description  of the atoms  of $M(\cI)$, and
a convenient criterion to decide whether  $\bb\in M$ gives rise to an
element  of $M(\cI)$.

For $\bI\subset\bS$ let $\Phi_\bI$ be the Garside automorphism of $M_\bI$
associated with the Garside element $\Delta_\bI$ (see \ref{artin(iv)a}). 
Since $\bI$ is finite (see \ref{artin(i)}) and is the whole set of atoms of
$M_\bI$, we have $\Phi_\bI(\bI)=\bI$.

We denote by $\Phi$ the Garside automorphism of $M$ associated to $\Delta$.
Since $\Phi$ is an automorphism which preserves $\cS$, for $\bI\subset\bS$,
it sends the Garside family $\cS\cap M_\bI$ to the Garside family
$\cS\cap M_{\Phi(\bI)}$ thus $\Phi(\Delta_\bI)=\Delta_{\Phi(\bI)}$.
\begin{proposition}\label{Garside map in M(cI)}
$M(\cI)$ has a Garside map  defined by the collection
of  morphisms  $\bI\xrightarrow{\Delta_\bI\inv\Delta}\Phi(\bI)$ for
$\bI\in\cI$. 
\end{proposition}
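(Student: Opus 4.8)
The goal is to verify that the collection $\bI\mapsto\Delta(\bI):=\Delta_\bI\inv\Delta$ satisfies the definition of a Garside map recalled just before Proposition \ref{second domino}. Since $M$ has no non-trivial invertibles, neither does $M(\cI)$, so $\CCCi$ in $M(\cI)$ consists of identity morphisms; hence it suffices to prove, for each $\bI\in\cI$, that $\Delta(\bI)$ is a morphism of $M(\cI)$ with source $\bI$ and target $\Phi(\bI)$, and that the set of its left-divisors in $M(\cI)$ equals $(\cS\cap M(\cI))(\bI,-)$, where $\cS$ is the set of divisors of $\Delta$. By Corollary \ref{Garside C(I)} the latter set is a Garside family, so this identifies the Garside map to which it is attached.

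First I would describe $\Delta(\bI)$ concretely. As $\Delta_\bI=\alpha_\bI(\Delta)$ is the maximal left-divisor of $\Delta$ lying in $M_\bI$, the element $\omega_\bI(\Delta)$ defined by $\Delta=\Delta_\bI\,\omega_\bI(\Delta)$ is $\bI$-reduced, i.e.\ $\alpha_\bI(\omega_\bI(\Delta))=1$, and $\Delta(\bI)=\Delta_\bI\inv\Delta=\omega_\bI(\Delta)$. For $\bs\in\bI$, the identity $f\Delta=\Delta\Phi(f)$ gives $\bs^\Delta=\Phi(\bs)$, while $\bs\Delta_\bI=\Delta_\bI\Phi_\bI(\bs)$ gives $\bs^{\Delta_\bI}=\Phi_\bI(\bs)\in\bI$; hence $\Phi(\bs)=\bs^\Delta=(\bs^{\Delta_\bI})^{\omega_\bI(\Delta)}=(\Phi_\bI(\bs))^{\omega_\bI(\Delta)}$, and since $\Phi_\bI$ permutes $\bI$ this shows that $\omega_\bI(\Delta)$ conjugates $\bI$ onto $\Phi(\bI)\subset\bS$. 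Thus $\bI\xrightarrow{\omega_\bI(\Delta)}\Phi(\bI)$ is a morphism of $\Ad(M,\cI)$ (its target lying in $\cI$), and being $\bI$-reduced it lies in $M(\cI)$, as claimed.

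It then remains to prove the two inclusions. For ``$\cS\cap M(\cI)\subseteq$ left-divisors of $\Delta(\bI)$'': let $\bI\xrightarrow{\bw}\bJ\in\cS\cap M(\cI)$, so $\bw\preccurlyeq\Delta$, $\bI^\bw\subset\bS$ and $\alpha_\bI(\bw)=1$. The stronger divisibility statement established inside the proof of Proposition \ref{ribbon}(iii) (that $\bb\preccurlyeq\bc$ with $\bI^\bb\subset\bS$ forces $\omega_\bI(\bb)\preccurlyeq\omega_\bI(\bc)$) gives $\bw=\omega_\bI(\bw)\preccurlyeq\omega_\bI(\Delta)=\Delta(\bI)$ in $M$; write $\Delta(\bI)=\bw\bw'$. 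Since $\Ad(M,\cI)$ is closed under right-quotient (the simultaneous-conjugacy form of Lemma \ref{FAd C is lcm}) and both $\bw\bw'=\Delta(\bI)$ and $\bw$ are morphisms of $\Ad(M,\cI)$, so is $\bJ\xrightarrow{\bw'}\Phi(\bI)$; then Proposition \ref{ribbon}(ii), applied to $\bI\xrightarrow{\bw}\bJ\in M(\cI)$ together with $\bI\xrightarrow{\bw\bw'}\Phi(\bI)=\Delta(\bI)\in M(\cI)$, yields $\bJ\xrightarrow{\bw'}\Phi(\bI)\in M(\cI)$, i.e.\ $\bw$ left-divides $\Delta(\bI)$ in $M(\cI)$. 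For the reverse inclusion, a left-divisor $\bI\xrightarrow{\bw}\bJ$ of $\Delta(\bI)$ in $M(\cI)$ satisfies $\bw\preccurlyeq\omega_\bI(\Delta)\preccurlyeq\Delta$, hence $\bw\in\cS$, and $\alpha_\bI(\bw)=1$ because the morphism lies in $M(\cI)$, so $\bw\in\cS\cap M(\cI)$ by the description in Corollary \ref{Garside C(I)}. This shows that $\bI\mapsto\Delta_\bI\inv\Delta$ is a Garside map for $M(\cI)$.

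The step I expect to be the main obstacle is the first inclusion: transferring the divisibility $\bw\preccurlyeq\Delta(\bI)$, which lives in the ambient monoid $M$, into a genuine left-division inside the ribbon category, where one must know that the complement $\bw'$ has source $\bJ$, target $\Phi(\bI)$, and is $\bJ$-reduced. This is precisely where the closure of $\Ad(M,\cI)$ under right-quotient and Proposition \ref{ribbon}(ii) do the work, on top of the auxiliary inequality $\omega_\bI(\bw)\preccurlyeq\omega_\bI(\Delta)$ extracted from the proof of Proposition \ref{ribbon}(iii). Everything else is routine once the identification $\Delta(\bI)=\omega_\bI(\Delta)$ and the target computation $\bI^{\omega_\bI(\Delta)}=\Phi(\bI)$ of the first two paragraphs are in place.
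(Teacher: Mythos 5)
Your proof is correct, and for the essential inclusion (that every $\bI\xrightarrow{\bw}\bJ$ in $\cS\cap M(\cI)$ left-divides $\Delta_\bI\inv\Delta$) you take a genuinely different route from the paper. You invoke the auxiliary inequality $\omega_\bI(\bb)\preccurlyeq\omega_\bI(\bc)$ established inside the proof of Proposition \ref{ribbon}(iii), applied with $\bb=\bw$ and $\bc=\Delta$, to obtain directly $\bw=\omega_\bI(\bw)\preccurlyeq\omega_\bI(\Delta)=\Delta_\bI\inv\Delta$. The paper instead shows that $\Delta_\bI\bw$ is the right lcm of $\Delta_\bI$ and $\bw$: writing the lcm $\delta$ as $\bw\bx$ gives $\bx\preccurlyeq\Delta_\bJ$; rewriting $\delta=\by\bw$ gives $\by\preccurlyeq\Delta_\bI$; then $\by\inv\Delta_\bI\preccurlyeq\bw$ together with $\alpha_\bI(\bw)=1$ forces $\by=\Delta_\bI$; and since $\Delta_\bI$ and $\bw$ both divide $\Delta$, so does their lcm $\Delta_\bI\bw$, which is equivalent to $\bw\preccurlyeq\Delta_\bI\inv\Delta$. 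Your route is shorter because it reuses an already-proven divisibility, while the paper's argument is self-contained and does not appeal to a statement buried inside another proof. You are also more explicit than the paper on two points the paper leaves implicit: the computation of the target $\bI^{\omega_\bI(\Delta)}=\Phi(\bI)$ (via $\bs^\Delta=\Phi(\bs)$ and $\bs^{\Delta_\bI}=\Phi_\bI(\bs)$), and the reverse inclusion that left-divisors of $\Delta_\bI\inv\Delta$ in $M(\cI)$ lie in $\cS\cap M(\cI)$. Everything in your argument checks out.
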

\begin{proof} We have $\Phi_\bI(\bI)=\bI$ and
$\omega_\bI(\Delta)=\Delta_\bI\inv\Delta$. Thus by Proposition \ref{ribbon(i)}
$\bI\xrightarrow{\Delta_\bI\inv\Delta}\Phi(\bI)\in \cS\cap M(\cI)$. 
We need two lemmas.
\begin{lemma}\label{left-divide Delta(I)}
Any morphism $\conjd\bI\bb\in M(\cI)\cap\cS$ left-divides
$\bI\xrightarrow{\Delta_\bI\inv\Delta}\Phi(\bI)$.
\end{lemma}
\begin{proof}
The divisibility we seek is equivalent to
$\Delta_\bI\bb$  left-dividing $\Delta$. Since $\Delta_\bI$
and $\bb$ left-divide $\Delta$, a right-lcm $\delta$ of these elements
divides $\Delta$. We claim that $\delta\eqir \Delta_\bI\bb$ which will show the
lemma. Since $\bI^\bb\subset\bS$ we have $\Delta_\bI^\bb\in M$ thus
$\delta\preccurlyeq\bb\Delta_\bI^\bb=\Delta_\bI\bb$.
Notice that $\alpha_\bI(\delta)=\Delta_\bI$ since $\Delta_\bI\preccurlyeq\delta$
and $\alpha_\bI(\delta)\preccurlyeq\alpha_\bI(\Delta)=\Delta_\bI$.
Now write $\delta=\bb\bx$; by Proposition~\ref{ribbon(ii)}
we have $\alpha_{\bI^\bb}(\bx)=\alpha_\bI(\delta)^\bb=\Delta_\bI^\bb$.
Thus $\Delta_\bI^\bb\preccurlyeq\bx$ thus
$\bb\Delta_\bI^\bb\preccurlyeq\bb\bx=\delta$, whence our claim.
\end{proof}
\begin{lemma}\label{Delta_I^b=Delta_J}
If $\bI\xrightarrow\bb\bJ$ is in $M(\cI)$ we have $\Delta_\bJ=\Delta_\bI^\bb$;
conjugation by $\bb$ induces an isomorphism of Garside monoids
$M_\bI\xrightarrow\sim M_\bJ$ which preserves normal forms.
\end{lemma}
\begin{proof}It is sufficient to prove the lemma for elements of the
generating set $(\cS\cap M(\cI))\cup\Isom M$. So we assume $\bb\in\cS\cup\Isom M$.
If $\bb\in\cS$,
in the proof of Lemma \ref{left-divide Delta(I)} we have 
$\Delta_\bI^\bb\preccurlyeq\bx$ where $\bx$ is a right-divisor
hence a left-divisor of $\Delta$, thus $\Delta_\bI^\bb\preccurlyeq\Delta$.
This is also clearly true if $\bb\in\Isom M$. Since $\Delta_\bI^\bb\in M_\bJ$
we get $\Delta_\bI^\bb\preccurlyeq\Delta_\bJ$. We show by contradiction
that this divisibility cannot be strict. By Lemma \ref{left-divide Delta(I)}
we can write
$\Delta_\bI\inv\Delta=\bb\bb'$; then by \ref{ribbon(ii)} we have
$\bJ\xrightarrow{\bb'}\Phi(\bI)\in M(\cI)$ and by the same argument as
above $\Delta_\bJ^{\bb'}\preccurlyeq\Delta_{\Phi(\bI)}$. Now
$\bb'$ induces by conjugation a morphism $M_\bJ\to M_{\Phi(\bI)}$ so we can
transport the strict divisibility $\Delta_\bI^\bb\prec\Delta_\bJ$ to
$\Delta_\bI^{\bb\bb'}\prec\Delta_\bJ^{\bb'}$. Composing we get
$\Phi(\Delta_\bI)=\Delta_\bI^{\bb\bb'}\prec\Delta_\bJ^{\bb'}\preccurlyeq
\Delta_{\Phi(\bI)}=\Phi(\Delta_\bI)$, a contradiction.

The second part of the statement follows from the first since
the first term of a  normal form of an element $\bx$ in a monoid with a 
Garside element $\Delta$ is a left-gcd of $\bx$ and $\Delta$ (see
Proposition \ref{second domino}(iii)),
and the conjugation by $\bb$ preserves gcds since it is an isomorphism.
\end{proof}
We now show the proposition. We know by Lemma \ref{left-divide Delta(I)}
that any $\bI\xrightarrow\bb\bJ$ in $\cS\cap M(\cI)$ left-divides
$\bI\xrightarrow{\Delta_\bI\inv\Delta}\Phi(\bI)$. 
It remains to show that such a morphism right-divides
$\Delta_{\Phi\inv(\bJ)}\inv\Delta$,  which is  equivalent to $\bb\Delta_\bJ$
right-dividing   $\Delta$  since  
$\Phi(\Delta_{\Phi\inv(\bJ)})=\Delta_\bJ$.  This in  turn is equivalent to
$\bb\Delta_\bJ$  left-dividing $\Delta$ since $\Delta$ is a Garside element.
The  result is then  a consequence of the fact
that  $\Delta_\bI\bb$ divides $\Delta$ as we have seen in Lemma
\ref{left-divide Delta(I)} and of the equality $\bb\Delta_\bJ=\Delta_\bI\bb$ 
which is given by Lemma \ref{Delta_I^b=Delta_J}.
\end{proof}

\begin{proposition}  Let $\bI\in\cI$ and let $\bJ$ be a parabolic subset of $\bS$ such that
$M_\bI\subsetneq M_\bJ$.
Then $\Delta_\bI\preccurlyeq\Delta_\bJ$ (see \ref{artin(v)}) and
$\bI\xrightarrow{v(\bJ,\bI)}\Phi_\bJ(\bI)$, where
$v(\bJ,\bI)=\Delta_\bI\inv\Delta_\bJ$,
is   a   morphism   in $M(\cI)$.
\end{proposition}
\begin{proof}
As noted after Proposition \ref{ribbon} we have to show that
$\alpha_\bI(v(\bJ,\bI))=1$
and that any $\bt\in\bI$ is conjugate by
$v(\bJ,\bI)$ to an element of $M$.
Since $\Delta_\bI\inv\Delta_\bJ$ left-divides $\Delta_\bI\inv\Delta$,
and $\alpha_\bI(\Delta_\bI\inv\Delta)=1$, by definition of $\Delta_\bI$,
we get the first property. The second is clear since
by definition $v(\bJ,\bI)$ conjugates $\bt$ to $\Phi_\bJ(\Phi_\bI\inv(\bt))$.
\end{proof}
(i)  of the next proposition is due  to Paris \cite[5.6]{paris} in the case
of Artin monoids.
\begin{proposition}\label{atoms C(I)}
\begin{enumerate}
\item Let $\bI\in\cI$ and
$\bb\in M$ such that $\alpha_\bI(\bb)=1$ and such that there exists
$p>0$ such that $(\Delta_\bI^p)^\bb\in M$. Then $\conjd\bI\bb\in M(\cI)$.
\item The atoms of $M(\cI)$ are the $v(\bJ,\bI)$ not strictly
divisible by another $v(\bJ',\bI)$ for $\bI\in\cI$.
\end{enumerate}
\end{proposition}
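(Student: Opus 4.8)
The plan is to reduce both statements to a single structural fact, which I call $(\star)$: if $\bI\in\cI$ and $g\in\cS$ is $\bI$-reduced and satisfies $(\Delta_\bI^p)^g\in M$ for some $p\ge 1$, then $\Delta_\bI g$ is the Garside element $\Delta_\bJ$ of a parabolic submonoid $M_\bJ$ with $\bI\subseteq\bJ$; equivalently $g=v(\bJ,\bI)\in M(\cI)$. Note that, by the remark following Proposition~\ref{ribbon}, a morphism of $M(\cI)$ with source $\bI$ is the same as an element $\bb\in M$ with $\alpha_\bI(\bb)=1$ that conjugates every $\bs\in\bI$ into $M$, so $(\star)$ already proves part~(i) in the case $g\in\cS$.

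For part~(i) in general I would induct on $\lS(g)$. First, $\gcd(g,\Delta_\bI^p)=1$: a common left-divisor lies in $M_\bI$ (closed under left-divisor, Assumption~\ref{MJ parabolic}) and left-divides $g$, hence left-divides $\alpha_\bI(g)=1$. Now let $\tilde H$ be an $(\Ad M\cap\cS)$-head function on the conjugacy category of the one-object category $M$ (it exists by Proposition~\ref{FAd Garside}), and factor $g=\tilde H(g)\,g'$. As a left-divisor of the $\bI$-reduced $g$, the element $\tilde H(g)\in\cS$ is $\bI$-reduced, and it conjugates $\Delta_\bI^p$ into $M$ (it has source $\Delta_\bI^p$ in $\Ad M$); so $(\star)$ gives $\tilde H(g)\in M(\cI)$, whence $\tilde H(g)$ conjugates $M_\bI$ and $(\Delta_\bI^p)^{\tilde H(g)}=\Delta_{\bI'}^p$ with $\bI'=\bI^{\tilde H(g)}\in\cI$. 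By Lemma~\ref{FAd C is lcm} the remainder $g'$ conjugates $\Delta_{\bI'}^p$; one checks $g'$ is $\bI'$-reduced (any $\bs'\in\bI'$ dividing $g'$ pulls back to a member of $\bI$ dividing $g$), and $\lS(g')<\lS(g)$ by Lemma~\ref{S-sup}. So $g'\in M(\cI)$ by induction and $g=\tilde H(g)\,g'\in M(\cI)$, as $M(\cI)$ is a category. The base case $\lS(g)\le 1$ is $(\star)$ itself.

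For part~(ii): by Proposition~\ref{Garside map in M(cI)} the category $M(\cI)$ has a Garside map and is right-Noetherian (inherited from $M$), so every non-invertible morphism is left-divisible in $M(\cI)$ by an atom, and every atom, being indecomposable, lies in the generating family $\cS\cap M(\cI)$ and is therefore simple. If $a$ is an atom with source $\bI$, then $a$ is simple, $\bI$-reduced and conjugates $\bI$, so $(\star)$ with $p=1$ yields $\Delta_\bI a=\Delta_\bJ$ with $M_\bJ$ parabolic, and $M_\bJ\supsetneq M_\bI$ since $a\ne 1$; thus $a=v(\bJ,\bI)$. Moreover $M_\bJ$ is minimal among parabolic submonoids strictly containing $M_\bI$, for an intermediate parabolic $M_\bK$ would give $v(\bK,\bI)$, a morphism of $M(\cI)$ by the preceding proposition (using that $M(\cI)$ is closed under right-quotient), strictly left-dividing $a=v(\bJ,\bI)$. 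Conversely, a $v(\bJ,\bI)$ that is not an atom is strictly left-divided in $M(\cI)$ by an atom, which is some $v(\bJ',\bI)$ by what was just proved; hence the atoms of $M(\cI)$ are exactly the $v(\bJ,\bI)$ not strictly left-divided by any other $v(\bJ',\bI)$.

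The hard part is $(\star)$. The argument I envisage: since $g\ne 1$ there is an atom $\bt\preccurlyeq\Delta_\bI g$ with $\bt\notin\bI$ (equivalently $\bt\not\preccurlyeq\Delta_\bI$, since the atoms of $M_\bI$ are exactly $\bI$); let $M_\bJ$ be a parabolic submonoid strictly containing $M_\bI$, minimal for this property among those reached ``through $\bt$''. Lemma~\ref{Garside map MI} makes $\Delta_\bJ$ its Garside element, and Assumption~\ref{lcm=Delta J} identifies the right-lcm of $\Delta_\bI$ with $\bt$ as $\Delta_\bJ$; one then shows $\Delta_\bJ\preccurlyeq\Delta_\bI g$, replaces $(\bI,\Delta_\bI)$ by $(\bJ,\Delta_\bJ)$ — legitimately, since $\Delta_\bJ$ is again the Garside element of a parabolic submonoid attached to a member of $\cI$ by the preceding proposition and Assumption~\ref{conj atom is atom} — and repeats, the iteration terminating by right-Noetherianity precisely when $\Delta_\bJ=\Delta_\bI g$. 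Controlling this iteration and the bookkeeping with the orbit $\cI$ and the automorphisms $\Phi_\bK$, together with the preliminary reduction of the hypothesis ``$(\Delta_\bI^p)^g\in M$'' to ``$(\Delta_\bI)^g\in M$'' (which uses $\gcd(g,\Delta_\bI)=1$), is the delicate point.
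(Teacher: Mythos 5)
The crucial auxiliary claim $(\star)$ is false, and it cannot be repaired as stated. Take $M=B^+(A_2)$, $\bI=\emptyset$ (so $\cI=\{\emptyset\}$, $M(\cI)=M$, $\Delta_\bI=1$), and $g=\bs_1\bs_2\in\cS$. Then $g$ is $\bI$-reduced, and $(\Delta_\bI^p)^g=1\in M$ trivially, so $g$ satisfies the hypotheses of $(\star)$; but $\Delta_\bI g=\bs_1\bs_2$ is not the Garside element $\Delta_\bJ$ of any parabolic submonoid (those are $1,\bs_1,\bs_2,\bs_1\bs_2\bs_1$), and correspondingly $g$ is not a $v(\bJ,\emptyset)$. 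The correct statement — and what the paper actually proves — is much weaker: $g$ is \emph{left-divisible by} some $v(\bJ,\bI)$, not equal to one. That weaker claim would still serve your purposes (for (i) you peel off a $v(\bJ,\bI)$ from $g$ and descend; for (ii), an atom $a$ that is left-divisible by a non-trivial $v(\bJ,\bI)$ is equal to it), but then the burden of proof shifts to the left-divisibility claim, which is precisely the step you sketch and call delicate.

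Your sketch of that delicate step also has problems. First, you propose replacing $(\bI,\Delta_\bI)$ by $(\bJ,\Delta_\bJ)$ "since $\Delta_\bJ$ is again the Garside element of a parabolic attached to a member of $\cI$"; but $\bJ$ strictly contains $\bI$, and by Assumption~\ref{conj atom is atom} conjugation preserves cardinality of subsets of atoms, so $\bJ\notin\cI$ and this bookkeeping does not close up. Second, the iteration as you describe it has $\Delta_\bI g$ as a fixed target and tries to build $\Delta_\bJ$ up to it; by the counterexample above, $\Delta_\bI g$ need not equal any $\Delta_\bJ$, so this iteration cannot terminate in the way you claim. The technical device the paper uses that your proposal is missing is a \emph{decreasing induction on the exponent} $p$ (or the intermediate index $i$), exploiting that the right-lcm of $\bt\in\bJ-\bI$ with $\Delta_\bI$ is $\Delta_\bJ=\Delta_\bI v(\bJ,\bI)$: from $\bt\preccurlyeq\Delta_\bI^i g$ one deduces $v(\bJ,\bI)\preccurlyeq\Delta_\bI^{i-1}g$, and the induction can continue because every atom left-dividing $v(\bJ,\bI)$ is again in $\bJ-\bI$ (it lies in $M_\bJ$ by closure under left-divisor, and not in $\bI$ because $v(\bJ,\bI)$ is $\bI$-reduced). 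At $i=1$ one obtains $v(\bJ,\bI)\preccurlyeq g$, which is the left-divisibility statement that does the work.
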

\begin{proof}  
Since $M$ is right-Noetherian, for (i) it suffices to prove
that  under our  assumption $\bb$  is either invertible or
left-divisible by  some non-invertible $\bv\in M$ giving rise to an element
of $M(\cI)$;  indeed if $\bb=\bv\bb'$  where $\bI\xrightarrow \bv\bI'\in M(\cI)$
then  by \ref{ribbon(ii)} we have $\alpha_{\bI'}(\bb')=1$ and since $\bI^\bv=\bI'$
we  have  $(\Delta_{\bI'}^p)^{\bb'}\in M$ by Lemma \ref{Delta_I^b=Delta_J},
so  by Noetherian induction we have $\conjd{\bI'}{\bb'}\in M(\cI)$, whence
$\conjd\bI\bb\in M(\cI)$. We will prove that $\bb$ is left-divisible by $v(\bJ,\bI)$
for some parabolic $\bJ\supsetneq\bI$ which will imply (i).
We proceed by decreasing induction on  $p$. We show that if for $i>0$
we have $\bs\preccurlyeq\Delta_\bI^i \bb$ for some atom $\bs$ not in $M_\bI$,
$v(\bJ,\bI)\preccurlyeq  \Delta_\bI^{i-1}\bb$ where $\bJ$ is as prescribed 
in \ref{artin(v)} from $\bI$ and $\bs$.
Indeed, the  right-lcm  of $\bs$ and
$\Delta_\bI$ is $\Delta_\bJ$ by property \ref{artin(v)}  thus  from
$\bs\preccurlyeq\Delta_\bI^i  \bb$ and  $\Delta_\bI\preccurlyeq\Delta_\bI^i \bb$ we
deduce $\Delta_\bJ\preccurlyeq \Delta_\bI^i \bb$. Since
$\Delta_\bJ=\Delta_\bI v(\bJ,\bI)$ we get as claimed
$v(\bJ,\bI)\preccurlyeq \Delta_\bI^{i-1}\bb$.
The induction starts at $i=p$ by taking  for $\bs$ any atom left-dividing
$\bb$, thus not in $M_\bI$ since $\alpha_\bI(\bb)=1$. Such an atom
satisfies $\bs\preccurlyeq \bb\preccurlyeq \Delta_\bI^p \bb$ since the assumption on
$\bb$ can be written $\bb\preccurlyeq\Delta_\bI^p\bb$.
Since any atom $\bt$ such that
$\bt\preccurlyeq  v(\bJ,\bI)$ is not in $M_\bI$ the induction can go on while
$i-1>0$.

We get (ii) from the proof of (i): any  element
$\bb\in M(\cI)$  satisfies the assumption  of (i) for  $p=1$ 
and $\bI$ equal to the source of $\bb$; whence the result since in
the proof of (i) we have seen that $\bb$ is a product of elements of the form
$v(\bJ,\bK)$.
\end{proof}

Though  in the current paper we need only finite Coxeter groups,
we note  that the  above description  of the  atoms also extends
to the  case of Artin  monoids which are
associated with infinite Coxeter groups ---and thus do not have a Garside element. 
Proposition \ref{godelle} below can be
extracted from the proof of Theorem 0.5 in \cite{godelle}.

In the case of an Artin monoid $(B^+,\bS)$ the Garside family
of Corollary \ref{Garside C(I)} in $B^+(\cI)$ is
$\bW\cap B^+(\cI)=\{\bI\xrightarrow\bw\bJ \in
\Conj B^+(\cI)\mid\bw\in\bW\text{  and  }  \alpha_\bI(\bw)=1\}$. 
For $\bI\subset\bS$ and $\bs\in\bS$ we denote by $\bI(\bs)$ the connected
component of $\bs$ in the Coxeter diagram of $\bI\cup\{\bs\}$, that is the
vertices of the connected component of $\bs$ in the graph with vertices
$\bI\cup\{\bs\}$ and an edge between $\bs'$ and $\bs''$ whenever $\bs'$ and 
$\bs''$ do not commute.

When $\bI$ is spherical, the subgroup $W_I$ generated by the image
$I$ of $\bI$ in $W$
is finite even though $W$ is not, in which case
we denote by $\bw_\bI$ the lift in $\bW$ of the
longest element of $W_I$. With these notations, we have
\begin{proposition}\label{godelle} The atoms of $B^+(\cI)$ are the morphisms
$\bI\xrightarrow{v(\bs,\bI)}\lexp{v(\bs,\bI)}\bI$
where $\bI$ is in $\cI$ and $\bs\in \bS-\bI$ is such that $\bI(\bs)$
is spherical,
and where $v(\bs,\bI)=\bw_{\bI(\bs)} \bw_{\bI(\bs)-\{\bs\}}$.
\end{proposition}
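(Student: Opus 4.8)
The plan is to prove the two inclusions of the stated description of the atoms of $B^+(\cI)$ separately, reducing almost everything to the \emph{spherical} Artin monoid $B^+_{\bI(\bs)}$, where Proposition \ref{atoms C(I)} is available. Fix $\bI\in\cI$ and $\bs\in\bS-\bI$, and set $\bK:=\bI(\bs)-\{\bs\}$. Since $\bI(\bs)$ is the connected component of $\bs$ in $\bI\cup\{\bs\}$, every vertex of $\bI(\bs)$ other than $\bs$ lies in $\bI$, so $\bK=\bI\cap\bI(\bs)\subset\bI$ and $\bI(\bs)=\bK\sqcup\{\bs\}$; moreover every element of $\bI-\bK$ commutes with every element of $\bI(\bs)$. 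The morphism $v(\bs,\bI)$ is supported on $\bI(\bs)$; it requires (and is only considered when) $\bI(\bs)$ is spherical, and then, inside $B^+_{\bI(\bs)}$, it coincides with the morphism $v(\bI(\bs),\bK)$ attached to the pair $\bK\subset\bI(\bs)$, so that $B^+_{\bI(\bs)}$ carries the Garside element $\bw_{\bI(\bs)}$.

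For the first inclusion I would argue that each such $v(\bs,\bI)$ is an atom. Let $\cI_0$ be the conjugacy orbit of $\bK$ in $B^+_{\bI(\bs)}$. Because $\bI-\bK$ commutes with $\bI(\bs)$, a braid $\bb$ supported on $\bI(\bs)$ is a morphism $\bI\xrightarrow\bb-$ of $B^+(\cI)$ (i.e.\ $\alpha_\bI(\bb)=1$ and $\bt^\bb\in\bS$ for $\bt\in\bI$) exactly when it is a morphism $\bK\xrightarrow\bb-$ of $B^+_{\bI(\bs)}(\cI_0)$. Under this identification $v(\bs,\bI)=v(\bI(\bs),\bK)$; and since $\bI(\bs)=\bK\sqcup\{\bs\}$ is the unique parabolic submonoid of $B^+_{\bI(\bs)}$ strictly containing $B^+_\bK$, Proposition \ref{atoms C(I)}(ii) — applicable because the spherical Artin monoid $B^+_{\bI(\bs)}$ has a Garside element and satisfies Assumptions \ref{conj atom is atom}, \ref{MJ parabolic}, \ref{lcm=Delta J} — gives that $v(\bI(\bs),\bK)$ is the unique atom of $B^+_{\bI(\bs)}(\cI_0)$ with source $\bK$. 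Any factorisation of $v(\bs,\bI)$ in $B^+(\cI)$ has all its terms left-dividing $v(\bs,\bI)$, hence supported on $\bI(\bs)$, hence is a factorisation in $B^+_{\bI(\bs)}(\cI_0)$; so $v(\bs,\bI)$ is an atom of $B^+(\cI)$, and it is non-invertible since its image in $W$ is non-trivial.

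For the reverse inclusion, let $\bI\xrightarrow\bb\bJ$ be a non-invertible morphism of $B^+(\cI)$ and choose an atom $\bs\preccurlyeq\bb$ in $B^+$; then $\bs\notin\bI$ because $\alpha_\bI(\bb)=1$. The crucial assertion is that $\bI(\bs)$ is spherical and $v(\bs,\bI)\preccurlyeq\bb$ in $B^+(\cI)$: granting it, write $\bb=v(\bs,\bI)\,\bb'$ (with $\bb'$ again a morphism of $B^+(\cI)$ by Proposition \ref{ribbon}(ii)); since $v(\bs,\bI)$ is non-invertible and $B^+$ has no non-trivial unit, $\bb$ being an atom forces $\bb'=1$, so $\bb=v(\bs,\bI)$, and together with the first inclusion this identifies the atoms exactly as claimed. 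Establishing the crucial assertion is the step I expect to be the main obstacle: it must \emph{produce} the sphericity of $\bI(\bs)$ rather than assume it, so the proof of Proposition \ref{atoms C(I)}(i), which uses the global Garside element $\Delta$ and Assumption \ref{lcm=Delta J}, cannot be quoted directly. It is instead the cascading argument of Godelle in the proof of Theorem 0.5 of \cite{godelle} (generalising Paris \cite[5.6]{paris}): for $\bt\in\bI$ the relation $\bt\bb=\bb\,\bt^\bb$ with $\bt^\bb\in\bS$ exhibits the positive braid $\bt\bb$ as a common right-multiple of $\bt$ and of $\bs$, and the mere existence of a common right-multiple of a set of atoms forces that set to be spherical with lcm the corresponding $\bw$; hence $\bw_{\{\bs,\bt\}}\preccurlyeq\bt\bb$ and, cancelling $\bt$, a non-trivial ribbon morphism left-divides $\bb$. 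Iterating through the connected diagram $\bI(\bs)$ one enlarges a ribbon left-divisor of $\bb$, each enlargement forcing the relevant finite parabolic of $W$ to exist, and — since $\bI(\bs)$ is connected — the process stops only once all of $\bI(\bs)$ has been absorbed, whence $W_{\bI(\bs)}$ is finite and the divisor produced is exactly $v(\bs,\bI)$.
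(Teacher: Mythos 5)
Your proposal is correct, and it actually gives more than the paper does: the paper offers no proof of Proposition~\ref{godelle} at all, deferring entirely to Godelle's Theorem~0.5. Your argument for the ``each $v(\bs,\bI)$ is an atom'' direction is a genuine self-contained proof, and it is a nice one. The observation that $\bI-\bK$ commutes with $\bI(\bs)$ (which follows because any element of $\bI$ adjacent to a vertex of the connected component $\bI(\bs)$ would itself lie in $\bI(\bs)$) lets you identify morphisms of $B^+(\cI)$ supported on $\bI(\bs)$ and sourced at $\bI$ with morphisms of $B^+_{\bI(\bs)}(\cI_0)$ sourced at $\bK$; and the ``same generators'' property of Artin relations guarantees that any factorisation $v(\bs,\bI)=\bb_1\bb_2$ inside $B^+(\cI)$ has both factors supported on $\bI(\bs)$, so it transfers. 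Since $\bI(\bs)$ is spherical, $B^+_{\bI(\bs)}$ satisfies all the hypotheses of Proposition~\ref{atoms C(I)}, and as $\bI(\bs)=\bK\cup\{\bs\}$ is the unique minimal parabolic strictly containing $B^+_\bK$, part~(ii) of that proposition applies trivially. This reduction to the spherical case, where the paper's own machinery can be invoked, is the content missing from the paper's treatment, which skips directly to citing~\cite{godelle}.

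For the converse direction your argument is, as you acknowledge, a sketch of Godelle's cascading argument, and you correctly identify why Proposition~\ref{atoms C(I)}(i) cannot simply be quoted: there is no global Garside element to play the role of $\Delta$, and the sphericity of $\bI(\bs)$ must be \emph{derived} rather than assumed. The chain of implications you outline --- $\bt\bb=\bb\bt^\bb$ makes $\bt\bb$ a common right multiple of $\bs$ and $\bt$, which in an Artin monoid forces $m_{s,t}<\infty$ and $\bw_{\{\bs,\bt\}}\preccurlyeq\bt\bb$, so after cancellation a non-trivial ribbon divides $\bb$ --- is the correct starting point, but the iteration that enlarges the set of absorbed atoms to all of $\bI(\bs)$ while producing finiteness of the growing parabolic at each stage is where the real work of \cite{godelle} lies, and you (reasonably) do not reproduce it. Since the paper itself makes exactly this same deferral, this is not a gap relative to what the paper proves; the net effect is that you have given a hybrid proof that is strictly more detailed than the paper's.
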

\section{Application to Artin groups}\label{application}
We will spell out how the above results can be stated in two particular cases.
We try to recall enough notation so this section can be read independently
of the previous ones.

\subsection*{Artin monoids with automorphism}
We  first look at the case of a  spherical Artin monoid $B^+$ attached to a
Coxeter  system $(W,S)$  with a  diagram automorphism  $\phi$, see 
\ref{artin monoids}. The category
$\cC$  we will take is the  monoid $B^+\rtimes\genby\phi$; it has a Garside
element   $\bw_0$  and  an  attached  Garside  family  $\bW$.  The  Garside
automorphism $\Phi$ is given by $\bb\mapsto\bb^{\bw_0}$; it  is trivial if
$\bW_0$ is central and has order 2 otherwise. 
We set $\bpi=\bw_0^2$, a central element in $B^+$.
An element $\bb\phi\in B^+\rtimes \genby\phi$ is $(d,p)$-periodic 
if $(\bb\phi)^d=\bw_0^p\phi^d$, 
which can
be written $\bb\lexp\phi\bb\lexp{\phi^2}\bb\dotsm=\bw_0^p$.
\begin{theorem}  If $\phi=\Id$, two periodic elements of
$B^+$ of same period are cyclically conjugate.
\end{theorem}
\begin{proof}  This results from the work of David Bessis on the dual braid
monoid. Two periodic elements of same period in $B^+$ are also periodic and
have  equal periods in the dual monoid,  since the Garside element $\bw_0$
of $B^+$ is a power of
the  Garside  element  of  the  dual monoid. By \cite[11.21]{bessis1}, such
elements  are conjugate in the dual monoid,  so are conjugate in $B$, hence
are  conjugate in  $B^+$; indeed  if $\bb'=\bh\inv\bb\bh$ with $\bb,\bb'\in
B^+$ and $\bh\in B$, then there exists $i>0$ such that $\bh\bpi^i\in B^+$
and  since $\bpi$ is central $\bh\bpi^i$ still conjugates $\bb$ to $\bb'$. By
Proposition   \ref{Ad=Cyc}  conjugate  periodic   elements  are  cyclically
conjugate.
\end{proof}
We  conjecture that the  same result holds  in the case $\phi\ne\Id$.

Taking in account that $\Phi^2=\Id$, statement \ref{70g} gives:
\begin{proposition}\label{6970 for B}
Let   $\bb'\phi\in B^+\phi$  be    $(d,2)$-periodic,  that  is
$(\bb'\phi)^d=\bpi\phi^d$,  and let $e=\lfloor\frac  d2\rfloor$. Then there
exists  $\bb\phi\in B^+\phi$  cyclically conjugate  to $\bb'\phi$ such that
$\bb^e\in\bW$, and
\begin{itemize}
\item
If   $d$   is   even   then   $(\bb\phi)^e=\bw_0\phi^e$.  The  centralizer
$C_{B^+}(\bb\phi)$  identifies  with  $\cyc  B^+(\bb\phi)$,  and  even  more
specifically   to  the  endomorphisms  of   $\bb\phi$  in the category of
conjugacy  by $\bw_0\phi^e$-stable divisors.
\item
If $d$ is odd there exists $\bv\in\bW$ such that
$(\bb\phi)^e\bv=\bw_0\phi^e$   and   $\bb=\bv\phi^{-e}(\bv^{\bw_0})$.   The
centralizer   $C_{B^+}(\bb\phi)$   identifies   with   the  endomorphisms  of
$\bv\bw_0\phi^{-e}$   in  the  category  of  conjugacy  by  $\phi^d$-stable
divisors.
\end{itemize}
\end{proposition}
Part of the above proposition is already in \cite[6.8]{BM}.
The equation $(\bb\phi)^d=\bpi\phi^d$ for $(d,2)$-periodic elements
made the authors of \cite{BM} call such elements $d$-th $\phi$-roots of $\bpi$.
\subsection*{Ribbons in Artin monoids}\label{artin ribbons}

We  keep in this subsection a spherical Artin monoid $B^+$ attached to $(W,S)$
with  a  diagram  automorphism  $\phi$  and  consider  the  ribbon category
$B^+\rtimes\genby\phi(\cI)$  defined by a conjugacy  class $\cI$ of subsets
of $\bS$. 

A subset $I\subset S$ and the corresponding subset
$\bI\subset\bS$ determine:
\begin{itemize} 
\item A {\em standard parabolic} subgroup $W_I$ generated by $I$; we
denote by $w_I$ its longest element (with this notation $w_0=w_S$). 
In every coset $W_I w$ there is a unique
shortest element called $I$-reduced.
\item A {\em parabolic submonoid} $B^+_\bI$ generated by
$\bI$; it has the Garside family $\bW_\bI:=\bW\cap B^+_\bI$ and the associated
Garside element is the lift $\bw_\bI$ of $w_I$; we set $\bpi_\bI=\bw_\bI^2$.
By Lemma \ref{alphaI} every element $\bb\in B^+$ 
has a unique longest divisor $\alpha_\bI(\bb)$ in $B^+_\bI$; an element
such that $\alpha_\bI(\bb)=1$ is called $\bI$-reduced.
\end{itemize}

The  ribbon  category  $B^+(\cI)$  is  the  category  whose objects are the
elements  of $\cI$  and a  morphism $\bI\xrightarrow  \bb\bJ$ is  given by
an $\bI$-reduced element $\bb\in  B^+$ such that $\bI^\bb=\bJ$; since $\bJ$
is  determined by  $\bI$ and  $\bb$ we  denote also by $\conjd\bI\bb$ this
morphism.  Proposition \ref{ribbon} shows that this definition makes sense,
that  is if we have a composition $\bI\xrightarrow\bb\bJ\xrightarrow\bc\bK$
in   $B^+(\cI)$,  then  $\alpha_\bI(\bb\bc)=1$.  

By  Corollary \ref{Garside C(I)} and Proposition \ref{Garside map in M(cI)}
$B^+(\cI)$ has a Garside family $\cS$ consisting of the morphisms
$\conjd\bI\bw$ where $\bw\in\bW$ and a Garside map
$\Delta_\cI(\bI)=\bI\xrightarrow{\bw_\bI\inv\bw_0}\bI^{\bw_0}$. These
properties include the following:
\begin{lemma}\label{bw generate B+(I)}
\begin{enumerate}
\item
$\cS$  generates  $B^+(\cI)$;  specifically, if $\bI\xrightarrow\bb\bJ\in
B^+(\cI)$  and $(\bw_1,\dots,\bw_k)$  is the  $\bW$-strict normal 
decomposition  of $\bb$, there
exist  subsets $\bI_i$ with $\bI_1=\bI$,  $\bI_{k+1}=\bJ$ such that for all
$i$ we have $\bI_{i+1}=\bI_i^{\bw_i}$; thus
$\bI\xrightarrow{\bw_1}\bI_2\to\dots\to  \bI_k\xrightarrow{\bw_k}\bJ$ is a
decomposition  of $\bI\xrightarrow\bb\bJ$  in $B^+(\cI)$  as a product of
elements of $\cS$.
\item
The relations
$(\bI\xrightarrow{\bw_1}\bJ\xrightarrow{\bw_2}\bK)=(\bI\xrightarrow{\bw}\bK)$ 
when $\bw=\bw_1\bw_2\in\bW$ form a presentation of $B^+(\cI)$.
\end{enumerate}
\end{lemma}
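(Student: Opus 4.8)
For part (i), the plan is to read the statement off the Garside structure already established. By Corollary~\ref{Garside C(I)} the set $\cS=\bW\cap B^+(\cI)$ is a Garside family in $B^+(\cI)$, hence it generates $B^+(\cI)$ (there being no non-trivial invertibles). To obtain the precise factorization I would take $\bI\xrightarrow\bb\bJ$ in $B^+(\cI)$, form the $\bW$-normal decomposition $(\bw_1,\dots,\bw_k)$ of $\bb$ in $B^+$, and apply Proposition~\ref{normal in MI} — which, via Proposition~\ref{FAd Garside}, identifies this with the normal decomposition of $\bb$ in $\Ad(B^+,\cI)$ — to conclude that every $\bw_i$ is a morphism of $B^+(\cI)$. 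Then the objects $\bI_i:=\bI^{\bw_1\cdots\bw_{i-1}}$ automatically lie in $\cI$, with $\bI_1=\bI$, $\bI_{k+1}=\bI^\bb=\bJ$ and $\bI_{i+1}=\bI_i^{\bw_i}$, which is exactly the asserted decomposition of $\bI\xrightarrow\bb\bJ$ as a path of elements of $\cS$.

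Part (ii) is the instance, for $\cS=\bW\cap B^+(\cI)$, of the general fact that a Garside family presents its category by the relations recording products that stay inside the family; these are precisely the relations of the statement, since for $\bw_1,\bw_2\in\bW$ the product $\bw_1\bw_2$ lies in $\bW$ exactly when it equals the canonical lift of $w_1w_2$. Concretely I would let $\cC'$ be the category defined by the presentation, note that its defining relations hold in $B^+(\cI)$ so that there is a functor $\pi\colon\cC'\to B^+(\cI)$ that is the identity on objects and on generators, observe that $\pi$ is surjective on morphisms by part (i), and then prove injectivity of $\pi$. For this it is enough to show that every path in $\cS$ can be rewritten, using only the relations, into the path given by the $\bW$-normal decomposition of its image in $B^+(\cI)$: that target path lies in $\cS$ by part (i), and normal decompositions are genuinely unique — $B^+$, hence $B^+(\cI)$, has no non-trivial invertible morphisms, so Lemma~\ref{deformation} applies with $\eqir$ an equality — whence two paths with the same $\pi$-image reduce to the same normal path and $\pi$ is injective.

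The rewriting is the standard left-to-right Garside normalization. At the first consecutive pair $(\bw_j,\bw_{j+1})$ failing to be $\bW$-normal, set $\bw_j\bx:=H(\bw_j\bw_{j+1})$ for an $\bW$-head function $H$ of $B^+$; the head-function axioms give $\bx\preccurlyeq\bw_{j+1}$, say $\bw_{j+1}=\bx\bw'_{j+1}$, and since both $\bw_j\bx$ and $\bx\bw'_{j+1}(=\bw_{j+1})$ lie in $\bW$, two uses of the relations replace the pair by the now $\bW$-normal pair $(\bw_j\bx,\bw'_{j+1})$; one then moves one step to the left and repeats. The one delicate point — and the only place the ribbon hypotheses are really used — is that these moves never leave $\cC'$: the arrow $\bw_j\bx=H(\bw_j\bw_{j+1})$ is the first term of the normal decomposition of the $B^+(\cI)$-morphism $\bw_j\bw_{j+1}$, hence is itself a morphism of $B^+(\cI)$ by part (i), so the prefix $\bw_1\cdots\bw_j\bx$ is one too, and then the inserted arrow $\bx$ and the tail $\bw'_{j+1}$ are morphisms of $B^+(\cI)$ because $B^+(\cI)$ is closed under right-quotient in $\Ad(B^+,\cI)$ (Proposition~\ref{ribbon} and the closure remark following it). Termination of the leftward cascade is the classical Garside ``domino'' argument, which I would simply quote from \cite{Livre}. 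Thus the genuine obstacle is exactly this bookkeeping of staying inside $B^+(\cI)$ throughout normalization; the combinatorial engine and its termination are standard and quotable.
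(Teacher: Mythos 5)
Your proposal is correct and follows essentially the same route as the paper, which gives no separate proof but introduces the lemma by ``This includes the following'' immediately after invoking Corollary~\ref{Garside C(I)} and Proposition~\ref{Garside map in M(cI)}, i.e.\ both parts are regarded as direct consequences of the Garside-family structure on $B^+(\cI)$ (part (ii) being the standard Garside presentation fact, quotable from \cite{Livre}). Your write-up merely supplies the details, correctly isolating the one non-formal point --- that the intermediate heads and right-quotients produced by the normalization stay inside $B^+(\cI)$ --- and handling it via Proposition~\ref{normal in MI} and closure of $B^+(\cI)$ under right-quotient in $\Ad(B^+,\cI)$.
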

In our case strict normal decompositions are unique. They can be defined as
follows:  for $\bb\in B^+$, let $\alpha(\bb)$  be the left-gcd of $\bb$ and
$\bw_0$;  the  restriction  of  $\alpha$  to  $B^+-\{1\}$  is  a $\bW$-head
function,  thus  $\bw_1:=\alpha(\bb)$  is  the  first  term  of  the normal
decomposition  of  $\bb$,  and  the  other  terms  are defined similarly by
induction, setting $\bw_2=\alpha(\bw_1\inv \bb)$, etc$\dots$

For generating the category $B^+\rtimes\genby\phi(\cI)$ we need additionally 
the invertible
morphisms $\bI\xrightarrow\phi\bI^\phi$. The family $\cS$ is still a
Garside family for this category, with the same Garside map $\Delta_\cI$.
When  $\cI=\{\emptyset\}$, $B^+(\cI)$ reduces to the Artin-Tits monoid $B^+$
and $B^+\rtimes\genby\phi(\cI)$ reduces to $B^+\rtimes\genby\phi$, thus
the results in this subsection generalize those of the previous subsection.

We  will  be interested in
$(d,2)$-periodic   elements  in  $B^+\phi(\cI)$.  Such  an  element  is  an
endomorphism   of  the   form  $\bI\xrightarrow{\bb\phi}\bI$   or  via  the
correspondence   between   conjugacy   in   the  semi-direct  category  and
$\phi$-conjugacy, a morphism $\bI\xrightarrow\bb\lexp\phi\bI$ in $B^+(\cI)$
where $\lexp{\bb\phi}\bI=\bI$. Since
$\Delta_\cI(\bI)\Delta_\cI(\bI^{\bw_0})=\bI\xrightarrow{\bpi/\bpi_\bI}\bI$
the condition for this morphism to be $(d,2)$-periodic is
$(\bb\phi)^d=\bpi/\bpi_\bI\phi^d$.

By the forgetful functor $(\conjd\bI{\bb\phi})\mapsto \bb\phi$ 
the morphisms in $B^+\phi(\cI)(\bI,\ud)$ identify with the elements 
$\bb\phi\in B^+\phi$ such that
$\lexp{\bb\phi}\bI\subset\bS$ and $\alpha_\bI(\bb)=1$. We will thus
sometimes write $\bb\phi\in B^+\phi(\cI)(\bI,\ud)$ to 
mean $\conjd\bI{\bb\phi}\in B^+\phi(\cI)(\bI,\ud)$.

Taking into account the above,  and that the 
Garside automorphism associated to $\Delta_\cI$ is 
$\Phi(\bI\xrightarrow\bv\bI^\bv)=
\bI^{\bw_0}\xrightarrow{\bv^{\bw_0}}\bI^{\bv\bw_0}$,
the generalization of Proposition \ref{6970 for B} is
\begin{proposition}\label{periodic ribbons}
Let   $\bb'\phi\in B^+\phi$   be  such that $(\bb'\phi)^d=\bpi/\bpi_\bJ\phi^d$
for  some $\phi^d$-stable $\bJ\in\cI$,  and let $e=\lfloor\frac d2\rfloor$.
Then $\bb'\phi$ defines an endomorphism of $\bJ$ in $B^+\phi(\cI)$, that is
$\lexp{\bb'\phi}\bJ=\bJ$  and  $\alpha_\bJ(\bb')=1$.  This  endomorphism is
$(d,2)$-periodic  and  there  exists  a  $\phi^d$-stable  $\bI\in\cI$ and
$\bI\xrightarrow{\bb\phi}\bI\in  B^+\phi(\cI)(\bI)$ cyclically conjugate to
$\bJ\xrightarrow{\bb'\phi}\bJ\in B^+\phi(\cI)(\bJ)$ such that
$(\bb\phi)^d=\bpi/\bpi_\bI\phi^d$, $(\bb\phi)^e\in\bW\phi^e$, and
\begin{itemize}
\item
If   $d$   is   even   then   $(\bb\phi)^e=\bw_\bI\inv\bw_0\phi^e$.  
The  centralizer $\Conj B^+(\cI)(\bI\xrightarrow{\bb\phi}\bI)$  identifies  with  
$(\cyc  B^+(\cI)(\bI\xrightarrow{\bb\phi}\bI))^{\bw_0\phi^e}$.
\item
If $d$ is odd there exists $\cI\xrightarrow\bv\bI^{\bw_0\phi^e}\in\bW\cap B^+(\cI)$ 
such that $(\bb\phi)^e\bv=\bw_\bI\inv\bw_0\phi^e$   and   
$\bb=\bv\phi^{-e}(\bv^{\bw_0})$.   The
centralizer   $\Conj B^+(\cI)(\bI\xrightarrow{\bb\phi}\bI)$   identifies   with
$(\cyc B^+(\cI)(\bI\xrightarrow{\bv\Phi\phi^{-e}}\bI))^{\phi^d}$.
\end{itemize}
\end{proposition}
\begin{proof}
We need to prove that $(\bb'\phi)^d=(\bpi_\bJ)\inv\bpi\phi^d$   implies
$\alpha_\bJ(\bb')=1$ and that $\lexp{\bb'\phi}\bJ=\bJ$.
The condition $\alpha_\bJ(\bb')=1$ follows from
$\alpha_\bJ(\bb')\preccurlyeq\alpha_\bJ((\bb'\phi)^d)$ and from the fact that
$(\bpi_\bJ)\inv\bpi$ defines a morphism in $B^+(\cI)$ as we have seen above.
By Proposition \ref{atoms C(I)}(i) $\bb'\phi$ defines a morphism
$\bJ\xrightarrow{\bb'\phi}\bK$ in $B^+(\cI)$. Hence $\bb'\phi$ conjugates
$\bpi_\bJ$ to $\bpi_\bK$ by Lemma \ref{Delta_I^b=Delta_J}.
Since $\bb'\phi$ centralizes $\bpi/\bpi_\bJ\phi^d$ and $\bpi$ is central,
it thus centralizes $\bpi_\bJ\phi^d$, hence it centralizes
$\bpi_\bJ^\delta$, where $\delta$ is the order of
$\phi$ and we get $\bpi_\bJ^\delta=\bpi_\bK^\delta$. However
the support (see the proof of Lemma \ref{all parabolic}) of $\bpi_\bJ^\delta$ is $\bJ$ and that of $\bpi_\bK^\delta$
is $\bK$, thus $\bJ=\bK$ and $\bb'\phi$ stabilizes $\bJ$.

The other assertions of the proposition are straightforward translations
of Corollary \ref{70g}.
\end{proof}
We  note that  any element  which conjugates  a $(d,2)$-periodic element in
$B^+\phi$  to another is $\phi^d$-stable. Indeed such an element conjugates
some  $\bpi/\bpi_\bJ\phi^d$ to  some $\bpi/\bpi_\bI\phi^d$;  if $\delta$ is
the   order  of  $\phi$   since  $\bpi$  is   central  it  thus  conjugates
$\bpi_\bJ^\delta$  to  $\bpi_\bI^\delta$  thus  by  the same reasoning as 
the end of the proof above it conjugates
$\bI$ to $\bJ$, which finally implies that it commutes with $\phi^d$.

We now state \ref{F-Bestvina} in the case of ribbons.
\begin{corollary}\label{ribbon-Bestvina}
Let $\bb'\phi\in B^+\phi$ be such that
$(\bb'\phi)^d=(\bpi/\bpi_\bJ)^k\phi^d$     for     some     $\phi^d$-stable
$\bJ\in\cI$.  Then $\bb'\phi$  defines a  $(d,2k)$-periodic endomorphism of
$\bJ$  in $B^+\phi(\cI)$, and up to cyclic conjugacy in $B^+\phi(\cI)$, we
may  assume $k$ prime to  $d$. Then, for any  choice of integers $d',k'$
with $dk'=1+kd'$ there exists a $\phi^d$-stable
$\bI\in\cI$ and $\bI\xrightarrow{\bb\phi}\bI\in
B^+\phi(\cI)(\bI)$   cyclically   conjugate   to
$\bJ\xrightarrow{\bb'\phi}\bJ$   such  that
$(\bb\phi)^d=(\bpi/\bpi_\bI)^k\phi^d$    and    $(\bb\phi)^{d'}\preccurlyeq
(\bpi/\bpi_\bI)^{k'}$,   and   if   we   define   $\bb_1\in   B^+(\cI)$  by
$(\bb\phi)^{d'}   \bb_1  \phi^{-d'}=   (\bpi/\bpi_\bI)^{k'}$  then  $(\bb_1
\phi^{-d'})^d=\bpi/\bpi_\bI  \phi^{-dd'}$  and  $(\bb_1  \phi^{-d'})^k=(\bb
\phi)\phi^{-k'd}$.
\end{corollary}
\begin{proof}
As in the beginning of the proof \ref{periodic ribbons} we deduce from the
equality $(\bb'\phi)^d=(\bpi/\bpi_\bJ)^k\phi^d$ that $\bb'\phi$ defines
an element of $B^+\phi(\cI)(\bJ)$. The only other observation needed
is that we apply \ref{F-Bestvina} for the Garside structure corresponding to
the Garside map  $\Delta(\bJ)=\bJ\xrightarrow{\bpi/\bpi_\bJ}\bJ$, 
the square of the previously introduced Garside map $\Delta_\cI$
---this is allowed by \ref{power of Delta}. For this Garside map the
corresponding functor $\Phi$ is the identity, as required by \ref{F-Bestvina}.
\end{proof}
\begin{corollary}\label{power}
As in corollary \ref{ribbon-Bestvina}  let  $\bb'\phi\in  B^+\phi$ be such that
$(\bb'\phi)^d=(\bpi/\bpi_\bJ)^k\phi^d$     for     some     $\phi^d$-stable
$\bJ\in\cI$. Then $\bI\xrightarrow{\bb'\phi}\bJ$
is   cyclically   conjugate   in   $B^+\phi(\cI)$  to  a  $(d,2k)$-periodic
endomorphism $\bI\xrightarrow{\bb\phi}\bI$ such that
$(\bb\phi)^{\lfloor\frac              d{2k}\rfloor}\in\bW\phi^{\lfloor\frac
d{2k}\rfloor}$.
\end{corollary}
\begin{proof}
By \ref{ribbon-Bestvina} we may first assume that $k$ is prime to $d$.
We then use \ref{ribbon-Bestvina} to get $\bb_1\phi^{-d'}\in  B^+\phi^{-d'}$
satisfying
the assumption of \ref{periodic ribbons} with $\phi$ replaced by
$\phi^{-d'}$.  By \ref{periodic ribbons} we may find a cyclic conjugate
$\bb'_1\phi^{-d'}$ of $\bb_1\phi^{-d'}$ such that 
$(\bb'_1\phi^{-d'})^{\lfloor \frac d2\rfloor} \in\bW
\phi^{-d'\lfloor \frac d2\rfloor}$. If this cyclic conjugation conjugates
$\bb\phi=(\bb_1\phi^{-d'})^k\phi^{k'd}$ to
$(\bb'_1\phi^{-d'})^k\phi^{k'd}$ we are done since $
k\lfloor\frac   d{2k}\rfloor\le \lfloor\frac d2\rfloor$. Note that the cyclic
conjugacy in \ref{periodic ribbons} conjugates $\bJ$ to $\bI$ and 
$\bpi/\bpi_\bJ\phi^d$ to $\bpi/\bpi_\bI\phi^d$, so is $\phi^d$-stable
($\phi^{-dd'}$-stable in our application). If we had that any
$\phi^{dd'}$-stable element is $\phi^d$-stable we would be done since the
conjugation would then commute with $\phi^{k'd}$. Thus we finish using Lemma
\ref{pas Dirichlet} which shows that we may choose $d'$ prime to the order of
$\phi$.
\end{proof}
For $\bb\in B^+$, let $\alpha(\bb)=\gcd(\bb,\bw_0)$. It is a $\bW$-head
function in $B^+$ thus by Proposition \ref{normal in MI} and Corollary
\ref{Garside C(I)} $(\conjd\bI\bb)\mapsto(\conjd\bI{\alpha(\bb)})$ is a
$\cS$-head function.
\begin{lemma}\label{alpha(vw)}
For $\conjd\bI\bb\in B^+(\cI)$ and $\bv\in B^+_\bI$ we have
$\alpha(\bv\bb)=\alpha(\bv)\alpha(\bb)$.
\end{lemma}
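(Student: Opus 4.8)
The plan is to reduce, via the head-function calculus of Proposition \ref{critereGarside}, to the case where the second factor is a simple morphism, and then to induct on the $\cS$-length $\lS(\bv)$ of the first factor.

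Since $\alpha$ restricted to $B^+\setminus\{1\}$ is an $\cS$-head function, property \ref{critereGarside}(iii) gives $\alpha(\bv\bw)=\alpha(\bv\,\alpha(\bw))$; and by Lemma \ref{bw generate B+(I)}(i) the element $\alpha(\bw)$, being the first term of the $\bW$-normal decomposition of the morphism $\bI\xrightarrow\bw\bI'$, is itself a simple morphism $\bI\to\bI_2$ of $B^+(\cI)$, with $\alpha(\alpha(\bw))=\alpha(\bw)$. So it suffices to prove $\alpha(\bv\bw)=\alpha(\bv)\bw$ when $\bw\in\bW$ is a simple morphism $\bI\to\bI'$. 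In that situation I will use freely the elementary facts, valid because $\bw$ is $\bI$-reduced and conjugates $\bI$ onto $\bI'$: (a) $\bu\bw=\bw\,\bu^\bw$ with $\bu^\bw\in B^+_{\bI'}$ for every $\bu\in B^+_\bI$, so $\bw\preccurlyeq\bu\bw$; (b) $\alpha_\bI(\bu\bw)=\bu$ for every $\bu\in B^+_\bI$, and $\bu\bw\in\bW$ for every $\bu\in\bW_\bI$; (c) $\alpha(\bv)=\gcd(\bv,\bw_0)$ lies in $\bW_\bI$, since it left-divides $\bv\in B^+_\bI$. One inequality is then immediate: writing $\bv=\alpha(\bv)\bv'$ with $\bv'\in B^+_\bI$, fact (a) gives $\bv\bw=\alpha(\bv)\bw\,(\bv')^\bw$, so $\alpha(\bv)\bw\preccurlyeq\bv\bw$; and $\alpha(\bv)\bw\in\bW$ by (b)--(c), so $\alpha(\bv)\bw\preccurlyeq\alpha(\bv\bw)$.

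For the reverse inequality I induct on $n=\lS(\bv)$, the cases $n\le1$ being clear since then $\bv\in\bW_\bI$ and $\bv\bw\in\bW$. For $n\ge2$ write $\bv=\bv_1\bv'$ with $\bv_1=\alpha(\bv)$, and set $\bv_2=\alpha(\bv')$, so that $(\bv_1,\bv_2)$ is the initial $\bW$-normal pair of the normal decomposition of $\bv$ and $\lS(\bv')=n-1$. By the inductive hypothesis $\alpha(\bv'\bw)=\bv_2\bw$, whence by \ref{critereGarside}(iii) again $\alpha(\bv\bw)=\alpha(\bv_1\cdot\alpha(\bv'\bw))=\alpha(\bv_1\bv_2\bw)$; thus everything comes down to the claim: \emph{for a $\bW$-normal pair $(\bv_1,\bv_2)$ in $\bW_\bI$ one has $\alpha(\bv_1\bv_2\bw)=\bv_1\bw$}. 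To prove it, set $\bh=\alpha(\bv_1\bv_2\bw)$; the inequality already shown gives $\bv_1\bw\preccurlyeq\bh$, say $\bh=\bv_1\bw\bt$ with $\bt\in\bW$. Cancelling $\bv_1\bw$ in $\bh\preccurlyeq\bv_1\bv_2\bw=\bv_1\bw\,\bv_2^\bw$ (fact (a)) gives $\bt\preccurlyeq\bv_2^\bw\in B^+_{\bI'}$, so $\bt\in\bW_{\bI'}$. Next, $\alpha_\bI(\bh)$ is a simple left-divisor of $\bv_1\bv_2\bw$ lying in $B^+_\bI$, hence, by fact (b) ($\alpha_\bI(\bv_1\bv_2\bw)=\bv_1\bv_2$), a simple left-divisor of $\bv_1\bv_2$, hence $\preccurlyeq\bv_1$ by $\bW$-normality of $(\bv_1,\bv_2)$; since trivially $\bv_1\preccurlyeq\alpha_\bI(\bh)$, we get $\alpha_\bI(\bh)=\bv_1$, so $\omega_\bI(\bh)=\bw\bt$ is $\bI$-reduced. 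Finally, if $\bt\ne1$, choose an atom $\bs'\in\bI'$ with $\bs'\preccurlyeq\bt$ and the atom $\bs\in\bI$ with $\bs\bw=\bw\bs'$ (it exists since $\bI^\bw=\bI'$); then $\bs\preccurlyeq\bw\bs'\preccurlyeq\bw\bt$, contradicting that $\bw\bt$ is $\bI$-reduced. Hence $\bt=1$ and $\bh=\bv_1\bw=\alpha(\bv)\bw$.

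The main obstacle is the claim about a $\bW$-normal pair: the temptation is to compute normal forms of $\bv_1\bv_2\bw$ by hand, whereas what is needed is to isolate exactly the two facts that make it go through --- that the $B^+_\bI$-part of $\bv_1\bv_2\bw$ is $\bv_1\bv_2$, and that $\bW$-normality forbids a simple left-divisor from exceeding $\bv_1\bw$ --- and it is precisely here that both hypotheses on $\bw$ are used: that it is $\bI$-reduced, and that it genuinely conjugates $\bI$ onto $\bI'$ (not merely that it is $\bI$-reduced).
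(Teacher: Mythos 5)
Your proof is correct, but it takes a genuinely different route from the paper. Both proofs begin the same way, using Proposition~\ref{critereGarside}(iii) to replace $\bw$ by $\alpha(\bw)$, which is a simple morphism $\bI\to\bI_2$. From there the paper finishes in two lines: it conjugates $\bv\,\alpha(\bw)=\alpha(\bw)\bv^{\alpha(\bw)}$, applies the head-function identity once more to pull out $\alpha(\bv^{\alpha(\bw)})$, then observes that conjugation by $\alpha(\bw)$ is an isomorphism between the parabolic submonoids $B^+_\bI$ and $B^+_{\bI_2}$ compatible with their Garside structures (this is where Lemmas~\ref{bw generate B+(I)}(i) and \ref{Garside map MI} enter), so that $\alpha(\bv^{\alpha(\bw)})=\alpha(\bv)^{\alpha(\bw)}$; un-conjugating then leaves $\alpha(\alpha(\bv)\alpha(\bw))=\alpha(\bv)\alpha(\bw)$ because the lengths add. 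You instead induct on $\lS(\bv)$, reducing everything to the claim that $\alpha(\bv_1\bv_2\bw)=\bv_1\bw$ for a $\bW$-normal pair $(\bv_1,\bv_2)$ in $B^+_\bI$, and you prove this claim by showing $\alpha_\bI(\bh)=\bv_1$ for the head $\bh$, then exploiting that $\omega_\bI(\bh)=\bw\bt$ must be $\bI$-reduced, which forces $\bt=1$ because $\bw$ conjugates $\bI$ onto $\bI'$. Your argument is more elementary in the sense that it only invokes the defining normality condition and the behaviour of $\alpha_\bI$ on elements of the form $\bu\bw$, at the cost of an explicit induction and the auxiliary normal-pair claim; the paper's proof trades that for a single structural observation (conjugation by a simple ribbon morphism commutes with taking heads), which is shorter and arguably more conceptual. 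Both correctly isolate the two hypotheses you flag at the end — that $\bw$ is $\bI$-reduced and that it conjugates $\bI$ onto a subset of $\bS$ — as the load-bearing facts.
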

\begin{proof}
Lemma \ref{bw generate B+(I)} implies that $\alpha(\bb)$ defines an
element of $B^+(\cI)(\bI,\ud)$ so that $\bv^{\alpha(\bb)}\in B^+$.
We have
$\alpha(\bv\bb)=\alpha(\bv\alpha(\bb))=\alpha(\alpha(\bb)\bv^{\alpha(\bb)})=
\alpha(\alpha(\bb)\alpha(\bv^{\alpha(\bb)}))$,
the first and last equalities by property ($\cH$) of Proposition
\ref{critereGarside}.
By Lemma \ref{Delta_I^b=Delta_J} we have
$\alpha(\bv^{\alpha(\bb)})=\alpha(\bv)^{\alpha(\bb)}$, 
so that $\alpha(\bv\bb)=\alpha
(\alpha(\bb)\alpha(\bv)^{\alpha(\bb)})=\alpha(\alpha(\bv)\alpha(\bb))$.
Since $\alpha(\bb)$ is $\bI$-reduced we have 
$\alpha(\bv)\alpha(\bb)\in\bW$, hence $\alpha(\alpha(\bv)\alpha(\bb))
=\alpha(\bv)\alpha(\bb)$.
\end{proof}
The following proposition shows a compatibility of morphisms in $B^+(\cI)$ 
with a ``parabolic'' situation.
\begin{proposition}\label{normal form of vw}
Fix  $\bI\in\cI$,  and  for  $\bJ\subset\bI$,  let  $\cJ$  be  the  set  of
$B^+_\bI$-conjugates of $\bJ$. Let $(\bI\xrightarrow{\bb}\bI')\in B^+(\cI)$
and  let $(\bJ\xrightarrow{\bv}\bJ')\in B^+_\bI(\cJ)$.
Let  $(\bu_1,\dots,\bu_k)$  be  the  strict normal decomposition of
$\bv\bb$  and let $(\bw_1,\bw_2,\dots,\bw_k)$  be a normal decomposition
of  $\bb$ (we have added some 1's at the end of the strict normal
decomposition so the decompositions have same length);
then for   each   $i$  there exists $\bv_i\in B^+$ such that $\bu_i=\bv_i\bw_i$
and $(\bv_1,\lexp{\bw_1}\bv_2,\lexp{\bw_1\bw_2}\bv_3,\dots)$   is  a  normal
decomposition of $\bv$.
\end{proposition}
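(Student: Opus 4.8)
The plan is to argue by induction on the word length $l(\bw)$ of $\bw$ in the atoms $\bS$, peeling off at each step the first term of the normal decomposition of $\bw$. Three preliminary facts will be used. First, for $\bv\in B^+_\bI$ the normal decomposition of $\bv$ computed in $B^+$ agrees with the one computed in $B^+_\bI$ (Lemma \ref{garside subcategory}, $\bW_\bI=\bW\cap B^+_\bI$ being a Garside family of $B^+_\bI$ on which the head function $\alpha$ restricts), its first term being $\alpha(\bv)$. Second, by Proposition \ref{normal in MI} every term of the normal decomposition of $\bv$ lies in $B^+_\bI(\cJ)$, so $\alpha(\bv)$ conjugates $\bJ$ to a subset $\bJ_1:=\bJ^{\alpha(\bv)}\subset\bI$, and $\tilde\bv:=\alpha(\bv)\inv\bv$ is a $\bJ_1$-reduced morphism of $B^+_\bI(\cJ)$ with source $\bJ_1$ whose normal decomposition is that of $\bv$ with its first term removed. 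Third, $\bw_1$, being a left divisor of the $\bI$-reduced element $\bw$, is itself $\bI$-reduced, so $\bI\xrightarrow{\bw_1}\bI_2$ with $\bI_2:=\bI^{\bw_1}$ is a morphism of $B^+(\cI)$, and conjugation $\rightad{\bw_1}{(\cdot)}$ is a monoid isomorphism $B^+_\bI\xrightarrow{\sim}B^+_{\bI_2}$ (Assumption \ref{conj atom is atom} makes it biject the atom sets $\bI$ and $\bI_2$) which carries $\bW_\bI$ onto $\bW_{\bI_2}$, preserves left-divisibility, hence preserves normal decompositions and reducedness, and carries $\cJ$ to the corresponding conjugacy orbit under $B^+_{\bI_2}$.

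The $i=1$ case and the start of the recursion come from Lemma \ref{alpha(vw)}: $\bu_1=\alpha(\bv\bw)=\alpha(\bv)\alpha(\bw)=\alpha(\bv)\bw_1$, so (cancelling $\bw_1$ in $\bu_1=\bv_1\bw_1$) $\bv_1=\alpha(\bv)$ is the first term of the normal decomposition of $\bv$. Writing $\tilde\bw=\bw_2\cdots\bw_k$ (a morphism $\bI_2\to\bI'$ of $B^+(\cI)$ with normal decomposition $(\bw_2,\dots,\bw_k)$ up to trailing $1$'s, and $l(\tilde\bw)<l(\bw)$ when $\bw\ne1$) and using $\tilde\bv\bw_1=\bw_1\,\rightad{\bw_1}{\tilde\bv}$, one gets
$$\bv\bw=\bv_1\tilde\bv\bw_1\tilde\bw=\bv_1\bw_1\cdot\rightad{\bw_1}{\tilde\bv}\cdot\tilde\bw=\bu_1\cdot\bigl(\rightad{\bw_1}{\tilde\bv}\bigr)\tilde\bw,$$
so $(\bu_2,\dots,\bu_k)$ is the normal decomposition of $(\rightad{\bw_1}{\tilde\bv})\tilde\bw$. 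Here $\rightad{\bw_1}{\tilde\bv}$ is the image under the isomorphism of the third fact of the reduced morphism $\tilde\bv$ of $B^+_\bI(\cJ)$, hence a reduced morphism of $B^+_{\bI_2}(\rightad{\bw_1}{\cJ})$, and $\tilde\bw$ is a morphism of $B^+(\cI)$ with source $\bI_2$; moreover the elements $\bv_i$ ($i\ge2$) defined by $\bu_i=\bv_i\bw_i$ are unchanged. Thus the induction hypothesis applies to the pair $(\rightad{\bw_1}{\tilde\bv},\tilde\bw)$ and gives that the normal decomposition of $\rightad{\bw_1}{\tilde\bv}$ is $(\bv_2,\lexp{\bw_2}{\bv_3},\lexp{\bw_2\bw_3}{\bv_4},\dots)$, its $j$-th term being $\lexp{\bw_2\cdots\bw_j}{\bv_{j+1}}$.

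It remains to transport this back through the inverse isomorphism $\lexp{\bw_1}{(\cdot)}$, which again preserves normal decompositions. Since $\lexp{\bw_1}{\bigl(\rightad{\bw_1}{\tilde\bv}\bigr)}=\tilde\bv$ and $\lexp{\bw_1}{\bigl(\lexp{\bw_2\cdots\bw_j}{\bv_{j+1}}\bigr)}=\lexp{\bw_1\bw_2\cdots\bw_j}{\bv_{j+1}}$, the normal decomposition of $\tilde\bv$ is $(\lexp{\bw_1}{\bv_2},\lexp{\bw_1\bw_2}{\bv_3},\dots)$; prepending the first term $\bv_1=\alpha(\bv)$ yields that the normal decomposition of $\bv=\bv_1\tilde\bv$ is $(\bv_1,\lexp{\bw_1}{\bv_2},\lexp{\bw_1\bw_2}{\bv_3},\dots)$, which is the claim; the base case $\bw=1$ (all $\bw_i=1$, $\bv_i=\bu_i$) is trivial. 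The step I expect to be the main obstacle is the third preliminary fact and its use: one must check with care that conjugation by the $\bI$-reduced element $\bw_1$ transports the entire ``ribbon'' configuration $(\bv\in B^+_\bI(\cJ),\ \bw\in B^+(\cI))$ faithfully — Garside families, normal forms and reducedness all being preserved — so that the induction hypothesis is genuinely available for $(\rightad{\bw_1}{\tilde\bv},\tilde\bw)$; one should also record that $\lS(\bv),\lS(\bw)\le\lS(\bv\bw)=k$ (as $\bv$, resp.\ $\bw$, is a left, resp.\ right, divisor of $\bv\bw$) so that the padding in the statement is meaningful.
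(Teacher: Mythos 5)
Your proof is correct and follows essentially the same route as the paper: both start from Lemma \ref{alpha(vw)} to identify $\bu_1=\alpha(\bv)\alpha(\bw)=\bv_1\bw_1$, then rewrite $\bu_2\cdots\bu_k=\omega(\bv)^{\alpha(\bw)}\omega(\bw)$ and invoke induction on this shorter pair (the paper inducts on $k$, you on $l(\bw)$, which amounts to the same reduction). The only difference is one of exposition: the paper leaves the transport of normal decompositions through the conjugation isomorphism $\rightad{\bw_1}{(\cdot)}\colon B^+_\bI\to B^+_{\bI_2}$ tacit, whereas you spell out why that conjugation is a Garside-structure-preserving isomorphism; your reasoning there is sound and fills in a genuinely elided step.
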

\begin{proof}
We  proceed  by  induction  on  $k$.  By  Lemma  \ref{alpha(vw)},  we  have
$\bu_1=\alpha(\bv)\alpha(\bb)=\alpha(\bv)\bw_1$. Hence $\bv_1=
\alpha(\bv)$ is a solution. Cancelling $\bv_1$ we get
$\bu_2\dotsm\bu_k=\omega(\bv)^{\alpha(\bb)}\omega(\bb)$.    The   induction
hypothesis  applied to $\omega(\bv)^{\alpha(\bb)}$, which defines an
element of $B^+_{\bI^{\alpha(\bb)}}(\cJ^{\alpha(\bb)})$, and to
$\omega(\bb)$ which defines an element of $B^+(\cI)$ gives the result.
\end{proof}
\subsubsection*{The category $\DI$}\label{D+(I)}
The  category $\cyc B^+\phi(\cI)$ will play  an important role in our work:
it  will be interpreted as a  category of morphisms between Deligne-Lusztig
varieties.  For  this  reason  we  will  abbreviate its name to $\DI$; when
$\cI=\{\emptyset\}$ it reduces to the category $\cD^+$ of \cite[5.1]{DMR}.

The  objects  of  $\DI$  are endomorphisms $\bI\xrightarrow{\bw\phi}\bI$ in
$B^+\phi(\cI)$  and the morphisms are generated by the ``simple'' morphisms
that  we will  denote by  $\ad \bv$,  defined for $\bv\preccurlyeq\bw$ such
that  $\bI^\bv\subset\bS$;  such  a  morphism goes    from
$\bI\xrightarrow{\bw\phi}\bI$ to $\bJ\xrightarrow{\bv\inv\bw \phi\bv}\bJ$
where $\bJ=\bI^\bv$.

By Proposition \ref{CFC} the category $\DI$ has a Garside family consisting
of  the simple  morphisms. In  particular defining  relations for $\DI$ are
given  by the  equalities $\ad  \bv_1\dotsm \ad  \bv_k=\ad \bv'_1\dotsm \ad
\bv'_{k'}$ whenever $\ad\bv_i$ are simple and $\bv_1\dotsm
\bv_k=\bv'_1\dotsm \bv'_{k'}$ in $B^+$.
If $\bv=\bv_1\dotsm \bv_k\in B^+$ where the $\ad\bv_i$ are simple morphisms
of  $\DI$, we  still denote  by $\ad  \bv$ the composed morphism in $\DI$.

Note that for $\bw\phi\in B^+\phi(\cI)(\bI)$, the  centralizer
$\Conj B^+(\cI)(\bI\xrightarrow{\bw\phi}\bI)$ identifies via the forgetful
functor with the monoid
$$B^+_\bw:=\{\bb\in C_{B^+}(\bw\phi)\mid \bI^\bb=\bI\text{ and }
\alpha_\bI(\bb)=1\}.$$
The following theorem gives
a general case where we can describe $\DI(\bI\xrightarrow{\bw\phi}\bI)$:
\begin{theorem}\label{desc endo}
Assume that some power of $\bw\phi$ is divisible on the left by
$\bw_\bI\inv\bw_0$. Then
$\DI(\bI\xrightarrow{\bw\phi}\bI)=\Conj
B^+(\cI)(\bI\xrightarrow{\bw\phi}\bI)$,
thus consists of the morphisms
$\ad\bb$ where $\bb\in B^+_\bw$.
\end{theorem}
\begin{proof} This is a special case of Proposition \ref{Ad=Cyc}.
\end{proof}
Note that if $k$ is the smallest power such that $\lexp{\phi^k}\bI=\bI$ and
$\lexp{\phi^k}\bw=\bw$, then $\bw^{(k)}:=\bw\lexp
\phi\bw\dotsm\lexp{\phi^{k-1}}\bw$ is in $B^+_\bw$. Since $\ad\bw$ is equal
up  to an invertible to  the Garside map of  $\DI$ described in Proposition
\ref{Fcyc  Garside} and $\ad\bw^{(k)}$ is equal  up to an invertible to the
$k$-th power of that map, every element of
$\DI(\bI\xrightarrow{\bw\phi}\bI)$  divides a  power of  $\ad\bw^{(k)}$; it
follows that under the assumptions of Theorem \ref{desc endo} every element
of  $B^+_\bw$ divides  a power  of $\bw^{(k)}$.  In particular, in the case
$\bI=\emptyset$,  Theorem \ref{desc endo} says that
$B^+\cap C_B(\bw\phi)=\End_{\cD^+}(\bw)$, with the notations of \cite[2.1]{endo}.
Since $\bw_0$ divides a power of
$\bw\phi$, hence a power of $\bw^{(k)}$, any element of the group $C_B(\bw\phi)$ 
multiplied by some power of $\bw^{(k)}$ lies in $B^+$, hence the group
$C_B(\bw\phi)$ is generated as  a monoid
by   $\End_{\cD^+}(\bw)$   and
$(\bw^{(k)})\inv$.  Thus Theorem  \ref{desc endo}  in this  particular case
gives a positive answer to conjecture \cite[2.1]{endo}.

As an example of Theorem \ref{desc endo} we get that
$\DI(\bI\xrightarrow{\bpi/\bpi_\bI\phi}\bI)$   identifies   with  $\{\bb\in
C_{B^+}(\bI)^\phi\mid\alpha_\bI(\bb)=1\}$   which   itself   identifies  with
$B^+(\cI)(\bI)^\phi$.

\subsection*{Two examples}
In two cases we show a picture of the category associated with the
centralizer of a periodic element.

We   first  look  at  the  case   of  a  $(4,2)$-periodic  element  $\bw\in
B^+(W(D_4))$; by Proposition \ref{6970 for B}(i) we may assume $\bw^2=\bw_0$;
following Proposition
\ref{6970  for B}(i) we  describe the monoid  $(\cyc B^+(\bw))^{\bw_0}$, in
our  case equal to $\cyc B^+(\bw)$ since  $\bw_0$ is central. As in Theorem
\ref{good  for D_n}, we  choose $\bw$ given  by the word  in the generators
$123423$ where the labeling of the Coxeter diagram is

$\nnode{1}\edge\vertbar{3}{2}\edge\nnode{4}$

By Proposition \ref{6970 for B}(i) the  monoid $\cyc B^+(\bw)$ generates
$C_B(\bw)$;  by \cite[12.5(ii)]{bessis1}, $C_B(\bw)$ is the braid group of
$C_W(w)\simeq G(4,2,2)$.  This  braid group has presentation
$\langle\bx,\by,\bz\mid      \bx\by\bz=\by\bz\bx=\bz\bx\by\rangle$.     The
automorphism  $\bx\mapsto\by\mapsto\bz$  corresponds  to  the  triality  in
$D_4$. One of the generators $\bx$ corresponds to the  morphism $24$  in the
diagram below. The other generators are the conjugates of the similar morphisms
$41$ and $21$ in the other squares.
\vspace{-1cm}
$$\xymatrix@R+3pc{ &&&\\
123243\ar[r]^1\ar@/^/[d]^2&232431\ar@/^/[d]^2\ar@{.>}[r]^3&
 231431\ar[r]^2\ar@/^/[d]^4&314312\ar@/^/[d]^4\ar@{.>}@/^4pc/[lddd]^3\\
132432\ar[r]^1\ar@/^/[u]^4&324312\ar@/^/[u]^4\ar@{.>}[rd]^(.7)3&
 123143\ar@/^/[u]^1\ar[r]^2&131432\ar@/^/[u]^1\ar@{.>}@/^1pc/[lll]^3\\
     &231234\ar@/^/[d]^2\ar@{.>}[ur]^(.3)3&243123\ar@/^/[d]^2\ar[l]^4&\\
     &131234\ar@/^/[u]^1\ar@{.>}@/^4pc/[luuu]^3&143123\ar@/^/[u]^1\ar[l]^4&\\
&&&\\
}$$
We now look at a $(3,2)$-periodic $\bw\in B^+(W(A_5))$, that is $\bw^3=\bpi$, 
and following Proposition \ref{6970 for B}(ii) we describe
$\cyc B^+(\bv\Phi)$ where $\Phi$ is the Garside automorphism
$\bb\mapsto\bb^{\bw_0}$ and where $\bw=\bv\Phi(\bv)=\bv\cdot\bv^{\bw_0}$. By
Proposition \ref{6970 for B}(ii) the monoid $\cyc B^+(s\Phi)$
generates $C_B(\bw)$ and,
again by the results of Bessis, $C_B(\bw)$ is the braid group of $C_W(w)\simeq
G(3,1,2)$ (see Theorem \ref{type A}). We choose $\bw$ such that $\bv$ is
given by the word $21325$ in the generators. The generator of
$C_B(\bw)$ lifting the generator of order 3 of $G(3,1,2)$ is given by the word
$531$. The other one is the conjugate of any of the length 2 cycles $23$ in the diagram.
$$\xymatrix@u@C+.9pc{
     &&     &     &     &
     21435\ar@/_6pc/[llllldd]^2\ar@/^6pc/[rrrrrdd]_4&     &     &     &     &
     &\\
&43543\ar[ld]^5\ar[r]^4&35432\ar@{.>}[ddrrrrr]_(.3)5\ar@/^/[r]^3&25432\ar@{.>}[rrrrrdd]^(.7)5\ar@/^/[l]_2
&24543\ar[ru]^5\ar@/^2pc/[lll]^2\ar[l]_4&
32145\ar[u]^3&12143\ar[ul]_1\ar[r]^2\ar@/_2pc/[rrr]_4
&12343\ar@{.>}[ddlllll]_(.7)1\ar@/^/[r]^4&12324\ar@{.>}[ddlllll]^(.3)1\ar@/^/[l]_3&12132\ar[l]_2\ar[rd]_1&&\\
14354\ar[dr]^1\ar@/_6pc/[rrrrrdd]^4&     &     &&     &      &     &&     &
&21325\ar@/^6pc/[llllldd]_2\ar[dl]_5&\\
&34354\ar[uu]^3\ar@/^2pc/[rrr]^4&23435\ar[l]^2\ar@/^/[r]_4&23245\ar@/^/[l]^3\ar[r]_2
&32454\ar[uu]^3\ar[uur]_(.2)5&12543\ar[uul]^(.2)1\ar[uur]_(.2)5 &13214\ar[uu]_3\ar[uul]^(.2)1
&34321\ar@/^/[r]_3\ar[l]^4&24321\ar@/^/[l]^2\ar[r]_4&21321\ar[uu]_3\ar@/_2pc/[lll]_2&&\\
    &     &     &     &     & 13254\ar[ul]^1\ar[u]^3\ar[ur]_5&
&     &     &     &     &\\
}$$
\section{Representations into bicategories}\label{bicategories}
We  give here a  theorem on  representations of categories with Garside families which
generalizes  a result of Deligne \cite[1.11]{Deligne} about representations
of spherical braid monoids into a category; just as this theorem of Deligne
was  used to attach  a Deligne-Lusztig variety  to an element  of an Artin
monoid,  our theorem will be  used to attach a  Deligne-Lusztig variety to a
morphism  of a ribbon category. Note  that Theorem \ref{bicategory} covers 
the case of non-spherical Artin monoids.

We  follow the  terminology of  \cite[XII.6]{MacLane} for  bicategories. By
{\em representation of category $\cC$ into bicategory $X$} we mean a morphism
of bicategories between $\cC$ viewed as a trivial bicategory into the given
bicategory  $X$. This  amounts to  give a  map $T$  from $\Obj(\cC)$ to the
$0$-cells  of  $X$,  and  for  $f\in\cC$  of  source $x$ and target $y$, an
element  $T(f)\in V(T(x),T(y))$ where $V(T(x),T(y))$  is the category whose
objects  (resp.\ morphisms) are  the 1-cells of  $X$ with domain $T(x)$ and
codomain  $T(y)$ (resp.\ the 2-cells between  them), together with for each
composable  pair  $(f,g)$  an  isomorphism $T(f)T(g)\xrightarrow\sim T(fg)$
such that the resulting square
\begin{equation}\label{bicat}
\xymatrix{T(f)T(f')T(f'')\ar[r]^\sim\ar[d]^\sim&T(ff')T(f'')\ar[d]^\sim\\
T(f)T(f'f'')\ar[r]^\sim&T(ff'f'')\\}
\end{equation}
commutes.

We  define a representation of the Garside family $\cS$ as the same, except
that  the  above  square  is  restricted  to  the case where $f$, $ff'$ and
$ff'f''$  are in $\cS$, (which implies $f',f'',f'f''\in \cS$ since $\cS$ is
closed under right-divisors). We then have
\begin{theorem}\label{bicategory}
Let $\cC$ be a right-Noetherian category which admits conditional right-lcms and
has a Garside family $\cS$.
Then any representation of $\cS$ into a bicategory extends uniquely to a
representation of $\cC$ into the same bicategory.
\end{theorem}
\begin{proof}
The  proof goes exactly as in \cite{Deligne}, in that what must been proven
is  a  simple  connectedness  property  for  the set $E(g)$ 
of decompositions as a
product  of elements  of $\cS$  of an  arbitrary morphism $g\in\cC$--- this
generalizes  \cite[1.7]{Deligne}  and  is  used  in  the  same  way. In his
context,   Deligne  shows   more,  the   contractibility  of   the  set  of
decompositions;  on the other hand our proof, which follows a suggestion by
Serge  Bouc to use a version of  \cite[Lemma 6]{Bouc}, is simpler and holds
in our more general context.

Fix  $g\in\cC$  with  $g\notin  \CCCi$.  We  denote  by  $E(g)$  the set of
decompositions of $g$ into a product of elements of $\cS-\CCCi$.

Then $E(g)$  is a poset, the order being defined by
$$(g_1,\dots,g_{i-1},g_i,g_{i+1},\dots,g_n)>
(g_1,\dots,g_{i-1},a,b,g_{i+1},\dots,g_n)$$ if $ab=g_i \in \cS$.

We  recall  the  definition  of  homotopy  in a poset $E$ (a translation of the
corresponding notion in a simplicial complex isomorphic as a poset to $E$).
A  path from $x_1$ to $x_k$ in $E$ is a sequence $x_1\dotsm x_k$ where each
$x_i$  is comparable to  $x_{i+1}$. The composition  of paths is defined by
concatenation.  Homotopy,  denoted  by  $\sim$,  is  the finest equivalence
relation  on paths compatible  with concatenation and  generated by the two
following  elementary relations: $xyz\sim xz$ if $x\le y\le z$ and both $xyx\sim
x$ and $yxy\sim y$ when $x\le y$. Homotopy classes form a groupoid, as
the  composition  of  a  path  with  source  $x$ and of the inverse path is
homotopic  to  the  constant  path  at  $x$.  For  $x\in  E$  we  denote by
$\Pi_1(E,x)$ the fundamental group of $E$ with base point $x$, which is the
group of homotopy classes of loops starting from $x$.

A  poset $E$ is said to be {\em simply connected} if it is connected (there
is  a path linking  any two elements  of $E$) and  if the fundamental group
with some (or any) base point is trivial.

Note  that  a  poset  with  a  smallest  or  largest  element $x$ is simply
connected   since   any   path   $xyzt\dotsm x$   is   homotopic   to
$xyxzxtx\dotsm x$ which is homotopic to the trivial loop.

\begin{proposition} \label{Deligne} 
The set $E(g)$ is simply connected.
\end{proposition}
\begin{proof}
First  we prove a version  of a lemma from  \cite{Bouc} on order preserving
maps  between posets. For a poset $E$ we put $E_{\ge x}=\{x'\in E\mid x'\ge
x\}$,  which is a simply connected subposet  of $E$ since it has a smallest
element.  If $f:X\to Y$  is an order  preserving map it  is compatible with
homotopy (it corresponds to a continuous map between simplicial complexes),
so it induces a homomorphism $f^*:\Pi_1(X,x)\to \Pi_1(Y,f(x))$.

\begin{lemma}[Bouc]  \label{bouc}  Let  $f:X\to Y$ an  order preserving map
between  two posets. We assume that $Y$ is connected and that for any $y\in
Y$  the poset $\f y$ is connected  and non empty. Then $f^*$ is surjective.
If  moreover  $\f  y$  is  simply  connected  for  all $y$ then $f^*$ is an
isomorphism.
\end{lemma}
\begin{proof}
Let  us first show that $X$ is connected. Let $x,x'\in X$; we choose a path
$y_0\dotsm  y_n$ in $Y$ from $y_0=f(x)$ to $y_n=f(x')$. For $i=0,\dots,n$,
we  choose  $x_i\in\f{y_i}$  with  $x_0=x$  and  $x_n=x'$. Then if $y_i\geq
y_{i+1}$  we have $\f{y_i}\subset \f{y_{i+1}}$ so  that there exists a path
in  $\f{y_{i+1}}$ from  $x_i$ to  $x_{i+1}$; otherwise $y_i<y_{i+1}$, which
implies  $\f{y_i}\supset \f{y_{i+1}}$ and there  exists a path in $\f{y_i}$
from  $x_i$ to $x_{i+1}$. Concatenating these paths gives a path connecting
$x$ and $x'$.

We fix now $x_0\in X$. Let $y_0=f(x_0)$. We prove that
$f^*:\Pi_1(X,x_0)\to\Pi_1(Y,y_0)$  is  surjective.  Let  $y_0y_1\dotsm y_n$
with  $y_n=y_0$ be a loop in $Y$. We lift arbitrarily this loop into a loop
$x_0\dash\cdots\dash x_n$ in $X$ as above, (where $x_i\dash x_{i+1}$ stands
for  a path  from $x_i$  to $x_{i+1}$  which is  either in  $\f{y_i}$ or in
$\f{y_{i+1}}$).  Then  the  path  $f(x_0\dash  x_1\dash\cdots\dash  x_n)$ is
homotopic  to $y_0\dotsm y_n$; this can be seen by induction: let us assume
that   $f(x_0\dash   x_1\cdots\dash   x_i)$   is  homotopic  to  $y_0\dotsm
y_if(x_i)$;  then the same property holds for $i+1$: indeed $y_iy_{i+1}\sim
y_if(x_i)y_{i+1}$  as they are two paths in a simply connected set which is
either   $Y_{\ge   y_i}$   or   $Y_{\ge   y_{i+1}}$;   similarly   we  have
$f(x_i)y_{i+1}f(x_{i+1}) \sim f(x_i\dash x_{i+1})$. Putting things together
gives
$$
\begin{aligned}
y_0\dotsm y_iy_{i+1}f(x_{i+1})&\sim y_0y_1\dotsm y_if(x_i)y_{i+1}f(x_{i+1})\\
&\sim f(x_0\dash\cdots \dash x_i)y_{i+1}f(x_{i+1})\\
&\sim f(x_0\dash\cdots \dash x_i\dash x_{i+1}).
\end{aligned}
$$

We now prove injectivity of $f^*$ when all $\f{y}$ are simply connected.

We   first  prove  that  if   $x_0\dash  \cdots\dash  x_n$  and  $x'_0\dash
\cdots\dash x'_n$ are two loops lifting the same loop $y_0\dotsm y_n$, then
they  are  homotopic.  Indeed,  we  get  by induction on $i$ that $x_0\dash
\cdots\dash  x_i\dash x'_i$  and $x'_0\dash\cdots\dash  x'_i$ are homotopic
paths,  using the fact that $x_{i-1}$, $x_i$, $x'_{i-1}$ and $x'_i$ are all
in  the  same  simply  connected  sub-poset, namely either $\f{y_{i-1}}$ or
$\f{y_i}$.

It remains to prove that we can lift homotopies, which amounts to show that
if we lift as  above two loops which  differ by an elementary homotopy,
the  liftings are homotopic. If $yy'y\sim y$ is an elementary homotopy with
$y<y'$ (resp.\ $y>y'$), then $\f{y'}\subset\f{y}$ (resp.\
$\f{y}\subset\f{y'}$)  and the lifting of $yy'y$ constructed as above is in
$\f{y}$  (resp.\  $\f{y'}$)  so  is  homotopic  to  the  trivial  path.  If
$y<y'<y''$,  a lifting of $yy'y''$ constructed as above is in $\f{y}$ so is
homotopic to any path in $\f{y}$ with the same endpoints. 
\end{proof}

We  now prove Proposition \ref{Deligne}  by contradiction. If  it fails we choose $g\in
\cC$  minimal for proper right-divisibility  such that $E(g)$ is not simply
connected.

Let  $L$ be the set  of elements of $\cS-\CCCi$  which are left-divisors of
$g$. For any $I\subset L$, since the category admits conditional right-lcms and is
right-Noetherian, the elements  of $I$ have an lcm. We fix such an lcm $\Delta_I$.
Let $E_I(g)=\{(g_1,\dots,g_n)\in   E(g)\mid  \Delta_I\preccurlyeq  g_1\}$.  We
claim  that  $E_I(g)$  is  simply  connected  for  $I\neq\emptyset$. 
This is clear if $g\in\Delta_I\CCCi$, in which case
$E_I(g)=\{(g)\}$. Let us assume this is not the case. In the
following, if $\Delta_I\preccurlyeq a$, we denote by $a^I$ the element such
that $a=\Delta_I a^I$.
The set $E(g^I)$ is defined since $g\not\in\Delta_I\CCCi$.
We apply Lemma \ref{bouc} to the map $f: E_I(g)\to E(g^I)$
defined by $$(g_1,\dots,g_n)\mapsto \begin{cases}
(g_2,\dots,g_n)&\text{if        $g_1=\Delta_I$}\\        (g_1^I,g_2,\dots
g_n)&\text{otherwise}\\ \end{cases}.$$
This map preserves the order and any
set $\f{(g_1,\dots,g_n)}$ has a least element, namely
$(\Delta_I,g_1,\dots,g_n)$,  so is  simply connected.  As by minimality of
$g$  the set $E(g^I)$ is simply  connected Lemma \ref{bouc} implies that $E_I(g)$
is simply connected.

Let  $Y$ be the set of non-empty subsets of $L$. We now apply Lemma \ref{bouc} to
the  map  $f:E(g)\to  Y$  defined  by $(g_1,\dots,g_n)\mapsto \{s\in L\mid
s\preccurlyeq  g_1\}$, where $Y$ is ordered by inclusion. This map is order
preserving      since     $(g_1,\dots,g_n)<(g'_1,\dots,g'_n)$     implies
$g_1\preccurlyeq  g'_1$.  We  have  $\f{I}=E_I(g)$,  so  this set is simply
connected.  Since  $Y$,  having  a  greatest  element, is simply connected,
Lemma \ref{bouc} gives that $E(g)$ is simply connected, whence the proposition.
\end{proof}
\end{proof}

\part*{II. Deligne-Lusztig varieties and eigenspaces}

In  this part, we study the Deligne-Lusztig  varieties giving rise to a
Lusztig  induction functor $R_\bL^\bG$ and
generalize  them to  varieties attached to  elements of a ribbon
category.

In Section \ref{eigenspaces and roots} we consider the particular ribbons
describing varieties which play a role in the Brou\'e conjectures;
they are associated with maximal eigenspaces of elements of the Weyl group.

Finally  in Section \ref{section 10} we spell out the geometric form of the
Brou\'e   conjectures,  describing  how  the   action  on  the  $\ell$-adic
cohomology  of the endomorphisms of our varieties coming from 
the conjugacy category
of  the ribbon  category should  factorize through  a  cyclotomic Hecke
algebra.

\section{Parabolic Deligne-Lusztig varieties}\label{section 8}
Let  $\bG$ be a connected reductive algebraic group over $\Fpbar$, and let $F$
be  an isogeny on $\bG$  such that some power  $F^\delta$ is a Frobenius for a
split $\BF_{q^\delta}$-structure (this defines a positive real number $q$ such 
that $q^\delta$ is an integral power of $p$).

Let  $\bL$ be  an $F$-stable  Levi subgroup  of a (non-necessarily $F$-stable)
parabolic  subgroup $\bP$ of  $\bG$ and let  $\bP=\bL\bV$ be the corresponding
Levi decomposition of $\bP$. Let
$$
\begin{aligned}
\bX_\bV&=\{g\bV\in\bG/\bV\mid g\bV\cap F(g\bV)\ne\emptyset\}=
\{g\bV\in\bG/\bV\mid g\inv\lexp F g\in \bV\lexp F\bV\}\\
&\simeq\{g\in \bG\mid g\inv\lexp Fg\in\lexp F\bV\}/
(\bV\cap \lexp F\bV).
\end{aligned}
$$
On this variety  $\bG^F$ acts by left-multiplication and $\bL^F$ acts by
right-multiplication.

We choose a prime number $\ell\ne p$. Then the virtual
$\bG^F$-module-$\bL^F$  given  by  $M=\sum_i  (-1)^i H^i_c(\bX_\bV,\Qlbar)$
defines  the Lusztig  induction $R_\bL^\bG$  which by definition maps
an $\bL^F$-module $\lambda$ to $M\otimes_{\Qlbar\bL^F}\lambda$.

The map  $g\bV\mapsto g\bP$ makes  $\bX_\bV$ an $\bL^F$-torsor over
$$
\begin{aligned}
\bX_\bP&=\{g\bP\in\bG/\bP\mid g\bP\cap F(g\bP)\ne\emptyset\}=
\{g\bP\in\bG/\bP\mid g\inv\lexp F g\in \bP\lexp F\bP\}\\
&\simeq\{g\in \bG\mid g\inv\lexp Fg\in\lexp F\bP\}/
(\bP\cap \lexp F\bP),
\end{aligned}
$$
a $\bG^F$-variety such that $R_\bL^\bG(\Id)=\sum_i (-1)^i
H^i_c(\bX_\bP,\Qlbar)$. The variety $\bX_\bP$ is the prototype of the
varieties we want to study.

Let  $\bT\subset\bB$  be  a  pair  of  an  $F$-stable  maximal torus and an
$F$-stable  Borel subgroup of  $\bG$. With this  choice is associated a basis
$\Pi$  of the  root system  $\Phi$ of  $\bG$ with  respect to  $\bT$, and a
Coxeter system $(W,S)$ for   the   Weyl   group  $W=N_\bG(\bT)/\bT$.  Let
$X_\BR=X(\bT)\otimes\BR$ where $X(\bT)$ is the group of rational characters of
the torus $\bT$. On the vector space $X_\BR$, the isogeny $F$ acts
as  $q\phi$ where $\phi$  is of order  $\delta$ and stabilizes the positive
cone $\BR^+\Pi$; we will still denote by $\phi$ the induced automorphism of
$(W,S)$.

To  a  subset  $I\subset\Pi$  corresponds  a  subgroup  $W_I\subset  W$,  a
parabolic   subgroup  $\bP_I=\coprod_{w\in  W_I}\bB  w\bB$,  and  the  Levi
subgroup $\bL_I$ of $\bP_I$ which contains $\bT$.

Given  any  $\bP=\bL\bV$  as  in the beginning of this section, where  $\bL$  is $F$-stable, there exists
$I\subset\Pi$  such that $(\bL,\bP)$ is $\bG$-conjugate to $(\bL_I,\bP_I)$; if
we  choose the conjugating  element such that  it conjugates a maximally split
torus  of $\bL$ to  $\bT$ and a  rational Borel subgroup  of $\bL$ containing
this  torus to $\bB\cap\bL_I$,  then this element  conjugates $(\bL,\bP,F)$ to
$(\bL_I,\bP_I,\dot   wF)$   where   $\dot   w\in   N_\bG(\bT)$  is  such  that
$\lexp{w\phi}I=I$, where $w$ is the image of $\dot w$ in $W$.

It will be
convenient  to consider $I$ as a subset of $S$ instead of a subset of $\Pi$;
the  condition on $w$ must then be stated as ``$I^w=\lexp\phi I$ and $w$ is
$I$-reduced''. Note that $w$ is then also reduced-$\lexp\phi I$.
Via the above conjugation,  the  variety  $\bX_\bP$  is
isomorphic to the variety $$\bX(I,w\phi)=\{g\bP_I\in\bG/\bP_I\mid g\inv\lexp F
g\in \bP_Iw\lexp F\bP_I\}.$$ We will denote by $\bX_\bG(I,w\phi)$ this variety
when  there is a possible  ambiguity on the group.  If we denote by $\bU_I$
the   unipotent   radical   of   $\bP_I$,   we  have  $\dim  \bX(I,w\phi)=\dim
\bU_I-\dim(\bU_I\cap  \lexp{wF}\bU_I)=l(w)$, the last equality since $w$ is
reduced-$\lexp\phi I$. The  $\ell$-adic cohomology of
the   variety  $\bX(I,w\phi)$  gives  rise   to  the  Lusztig  induction  from
$\bL_I^{\dot wF}$ to $\bG^F$ of the trivial representation;
to avoid ambiguity on the isogenies involved,
we   will  sometimes  denote  this   Lusztig  induction  by  $R_{\bL_I,\dot
wF}^{\bG,F}(\Id)$.

\begin{definition}\label{relative position}
We say  that a pair $(\bP,\bQ)$ of parabolic subgroups is in
relative  position $(I,w,J)$, where $I,J\subset S$ and  $w\in W$,
if $(\bP,\bQ)$ is $\bG$-conjugate to $(\bP_I,\lexp w\bP_J)$. We denote this as
$\bP\xrightarrow{I,w,J}\bQ$.
\end{definition}
Since any pair $(\bP,\bQ)$ of parabolic subgroups share a common maximal torus,
it has a relative position $(I,w,J)$ where $I,J$ is uniquely determined as
well as the double coset $W_I w W_J$. 

Let  $\cP_I$ be  the variety  of parabolic  subgroups conjugate  to $\bP_I$;
this variety is isomorphic to $\bG/\bP_I$.  Via the map 
$g\bP_I\mapsto\lexp g\bP_I$ we have an  isomorphism
$$\bX(I,w\phi)\simeq\{\bP\in\cP_I\mid  \bP\xrightarrow{I,w,\lexp
\phi I}\lexp  F\bP\};$$ 
it is a variety  over $\cP_I\times\cP_{\lexp \phi I}$ by the
first and second projection.

\subsection*{The varieties $\cO$ attached to $B^+(\cI)$.}
In order to have a rich enough monoid of endomorphisms (see Definition
\ref{Dv}), we need to generalize the pairs $(I,w\phi)$ which label our
varieties to the larger set of morphisms of the category $B^+(\cI)$
of Section \ref{artin ribbons}, where $\cI$ is the conjugacy class in $B^+$
of the lift $\bI$ of $I$.

In order to do this, we define in this subsection a representation of $B^+(\cI)$ into the
bicategory  $\bX$ of varieties  over $\cP_I\times \cP_J$,  where $I,J$ vary
over  $\cI$. The bicategory  $\bX$ has $0$-cells  which are the elements of
$\cI$,  has  1-cells  with  domain  $\bI$  and codomain $\bJ$ which are the
$\cP_I\times  \cP_J$-varieties and  has 2-cells  which are  isomorphisms of
$\cP_I\times \cP_J$-varieties. For $\bI,\bJ\in\cI$ we denote by $V(\bI,\bJ)$ the category whose
objects  (resp.\ morphisms) are the 1-cells  with domain $\bI$ and codomain
$\bJ$  (resp.\ the 2-cells  between them); in  other words, $V(\bI,\bJ)$ is
the  category of $\cP_I\times\cP_J$-varieties endowed with the isomorphisms
of   $\cP_I\times\cP_J$-varieties.  The  horizontal  composition  bifunctor
$V(\bI,\bJ)\times V(\bJ,\bK)\to V(\bI,\bK)$ is given by the fibered product
over  $\cP_J$.  The  vertical  composition  is  given by the composition of
isomorphisms.

The representation of $B^+(\cI)$ in $\bX$ we construct will be denoted by $T$,
following  the notations of Section  \ref{bicategories}. For
$\bI\xrightarrow\bb\bJ\in B^+(\cI)$, we will also write
$\cO(\bI,\bb)$ for $T(\bI\xrightarrow\bb\bJ)$, to lighten the notation.
We  first define $T$ on the Garside family $\cS$ of $B^+(\cI)$.
\begin{definition}
For  $(\bI\xrightarrow{\bw}\bJ)\in  \cS$
we define $\cO(\bI,\bw)$ to be the variety
$\{(\bP,\bP')\in\cP_I\times\cP_J\mid\bP\xrightarrow{I,w,J}\bP'\}$,
where $I$, $w$, $J$ are the images in $W$ of $\bI$, $\bw$, $\bJ$.
\end{definition}

The following lemma constructs the isomorphism
$T(f)T(g)\xrightarrow\sim T(fg)$ when $f,g,fg\in \cS$:
\begin{lemma}\label{1.1}
Let         $(\bI\xrightarrow{\bw_1}\bI_2\xrightarrow{\bw_2}\bJ)=(
\bI\xrightarrow{\bw}\bJ)$        where
$\bw=\bw_1\bw_2\in\bW$   be  a  defining  relation  of  $B^+(\cI)$.  Then
$(p',p''):\cO(I,\bw_1)\times_{\cP_{I_2}}\cO(I_2,\bw_2)
\xrightarrow\sim\cO(I,\bw_1\bw_2)$ is an isomorphism, where $p'$ and $p''$
are respectively the first and last projections.
\end{lemma}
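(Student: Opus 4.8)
The plan is to use that $(p',p'')$ is a $\bG$-equivariant morphism of varieties (both projections of a fibered product are morphisms, $\bG$ acting diagonally), and that the target $\cO(\bI,\bw)$ is a single $\bG$-orbit. Indeed, since $\bw=\bw_1\bw_2\in\bW$ is a morphism of $B^+(\cI)$ with source $\bI$, the element $w$ is $(I,J)$-reduced: $\alpha_\bI(\bw)=1$ forces $w$ to be left $I$-reduced, and $\bJ=\bI^\bw$ forces it to be right $J$-reduced as well, so $W_IwW_J$ has reduced representative $w$ and $\cO(\bI,\bw)=\{(\bP,\bP')\mid\bP\xrightarrow{I,w,J}\bP'\}$ is by definition the orbit of the standard pair $(\bP_I,\lexp{\dot w}\bP_J)$. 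By the general description of a $\bG$-equivariant map to a homogeneous space, the source $\cO(\bI,\bw_1)\times_{\cP_{I_2}}\cO(\bI_2,\bw_2)$ is the associated bundle over $\cO(\bI,\bw)$ whose fibre over this base point is the scheme $F$ of parabolics $\bQ\in\cP_{I_2}$ with $\bP_I\xrightarrow{I,w_1,I_2}\bQ$ and $\bQ\xrightarrow{I_2,w_2,J}\lexp{\dot w}\bP_J$; hence it suffices to prove that $F$ is a single reduced point, necessarily $\lexp{\dot w_1}\bP_{I_2}$. (That $\lexp{\dot w_1}\bP_{I_2}\in F$ is clear: conjugating by $\dot w_1\inv$ and using $\dot w=\dot w_1\dot w_2$ reduces the second condition to $\bP_{I_2}\xrightarrow{I_2,w_2,J}\lexp{\dot w_2}\bP_J$, which holds because $w_2$ is left $I_2$-reduced ($\alpha_{\bI_2}(\bw_2)=1$) and right $J$-reduced ($\bJ=\bI_2^{\bw_2}$).)

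Both remaining points — that $F$ has a single reduced point, and the preliminary fact that $(p',p'')$ really maps into $\cO(\bI,\bw)$, i.e. $\bP\xrightarrow{w_1}\bQ\xrightarrow{w_2}\bP'$ implies $\bP\xrightarrow{w}\bP'$ — I would deduce from the case of Borel subgroups, which is classical (it is the $\cI=\{\emptyset\}$ case, among the material of \cite{BM} that this paper extends). Concretely, a parabolic $\bQ$ of type $I_2$ with $\bP\xrightarrow{w_1}\bQ\xrightarrow{w_2}\bP'$ can be refined to a gallery of Borel subgroups $\bB\subseteq\bP$, $\bB''\subseteq\bQ$, $\bB'\subseteq\bP'$ with $\bB\xrightarrow{w_1}\bB''$ and $\bB''\xrightarrow{w_2}\bB'$; since $l(w)=l(w_1)+l(w_2)$ (this is exactly $\bw_1\bw_2\in\bW$), the classical statement gives $\bB\xrightarrow{w}\bB'$, whence $\bP\xrightarrow{I,w,J}\bP'$ as $w$ is $(I,J)$-reduced. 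In the situation of $F$, the classical uniqueness of $\bB''$ given $(\bB,\bB')$ pins down $\bB''$, hence $\bQ$, up to the ambiguity in the choice of the refining pair $(\bB,\bB')$; that ambiguity is governed by $\bP\cap\bP'$ (the common parabolic $\bP_I\cap\lexp{\dot w}\bP_J$ in standard position), so the whole matter comes down to the containment $\bP_I\cap\lexp{\dot w}\bP_J\subseteq\lexp{\dot w_1}\bP_{I_2}$: since $N_\bG(\lexp{\dot w_1}\bP_{I_2})=\lexp{\dot w_1}\bP_{I_2}$, this makes $\bQ$ independent of all choices, so that $(\bP,\bP')\mapsto(\bP,\bQ,\bP')$ is a morphism inverse to $(p',p'')$.

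I expect the containment $\bP_I\cap\lexp{\dot w}\bP_J\subseteq\lexp{\dot w_1}\bP_{I_2}$ to be the main obstacle; it is also the one spot where the parabolic types genuinely intervene, through $\bI_2=\bI^{\bw_1}$ and the reducedness hypotheses. Comparing root subgroups, it amounts to: if $\alpha\in(\Phi^+\cup\Phi_I)\cap\lexp{w}{(\Phi^+\cup\Phi_J)}$ then $w_1\inv\alpha\in\Phi^+\cup\Phi_{I_2}$, which one proves by a case analysis using that $w_1$ is $(I,I_2)$-reduced (so $w_1\inv\Phi_I^+\subseteq\Phi^+$, $w_1\Phi_{I_2}^+\subseteq\Phi^+$ and $w_1\inv\Phi_I=\Phi_{I_2}$), that $w_2$ is $(I_2,J)$-reduced, and crucially that $l(w)=l(w_1)+l(w_2)$, the latter translating into a direct-product decomposition of the relevant unipotent subgroups. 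Carrying out that decomposition in explicit unipotent coordinates has the further advantage of exhibiting the inverse of $(p',p'')$ as a morphism directly, so that no separability argument is needed to upgrade the bijection to an isomorphism of varieties in characteristic $p$; this is the route I would follow.
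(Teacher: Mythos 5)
Your proof is correct, and its core mechanism --- refine the pair of parabolics to a gallery of Borel subgroups, invoke the length additivity $l(w)=l(w_1)+l(w_2)$ to get a unique intermediate Borel, and then take the unique parabolic of type $I_2$ containing it --- is the same one the paper uses. But the way you package the argument differs from the paper in two genuine respects that are worth noting.

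First, you cast the problem in the homogeneous-space language (the target $\cO(\bI,\bw)$ is a single $\bG$-orbit, so the source is an associated bundle and it suffices to compute one fibre $F$), and you identify as the decisive input the containment $\bP_I\cap\lexp{\dot w}\bP_J\subseteq\lexp{\dot w_1}\bP_{I_2}$, which controls the ambiguity in the choice of refining Borel pair. The paper does not state this containment; instead, for the uniqueness step it takes a candidate $\bP'_1$, produces a Borel $\bB'_1\subseteq\bP'_1$ via a second conjugation and argues it must coincide with the known intermediate Borel $\bB_1$. Your containment is a clean root-theoretic fact (and your case analysis, using $w_1\inv\Phi_I=\Phi_{I_2}$, $w_2\inv\Phi_{I_2}=\Phi_J$ and $N(w_1\inv)\subseteq N(w\inv)$, does establish it), and it makes the well-definedness of $\bQ$ explicit; the paper's version is shorter but arguably harder to read off. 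Second, you propose to finish by writing the inverse map in explicit unipotent coordinates, while the paper instead deduces the isomorphism from bijectivity plus normality of the target (citing its smoothness lemma). Your route, which is in fact exactly the one the paper takes in the neighbouring Lemma 2.2 for $\tilde\cO$, has the advantage of side-stepping the separability subtleties that lurk behind ``bijective onto a normal variety'' in positive characteristic; the paper's route is shorter if one accepts that the map is $\bG$-equivariant between homogeneous spaces with reduced stabilizers. In short: same backbone, a more transparent uniqueness argument, and a more self-contained finish.

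Two small precision points you should make sure are airtight when you write this up: when you say ``that ambiguity is governed by $\bP\cap\bP'$'', you are using that any two Borel refinements of $(\bP,\bP')$ in position $w$ are conjugate by an element of $N_\bG(\bP)\cap N_\bG(\bP')=\bP\cap\bP'$; state that explicitly. And when you conclude that $w$ is right $J$-reduced from $\bJ=\bI^\bw$, it is really the combination of $\alpha_\bI(\bw)=1$ with $\bI^\bw=\bJ$ that gives $w\Phi_J^+=\Phi_I^+\subseteq\Phi^+$; the conjugacy of the subsets alone would only give this up to sign.
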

\begin{proof}
First notice that for two parabolic subgroups
$(\bP',\bP'')\in\cP_I\times\cP_J$ we have
$\bP'\xrightarrow{I,w,J}\bP''$ if and only if the pair $(\bP',\bP'')$
is conjugate to a pair containing termwise the pair $(\bB,\lexp w\bB)$.
This shows that if $\bP'\xrightarrow{I,w_1,I_2}\bP_1$ and
$\bP_1\xrightarrow{I_2,w_2,J}\bP''$ then $\bP'\xrightarrow{I,w_1w_2,J}\bP''$,
so $(p',p'')$ goes to the claimed variety.

Conversely, we have to show that given $\bP'\xrightarrow{I,w,J}\bP''$ there
is a unique $\bP_1$ such that
$\bP'\xrightarrow{I,w_1,I_2}\bP_1\xrightarrow{I_2,w_2,J}\bP''$.  The  image
of $(\bB,\lexp w\bB)$ by the conjugation which sends $(\bP_I,\lexp w\bP_J)$
to $(\bP',\bP'')$ is a pair of Borel subgroups
$(\bB'\subset\bP',\bB''\subset\bP'')$     in     position     $w$.    Since
$l(w_1)+l(w_2)=l(w)$,  there is a  unique Borel subgroup  $\bB_1$ such that
$\bB'\xrightarrow{w_1}\bB_1\xrightarrow{w_2}\bB''$.  The  unique  parabolic
subgroup  of type $I_2$ containing $\bB_1$ has the desired relative
positions,  so $\bP_1$  exists. And  any other  parabolic subgroup $\bP'_1$
which has the desired relative positions contains a Borel subgroup $\bB'_1$
such  that  $\bB'\xrightarrow{w_1}\bB'_1\xrightarrow{w_2}\bB''$  (take  for
$\bB'_1$  the  image  of  $\lexp{w_1}\bB$  by  the  conjugation  which maps
$(\bP_I,\lexp{w_1}\bP_{I_2})$   to  $(\bP',\bP'_1)$),  which  implies  that
$\bB'_1=\bB_1$ and thus $\bP'_1=\bP_1$.
Thus our map is bijective on points. To show it is an isomorphism, it is
sufficient to check that its target is a normal variety, which is given by
\begin{lemma}\label{smooth}
For $(\bI\xrightarrow{\bw}\bJ)\in \cS$ the variety $\cO(\bI,\bw)$ is smooth.
\end{lemma}
\begin{proof}
Consider the locally trivial fibrations with smooth fibers given by
$\bG\times\bG\xrightarrow p\cP_I\times\cP_J:(g_1,g_2)\mapsto
(\lexp{g_1}\bP_I,\lexp{g_2w}\bP_J)$ and $\bG\times\bG\xrightarrow q\bG:
(g_1,g_2)\mapsto g_1\inv g_2$. It is easy to check that $\cO(\bI,\bw)=
p(q\inv(\lexp w\bP_J))$ thus by for example \cite[2.2.3]{DMR} it is smooth.
\end{proof}
\end{proof}

From  the above lemma we see also  that the square \ref{bicat} commutes for
elements   of  $\cS$,   since  the   isomorphism  ``forgetting  the  middle
parabolic'' has clearly the corresponding property. We have thus defined
a representation $T$ of $\cS$ in $\bX$.

The extension of  $T$  to  the  whole  of  $B^+(\cI)$  associates with a
composition
$\bI\xrightarrow{\bw_1}\bI_2\to\dots\to\bI_k\xrightarrow{\bw_k}\bJ$  with
$\bw_i\in\bW$ the variety 
$$
\cO(\bI,\bw_1)\times_{\cP_{I_2}}\dots\times_{\cP_{I_k}}\cO(\bI_k,\bw_k)
=\{(\bP_1,\dots,\bP_{k+1})\mid
\bP_i\xrightarrow{I_i,w_i,I_{i+1}}\bP_{i+1}\},  
$$
where   $I_1=I$   and
$I_{k+1}=J$.  It  is  a  $\cP_I\times\cP_J$-variety  via the first and last
projections mapping $(\bP_1, \dots,\bP_{k+1})$ respectively to $\bP_1$ and
$\bP_{k+1}$,  and Lemma \ref{1.1} shows that  up to isomorphism it does not
depend   on  the   chosen  decomposition   of  $\bI\xrightarrow{\bw_1\dotsm
\bw_k}\bJ$. Theorem \ref{bicategory} shows that there is actually a unique
isomorphism between the various models attached to different decompositions,
so $T$ associates a well-defined variety to any element of $B^+(\cI)$.

\begin{definition}
For $\bI\xrightarrow{\bb}\bJ\in B^+(\cI)$ 
we denote  by $\cO(\bI,\bb)$ the variety defined by Theorem \ref{bicategory}.
For any decomposition $(\bI\xrightarrow\bb\bJ)=
(\bI_1\xrightarrow{\bw_1}\bI_2\to\cdots\xrightarrow{\bw_k}\bJ)$ into
elements of $\cS$ it has the model $\{(\bP_1,\dots,\bP_{k+1})\mid
\bP_i\xrightarrow{I_i,w_i,I_{i+1}}\bP_{i+1}\}$.
\end{definition}
The variety $\cO(\bI,\bb)$ is endowed with a natural action of $\bG$ by
simultaneous conjugation of the $\bP_i$.

\subsection*{The Deligne-Lusztig varieties attached to $B^+(\cI)$.}
The  automorphism $\phi$ lifts naturally to  an automorphism of $B^+$ which
stabilizes $\bS$, which we will still denote by $\phi$, by abuse of notation.
For   $(\bI\xrightarrow{\bw}\lexp\phi\bI)\in\cS$,  the variety
$\bX(I,w\phi)$  is  the  intersection  of  $\cO(\bI,\bw)$ with the graph of
$F$,  that is,  points whose image under
$(p',p'')$  has  the  form  $(\bP,\lexp  F\bP)$.  Via the correspondance
between $\phi$-conjugacy and conjugacy in the coset, we interpret
$\bI\xrightarrow\bw\lexp\phi\bI$ as the endomorphism
$\bI\xrightarrow{\bw\phi}\bI$ in $B^+\phi(\cI)$. More generally,
\begin{definition}
Let  $\bI\xrightarrow{\bb\phi}\bI$ be an  endomorphism of $B^+\phi(\cI)$; we
define the variety $\bX(\bI,\bb\phi)$ as the intersection of $\cO(\bI,\bb)$
with the graph of $F$.
\end{definition}
The action of $\bG$ on $\cO(\bI,\bb)$ restricts to an action of $\bG^F$ on
$\bX(\bI,\bb\phi)$.
This last  variety  may   be  interpreted  as   an  ``ordinary''  parabolic
Deligne-Lusztig variety in a group which is a restriction of scalars:
\begin{proposition}\label{descente}
For any decomposition $(\bI\xrightarrow\bb\lexp\phi\bI)=
(\bI_1\xrightarrow{\bw_1}\bI_2\to\cdots\xrightarrow{\bw_k}\lexp  \phi\bI)$ in
elements of $\cS$ the variety $\bX(\bI,\bb\phi)$ has the model
$\{(\bP_1,\dots,\bP_{k+1})\mid
\bP_i\xrightarrow{I_i,w_i,I_{i+1}}\bP_{i+1}\text{ and }
\bP_{k+1}=F(\bP_1)\}$.
Let $F_1$ be the isogeny of $\bG^k$ defined by
$F_1(g_1,\dots,g_k)=(g_2,\dots,g_k,F(g_1))$ and let $\phi_1$ be the
corresponding automorphism of $W^k$.
Then the above model is isomorphic to
$\bX_{\bG^k}(I_1\times\dots\times I_k,(w_1,\dots,w_k)\phi_1)$. By this
isomorphism the action of $F^\delta$ corresponds to that of $F_1^{k\delta}$
and the action of $\bG^F$ corresponds to that of
$(\bG^k)^{F_1}$---the isomorphism $\bG^F\xrightarrow\sim(\bG^k)^{F_1}$ is via
the diagonal embedding.
\end{proposition}
\begin{proof}
That $\bX(\bI,\bb\phi)$ has the model given above is a consequence of the
analogous statement for $\cO(\bI,\bb)$.

An element $\bP_1\times\dots\times\bP_k\in
\bX_{\bG^k}(I_1\times\dots\times I_k,(w_1,\dots,w_k)\phi_1)$ by definition 
satisfies
$$\bP_1\times\dots\times\bP_k\xrightarrow{I_1\times\cdots I_k,
(w_1,\dots,w_k),I_2\times\cdots I_k\times\lexp
\phi I_1}\bP_2\times\dots\times\bP_k\times\lexp F\bP_1$$
thus is equivalently given by a sequence $(\bP_1,\dots,\bP_{k+1})$
such that $\bP_i\xrightarrow{I_i,w_i,I_{i+1}}\bP_{i+1}$ with 
$\bP_{k+1}=\lexp F\bP_1$ and $I_{k+1}=\lexp \phi I_1$, which is the same as an 
element
$$(\bP_1,\dots,\bP_{k+1})\in\cO(\bI_1,\bw_1)\times_{\cP_{I_2}}
\cO(\bI_2,\bw_2)\dots\times_{\cP_{I_k}}\cO(\bI_k,\bw_k)$$ such that 
$\bP_{k+1}=\lexp  F\bP_1$.
But this is a model of $\bX_\bG(\bI,\bb \phi)$ as explained above.

One checks easily that this sequence of identifications is compatible with
the actions of $F^\delta$ and $\bG^F$ as described by the proposition.
\end{proof}
\begin{proposition}\label{irreducibility}
The variety $\bX(\bI,\bb \phi)$ is irreducible if and only if $\bI\cup\supp(\bb)$
meets all the orbits of $\phi$ on $\bS$, where $\supp(\bb)$ is the support of
$\bb$ (see the proof of Lemma \ref{all parabolic}).
\end{proposition}
\begin{proof}
This is, using Proposition \ref{descente}, an immediate translation in our setting of the
result \cite[Theorem 2]{BR} of Bonnaf\'e-Rouquier.
\end{proof}
\subsection*{The varieties $\tilde\bX(\bI,\bw\phi)$}
The   conjugation  which  transforms  $\bX_\bP$  into  $\bX(I,w\phi)$  maps
$\bX_\bV$ to the $\bG^F$-variety-$\bL_I^{\dot wF}$ given by
\begin{equation}\label{tildeXwphi}
\tilde\bX(I,\dot  wF)=\{g\bU_I\in\bG/\bU_I\mid g\inv\lexp Fg\in\bU_I \dot
w\lexp F\bU_I\},
\end{equation}
where  $\dot  w$  is  a  representative  of  $w$ (any representative can be
obtained  by choosing  an appropriate  conjugation). The map $g\bU_I\mapsto
g\bP_I$  makes  $\tilde\bX(I,\dot  wF)$  a  $\bL_I^{\dot  wF}$-torsor  over
$\bX(I,w\phi)$. We have written $\dot w$ and $F$ together since 
the  variety  depends  only  on  the  product  $\dot w F\in
N_\bG(\bT)  \rtimes\genby F$; we will write $\tilde\bX(I,\dot w\cdot F)$ to
separate  the Frobenius  endomorphism from  the representative  of the Weyl
group  element when needed, in  the case where the  ambient group is a Levi
subgroup with Frobenius endomorphism of the form $\dot x F$.

In this section, we define a variety $\tilde\bX(\bI,\bw\phi)$ which generalizes
$\tilde\bX(I,\dot  wF)$  by  replacing  $\dot  w$  by elements of the braid
group. Since $\dot w$ represents a choice of a lift of $w$ to $N_\bG(\bT)$,
we  have  to  make  uniformly  such  choices  for all elements of the braid
group, which we do by using a ``Tits homomorphism''.

First, when $\bw\in\bW$, we define a variety $\tilde\cO(I,\dot w)$
``above''   $\cO(\bI,\bw)$   such   that   $\tilde\bX(I,\dot  wF)$  is  the
intersection  of $\tilde\cO(I,w)$ with the graph of $F$, and then we extend
this construction to $\cB^+(\cI)$.

\begin{definition}
Let $(\bI\xrightarrow{\bw}\bJ)\in\cS$, and let $\dot w\in N_\bG(\bT)$ be a
representative of $w$.  We define $\tilde\cO(I,\dot w)=
\{(g\bU_I,g'\bU_J)\in\bG/\bU_I\times\bG/\bU_J\mid g\inv g'\in\bU_I\dot
w\bU_J\}$.
\end{definition}
The variety $\tilde\cO(I,\dot w)$ has a left action of $\bG$ by simultaneous
translation and a right action of $\bL_I$ by
$(g\bU_I,g'\bU_J)\mapsto(gl\bU_I,g'l^{\dot w}\bU_J)$.

We can prove an analogue of Lemma \ref{1.1}.
\begin{lemma}\label{2.2}
Let     $(\bI\xrightarrow{\bw_1}\bI_2\xrightarrow{\bw_2}\bJ)=(
\bI\xrightarrow{\bw_1\bw_2}\bJ)$        where
$\bw_1\bw_2\in\bW$   be  a  defining  relation  of  $B^+(\cI)$, and let
$\dot w_1,\dot w_2$ be representatives of the images of $\bw_1$ and $\bw_2$ in
$W$. Then
$(p',p''):\tilde\cO(I,\dot w_1)\times_{\bG/\bU_{I_2}}\tilde\cO(I_2,\dot w_2)
\xrightarrow\sim\tilde\cO(I,\dot w_1\dot w_2)$ is an isomorphism
where $p'$ and $p''$ are the first and last projections.
\end{lemma}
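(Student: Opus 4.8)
The plan is to mimic the proof of Lemma \ref{1.1}, lifted up one level along the $\bU$-torsors. First I would recall that $\tilde\cO(I,\dot w_1)\times_{\bG/\bU_{I_2}}\tilde\cO(I_2,\dot w_2)$ consists of triples $(g\bU_I,g_1\bU_{I_2},g'\bU_J)$ with $g\inv g_1\in\bU_I\dot w_1\bU_{I_2}$ and $g_1\inv g'\in\bU_{I_2}\dot w_2\bU_J$; the map $(p',p'')$ forgets the middle coset. Composing the two conditions immediately gives $g\inv g'\in\bU_I\dot w_1\bU_{I_2}\dot w_2\bU_J$, so I must check that this product set equals $\bU_I\dot w_1\dot w_2\bU_J$. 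Since $l(w_1)+l(w_2)=l(w_1w_2)$ (this is exactly the hypothesis that $\bw_1\bw_2\in\bW$, i.e.\ lengths add), we have $\bU_I\dot w_1\bU_{I_2}\dot w_2\bU_J=\bU_I\dot w_1\dot w_2\bU_J$ by the standard Bruhat‑cell multiplication identity $U_{w_1}\dot w_1U_{w_2}\dot w_2=U_{w_1w_2}\dot w_1\dot w_2$ when lengths add, together with the fact that $\bU_{I_2}=(\bU_{I_2}\cap\lexp{w_2}\bU_J)\cdot(\text{part absorbed})$; concretely one uses that the multiplication map on the relevant unipotent subgroups is bijective. This shows $(p',p'')$ lands in $\tilde\cO(I,\dot w_1\dot w_2)$.

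Next I would prove bijectivity on points by exhibiting the inverse. Given $(g\bU_I,g'\bU_J)\in\tilde\cO(I,\dot w_1\dot w_2)$, write $g\inv g'=u_1\dot w_1\dot w_2 u_J$ with $u_1\in\bU_I$, $u_J\in\bU_J$; the point is that the decomposition of $\bU_I\dot w_1\dot w_2\bU_J$ through the intermediate cell is unique when lengths add, so there is a unique $g_1\bU_{I_2}$ with $g\inv g_1\in\bU_I\dot w_1\bU_{I_2}$ and $g_1\inv g'\in\bU_{I_2}\dot w_2\bU_J$. Uniqueness here is the torsor‑level refinement of the uniqueness of the intermediate Borel $\bB_1$ in the proof of Lemma \ref{1.1}: one first gets uniqueness of the intermediate parabolic $\bP_{I_2}$ from Lemma \ref{1.1}, and then uniqueness of the $\bU_{I_2}$‑coset follows because, once the parabolic is fixed, the ambiguity in choosing $g_1$ modulo $\bU_{I_2}$ is killed by the length‑additivity (there is no room to absorb a nontrivial unipotent factor on either side). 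So $(p',p'')$ is bijective.

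Finally, to upgrade bijectivity to an isomorphism of varieties I would argue as in Lemma \ref{1.1}: it suffices that the target $\tilde\cO(I,\dot w_1\dot w_2)$ be a normal (indeed smooth) variety. This is the analogue of Lemma \ref{smooth}: one writes $\tilde\cO(I,\dot w)$ as $p(q\inv(\bU_I\dot w\bU_J))$ for suitable locally trivial fibrations $\bG\times\bG\xrightarrow{p}\bG/\bU_I\times\bG/\bU_J$ and $\bG\times\bG\xrightarrow{q}\bG$, $(g_1,g_2)\mapsto g_1\inv g_2$, and since $\bU_I\dot w\bU_J$ is a locally closed smooth subvariety of $\bG$ (it is isomorphic to an affine space times a torus, being a single cell in the $\bU_I\backslash\bG/\bU_J$ decomposition), smoothness follows by \cite[2.2.3]{DMR} exactly as before.

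The main obstacle I expect is the uniqueness of the lift of the intermediate coset in the second paragraph: at the level of parabolics this is Lemma \ref{1.1}, but at the level of $\bU$-cosets one has to verify carefully that length‑additivity $l(w_1)+l(w_2)=l(w)$ leaves no slack, i.e.\ that $\bU_I\dot w_1\bU_{I_2}\cap(\text{a single }\bU_I\dot w\bU_J\text{-coset data})$ pins down $g_1\bU_{I_2}$ uniquely rather than up to a positive‑dimensional family — this is where the hypothesis $\bw_1\bw_2\in\bW$ is used essentially, and it is the only place the argument differs in substance from the parabolic‑only case of Lemma \ref{1.1}.
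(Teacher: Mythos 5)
Your structure is right, and the smoothness step is exactly as in the paper (which says to replace $\cP_I,\cP_J$ by $\bG/\bU_I,\bG/\bU_J$ in the argument of Lemma~\ref{smooth}). But the heart of the lemma --- the uniqueness of the intermediate coset $g_2\bU_{I_2}$ --- is not actually established, and the detour through Lemma~\ref{1.1} does not close that gap. Knowing the intermediate parabolic $\lexp{g_2}\bP_{I_2}$ (which is what Lemma~\ref{1.1} gives) only pins down $g_2$ modulo $\bP_{I_2}$, so $g_2\bU_{I_2}$ is still free to vary over an $\bL_{I_2}$-torsor worth of cosets; the claim that ``there is no room to absorb a nontrivial unipotent factor'' does not yet engage with that Levi-sized ambiguity at all. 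In other words, the passage from ``parabolic determined'' to ``$\bU$-coset determined'' is a positive-dimensional step that your plan flags as the main obstacle and then does not carry out.

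The paper avoids this entirely and works directly at the level of $\bU$-cosets: it isolates and proves the key multiplicativity statement that the product map
$$(\bU_I\cap\lexp{\dot w_1}\bU_{I_2}^-)\dot w_1\times(\bU_{I_2}\cap\lexp{\dot w_2}\bU_J^-)\dot w_2\xrightarrow\sim(\bU_I\cap\lexp{\dot w_1\dot w_2}\bU_J^-)\dot w_1\dot w_2$$
is an isomorphism, which follows from $\bU_I\cap\lexp w\bU_J^-=\prod_{-\alpha\in\lexp wN(w)}\bU_\alpha$ together with the additivity $N(w_1)^{w_2}\coprod N(w_2)=N(w_1w_2)$ of inversion sets when lengths add. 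This single isomorphism gives both that $(p',p'')$ lands in $\tilde\cO(I,\dot w_1\dot w_2)$ \emph{and} that, after translating to $g_1=1$ and taking the canonical section $g_2\in(\bU_I\cap\lexp{\dot w_1}\bU_{I_2}^-)\dot w_1$ of $g_2\bU_{I_2}$, the element $g_3\in\bU_I\dot w_1\dot w_2\bU_J$ decomposes uniquely as $g_2\cdot(g_2\inv g_3)$ with the required membership conditions. That is the step you should replace the Lemma~\ref{1.1} detour with: a direct unique-factorization argument for the double coset through the intermediate cell, proved by the root-system bookkeeping, not by first dropping down to parabolics.
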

\begin{proof}
We  first  note  that  if  $\bI\xrightarrow{\bw}\bJ\in B^+(\cI)$  and  $\dot w$ is a
representative  in  $N_\bG(\bT)$  of  the  image  of  $\bw$  in  $W$,  then
$\bU_I\dot  w\bU_J$ is  isomorphic by  the product  morphism to  the direct
product of varieties $(\bU_I\cap \lexp w\bU_J^-)\dot w\times\bU_J$, where
$\bU_J^-$ is the unipotent radical of the parabolic subgroup opposed to
$\bP_J$ containing $\bT$.
We now use the lemma:
\begin{lemma}
Under the assumptions of Lemma \ref{2.2}, the product gives an isomorphism
$(\bU_I\cap\lexp{\dot w_1}\bU_{I_2}^-)\dot w_1\times
(\bU_{I_2}\cap\lexp{\dot w_2}\bU_J^-)\dot w_2\xrightarrow\sim
(\bU_I\cap\lexp{\dot w_1\dot w_2}\bU_J^-)\dot w_1\dot w_2$.
\end{lemma}
\begin{proof}
Since $w$ is $I$-reduced and $I^w=J$,  we have
$\bU_I\cap \lexp w\bU_J^-=\prod_{-\alpha\in\lexp w N(w)}\bU_\alpha$
as a product of root subgroups,
where $N(w)=\{\alpha\in\Phi^+\mid \lexp w\alpha\in\Phi^-\}$.
The lemma is then a consequence of the equality $N(w_1)^{w_2}\coprod
N(w_2)=N(w_1w_2)$ when $l(w_1)+l(w_2)=l(w_1w_2)$.
\end{proof}
The   lemma  proves  in  particular   that  if  $g_1\inv  g_2\in  \bU_I\dot
w_1\bU_{I_2}$  and  $g_2\inv  g_3\in  \bU_{I_2}\dot w_2\bU_J$ then $g_1\inv
g_3\in    \bU_I\dot    w_1\bU_{I_2}\dot    w_2\bU_J=   (\bU_I\cap\lexp{\dot
w_1}\bU_{I_2}^-)\dot    w_1    (\bU_{I_2}\cap\lexp{\dot    w_2}\bU_J^-)\dot
w_2\bU_J=  (\bU_I\cap\lexp{\dot w_1\dot  w_2}\bU_J^-)\dot w_1\dot w_2\bU_J=
\bU_I\dot  w_1\dot w_2\bU_J$,  so the  image of  the morphism $(p',p'')$ in
Lemma \ref{2.2} is indeed in the variety $\tilde\cO(I,\dot w_1\dot w_2)$.

Conversely, we have to show that given $(g_1\bU_I,g_3\bU_\bJ)\in\tilde\cO
(I,\dot w_1\dot w_2)$, there exists a unique $g_2U_{I_2}$ such that 
$(g_1\bU_I,g_2\bU_{I_2})\in\tilde\cO(I,\dot w_1)$
and $(g_2\bU_{I_2},g_3\bU_{I_3})\in\tilde\cO(I_2,\dot w_2)$. The varieties
involved being invariant by left-translation by $\bG$, it is enough to solve
the problem when $g_1=1$. Then we have $g_3\in\bU_I\dot w_1\dot w_2\bU_J$,
and the conditions for $g_2\bU_{I_2}$ is that 
$g_2\bU_{I_2}\subset\bU_I\dot w_1\bU_{I_2}$. Any such coset has then a unique
representative in $(\bU_I\cap\lexp{\dot w_1}\bU_{I_2}^-)\dot w_1$ and we will
look for such a representative $g_2$. But we must have
$g_2\inv g_3\in \bU_{I_2}\dot w_2\bU_J=
(\bU_{I_2}\cap\lexp{\dot w_2}\bU_J^-)\dot w_2\bU_J$ and since by the lemma
the product  gives an isomorphism between
$(\bU_I\cap\lexp{\dot w_1}\bU_{I_2}^-)\dot w_1 \times
(\bU_{I_2}\cap\lexp{\dot w_2}\bU_J^-)\dot w_2\bU_J$ and
$\bU_I\dot w_1\dot w_2\bU_J$, the element $g_3$ can be decomposed in one and
only one way in a product $g_2(g_2\inv g_3)$ satisfying the conditions.
To conclude as in Lemma \ref{1.1} we show that the variety $\tilde\cO(\bI,\dot
w_1\dot  w_2)$ is smooth. An argument similar to the proof of Lemma \ref{smooth},
replacing  $\cP_I$ and  $\cP_J$ by  $\bG/\bU_I$ and $\bG/\bU_J$ respectively
gives the result.
\end{proof}

The isomorphism of Lemma \ref{2.2} is compatible with the action of $\bG$
and of $\bL_I$, $\bL_{I_2}$ respectively.

We will now use a Tits homomorphism, which is a homomorphism $B\xrightarrow
t  N_\bG(\bT)$ which  factors the  projection $B\to  W$ ---the existence of
such a homomorphism is
proved  in \cite{Tits}. Theorem \ref{bicategory} implies that, setting
$T(\bI\xrightarrow\bw\bJ)=\tilde\cO(I,t(\bw))$ for
$(\bI\xrightarrow\bw\bJ)\in\cS$   and  replacing  Lemma   \ref{1.1}  by  Lemma
\ref{2.2},  we can define a representation  of $B^+(\cI)$ in the bicategory
$\tilde\bX$  of varieties above $\bG/\bU_I\times\bG/\bU_J$ for $I,J\in\cI$.
\begin{definition}
The  above  representation  defines  for  any  $\bI\xrightarrow{\bb}\bJ\in
B^+(\cI)$ a variety $\tilde\cO(\bI,\bb)$ which  for any  decomposition
$(\bI\xrightarrow{\bb}\bJ)=(\bI\xrightarrow{\bw_1}\bI_2\to\dots\to\bI_k
\xrightarrow{\bw_k}\bJ)$ into elements of $\cS$ has the model
$\tilde\cO(I,t(\bw_1))\times_{\bG/\bU_{I_2}}
\dots\times_{\bG/\bU_{I_k}}\tilde\cO(I_k,t(\bw_k))$.
\end{definition}
By the remarks after Lemma \ref{2.2} the variety $\tilde\cO(\bI,\bb)$ affords
a natural left action of $\bG$ and right action of $\bL_I$.

\begin{proposition}   There  exists  a  Tits   homomorphism  $t$  which  is
$F$-equivariant, that is such that $t(\phi(\bb))=F(t(\bb))$.
\end{proposition}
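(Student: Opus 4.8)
The plan is to build $t$ from an $F$-stable pinning. Recall the classical construction: fix a pinning $(\bT,\bB,(u_\alpha)_{\alpha\in\Pi})$ of $\bG$, where each $u_\alpha$ is an isomorphism from the additive group onto the root subgroup $\bU_\alpha$, arising from a homomorphism $\varphi_\alpha\colon\mathrm{SL}_2\to\bG$ (so that also the negative root groups $u_{-\alpha}$ are parametrized through $\varphi_\alpha$). Set $\dot s_\alpha=u_\alpha(1)\,u_{-\alpha}(-1)\,u_\alpha(1)\in N_\bG(\bT)$; this element represents the simple reflection $s_\alpha\in W$, and by Tits' lemma the family $(\dot s_\alpha)_{\alpha\in\Pi}$ satisfies the braid relations of $(W,S)$. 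Hence $\bs_\alpha\mapsto\dot s_\alpha$ defines a group homomorphism $t\colon B\to N_\bG(\bT)$ (it lands in a group and the $\dot s_\alpha$ are invertible), and since $\dot s_\alpha$ maps to $s_\alpha$ in $W=N_\bG(\bT)/\bT$, this $t$ factors the projection $B\to W$, i.e. is a Tits homomorphism. It is therefore enough to arrange that $F(\dot s_\alpha)=\dot s_{\phi(\alpha)}$ for all $\alpha\in\Pi$.

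Granting that, I would conclude as follows. The map $\bb\mapsto F(t(\bb))$ is a group homomorphism $B\to N_\bG(\bT)$, because $t$ is one and $F$, being an isogeny that stabilizes $\bT$, restricts to an endomorphism of the group $N_\bG(\bT)$; and $\bb\mapsto t(\phi(\bb))$ is a group homomorphism because $\phi$ is an automorphism of $B$. These two homomorphisms agree on the generators, since $F(t(\bs_\alpha))=F(\dot s_\alpha)=\dot s_{\phi(\alpha)}=t(\bs_{\phi(\alpha)})=t(\phi(\bs_\alpha))$; as the $\bs_\alpha$ generate $B$, they coincide, which is exactly $t(\phi(\bb))=F(t(\bb))$ for all $\bb$.

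To obtain $F(\dot s_\alpha)=\dot s_{\phi(\alpha)}$ I would choose the pinning to be $F$-stable. As $\bT\subset\bB$ are $F$-stable, $\bG$ is quasi-split with respect to $F$, and it is classical that the $u_\alpha$ can then be chosen so that $F(u_\alpha(c))=u_{\phi(\alpha)}(c^{p^{e_\alpha}})$ for all $c$, for suitable integers $e_\alpha\ge 0$ (compatibly around each $\phi$-orbit, the product of the $p^{e_\alpha}$ over an orbit being the relevant power of $q$); the same formula holds for $u_{-\alpha}$, through $\varphi_\alpha$. Applying $F$ to $\dot s_\alpha$ then gives $u_{\phi(\alpha)}(1)\,u_{-\phi(\alpha)}\bigl((-1)^{p^{e_\alpha}}\bigr)\,u_{\phi(\alpha)}(1)$, which is $\dot s_{\phi(\alpha)}$ because $(-1)^{p^{e_\alpha}}=-1$ — clear for $p$ odd, and trivial for $p=2$ where $-1=1$. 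The routine parts are Tits' braid relations and the stability of $N_\bG(\bT)$ under $F$; the only delicate point, and the real obstacle, is the existence of an $F$-stable pinning, i.e. getting the leading coefficient of $F(u_\alpha(c))$ to be exactly $1$ around each $\phi$-orbit rather than merely a root of unity, so that no stray torus element appears in $F(\dot s_\alpha)$.
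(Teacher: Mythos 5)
Your argument is correct, but it follows a different path from the paper. You build the representatives from an $F$-stable pinning via the formula $\dot s_\alpha=u_\alpha(1)u_{-\alpha}(-1)u_\alpha(1)$ and then appeal to Tits' theorem; the whole weight rests on the existence of an $F$-stable pinning, which you correctly single out as the nontrivial point but invoke as classical rather than prove. The paper does not normalize a full pinning: it picks, in each rank-one subgroup $\bG_s$, an arbitrary representative $\dot s\in N_{\bG_s}(\bT\cap\bG_s)$ that is $F^d$-stable (where $d$ is the length of the $\phi$-orbit of $s$) for one $s$ per orbit, and transports it by $F$ to the rest of the orbit; Tits' theorem (\cite[Theorem 4.4]{Tits}) is stated precisely for arbitrary representatives lying in the $\bG_s$, so no pinning formula is needed. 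Both approaches work and give the same kind of $t$; the paper's is more self-contained (an $F^d$-stable element of the non-identity coset of $\bT\cap\bG_s$ exists by a Lang-theorem argument, and one avoids having to simultaneously adjust the scalar factors $c_\alpha$ of $F(u_\alpha(c))=u_{\phi(\alpha)}(c_\alpha c^{q_\alpha})$ around each orbit), while yours yields a more explicit formula once the $F$-stable pinning is granted. If you want to complete your version, the existence of the $F$-stable pinning for a quasi-split $(\bG,F)$ is in Steinberg's ``Endomorphisms of linear algebraic groups''; alternatively you can reduce, as the paper implicitly does, to normalizing the rank-one data orbit by orbit, which requires only scaling $u_\alpha$ for the orbit-initial $\alpha$.
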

\begin{proof} With any simple reflection $s\in S$ is associated a
quasi-simple  subgroup $\bG_s$  of rank  1 of  $\bG$, generated by the root
subgroups  $\bU_{\alpha_s}$ and $\bU_{-\alpha_s}$; the 1-parameter subgroup
of  $\bT$  given  by  $\bT\cap\bG_s$  is  a  maximal  torus  of $\bG_s$. By
\cite[Theorem  4.4]{Tits} if  for any  $s\in S$  we choose a representative
$\dot  s$ of $s$  in $\bG_s$, then  these representatives satisfy the braid
relations,  which implies that  $\bs\mapsto \dot s$  induces a well defined
Tits  homomorphism. We claim that if $s$ is fixed by some power $\phi^d$ of
$\phi$  then there exists $\dot  s\in\bG_s$ fixed by $F^d$;  we then get an
$F$-equivariant  Tits homomorphism by choosing arbitrarily $\dot s$ for one
$s$  in each orbit of  $\phi$. If $s$ is  fixed by $\phi^d$ then $\bG_s$ is
stable  by  $F^d$;  the  group  $\bG_s$  is  isomorphic to either $SL_2$ or
$PSL_2$ and $F^d$ is a Frobenius endomorphism of this group. In either case
the  simple reflection $s$ of $\bG_s$ has an $F^d$-stable representative in
$N_{\bG_s}(\bT\cap\bG_s)$, whence our claim.
\end{proof}

\begin{notation}
We assume now that we have chosen, once and for all, an $F$-equivariant Tits
homomorphism $t$ which is used to define the varieties
$\tilde\cO(\bI,\bb)$. 
\end{notation}
The equivariance of $t$ allows to extend it to a morphism
$B^+\rtimes\genby\phi\to N_\bG(\bT)\rtimes\genby F$---note that
here our convention that $\genby \phi$ is infinite order is useful,
since $F$ is of infinite order. This allows to extend $t$ by 
$t(\phi)=F$ thus we can write indifferently $t(\bb)F$ or $t(\bb\phi)$.
\begin{definition}
For any endomorphism $(\bI\xrightarrow{\bb\phi}\bI)\in B^+\phi(\cI)$ we define
$\tilde \bX(\bI,\bb \phi)=\{x\in\tilde\cO(\bI,\bb)\mid p''(x)=F(p'(x))\}$.
\end{definition}
The action of $\bL_I$ on $\tilde\cO(\bI,\bb)$ restricts to an action
of $\bL_I^{t(\bb\phi)}$ on $\tilde \bX(\bI,\bb \phi)$,
compatible with the first projection $\tilde\bX(\bI,\bb\phi)\to \bG/\bU_{I}$.

When  $\bw\in\bW$ we have  $\tilde \bX(\bI,\bw \phi)= \tilde\bX(I,t(\bw\phi))$,
the variety defined in \ref{tildeXwphi} for $\dot wF=t(\bw\phi)$.
We have the following analogue of Proposition \ref{descente} for
$\tilde\bX(\bI,\bb\phi)$.
\begin{proposition}\label{descente pour X tilde}
Let
$\bI=\bI_1\xrightarrow{\bw_1}\bI_2\to\dots\to\bI_k\xrightarrow{\bw_k}\lexp 
\phi\bI$ be a decomposition into elements of $\cS$ of
$\bI\xrightarrow{\bb}\lexp\phi\bI\in B^+(\cI)$,
let $F_1$ be the isogeny of $\bG^k$ as in Proposition \ref{descente}.

Then $\tilde\bX_\bG(\bI,\bb \phi) \simeq\tilde\bX_{\bG^k}(I_1\times
\dots\times   I_k,  (t(\bw_1),\dots,t(\bw_k))F_1)$.
By this isomorphism the action of $F^\delta$ corresponds to that
of   $F_1^{k\delta}$, the  action  of  $\bG^F$  corresponds  to  that  of
$(\bG^k)^{F_1}$, and the action of $\bL_I^{t(\bb\phi)}$ corresponds to that of
$(\bL_{I_1}\times\dots\times\bL_{I_k})^{(t(\bw_1),\dots,t(\bw_k))F_1}$.
\end{proposition}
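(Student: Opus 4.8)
The plan is to copy the structure of the proof of Proposition~\ref{descente}: write both sides as explicit ``sequence'' models and identify them termwise, then verify the three equivariance statements separately. The only new feature, compared with \ref{descente}, is that we carry along $\bU_{I_j}$-cosets instead of parabolic subgroups, so the relevant model of $\tilde\bX_\bG(\bI,\bb\phi)$ is the one produced in the proof of Lemma~\ref{tildeX torsor}.

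First I would unwind the right-hand side. By the definition of an ordinary parabolic Deligne--Lusztig variety applied to $(\bG^k,F_1)$, a point of $\tilde\bX_{\bG^k}(I_1\times\cdots\times I_k,(\dot w_1,\ldots,\dot w_k)F_1)$ is a coset $g\bU$, where $g=(g_1,\ldots,g_k)$ and $\bU=\bU_{I_1}\times\cdots\times\bU_{I_k}$, satisfying $g\inv{}\lexp{F_1}g\in\bU\,(\dot w_1,\ldots,\dot w_k)\,\lexp{F_1}\bU$. Since $\lexp{F_1}{(g_1,\ldots,g_k)}=(g_2,\ldots,g_k,\lexp F{g_1})$ and $\lexp{F_1}\bU=\bU_{I_2}\times\cdots\times\bU_{I_k}\times\bU_{\lexp\phi I}$, reading this equation coordinatewise gives exactly $g_j\inv g_{j+1}\in\bU_{I_j}\dot w_j\bU_{I_{j+1}}$ for $j<k$ together with $g_k\inv{}\lexp F{g_1}\in\bU_{I_k}\dot w_k\bU_{\lexp\phi I}$; and $\bG^k/\bU=\prod_j\bG/\bU_{I_j}$. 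Comparing with Lemma~\ref{tildeX torsor} (with $\dot w_j=t(\bw_j)$, and the graph-of-$F$ condition $p''=F\circ p'$ encoded by taking the $(k{+}1)$-st coset to be $\lexp F{g_1}\bU_{\lexp\phi I}$), this is precisely the sequence model of $\tilde\bX_\bG(\bI,\bb\phi)$. Hence $(g_1,\ldots,g_k)\bU\leftrightarrow(g_1\bU_{I_1},\ldots,g_k\bU_{I_k})$ is an isomorphism of varieties; its independence of the chosen decomposition of $\bb$ follows for the same bicategory reason (Theorem~\ref{bicategory}) that makes $\tilde\cO(\bI,\bb)$ well defined.

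It then remains to check the equivariances. For $\bG^F$: diagonal left translation by $x\in\bG^F$ preserves all the conditions above (it cancels in each $g_j\inv g_{j+1}$, and in $g_k\inv{}\lexp F{g_1}$ because $\lexp F x=x$), so it realises the $\bG^F$-action on $\tilde\bX_\bG(\bI,\bb\phi)$; on the other side one checks that $(\bG^k)^{F_1}$ is exactly the diagonal copy $\{(x,\ldots,x):x\in\bG^F\}$, acting the same way. For $F^\delta$: since $\phi^\delta=1$ and the chosen Tits homomorphism $t$ is $F$-equivariant, each $\bU_{I_j}$ and each $\dot w_j=t(\bw_j)$ is $F^\delta$-stable, so $F^\delta$ acts coordinatewise on the sequence model, matching $F_1^{k\delta}$, which is $F^\delta$ coordinatewise on $\bG^k$.

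The least automatic point is the $\bL$-torsor action. By the proof of Lemma~\ref{tildeX torsor}, $l\in\bL_I^{t(\bb)F}$ acts on $\tilde\bX_\bG(\bI,\bb\phi)$ by
$$(g_1\bU_{I_1},\ldots,g_k\bU_{I_k})\longmapsto(g_1l\,\bU_{I_1},\,g_2l^{t(\bw_1)}\bU_{I_2},\,\ldots,\,g_kl^{t(\bw_1\cdots\bw_{k-1})}\bU_{I_k}).$$
On the $\bG^k$ side I would solve the fixed-point equation $(l_1,\ldots,l_k)=\lexp{(\dot w_1,\ldots,\dot w_k)F_1}{(l_1,\ldots,l_k)}$ defining the torsor group $(\bL_{I_1}\times\cdots\times\bL_{I_k})^{(\dot w_1,\ldots,\dot w_k)F_1}$: writing it out componentwise forces $l_{j+1}=l_1^{t(\bw_1\cdots\bw_j)}$ for $j<k$ and, since $\dot w_1\cdots\dot w_k=t(\bw_1)\cdots t(\bw_k)=t(\bb)$, forces $l_1\in\bL_I^{t(\bb)F}$. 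Thus $(\bL_{I_1}\times\cdots\times\bL_{I_k})^{(\dot w_1,\ldots,\dot w_k)F_1}\cong\bL_I^{t(\bb)F}$, and its right-multiplication action on $(g_1,\ldots,g_k)\bU$ is visibly the same twisted action as above. Matching the successive conjugations $l\mapsto l^{t(\bw_1\cdots\bw_{j-1})}$ coming from the torsor structure with the shift built into $F_1$ is the one genuinely fiddly bookkeeping step, but it is a direct computation once the fixed-point equation is unwound.
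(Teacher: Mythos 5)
Your proof is correct and takes essentially the same approach as the paper: both identify the two sides via the common ``sequence'' model coming from the fibered-product description of $\tilde\cO(\bI,\bb)$, and then verify the three equivariances. The paper dismisses the equivariance checks with ``one checks easily''; you carry them out explicitly, and in particular your unwinding of the fixed-point equation for $(\bL_{I_1}\times\cdots\times\bL_{I_k})^{(\dot w_1,\ldots,\dot w_k)F_1}$ (which forces $l_{j+1}=l_1^{t(\bw_1\cdots\bw_j)}$ and $l_1\in\bL_I^{t(\bb)F}$) correctly matches the twisted torsor action extracted from the proof of Lemma~\ref{tildeX torsor}.
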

\begin{proof}
An element $x_1\bU_{I_1}\times\dots\times x_k\bU_{I_k}\in
\tilde\bX_{\bG^k}(I_1\times\dots\times I_k,(t(\bw_1),\dots,t(\bw_k))F_1)$
by definition satisfies
$(x_i\bU_{I_i},x_{i+1}\bU_{I_{i+1}})\in\tilde\cO(I_i,t(\bw_i))$ 
for $i=1,\dots,k$, where  we have put
$I_{k+1}=\lexp FI_1$ and $x_{k+1}\bU_{I_k+1}=\lexp F(x_1\bU_{I_1})$.
This is the same as an element in the intersection of
$\tilde\cO(\bI_1,\bw_1)\times_{\bG/\bU_{I_2}}
\tilde\cO(\bI_2,\bw_2)\dots\times_{\bG/\bU_{I_k}}\tilde\cO(\bI_k,\bw_k)$
with the graph of $F$. Since, by definition, we have
$$\tilde\cO(\bI,\bb)\simeq
\tilde\cO(\bI_1,\bw_1)\times_{\bG/\bU_{I_2}}
\tilde\cO(\bI_2,\bw_2)\dots\times_{\bG/\bU_{I_k}}
\tilde\cO(\bI_k,\bw_k),$$ 
via  this last isomorphism we get an element of
$\tilde\cO(\bI,\bb)$ which is in $\tilde\bX_\bG(\bI,\bb \phi)$.

One checks easily that this sequence of identifications is compatible with
the actions of $F^\delta$, of  $\bG^F$ and of $\bL_I^{t(\bb\phi)}$
as described by the proposition.
\end{proof}

\begin{lemma}\label{tildeX torsor}
For any endomorphism $(\bI\xrightarrow{\bb\phi}\bI)\in B^+\phi(\cI)$, there is a natural
projection $\tilde\bX(\bI,\bb\phi)\xrightarrow\pi\bX(\bI,\bb\phi)$ which makes
$\tilde\bX(\bI,\bb\phi)$ a $\bL_I^{t(\bb\phi)}$-torsor over $\bX(\bI,\bb\phi)$.
\end{lemma}
\begin{proof}
Let $\bI\xrightarrow{\bw_1}\bI_2\to\dots\to\bI_r
\xrightarrow{\bw_r}\lexp\phi\bI$ 
be a decomposition into elements of $\cS$ of
$\bI\xrightarrow{\bb}\lexp\phi\bI$,    so   that   $\tilde\bX(\bI,\bb\phi)$
identifies with the set of sequences
$(g_1\bU_I,g_2\bU_{I_2},\dots,g_r\bU_{I_r})$ such that $g_j\inv g_{j+1}\in
\bU_{I_j}t(\bw_j)\bU_{I_{j+1}}$   for  $j<r$  and  $g_r\inv  \lexp  Fg_1\in
\bU_{I_r}t(\bw_r) \bU_{\lexp \phi I}$. We define $\pi$ by
$g_j\bU_{I_j}\mapsto  \lexp{g_j}\bP_{I_j}$. It  is easy  to check  that the
morphism $\pi$ thus defined commutes with an ``elementary morphism'' in the
bicategories  of varieties $\tilde\bX$ or  $\bX$ consisting of passing from
the    decomposition    $(\bw_1,\dots,\bw_i,\bw_{i+1},\dots,\bw_r)$    to
$(\bw_1,\dots,\bw_i\bw_{i+1},\dots,\bw_r)$     when    $(\bI_i\xrightarrow
{\bw_i\bw_{i+1}}\bI_{i+2})\in\cS$. Thus by \ref{bicat} the morphism $\pi$ is
well-defined  independently of the chosen decomposition of $\bb$. 

The fact that $\pi$   makes   $\tilde\bX(\bI,\bb\phi)$   a   
$\bL^{t(\bb\phi)}$-torsor over $\bX(\bI,\bb\phi)$ results then via
Proposition \ref{descente pour X tilde} from the same statement on the varieties
of \ref{tildeXwphi}.
\end{proof}

We give an isomorphism which reflects the transitivity of Lusztig's induction.
\begin{proposition}\label{produit fibre general}
Let  $\bI\xrightarrow{\bw\phi}\bI\in  B^+\phi(\cI)$, and let $w$ be the
image  of $\bw$ in  $W$; the automorphism  $w\phi$ of $W_I$ lifts to an automorphism
that  we will still  denote by $w\phi$ of  $B^+_\bI$. For $\bJ\subset\bI$, let
$\cJ$ be the set of $B^+_\bI$-conjugates of $\bJ$ and let
$\bJ\xrightarrow{\bv w\phi}\bJ\in  B^+_\bI w\phi(\cJ)$.  Then
\begin{enumerate}
\item We  have  an
isomorphism $\tilde\bX(\bI,\bw\phi)
\times_\LwF\tilde\bX_{\bL_I}(\bJ,\bv     w\phi)\xrightarrow\sim
\tilde\bX(\bJ,\bv\bw\phi)$ of $\bG^F$-varieties-$\bL_J^{t(\bv\bw\phi)}$,
where the variety $\tilde\bX_{\bL_I}(\bJ,\bv     w\phi)$ is defined via the
(obvious) Tits homomorphism 
$B^+_\bI\rtimes\genby{w\phi}\to N_{\bL_I}(\bT)\rtimes\genby
{t(\bw \phi)}$.
This isomorphism  is compatible  with the  action of
$F^n$   for  any  $n$   such  that  $\bI$,   $\bJ$,  $\bv$  and  $\bw$  are
$\phi^n$-stable.
\item   Through   the   quotient   by  $\bL_J^{t(\bv w\phi)}$  (see  Lemma
\ref{tildeX   torsor})   we   get   an   isomorphism  of  $\bG^F$-varieties
$$\tilde\bX(\bI,\bw\phi)   \times_\LwF\bX_{\bL_I}(\bJ,\bv
w\phi)\xrightarrow\sim \bX(\bJ,\bv\bw\phi).$$
\end{enumerate}
\end{proposition}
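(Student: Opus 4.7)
The plan is to build the isomorphism of (i) from the tuple-model of $\tilde\bX(\bJ,\bv\bw\phi)$ produced by concatenating decompositions of $\bv$ and $\bw$ into atoms, and to derive (ii) by quotienting both sides by the shared $\bL_J^{t(\bv\bw)F}$-action.

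Fix reduced expressions $\bw=\bw_1\cdots\bw_k$ in $\bW$ and $\bv=\bv_1\cdots\bv_m$ in $\bW_\bI$; by Lemma \ref{bw generate B+(I)} these give decompositions into elements of $\cS$ of $\bw\in B^+(\cI)$, of $\bv\in B^+_\bI(\cJ)$, and of $\bv\bw\in B^+(\cI)$, with successive subsets $\bI=\bI_1,\ldots,\bI_{k+1}=\lexp\phi\bI$ and $\bJ=\bJ_1,\ldots,\bJ_{m+k+1}=\lexp\phi\bJ$, where $\bJ_{m+1}=\lexp{w\phi}\bJ\subseteq \bI$ and $\bJ_{m+j}\subseteq \bI_j$ for all~$j$. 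Define
\[
\Psi\colon \tilde\bX(\bI,\bw\phi)\times\tilde\bX_{\bL_I}(\bJ,\bv w\phi)\to \tilde\bX(\bJ,\bv\bw\phi)
\]
by
\[
\bigl((g_j\bU_{I_j})_j,(l_i\bU_{J_i}^{\bL_I})_i\bigr)\mapsto (g_1l_1\bU_{J_1},\ldots,g_1l_m\bU_{J_m},g_1\bU_{J_{m+1}},g_2\bU_{J_{m+2}},\ldots,g_k\bU_{J_{m+k}}),
\]
where $\bU_{J_i}^{\bL_I}=\bU_{J_i}\cap \bL_I$; the formula is well-defined thanks to $\bU_{J_i}^{\bL_I}\subseteq \bU_{J_i}$ and $\bU_{I_j}\subseteq \bU_{J_{m+j}}$ (the latter from $\bJ_{m+j}\subseteq \bI_j$). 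The diagonal $\bL_I^{\dot wF}$-action, given by the torsor action of Lemma \ref{tildeX torsor} on the left factor and by left multiplication on the right factor, cancels in $\Psi$, so it descends to the fibered product.

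The chain conditions in the target are immediate from these inclusions. The key Frobenius check is $g_k\inv F(g_1l_1)=g_k\inv F(g_1)\cdot F(l_1)$: using $g_k\inv F(g_1)\in \bU_{I_k}\dot w_k\bU_{\lexp\phi I}$, the normalization of $\bU_{\lexp\phi I}$ by $F(l_1)\in \bL_{\lexp\phi I}$, and the $\bL_I$-side Frobenius relation $l_m\inv \dot wF(l_1)\dot w\inv \in \bU_{J_m}^{\bL_I}\dot v_m\bU_{J_{m+1}}^{\bL_I}$, the product lands in the prescribed cell $\bU_{J_{m+k}}\dot w_k\bU_{\lexp\phi J}$. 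Both sides of (i) being smooth of dimension $\ell(\bw)+\ell(\bv)$, bijectivity suffices; it follows from the Levi factorization
\[
\bU_J\dot v\bU_{J'}=\bU_J^{\bL_I}\dot v\bU_{J'}^{\bL_I}\cdot \bU_I\qquad(\dot v\in \bL_I,\ J,J'\subseteq I),
\]
a consequence of $\bU_J=\bU_J^{\bL_I}\bU_I$ and of $\dot v$ and $\bU_{J'}^{\bL_I}$ normalizing $\bU_I$: combined with $\bL_I\cap \bU_I=\{1\}$ it provides unique decompositions and thus recovers the pair from any point of $\tilde\bX(\bJ,\bv\bw\phi)$, up to the torsor indeterminacy.

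For (ii), by Lemma \ref{tildeX torsor} both $\tilde\bX(\bJ,\bv\bw\phi)\to \bX(\bJ,\bv\bw\phi)$ and $\tilde\bX_{\bL_I}(\bJ,\bv w\phi)\to \bX_{\bL_I}(\bJ,\bv w\phi)$ are torsors for $\bL_J^{t(\bv\bw)F}=\bL_J^{\dot v\dot wF}\subseteq \bL_I$; $\Psi$ intertwines these actions because it operates through the second factor, so quotienting on both sides yields (ii). The main obstacle is the Frobenius verification: one has to check carefully that the conjugation by $\dot w$ relating the twisted Frobenius $\dot wF(l_1)\dot w\inv$ on $\bL_I$ to the untwisted $F(l_1)$ is absorbed cleanly by the unipotent factors $\bU_{\lexp\phi I}$ and $\bU_I$ so as to fit into the cell structure $\bU_{J_m}\dot v_m\bU_{J_{m+1}}\cdots \dot w_k\bU_{\lexp\phi J}$ of the concatenated decomposition.
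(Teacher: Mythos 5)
Your approach is genuinely different from the paper's: where the paper reduces the general case to $\bv,\bw\in\bW$ via the descent-of-scalars isomorphism of Proposition~\ref{descente pour X tilde} (so that the only ``hands-on'' argument is Lusztig's original one-step transitivity argument in $\bG$), you try to write a direct tuple formula for the isomorphism. If done correctly that would be a legitimate and perhaps more explicit route. But the formula you wrote down has a real error, and the issue is not the Frobenius bookkeeping you flag at the end --- it arises one step earlier.

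The formula $\Psi$ sends $((g_j),(l_i))$ to the tuple whose last $k$ entries are $g_1\bU_{J_{m+1}},g_2\bU_{J_{m+2}},\ldots,g_k\bU_{J_{m+k}}$ with no $l$-dependence. This is wrong in three linked ways. First, it is not invariant under the $\bL_I^{t(\bw)F}$-action on the fibered product: that action replaces $g_j$ by $g_j\lambda_j$ with $\lambda_j=\lambda^{t(\bw_1\cdots\bw_{j-1})}\in\bL_{I_j}$, and $\lambda_j\notin\bU_{J_{m+j}}$ in general, so the cosets $g_j\bU_{J_{m+j}}$ change; your claim that the action ``cancels in $\Psi$'' holds only for the first $m$ coordinates. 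Second, the chain condition at the junction $\bQ_m\to\bQ_{m+1}$ requires $(g_1l_m)^{-1}g_1=l_m^{-1}\in\bU_{J_m}\dot v_m\bU_{J_{m+1}}$, but what the $\bL_I$-side gives you is $l_m^{-1}\lexp{\dot wF}l_1\in\bU_{J_m}^{\bL_I}\dot v_m\bU_{J_{m+1}}^{\bL_I}$; these coincide only when $\lexp{\dot wF}l_1=1$. Third, the Frobenius condition requires $g_k^{-1}\lexp F(g_1l_1)\in\bU_{J_{m+k}}\dot w_k\lexp F\bU_{J_1}$, but what you actually get is an element of $\bU_{I_k}\dot w_k\lexp Fl_1\lexp F\bU_{I_1}$, and $\lexp Fl_1\in\bL_{\lexp\phi I}$ is not in $\bU_{\lexp\phi J}$, so the element need not land in the prescribed cell. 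All three failures disappear exactly when $l_1=1$, which is why the formula \emph{looks} right; the correct statement is that you may use the torsor structure to normalize $l_1=1$, or equivalently you must insert correction factors and set $\bQ_{m+j}=g_j\lambda_j\bU_{J_{m+j}}$ with $\lambda_1=\lexp{\dot wF}l_1$ and $\lambda_{j+1}=\lambda_j^{\dot w_j}$; then $\lambda_{k+1}=\lambda_1^{\dot w}=\lexp Fl_1$ closes the Frobenius condition and the whole formula becomes $\bL_I^{t(\bw)F}$-invariant. With that fix (and a somewhat longer verification of bijectivity than the one you sketch, since the Levi factorization $\bU_J\dot v\bU_{J'}=\bU_J^{\bL_I}\dot v\bU_{J'}^{\bL_I}\bU_I$ has to be combined with Lang's theorem to recover $g_1$, as in the paper's one-step argument), your proof goes through, but as written the main construction is not well-defined.
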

\begin{proof}
We  first look at the case  $\bw,\bv\in\bW$ (which implies $\bv\bw\in\bW$), in
which case we seek an isomorphism $$\tilde\bX(I,t(\bw\phi))\times_\LwF
\tilde\bX_{\bL_I}(J,t(\bv w\phi))
\xrightarrow\sim\tilde\bX(J,t(\bv\bw\phi))$$
where 
$$\begin{aligned}
\tilde\bX(I,t(\bw\phi))&=\{g\bU_I\in \bG/\bU_I\mid
g\inv\lexp Fg\in \bU_I t(\bw)\lexp F\bU_I\},\\
\tilde\bX(J,t(\bv\bw\phi))&=\{g\bU_J\in \bG/\bU_J\mid
g\inv\lexp Fg\in \bU_J t(\bv\bw)\lexp F\bU_J\}\\
\text{and }
\tilde\bX_{\bL_I}(J,t(\bv w\phi))&=
\{l\bV_J\in \bL_I/\bV_J\mid
l\inv\lexp{t(w\phi)}l\in \bV_I t(\bv)\lexp{t(w\phi)}\bV_J\},\\
\end{aligned}$$
where $\bV_J=\bL_I\cap  \bU_J$.

This is the content of Lusztig's proof of the transitivity of his induction
(see \cite[Lemma 3]{lusztig}), that we recall and detail in our context. We
claim   that   $(g\bU_I,l\bV_J)\mapsto   g\bU_I   l\bV_J=   gl\bU_J$
induces  the  isomorphism  we  want.  Using  that
$\bU_J=\bU_I\bV_J$   and   that   $\bV_Jt(\bv)\lexp{t(w\phi)}\bV_J$  is  in
$\bL_I$, thus normalizes $\bU_I$, we get
$$\bU_J    t(\bv\bw)\lexp    F\bU_J=   \bU_I\bV_Jt(\bv)\lexp{t(w\phi)}\bV_J
t(\bw)\lexp    F\bU_I=   \bV_Jt(\bv)\lexp{t(w\phi)}\bV_J\bU_I   t(\bw)\lexp
F\bU_I.$$     Hence    if    $(g\bU_I,l\bV_J)\in    \tilde\bX(I,t(\bw\phi))
\times\tilde\bX_{\bL_I}(J,t(\bv w\phi))$, we have
\begin{multline*}
(gl)\inv\lexp F(gl)\in l\inv\bU_It(\bw)\lexp F\bU_I\lexp F l=
l\inv\bU_I\lexp {t(w\phi)}lt(\bw)\lexp F\bU_I\hfill\\
\hfill=l\inv\lexp{t(w\phi)}l\bU_It(\bw)\lexp F\bU_I
\subset\bV_Jt(\bv)\lexp{t(w\phi)}\bV_J\bU_It(\bw)\lexp F\bU_I
=\bU_J t(\bv\bw)\lexp F\bU_J.
\end{multline*}
Hence we have defined a morphism
$\tilde\bX(I,t(\bw\phi)) \times\tilde\bX_{\bL_I}(J,t(\bv w\phi))
\to\tilde\bX(J,t(\bv\bw\phi))$ of
$\bG^F$-varieties-$\bL_J^{t(\bv w\phi)}$.
We  show  now  that  it  is  surjective. 
The unicity in the decomposition
$\bP_I\cap\lexp{ t(\bw\phi)}\bU_I=
\bL_I\cdot(\bU_I\cap\lexp{t(\bw\phi)}\bU_I)$
implies that the product $\bL_I.(\bU_I t(\bw)\lexp
F\bU_I)$ is direct. Hence  an element $x\inv\lexp
Fx\in\bU_J  t(\bv\bw)\lexp F\bU_J$  defines unique elements
$l\in\bV_Jt(\bv)\lexp{t(w\phi)}\bV_J$ and $u\in\bU_It(\bw)\lexp F\bU_I$ such 
that $x\inv\lexp
Fx=lu$.  If, using  Lang's theorem,  we write $l=l^{\prime-1}\lexp{t(w\phi)}l'$
with   $l'\in\bL_I$,  the  element   $g=xl^{\prime-1}$  satisfies  $g\inv\lexp
Fg=l'x\inv\lexp  Fx\lexp  Fl^{\prime-1}= \lexp{t(w\phi)}l'u\lexp Fl^{\prime-1}
\in\lexp{t(w\phi)}l'\bU_It(\bw)\lexp  F\bU_I\lexp  Fl^{\prime-1}=  \bU_It(\bw)
\lexp  F\bU_I$.  Hence  $(g\bU_I,l'\bV_J)$  is  a  preimage  of  $x\bU_J$  in
$\tilde\bX(I,t(\bw\phi)) \times\tilde\bX_{\bL_I}(J,t(\bv w\phi))$.

Let  us  look  now  at  the  fibers  of  the  above  morphism.  If $g'\bU_I
l'\bV_J=g\bU_I l\bV_J$ then $g^{\prime-1}g\in\bP_I$ so we may choose $g'$ in
$g'\bU_I$ such that $g'=g\lambda$ with $\lambda\in\bL_I$; we have then $\lambda
l'\bU_J=l\bU_J$, so that $l\inv\lambda l'\in \bU_J\cap \bL_I=\bV_J$; moreover
if  $g\lambda\bU_I\in\tilde\bX(I,t(\bw\phi))$  with
$\lambda\in\bL_I$,  then $\lambda\inv\bU_It(\bw)\lexp F\bU_I\lexp  F\lambda
=\bU_It(\bw)\lexp F\bU_I$ which implies $\lambda\in\LwF$. Conversely,
the action of $\lambda\in\LwF$ given by $(g\bU_I,l\bV_J)\mapsto
(g\lambda\bU_I,\lambda\inv l\bV_J)$ preserves the subvariety
$\tilde\bX(I,t(\bw\phi)) \times\tilde\bX_{\bL_I}(J,t(\bv w\phi))$,
of $\bG/\bU_I\times\bL_I/\bV_J$. Hence the
fibers are the orbits under this action of $\LwF$.

Now the morphism $j:(g\bU_I,l\bV_J)\mapsto gl\bU_J$ is an isomorphism 
$\bG/\bU_I\times_{\bL_I}\bL_I/\bV_J\simeq\bG/\bU_J$ since $g\bU_J\mapsto(g\bU_I,\bV_J)$
is its inverse. By what we have seen above the restriction of $j$ to the closed subvariety
$\tilde\bX(I,t(\bw\phi)) \times_\LwF\tilde\bX_{\bL_I}(J,t(\bv w\phi))$ maps this variety
surjectively on the closed subvariety $\tilde\bX(J,t(\bv\bw\phi))$ of $\bG/\bU_J$,
hence we get the isomorphism we want.

We now consider the case of generalized varieties. Let $k$ be the number of
terms of the strict normal decomposition of $\bv\bw$ and let
$\bI\xrightarrow{\bw_1}\bI_2
\xrightarrow{\bw_2}\bI_3\to\dots\to\bI_k\xrightarrow{\bw_k}\lexp\phi\bI$
be  a normal decomposition of $\bI\xrightarrow\bw\lexp\phi\bI$ of same
length. We have $\tilde\bX(\bI,\bw\phi)\simeq
\tilde\bX(I_1\times  I_2\times \dots\times  I_k,(t(\bw_1),\dots,t(\bw_k))
F_1)$, where $F_1$  is  as  in  Proposition \ref{descente}.  Let  us  write
$(\bv_1\bw_1,\dots,\bv_k\bw_k)$  for the normal  decomposition of $\bv\bw$, 
with same  notation as in Proposition \ref{normal  form of vw}. Let
$J_1=J$ and $J_{j+1}= J_j^{v_jw_j}\subset I_{j+1}$ for
$j=1,\dots,k-1$. We apply the first part of the proof to the group $\bG^k$
with  isogeny $F_1$ with  $I$, $J$, $w$, $v$ replaced
respectively  by  $I_1\times\dots\times  I_k$,  $J_1\times\dots\times  J_k$,
$(w_1,\dots,w_k)$    $(v_1,\dots,v_k)$.    Using    the   isomorphisms
from Proposition \ref{descente pour X tilde}; 
$$\tilde\bX_{\bG^k}(J_1\times\cdots\times J_k,(t(\bv_1\bw_1),\dots,
t(\bv_k\bw_k))F_1)\simeq  \tilde\bX(\bJ,\bv\bw\phi)$$ and 
$$\tilde\bX_{\bL_{I_1\times\dots\times I_k}} (J_1\times\dots\times
J_k,(v_1,\dots,v_k).(t(\bw_1),\dots,t(\bw_k))F_1)\simeq   
\tilde\bX_{\bL_I}(\bJ,\bv w\phi),$$ we get (i).
Now (ii) is immediate from (i) taking the quotient on both sides
by $\bL_J^{t(\bv w\phi)}$.
\end{proof}
In  the particular case  where $\bI=\emptyset$ we  write $\bX(\bw\phi)$ for
$\bX(\bI,\bw\phi)$.  Let us recall  that in \cite[2.3.2]{DMR}  we defined a
monoid  $\uB^+$ generated  by $B^+$  and symbols  $\uw$ where $w\in W$, and
attached  to  any  $\bu\in\uB^+$  a  Deligne-Lusztig variety $\bX(\bu\phi)$.
This variety is denoted  by $\bX(\bu)$ in  \cite{DMR} and roughly  defined by the property
that   given   $\uw$   attached   to   $w\in   W$,  we  have  $\bX(\bu_1\uw
\bu_2\phi)=\bigcup_{w'\le  w}\bX(\bu_1\bw' \bu_2\phi)$, where $\bw'$ is the
lift  to $B^+$ of $w'$  and where $w'$ runs  over the elements smaller than
$w$  for the Bruhat order. Attached to  $I\subset S$, we have an analogous
monoid  $\uB^+_\bI$  attached  to  $W_I$,  which  has  a  natural embedding
$\uB^+_\bI\subset\uB^+$.
\begin{corollary}\label{X(uwI)}  With these  notations of  \cite{DMR}, for
any $\bI\xrightarrow{\bw\phi}\bI\in B^+\phi(\cI)$ and
any  $\bu\in\uB^+_\bI$, we   have  an  isomorphism
$\bX(\bu\bw\phi)\xrightarrow\sim\tilde\bX(\bI,\bw\phi)
\times_\LwF\bX_{\bL_I}(\bu   w\phi)$   and   a   surjective
morphism  $\bX(\bu\bw\phi)\rightarrow  \bX(\bI,\bw\phi)$  whose  fibers are
isomorphic to $\bX_{\bL_I}(\bu w\phi)$.
\end{corollary}
\begin{proof}
The  variety $\bX(\bu\bw\phi)$ is the union of varieties of the form
$\bX_{\bL_I}(\bv w\phi)$
with  $\bv\in\bW_\bI$.  The  isomorphisms given for each $\bv$ by Proposition
\ref{produit  fibre  general}  applied with  $\bJ=\emptyset$
can  be  glued  together  to give a global morphism of varieties
since  they  are  defined by a formula
independent of $\bv$. We thus get a bijective morphism
$\tilde\bX(\bI,\bw\phi)  \times_\LwF\bX_{\bL_I}(\bu   w\phi)
\to\bX(\bu\bw\phi)$
which   is  an   isomorphism  since   $\bX(\bu\bw\phi)$  is  normal  (see
\cite[2.3.5]{DMR}).  Composing  this  isomorphism  with  the  projection of
$\tilde\bX(\bI,\bw\phi) \times_\LwF\bX_{\bL_I}(\bu   w\phi)$ onto
$\bX(\bI,\bw\phi)$ (see \ref{tildeX torsor}), we get the second assertion of the corollary.
\end{proof}

\subsection*{Endomorphisms of parabolic Deligne-Lusztig varieties ---
the category $\DI$}
\begin{definition}\label{Dv}
Given  $\ad\bv\in\DI(\bI\xrightarrow{\bw\phi}\bI,
\bJ\xrightarrow{\bv\inv\bw\phi\bv}\bJ)$ where $\bJ=\bI^\bv$, we define
morphisms   of   varieties:
\begin{enumerate}
\item
$D_\bv:\bX(\bI,\bw\phi)\to\bX(\bJ,\bv\inv\bw\phi\bv)$ 
as the restriction of the morphism 
\begin{multline*}(a,b)\mapsto (b,\lexp F a):
\cO(\bI,\bw)=\cO(\bI,\bv)\times_{\cP_J}\cO(\bJ,\bv\inv\bw)\to\hfill\\ \hfill
\cO(\bJ,\bv\inv\bw)\times_{\cP_{\lexp\phi I}}\cO(\lexp\phi\bI,\lexp\phi\bv)
=\cO(\bJ,\bv\inv\bw\lexp\phi\bv). 
\end{multline*}
\item $\tilde D_\bv:\tilde\bX(\bI,\bw\phi)\to\tilde\bX(\bJ,\bv\inv\bw\phi\bv)$ 
as the restriction of the morphism 
\begin{multline*}(a,b)\mapsto (b,\lexp F a):
\tilde\cO(\bI,\bw)=\tilde\cO(\bI,\bv)\times_{\bG/\bU_J}
\tilde\cO(\bJ,\bv\inv\bw)\to\hfill\\ \hfill\tilde\cO(\bJ,\bv\inv\bw)
\times_{\bG/\bU_{\lexp\phi I}}\tilde\cO(\lexp\phi\bI,
\lexp\phi\bv)=\tilde\cO(\bJ,\bv\inv\bw\lexp\phi\bv).
\end{multline*}
\end{enumerate}
\end{definition}
Note that the existence of well-defined decompositions as above of 
$\cO(\bI,\bw)$ and of $\tilde\cO(\bI,\bw)$ are consequences of 
Theorem \ref{bicategory}.

Note that
when  $\bv$, $\bw$ and $\bv\inv\bw\lexp\phi\bv$ 
are in $\bW$ the endomorphism $D_\bv$ maps $g\bP_I\in
\bX(I,w\phi)$  to $g'\bP_J\in\bX(J,v\inv  w\phi v)$ such  that $g\inv  g'\in\bP_I
v\bP_J$ and $g^{\prime-1}\lexp F g\in\bP_Jv\inv w\lexp F\bP_I$ and similarly
for $\tilde D_\bv$.

Note also that $D_\bv$ and $\tilde D_\bv$ are equivalences of \'etale sites;
indeed, the proof of \cite[3.1.6]{DMR} applies without change in our case. 

The definition of $\tilde D_\bv$ and $D_\bv$ shows the following property:
\begin{lemma}\label{D tilde to D}
The following diagram is commutative:
$$
\xymatrix{\tilde\bX(\bI,\bw\phi)\ar[r]^-{\tilde D_\bv}\ar[d]&
\tilde\bX(\bJ,\bv\inv\bw\phi\bv)\ar[d]\\
\bX(\bI,\bw\phi)\ar[r]^-{D_\bv}&\bX(\bJ,\bv\inv\bw\phi\bv)}
$$
where the vertical arrows are the respective quotients by $\LwF$
and  $\bL_J^{t(\bv\inv\bw\phi\bv)}$ (see  Lemma \ref{tildeX torsor});
for  $l\in\LwF$  we  have  $\tilde  D_\bv\circ l=l^{t(\bv)}\circ
\tilde D_\bv$.
\end{lemma}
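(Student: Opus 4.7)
The plan is to reduce everything to a direct computation in an explicit model. Choose decompositions $\bv=\bv_1\cdots\bv_k$ and $\bv\inv\bw=\bv'_1\cdots\bv'_m$ into elements of $\cS$; their concatenation is a decomposition of $\bw$, and applying $\phi$ gives $\lexp\phi\bv=\lexp\phi\bv_1\cdots\lexp\phi\bv_k$. In the resulting model, a point of $\tilde\bX(\bI,\bw\phi)$ is a sequence $(g_1\bU_{I_1},\ldots,g_{k+m}\bU_{I_{k+m}})$ satisfying the successive double-coset conditions and the Frobenius wrap-around $g_{k+m}\inv\lexp F g_1\in\bU_{I_{k+m}}t(\bw_{k+m})\bU_{\lexp\phi I_1}$. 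Unwinding the definition, $\tilde D_\bv$ is the cyclic shift
\[(g_1\bU_{I_1},\ldots,g_{k+m}\bU_{I_{k+m}})\mapsto(g_{k+1}\bU_{J_1},\ldots,g_{k+m}\bU_{J_m},\lexp Fg_1\bU_{\lexp\phi I_1},\ldots,\lexp Fg_k\bU_{\lexp\phi I_k}),\]
landing in $\tilde\bX(\bJ,\bv\inv\bw\phi\bv)$ expressed via the analogous decomposition $\bv'_1,\ldots,\bv'_m,\lexp\phi\bv_1,\ldots,\lexp\phi\bv_k$; the same formula with $\bP$'s in place of $\bU$'s describes $D_\bv$.

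Commutativity of the diagram is then immediate: the projection $\pi$ of Lemma \ref{tildeX torsor} replaces each $g_i\bU_\ast$ by $\lexp{g_i}\bP_\ast$ entry by entry, the cyclic shift operates on each entry independently, and $\pi$ is manifestly compatible with $F$, so $\pi\circ\tilde D_\bv=D_\bv\circ\pi$.

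For the equivariance, the proof of Lemma \ref{tildeX torsor} shows that $l\in\bL_I^{t(\bw)F}$ acts on the sequence by right translation of the $i$-th entry by $\ell_i$, with $\ell_1=l$ and $\ell_{i+1}=\ell_i^{t(\bw_i)}$ propagating along the concatenated decomposition. Since $t$ is a homomorphism, $\ell_{k+1}=l^{t(\bv_1)\cdots t(\bv_k)}=l^{t(\bv)}$, so the first $m$ entries of $\tilde D_\bv(l\cdot x)$ coincide with the entries obtained by making $l^{t(\bv)}$ act (in the sense of Lemma \ref{tildeX torsor} on the target) on the first $m$ entries of $\tilde D_\bv(x)$. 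For the appended entries, $\lexp F(g_j\ell_j)=\lexp Fg_j\cdot F(\ell_j)$; from $l=\lexp{t(\bw)F}l$ we get $F(l)=l^{t(\bw)}=(l^{t(\bv)})^{t(\bv\inv\bw)}$, and the $F$-equivariance $F(t(\bv_i))=t(\lexp\phi\bv_i)$ propagates the twist through the tail $\lexp\phi\bv_1,\ldots,\lexp\phi\bv_k$ in exactly the pattern prescribed for the action of $l^{t(\bv)}$ on $\tilde\bX(\bJ,\bv\inv\bw\phi\bv)$. A similar direct computation using the $F$-equivariance of $t$ shows $l^{t(\bv)}\in\bL_J^{t(\bv\inv\bw\lexp\phi\bv)F}$, so the action on the target side makes sense.

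The main (and only) obstacle is bookkeeping the twists, which is entirely governed by the factorization $t(\bw)=t(\bv)t(\bv\inv\bw)$ and the $F$-equivariance of $t$. Independence of the chosen decompositions of $\bv$ and $\bv\inv\bw$ follows from Theorem \ref{bicategory}, since $\tilde D_\bv$, $\pi$, and the Levi action are all constructed inside the bicategorical framework of Section \ref{bicategories}.
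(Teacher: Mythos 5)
Your proof is correct and follows essentially the same route as the paper, which states the lemma as an immediate consequence of the definitions of $\tilde D_\bv$, $D_\bv$, and the projection $\pi$ of Lemma \ref{tildeX torsor}, without writing out the bookkeeping. You have simply unfolded that bookkeeping explicitly — the cyclic-shift description, the entrywise nature of $\pi$, and the propagation $l_{j+1}=l_j^{t(\bw_j)}$ together with $l=\lexp{t(\bw)F}l$ and $F$-equivariance of $t$ — which is exactly the verification the paper leaves to the reader.
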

As  a further consequence  of Theorem \ref{bicategory},
the map  which sends  a simple morphism $\ad\bv$ to
$D_\bv$ extends to a natural morphism from
$\DI(\bI\xrightarrow{\bw\phi}\bI,\bJ\xrightarrow{\bv\inv\bw\phi\bv}\bJ)$
to $\Hom_{\bG^F}(\bX(\bI,\bw\phi),\bX(\bJ,\bv\inv\bw\phi\bv))$
whose image consists of 
equivalences of \'etale  sites.  We  still  denote  by  $D_\bv$  the image 
of $\ad\bv$ by this morphism.

\begin{lemma}\label{descente pour Dx}
Via the isomorphism of \ref{descente pour X tilde} and with
the notations of loc.\ cit.\ the morphism $D_{\bw_1}$ with source
$\tilde\bX_\bG(\bI,\bb\phi)$ becomes the morphism $D_{(t(\bw_1),1,\dots,1)}$
with source 
$\tilde\bX_{\bG^k}(I_1\times\dots\times I_k,(t(\bw_1),\dots,t(\bw_k))F_1)$.
\end{lemma}
\begin{proof}
The  endomorphism $D_{\bw_1}$ maps  the element $(g_1\bU_1,\dots,g_k\bU_k)$
of the model of \ref{descente pour X tilde} of $\tilde\bX_\bG(\bI,\bb\phi)$
to  $(g_2\bU_2,\dots,g_k\bU_k,\lexp Fg_1\lexp  F\bU_1)$. On  the other hand
the   isomorphism  of   Proposition  \ref{descente   pour  X   tilde}  maps
$(g_1\bU_1,\dots,g_k\bU_k)$    to   $$(g_1,\dots,g_k)(\bU_1,\dots,\bU_k)\in
\tilde  X_{\bG^k}(I_1\times\dots\times I_k, (t(\bw_1),\dots,t(\bw_k))F_1)$$
which  is  sent  by  $D_{(t(\bw_1),1,\dots,1)}$  to  $(g_2,\dots,g_k,\lexp
Fg_1)(\bU_2,\dots,\bU_k,\lexp      F\bU_1)$ which is the image by the
isomorphism of
Proposition  \ref{descente pour X tilde} of $(g_2\bU_2,\dots,g_k\bU_k,\lexp
Fg_1\lexp F\bU_1)$, whence the lemma.
\end{proof}
\begin{proposition}\label{Dx pour x in B_I}
For  $\bJ\subset\bI$ let  $\cJ$ denote  the set  of $B^+_\bI$-conjugates of
$\bJ$.  With same assumptions  and notation as  in Proposition \ref{produit
fibre general},
let $\bJ\xrightarrow{\bx}\bJ^\bx\in  B^+_\bI(\cJ)$ be a left-divisor of
$\bJ\xrightarrow{\bv}\lexp{\bw\phi}\bJ$.
The following diagram
is commutative:
$$\xymatrix{
\tilde\bX(\bI,\bw\phi)\times_\LwF
\tilde\bX_{\bL_I}(\bJ,\bv\cdot w\phi)\ar[r]^-\sim\ar[d]_{\Id\times\tilde D_\bx}&
\tilde\bX(\bJ,\bv\bw\phi)\ar[d]^{\tilde D_{\bx}} \\
\tilde\bX(\bI,\bw\phi) \times_\LwF
\tilde\bX_{\bL_I}(\bJ^\bx,\bx\inv(\bv\cdot w\phi)\bx)\ar[r]^-\sim &
\tilde\bX(\bJ^\bx,\bx\inv\bv\bw\phi \bx) 
}$$
\end{proposition}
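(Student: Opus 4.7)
The strategy is to reduce to a single simple $\bx\in\bW$ and verify commutativity in explicit coordinates. The reduction is valid because $\tilde D$ respects composition (by the functoriality in Theorem \ref{bicategory} and Definition \ref{Dv}) and the horizontal isomorphisms of Proposition \ref{produit fibre general} are natural in $\bv$; so if the square commutes for each simple factor in a decomposition $\bx=\bx_1\cdots\bx_n$ of simple morphisms in $\cD^+_\bI(\cJ)$, it commutes for $\bx$.

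Assuming $\bx\in\bW$, a point of the top-left corner is a pair $(g\bU_I,l\bV_J)$ with $g\inv\lexp Fg\in\bU_I\dot w\lexp F\bU_I$ and $l\inv\lexp{\dot wF}l\in\bV_Jt(\bv)\lexp{\dot wF}\bV_J$. Since $\bx\preccurlyeq\bv$ in $B^+_\bI(\cJ)$, there is a unique decomposition (modulo $\bV_{J^\bx}$) $l=l_1l_2$ with $l_1\in\bV_Jt(\bx)\bV_{J^\bx}$. Along the left-down-right path, $(\Id\times\tilde D_\bx)(g\bU_I,l\bV_J)$ is represented by $(g\bU_I,l_2\bV_{J^\bx})$, and the bottom horizontal isomorphism sends this to $gl_2\bU_{J^\bx}\in\tilde\bX(\bJ^\bx,\bx\inv\bv\bw\phi\bx)$. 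Along the other path, the top isomorphism first sends $(g\bU_I,l\bV_J)$ to $gl\bU_J\in\tilde\bX(\bJ,\bv\bw\phi)$. The key claim is that under the bicategorical decomposition
$$\tilde\cO(\bJ,\bv\bw)\simeq\tilde\cO(\bJ,\bx)\times_{\bG/\bU_{J^\bx}}\tilde\cO(\bJ^\bx,\bx\inv\bv\bw),$$
the point $gl\bU_J$ corresponds to the pair $(gl\bU_J,gl_2\bU_{J^\bx})$; indeed, $(gl_2)\inv(gl)=l_1\in\bV_Jt(\bx)\bV_{J^\bx}\subset\bU_Jt(\bx)\bU_{J^\bx}$ (using $\bV_J=\bL_I\cap\bU_J$ and $t(\bx)\in\bL_I$), so the first leg lies in $\tilde\cO(\bJ,\bx)$; the second leg lies in $\tilde\cO(\bJ^\bx,\bx\inv\bv\bw)$ by a coset computation combining the two conditions above and using that $\bL_I$ normalizes $\bU_I$. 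Applying $\tilde D_\bx$ then extracts the first coordinate $gl_2\bU_{J^\bx}$, matching the first path.

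The main obstacle is the coset bookkeeping in the second leg: one must track how $\lexp Fl$ passes through $\dot w\lexp F\bU_I$, producing the transition from $\lexp F$ to $\lexp{\dot wF}$ that connects the $\bG$-side computation to the $\bL_I$-side hypothesis on $l$. Concretely, $\lexp Fl\cdot\lexp F\bU_I=\lexp F\bU_I\cdot\lexp Fl$ (since $\bL_I$ normalizes $\bU_I$), and pushing past $\dot w$ converts $\lexp Fl$ into $\lexp{\dot wF}l$, after which the hypothesis on $l$ can be invoked. This calculation is tractable precisely because $\bx\in B^+_\bI$ ensures $t(\bx)\in\bL_I$.
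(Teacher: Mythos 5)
The reduction to $\bx\in\bW$ by decomposing into simples in $\cD^+_\bI(\cJ)$ matches the first step of the paper's proof, and the coset computation you carry out is essentially the same as the paper's explicit calculation. However, there is a real gap in the middle: your explicit model of a point of $\tilde\bX(\bI,\bw\phi)\times_{\bL_I^{\dot wF}}\tilde\bX_{\bL_I}(\bJ,\bv\cdot w\phi)$ as a pair $(g\bU_I,l\bV_J)$ with $g\inv\lexp Fg\in\bU_I\dot w\lexp F\bU_I$ and $l\inv\lexp{\dot wF}l\in\bV_J t(\bv)\lexp{\dot wF}\bV_J$ is only valid when $\bw\in\bW$ and $\bv\in\bW_\bI$. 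For a general $\bw\in B^+$ and $\bv\in B^+_\bI$, the varieties $\tilde\bX(\bI,\bw\phi)$, $\tilde\bX_{\bL_I}(\bJ,\bv\cdot w\phi)$ and $\tilde\cO(\bJ,\bv\bw)$ are defined only via the bicategorical machinery of Theorem \ref{bicategory} applied to a decomposition into elements of $\cS$, and one cannot write their points as single cosets $g\bU_I$ or $l\bV_J$ subject to a single double-coset condition involving $t(\bw)$ and $t(\bv)$. In particular, your ``unique decomposition $l=l_1l_2$'' and your description of the bicategorical identification of $\tilde\cO(\bJ,\bv\bw)$ both presuppose this model.

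To fill the gap you need, as the paper does, a second reduction: use Proposition \ref{descente pour X tilde} to pass to the descent-of-scalars group $\bG^k$, reducing the assertion to the situation where $\bv\bw$ and $\bv\inv\bw\lexp\phi\bv$ lie in $\bW$ (hence $\bw\in\bW$, $\bv\in\bW_\bI$, and the coset model applies). This reduction is not automatic, and the paper spends a non-trivial part of the proof verifying that the isomorphism of Proposition \ref{descente pour X tilde} is compatible with $D_\bx$ for $\bx\in\bW$, which is exactly what is needed to legitimize transporting the commutativity assertion from $\bG^k$ back to $\bG$. Without this compatibility check the argument does not close. Once that reduction is in place, your explicit computation with $(g\bU_I,l\bV_J)$ is correct and coincides with the paper's.
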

\begin{proof} Decomposing $\bx$ into a product of simples in the category
analogous to $\DI$ where $B^+$ is replaced by $B^+_\bI$ and $\cI$ by $\cJ$,
the  definitions  show  that  it  is  sufficient  to  prove  the result for
$\bx\in\bW$.  We use  then Proposition  \ref{normal form  of vw}  and Lemma
\ref{descente  pour Dx} to reduce the proof to the case where $\bv$, $\bw$
and  $\bx\inv\bv\lexp{w\phi}\bx$  are  in  $\bW$  (in  which  case $\bv\bw$ and
$\bx\inv\bv\bw\lexp{\phi}\bx$   are   in   $\bW$   too):  we  choose  compatible
decompositions  of $\bv$ and  $\bw$ as in  \ref{normal form of  vw} which we
refine if needed so that $\bx$ is the first term of that of $\bv$ and use
Lemma \ref{descente pour Dx} once in $\bG$ and once in in $\bL_I$.

Assume now $\bv$, $\bw$ and $\bx\inv\bv\lexp{w\phi}\bx$ in $\bW$. We start with
$(g\bU_I,l\bV_J)\in\tilde\bX(I,t(\bw\phi))\times\tilde\bX_{\bL_I}(J,v
w\phi)$.  This element is sent  by the top isomorphism  of the diagram to
$gl\bU_J$.  On the other hand, we have seen above Lemma \ref{D  tilde to D} 
that it is sent by
$\Id\times \tilde D_\bx$  to  $(g\bU_I,l'\bV_{J^x})$   where  $l\inv  l'\in\bV_J
x\bV_{J^x}$       and      $l^{\prime-1}\lexp{t(\bw\phi)}l\in\bV_{J^x}x\inv
v\lexp{wF}\bV_J$.  This element is  sent in turn to $gl'\bU_{J^x}$  by the bottom
isomorphism of the diagram. We have to check that
$gl'\bU_{J^x}=\tilde D_\bx(gl\bU_J)$.   But   $(gl)\inv   gl'=l\inv   l'$   is  in
$\bV_Jx\bV_{J^x}\subset \bU_J x\bU_{J^x}$ and
\begin{multline*}
(gl')\inv\lexp F(gl)=l^{\prime-1}g\inv\lexp Fg\lexp Fl\in
l^{\prime-1}\bU_It(\bw)\lexp   F\bU_I\lexp Fl=\bU_Il^{\prime-1}\lexp{t(w\phi)}
lt(\bw)\lexp  F\bU_I\hfill\\  \hfill  \subset\bU_I\bV_{J^x}x\inv  vw\lexp
F\bV_J\lexp F\bU_I=\bU_{J^x}x\inv vw\lexp F\bU_J,
\end{multline*}
so that $(gl'\bU_{J^x})=\tilde D_\bx(gl\bU_J)$.
\end{proof}

Using Proposition \ref{produit fibre general}(ii), Proposition \ref{Dx pour x in B_I} and Lemma \ref{D tilde to D} we get
\begin{corollary}
The following diagram
is commutative:
$$\xymatrix{
\tilde\bX(\bI,\bw\phi)\times_\LwF
\bX_{\bL_I}(\bJ,\bv\cdot w\phi)\ar[r]^-\sim \ar[d]_{\Id\times D_\bx} &
 \bX(\bJ,\bv\bw\phi)\ar[d]^{D_{\bx}} \\
\tilde\bX(\bI,\bw\phi) \times_\LwF
\bX_{\bL_I}(\bJ^\bx,\bx\inv(\bv\cdot w\phi)\bx)\ar[r]^-\sim &
 \bX(\bJ^\bx,\bx\inv\bv\bw\phi \bx) 
}$$
\end{corollary}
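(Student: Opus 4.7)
The strategy is to deduce this diagram from the tilde version already proven in Proposition \ref{Dx pour x in B_I} by quotienting appropriately. Concretely, I would take the commutative square of Proposition \ref{Dx pour x in B_I} and quotient the top-right corner $\tilde\bX(\bJ,\bv\bw\phi)$ by $\bL_J^{t(\bv\bw)F}$ and the bottom-right corner $\tilde\bX(\bJ^\bx,\bx\inv\bv\bw\phi\bx)$ by $\bL_{\bJ^\bx}^{t(\bx\inv\bv\bw\lexp\phi\bx)F}$. By Lemma \ref{tildeX torsor}, these quotients produce $\bX(\bJ,\bv\bw\phi)$ and $\bX(\bJ^\bx,\bx\inv\bv\bw\phi\bx)$ respectively.

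Next I would handle the left-hand vertical side. The fiber product $\tilde\bX(\bI,\bw\phi)\times_{\bL_I^{\dot wF}}\tilde\bX_{\bL_I}(\bJ,\bv\cdot w\phi)$ carries a residual action of the Levi group (acting on the second factor on the right), and quotienting the second factor by it yields $\tilde\bX(\bI,\bw\phi)\times_{\bL_I^{\dot wF}}\bX_{\bL_I}(\bJ,\bv\cdot w\phi)$, which is the top-left corner of the target diagram. Under the top horizontal isomorphism of Proposition \ref{Dx pour x in B_I} this quotient corresponds on the right to the quotient by $\bL_J^{t(\bv\bw)F}$ by construction of the isomorphism in Proposition \ref{produit fibre general}(ii); hence the top horizontal arrow of the target diagram is exactly the descended isomorphism of that proposition. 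The same holds for the bottom horizontal arrow, with $\bJ$ replaced by $\bJ^\bx$.

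It then remains to check that the vertical arrows descend correctly. For the right-hand vertical, Lemma \ref{D tilde to D} asserts precisely that $\tilde D_\bx$ descends to $D_\bx$ along the Levi quotient. For the left-hand vertical, $\Id\times\tilde D_\bx$ acts as the identity on the first factor and as $\tilde D_\bx$ on the second (both applied in the Levi $\bL_I$ rather than in $\bG$); so again by Lemma \ref{D tilde to D} applied inside $\bL_I$, it descends to $\Id\times D_\bx$. Since the tilde diagram commutes and all four corners and arrows are compatible with these quotients, the corollary's diagram commutes.

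The main potential obstacle is just bookkeeping: one must verify that the Levi actions used to define the two fiber products (left column of the tilde diagram) commute with $\tilde D_\bx$ in the way needed to perform the quotient simultaneously on both rows, and that the compatibility stated at the end of Lemma \ref{D tilde to D} between $\tilde D_\bv\circ l$ and $l^{t(\bv)}\circ\tilde D_\bv$ matches how $\bL_J^{t(\bv\bw)F}$ is identified with $\bL_{\bJ^\bx}^{t(\bx\inv\bv\bw\lexp\phi\bx)F}$ via conjugation by $t(\bx)$. Once this identification is spelled out, the diagram chase is mechanical.
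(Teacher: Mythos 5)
Your proposal is correct and matches the paper's (very terse) argument: the paper simply says the corollary follows from Proposition \ref{produit fibre general}(ii) and Lemma \ref{D tilde to D} applied to the square of Proposition \ref{Dx pour x in B_I}, which is exactly the descent-along-Levi-quotients argument you spell out in detail.
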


\subsection*{Affineness}
Until  the end of  the text, we  will be specially interested in
varieties $\bX(\bI,\bb\phi)$
which satisfy the assumption of Theorem \ref{desc endo}, that is some power
of  $\bb\phi$ is left-divisible by  $\bw_\bI\inv\bw_0$. They have many nice
properties.  We show in  this subsection that  they are affine, by adapting
the proof of Bonnaf\'e and Rouquier \cite{BR2}; we use the existence of the
varieties  $\tilde\cO(\bI,\bb)$  and  $\tilde\bX(\bI,\bb\phi)$  to  replace
doing a quotient by $\bL_I$ by doing a quotient by $\LwF$.

\begin{proposition}   Assume   the   morphism   $\bI\xrightarrow{\bb}\bJ\in
B^+(\cI)$ is left-divisible by
$\Delta_\cI=\bI\xrightarrow{\bw_\bI\inv\bw_0}\bI^{\bw_0}$. Then the variety
$\tilde\cO(\bI,\bb)$ is affine.
\end{proposition}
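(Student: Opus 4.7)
Using the assumed left-divisibility, write $\bb=\Delta_\cI(\bI)\cdot \bb'$, where $\Delta_\cI(\bI)\colon \bI\to\bK$ is $\bw_\bI\inv\bw_0$ with $\bK=\bI^{\bw_0}$. The bicategory construction of $\tilde\cO$ (Theorem \ref{bicategory}) gives a decomposition
\[
\tilde\cO(\bI,\bb)\cong \tilde\cO(\bI,\Delta_\cI(\bI))\times_{\bG/\bU_K}\tilde\cO(\bK,\bb'),
\]
so the plan is to reduce to two affineness statements: (i) $\tilde\cO(\bI,\Delta_\cI(\bI))\cong \bG$ (hence is affine), and (ii) for any morphism $\bc$ with source $\bK$ in $B^+(\cI)$, the first projection $\tilde\cO(\bK,\bc)\to\bG/\bU_K$ is an affine morphism. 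Granting both, the base change of (ii) (applied to $\bc=\bb'$) along the second projection $\bG\cong\tilde\cO(\bI,\Delta_\cI(\bI))\to\bG/\bU_K$ yields an affine morphism $\tilde\cO(\bI,\bb)\to\bG$, and since $\bG$ is affine the conclusion follows.

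For (i), set $w=w_\bI\inv w_0$ and take the base point $(\bU_I,\dot w\bU_K)$. The stabilizer under the left $\bG$-action equals $\bU_I\cap \lexp{w}{\bU_K}$. Since $\bK=\bI^{\bw_0}$, we have $\lexp{w_0}{\bP_K}=\bP_I^-$ (the parabolic opposite to $\bP_I$), hence $\lexp{w_0}{\bU_K}=\bU_I^-$; and since $w_\bI$ preserves each of $\Phi^{\pm}\setminus \Phi_\bI^{\pm}$ setwise, it also preserves $\bU_I^-$, so $\lexp{w}{\bU_K}=\bU_I^-$. The stabilizer is therefore $\bU_I\cap \bU_I^-=\{1\}$, and transitivity of the action is a direct check via the decomposition $\bU_I\dot w\bU_K$. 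Under the resulting identification $\tilde\cO(\bI,\Delta_\cI(\bI))\cong \bG$, the second projection becomes $\bG\to\bG/\bU_K$ composed with right translation by $\dot w$, an affine morphism (a $\bU_K$-torsor, and $\bU_K$ is affine).

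Step (ii) proceeds by induction on the number of terms in a decomposition of $\bc$ as a product of elements of the Garside family $\cS\cap B^+(\cI)$. For the base case $\bc=\bw\in\bW$, the first projection is $\bG$-equivariant, and its pullback along the $\bU_K$-torsor $\bG\to\bG/\bU_K$ is trivialized as $\bG\times(\bU_K\dot w\bU_{K'}/\bU_{K'})$, whose fiber is affine (a homogeneous space for the unipotent group $\bU_K$, explicitly a product of root subgroups). Thus the pullback is affine, and affineness descends along the fppf surjection $\bG\to\bG/\bU_K$. For the inductive step, write $\bc=\bw_1\bc''$ and factor $\tilde\cO(\bK,\bc)\to\bG/\bU_K$ through $\tilde\cO(\bK,\bw_1)\to\bG/\bU_K$; the first arrow is the base change of the first projection $\tilde\cO(\bK_2,\bc'')\to\bG/\bU_{K_2}$ (affine by induction) along the second projection of $\tilde\cO(\bK,\bw_1)$, hence is affine, and composition with the base case yields the claim.

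The hard part is step (ii), whose base case requires combining $\bG$-equivariant local triviality with fppf descent of affineness. Step (i) is a short stabilizer computation but is the conceptual heart of the argument: it pinpoints why $\Delta_\cI$-divisibility is the correct hypothesis, by realizing $\tilde\cO(\bI,\Delta_\cI(\bI))$ as $\bG$ itself and presenting the glueing map to $\bG/\bU_K$ as a torsor quotient by a unipotent group, so that everything stays in the affine category.
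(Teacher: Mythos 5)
Your proof is correct, and it reaches the same destination as the paper's via essentially the same structural input (the direct-product decomposition $\bU_I\dot w\bU_J\cong(\bU_I\cap\lexp w\bU^-_J)\dot w\times\bU_J$ for Bruhat double cosets, plus the fact that for $w=w_I^{-1}w_0$ the first factor is all of $\bU_I\dot w$), but the organization is genuinely different. The paper proceeds by exhibiting one explicit global isomorphism
$\bG\times\prod_{i}(\bU_{I_i}\cap\lexp{v_i}\bU^-_{I_{i+1}})\dot v_i\xrightarrow{\sim}\tilde\cO(\bI,\bb)$,
and constructs the inverse by successively extracting the unipotent coordinates via the retraction maps $\eta_i:\bU_{I_i}\dot v_i\bU_{I_{i+1}}\to(\bU_{I_i}\cap\lexp{v_i}\bU^-_{I_{i+1}})\dot v_i$; this is completely elementary and self-contained, and avoids any descent argument. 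Your version is more structural: step (i) realizes $\tilde\cO(\bI,\Delta_\cI(\bI))$ as a trivial $\bG$-torsor over a point by a stabilizer computation (the stabilizer $\bU_I\cap\lexp{w}\bU_K=\bU_I\cap\bU_I^-=\{1\}$, which is precisely where $\Delta_\cI$-divisibility enters), and step (ii) shows by induction that the first projection $\tilde\cO(\bK,\bc)\to\bG/\bU_K$ is an affine morphism, using $\bG$-equivariance and descent of affineness along $\bG\to\bG/\bU_K$. What your approach buys is a clean conceptual explanation of the hypothesis: $\Delta_\cI$-divisibility is exactly what makes the leading piece into $\bG$ rather than a (non-affine) flag-type variety, so everything after that is fibered over an affine base by affine morphisms. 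One small improvement you could make: since $\bU_K$ is a split unipotent group, the torsor $\bG\to\bG/\bU_K$ is Zariski-locally trivial, so the appeal to fppf descent can be replaced by the definitional fact that affineness of a morphism is Zariski-local on the target; this brings your argument closer to the elementary level of the paper's.
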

\begin{proof}
By assumption there exists a decomposition into elements of $\cS$ of
$\bI\xrightarrow{\bb}\bJ$ of the form
$\bI\xrightarrow{\bw_\bI\inv\bw_0}\bI_1
\xrightarrow{\bv_1}\bI_2
\xrightarrow{\bv_2}\bI_3\to\dots\to\bI_r\xrightarrow{\bv_r}\bJ$.
We show that the map $\varphi$ defined by:
\begin{multline*}
\bG\times\prod_{i=1}^{i=r}(\bU_{I_i}\cap\lexp{t(\bv_i)}\bU^-_{I_{i+1}})t(\bv_i)
\rightarrow\hfill\\
\hfill\tilde\cO(I,t(\bw_\bI\inv\bw_0))\times_{\bG/\bU_{I_1}}\tilde\cO(I_1,
t(\bv_1))\dots\times_{\bG/\bU_{I_r}}\tilde\cO(I_r,t(\bv_r))\\
(g,h_1,\dots,h_r)\mapsto\hfill\\
\hfill(g\bU_I,gt(\bw_\bI\inv\bw_0)\bU_{I_1},
gt(\bw_I\inv\bw_0)h_1\bU_{I_2},\dots,gt(\bw_I\inv\bw_0)h_1\dotsm h_r\bU_J)
\end{multline*}
is an isomorphism; since the first variety is a product of affine varieties
this will prove our claim.

Since
$\bU_{I_i}t(\bv_i)\bU_{I_{i+1}}$ is isomorphic to
$(\bU_{I_i}\cap\lexp{t(\bv_i)}\bU^-_{I_{i+1}})t(\bv_i)\times \bU_{I_{i+1}}$,
by composition with the first projection we get a morphism $\eta_i:
\bU_{I_i}t(\bv_i)\bU_{I_{i+1}}\rightarrow
(\bU_{I_i}\cap\lexp{t(\bv_i)}\bU^-_{I_{i+1}})t(\bv_i)$
for $i=1,\dots,r$, where $I_{r+1}=J$. Similarly we have a morphism $\eta: 
\bU_It(\bw_\bI\inv\bw_0)\bU_{I_1}\rightarrow
(\bU_I\cap\lexp{t(\bw_\bI\inv\bw_0)}\bU^-_{I_{1}})t(\bw_\bI\inv\bw_0)$. For
\begin{multline*}
x=(g\bU_I,g_1\bU_{I_1},g_2\bU_{I_2},\dots,g_r\bU_{I_r},g_{r+1}\bU_J)\hfill\\
\hfill\in
\tilde\cO(I,t(\bw_\bI\inv\bw_0))\times_{\bG/\bU_{I_1}}\tilde\cO(I_1,t(\bv_1))
\dots\times_{\bG/\bU_{I_r}}\tilde\cO(I_r,t(\bv_r))
\end{multline*}
let $\psi(x)=g\eta(g\inv g_1)$, $\psi_1(x)=\psi(x)t(\bw_0)$,
$\psi_i(x)=\eta_i((\psi(x)\psi_1(x)\dotsm\psi_{i-1}(x))\inv g_i)$.
We claim that the map
$\psi$ (resp.\ $\psi_i$) is well defined, that is does not depend on the
representative $g$ (resp.\ $g_i$) chosen; the morphism
$x\mapsto(\psi(x),\psi_1(x),\dots,\psi_r(x))$ is
then clearly inverse to $\varphi$. Since $\eta_i(hu)=\eta_i(h)$ for all
$h\in\bU_{I_i}t(\bv_i)\bU_{I_{i+1}}$ and all $u\in\bU_{I_{i+1}}$,
we get that all $\psi_i$ are well-defined. Since moreover
$\eta(uh)=u\eta(h)$ for 
all $h\in\bU_It(\bw_\bI\inv\bw_0)\bU_{I_1}$ and all $u\in\bU_I$,
we get that $\psi$ also is
well-defined, whence our claim.
\end{proof}

\begin{proposition} \label{tildeX affine}
Assume that we are under the assumptions of Theorem 
\ref{desc endo}, that is $(\bI\xrightarrow{\bw\phi}\bI)\in B^+\phi(\cI)$ has
some  power  divisible  by  $\Delta_\cI$,  or  equivalently  some  power of
$\bw\phi$  is left-divisible by  $\bw_\bI\inv\bw_0$. 
Then $\tilde\bX(\bI,\bw\phi)$ is affine.
\end{proposition}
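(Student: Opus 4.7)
The plan is to realize $\tilde\bX(\bI,\bw\phi)$ as a closed subvariety of $\tilde\cO(\bI,\bw^{(n)})$ for $n$ large, where $\bw^{(n)}:=\bw\cdot\lexp\phi\bw\cdots\lexp{\phi^{n-1}}\bw$ is the $n$-fold composition in $B^+(\cI)$, viewed as a morphism $\bI\to\lexp{\phi^n}\bI$. (That this composition genuinely lies in $B^+(\cI)$, i.e.\ is $\bI$-reduced, follows from Proposition \ref{ribbon}(\ref{alpha(b1b2)}) applied inductively.) The hypothesis, in the reformulation given in the statement, provides an $n$ with $\bw^{(n)}$ left-divisible by $\bw_\bI\inv\bw_0=\Delta_\cI(\bI)$ in $B^+(\cI)$, so by the preceding proposition the variety $\tilde\cO(\bI,\bw^{(n)})$ is affine.

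Next I would construct the morphism
\[
\Psi:\tilde\bX(\bI,\bw\phi)\to\tilde\cO(\bI,\bw^{(n)}),\qquad y\mapsto(y,F(y),F^2(y),\ldots,F^{n-1}(y)),
\]
where the target is modelled via Theorem \ref{bicategory} by the decomposition $\bI\xrightarrow{\bw}\lexp\phi\bI\xrightarrow{\lexp\phi\bw}\lexp{\phi^2}\bI\to\cdots\to\lexp{\phi^n}\bI$ as
\[
\tilde\cO(\bI,\bw)\times_{\bG/\bU_{\lexp\phi I}}\tilde\cO(\lexp\phi\bI,\lexp\phi\bw)\times\cdots\times_{\bG/\bU_{\lexp{\phi^{n-1}}I}}\tilde\cO(\lexp{\phi^{n-1}}\bI,\lexp{\phi^{n-1}}\bw).
\]
The fact that $F^i(y)\in\tilde\cO(\lexp{\phi^i}\bI,\lexp{\phi^i}\bw)$ uses exactly the $F$-equivariance of the chosen Tits homomorphism $t$, since this is what intertwines $F$ with the shift by $\phi$ on the data defining $\tilde\cO$. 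The compatibility for the fiber product at the $i$-th step reads $p''(F^i(y))=p'(F^{i+1}(y))$, and using $p'\circ F=F\circ p'$ and $p''\circ F=F\circ p''$ it reduces to the single equality $p''(y)=F(p'(y))$, which is precisely the defining condition of $\tilde\bX(\bI,\bw\phi)\subset\tilde\cO(\bI,\bw)$; so $\Psi$ is well-defined.

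Finally, $\Psi$ is a closed embedding: its image is the locus cut out by the graph conditions $y_{i+1}=F(y_i)$ for $0\le i\le n-2$ (closed, since $F$ is a morphism), and projection onto the first factor is an inverse on the image. A closed subvariety of an affine variety is affine, whence $\tilde\bX(\bI,\bw\phi)$ is affine. The only non-routine point in this argument is the bookkeeping in the middle step, namely tracking through the models of Theorem \ref{bicategory} to check that $\Psi$ lands in the iterated fiber product rather than merely in the product of the factors; everything else is formal.
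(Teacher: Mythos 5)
Your proof is correct and follows essentially the same route as the paper: both realise $\tilde\bX(\bI,\bw\phi)$ as a closed subvariety of $\tilde\cO(\bI,\bw^{(n)})$ by iterating $F$, once the latter is known to be affine via the preceding proposition. The only cosmetic difference is that the paper tracks explicit coset sequences $g_{i,j}\bU$ (and for this imposes in addition $\lexp{\phi^k}\bI=\bI$, $\lexp{\phi^k}\bw=\bw$), whereas you argue at the level of the bicategorical models, where these auxiliary choices are unnecessary.
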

\begin{proof}
Let  us  define $k$  as the smallest
integer such that $\lexp{\phi^k}\bI=\bI$,   $\lexp{\phi^k}\bw=\bw$    and
$\bw_\bI\inv\bw_0\preccurlyeq\bw^{(k)}$,     where     $\bw^{(k)}:=\bw\lexp
\phi\bw\dotsm\lexp{\phi^{k-1}}\bw$.

We will embed $\tilde\bX(\bI,\bw\phi)$ as a closed
subvariety in $\tilde\cO(\bI,\bw^{(k)})$, which will prove it to be affine.

Let $\bI\xrightarrow{\bw_1}\bI_2
\xrightarrow{\bw_2}\bI_3\to\dots\to\bI_r\xrightarrow{\bw_r}\lexp\phi\bI$ 
be a decomposition of $\bI\xrightarrow{\bw}\lexp\phi\bI$
into elements of $\cS$, so that
$\tilde\cO(\bI,\bw^{(k)})$ identifies with the set of sequences
$$\begin{aligned}
(&g_{1,1}\bU_I,g_{1,2}\bU_{I_2},\dots,g_{1,r}\bU_{I_r},\\
&g_{2,1}\bU_{\lexp \phi I},g_{2,2}\bU_{\lexp \phi I_2},\dots,g_{2,r}\bU_{\lexp
\phi I_r},\\
&\dots,\\
&g_{k,1}\bU_{\lexp{\phi^{k-1}}I},g_{k,2}\bU_{\lexp
  {\phi^{k-1}}I_2},\dots,g_{k,r}\bU_{\lexp{\phi^{k-1}}I_r},\\&g_{k+1,1}\bU_I)
\end{aligned}$$
such that for $j<r$ we have $g_{i,j}\inv g_{i,j+1}\in \bU_{\lexp{\phi^{i-1}}I_j}
t(\lexp{\phi^{i-1}}{\bw}_j)\bU_{\lexp{\phi^{i-1}}I_{j+1}}$ 
and $g_{i,r}\inv g_{i+1,1}\in \bU_{\lexp{\phi^{i-1}}I_r}
t(\lexp{\phi^{i-1}}{\bw}_r) \bU_{\lexp{\phi^i}I}$.

Similarly $\tilde\bX(\bI,\bw\phi)$ identifies with the set of sequences
$(g_1\bU_I,g_2\bU_{I_2},\dots,g_r\bU_{I_r})$ such that
$g_j\inv g_{j+1}\in \bU_{I_j}t(\bw_j)\bU_{I_{j+1}}$ for $j<r$ and
$g_r\inv \lexp Fg_1\in \bU_{I_r}t(\bw_r) \bU_{\lexp \phi I}$. It is thus clear
that the map
$$\begin{aligned}
(g_1\bU_I,g_2\bU_{I_2},\dots,g_r\bU_{I_r})\mapsto
(&g_1\bU_I,g_2\bU_{I_2},\dots,g_r\bU_{I_r},\\
&\lexp Fg_1\bU_{\lexp \phi I},\lexp Fg_2\bU_{\lexp \phi I_2},\dots,\lexp
Fg_r\bU_{\lexp \phi I_r},\\
&\dots,\\
&\lexp{F^{k-1}}g_1\bU_{\lexp{\phi^{k-1}}I},
\dots,\lexp{F^{k-1}}g_r\bU_{\lexp{\phi^{k-1}}I_r},
\lexp{F^k}g_1\bU_I)\\
\end{aligned}$$
identifies $\tilde\bX(\bI,\bw\phi)$ with the closed subvariety of
$\tilde\cO(\bI,\bw^{(k)})$ defined by $g_{i+1,j}\bU_{\lexp{\phi^i}I_j}=
\lexp F(g_{i,j}\bU_{\lexp{\phi^{i-1}}I_j})$ for all $i,j$.
\end{proof}
\begin{corollary} Under the assumptions of Theorem 
\ref{desc endo}, that is $(\bI\xrightarrow{\bw\phi}\bI)\in B^+(\cI)$ has some power 
divisible by $\Delta_\cI$, or equivalently some power of $\bw\phi$ is divisible
on the left by $\bw_\bI\inv\bw_0$, the variety 
$\bX(\bI,\bw\phi)$ is affine.
\end{corollary}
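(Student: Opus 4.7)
The plan is to deduce the corollary immediately from Proposition \ref{tildeX affine} together with Lemma \ref{tildeX torsor}. Since the variety $\bX(\bI,\bw\phi)$ is defined without reference to a Tits homomorphism, while $\tilde\bX(\bI,\bw\phi)$ depends on a choice of $t$, I may freely fix $t$ to be $F$-equivariant, as guaranteed by the proposition on the existence of an $F$-equivariant Tits homomorphism. With this choice in place, Proposition \ref{tildeX affine} tells us that $\tilde\bX(\bI,\bw\phi)$ is affine.

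Next, Lemma \ref{tildeX torsor} provides a projection
$\pi:\tilde\bX(\bI,\bw\phi)\to\bX(\bI,\bw\phi)$
which is a $\bL_I^{t(\bw)F}$-torsor. The group $\bL_I^{t(\bw)F}$ is finite: the isogeny $\Ad(t(\bw))\circ F$ of the connected reductive group $\bL_I$ satisfies the same assumption as $F$ does on $\bG$ (namely that some power is a Frobenius endomorphism for a rational structure over a finite field), so its group of fixed points is finite. Hence we must show that the quotient of an affine variety by a free action of a finite group is affine. This is standard: if $\tilde\bX(\bI,\bw\phi)=\Spec A$, then, since the action is free and the group is finite, the geometric quotient exists and is given by $\Spec A^{\bL_I^{t(\bw)F}}$; the ring of invariants is finitely generated by Noether's theorem on finite group invariants, and is identified with the ring of functions on $\bX(\bI,\bw\phi)$ via $\pi^*$. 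Thus $\bX(\bI,\bw\phi)$ is affine.

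There is essentially no obstacle in this step; all the work is in Proposition \ref{tildeX affine}, where $\tilde\bX(\bI,\bw\phi)$ is embedded as a closed subvariety of the affine variety $\tilde\cO(\bI,\bw^{(k)})$. The only subtlety in the passage from $\tilde\bX$ to $\bX$ is ensuring that the quotient we take is indeed the geometric quotient, which is automatic here because the torsor description of $\pi$ makes the action free on a variety, so the categorical quotient by the finite group coincides with the geometric one.
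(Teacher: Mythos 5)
Your proof is correct and takes essentially the same route as the paper: Proposition \ref{tildeX affine} (with an $F$-equivariant Tits homomorphism) gives affineness of $\tilde\bX(\bI,\bw\phi)$, and Lemma \ref{tildeX torsor} then exhibits $\bX(\bI,\bw\phi)$ as the quotient of an affine variety by the finite group $\bL_I^{t(\bw)F}$, hence affine. The extra details you supply (finiteness of the fixed-point group, geometric versus categorical quotient, Noether) are just the standard facts the paper leaves implicit.
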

\begin{proof} Indeed, by Proposition \ref{tildeX affine} and Lemma \ref{tildeX torsor},
$\bX(\bI,\bw\phi)$ is
the quotient of an affine variety by a finite group, so it is affine.
\end{proof}
\subsection*{Shintani descent identity}
In this subsection we give a formula for the Leftschetz number of a variety
$\bX(\bI,\bw F)$ which we deduce from a ``Shintani descent identity''.

Let $m$ be a multiple of $\delta$;
if we identify $\bG/\bB$ with the variety
$\cB$ of Borel subgroups of $\bG$, 
the $\bG^{F^m}$-module $\Qlbar(\bG/\bB)^{F^m}$ identifies with
the permutation module of $\bG^{F^m}$ on $\cB^{F^m}$.
Its endomorphism algebra $\cH_{q^m}(W):=\End_{\bG^{F^m}}( \Qlbar\cB^{F^m})$
has a basis consisting of the operators $(T_w)_{w\in W}$ where
$$T_w: \bB'\mapsto\sum_{\{\bB''\in\cB^{F^m}\mid\bB''\xrightarrow w\bB'\}}\bB''$$
(see \cite[Chapitre IV \S 2, exercice 22]{bbk}).

Similarly, since $I$ is $F^m$-stable, the algebra
$\cH_{q^m}(W,W_I):=\End_{\bG^{F^m}}(\Qlbar\cP_I^{F^m})$ has a
$\Qlbar$-basis consisting of the operators
$$X_w: \bP\mapsto\sum_{\{\bP'\in\cP_I^{F^m}\mid\bP'\xrightarrow{I,w,I}\bP\}}\bP',$$
where  $w$  runs  over  a  set  of  representatives  of  the  double cosets
$W_I\backslash  W/W_I\simeq  \bP_I^{F^m}\backslash  \bG^{F^m}/\bP_I^{F^m}$.
The  map  $\gamma$  which  sends  $\bP\in\cP_I^{F^m}$  to  the  sum of all its
$F^m$-stable  Borel  subgroups  makes  $\Qlbar\cP_I^{F^m}$  into a direct
summand  of $\Qlbar\cB^{F^m}$.  Indeed the  image of  $\gamma$ identifies with
that  of the idempotent  $X_1= |(\bP_I/\bB)^{F^m}|\inv \sum_{v\in W_I}
T_v$,   and  $\gamma$ has  a left-inverse  given  up to a scalar by  mapping
$\bB\in\cB^{F^m}$  to the unique (thus  $F^m$-stable) parabolic subgroup in
$\cP_I$  containing it. The operator  $X_w$ identifies with the restriction
of $X_1T_w$ to the image $\Qlbar\cP_I^{F^m}$ of $X_1$.

We may define a $\Qlbar$-representation of
$B^+(\cI)(\bI)$ on $\Qlbar\cP_I^{F^m}$
by sending $\bI\xrightarrow{\bw}\bI$ to the operator $X_\bw\in \cH(W,W_I)$ defined by
$$X_\bw(\bP)=\sum_{\{x\in\cO(\bI,\bw)^{F^m}\mid p''(x)=\bP\}}p'(x).$$
When $\bw\in\bW$, with image $w$ in $W$, the operators $X_\bw$ and $X_w$ coincide.
In the particular case where $I=\emptyset$ we get an operator denoted by $T_\bw$,
defined for any $\bw$ in $B^+$.
The operator $X_\bw$ identifies with the restriction of $X_1T_\bw$
to the image $\Qlbar\cP_I^{F^m}$ of $X_1$.

Similarly, to $\bI\xrightarrow{\bw\phi}\bI\in B^+\phi(\cI)$
we associate an endomorphism $X_{\bw\phi}$ of
$\Qlbar\cP_I^{F^m}$ by the formula
$$X_{\bw\phi}(\bP)=\sum_{\{x\in\cO(\bI,\bw)^{F^m}\mid p''(x)=F(\bP)\}}p'(x).$$
When $\phi(I)=I$ we have $X_{\bw\phi}=X_\bw F$. In general we have
$X_{\bw\phi}=X_1T_\bw F$ on $\Qlbar\cP_I^{F^m}$ seen as a subspace of
$\Qlbar\cB^{F^m}$: on this latter module one can separate
the action of $F$; the operator $F$ sends the submodule
$\Qlbar\cP_I^{F^m}$ to $\Qlbar\cP_{\phi(I)}^{F^m}$
which is sent back to $\Qlbar\cP_I^{F^m}$ by $X_1T_\bw$.
The endomorphism  $X_{\bw\phi}$ commutes with $\bG^{F^m}$ like $F$, hence
normalizes $\cH_{q^m}(W,W_I)$; its action identifies with the conjugation
action of $T_\bw\phi$ on $\cH_{q^m}(W,W_I)$ inside 
$\cH_{q^m}(W)\rtimes\genby\phi$ .

Recall that the Shintani descent 
$\Sh_{F^m/F}$ is the ``norm'' map which maps the
$F$-class of $g'=h.\lexp Fh\inv\in\bG^{F^m}$ to the class of
$g=h\inv .\lexp{F^m} h \in\bG^F$.
\begin{proposition}[Shintani descent identity]
Let $\bI\xrightarrow{\bw\phi}\bI\in B^+\phi(\cI)$,
and let $m$ be a multiple of $\delta$. We have the following equality of
functions on $\bG^F$:
$$(g\mapsto|\bX(\bI,\bw\phi)^{gF^m}|)=\Sh_{F^m/F}
(g'\mapsto \Trace(g' X_{\bw\phi}\mid \Qlbar\cP_I^{F^m})).$$
\end{proposition}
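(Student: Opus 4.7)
The plan is to give a bijective proof, interpreting both sides as counts of configurations in parallel combinatorial descriptions. Fix a decomposition $\bw = \bw_1 \cdots \bw_k$ of $\bw$ into elements of $\cS$. A point of $\bX(\bI, \bw\phi)$ is then a sequence $(\bP_1, \ldots, \bP_{k+1})$ with $\bP_i \xrightarrow{I_i, w_i, I_{i+1}} \bP_{i+1}$ and $\bP_{k+1} = F(\bP_1)$; it is fixed by $gF^m$ precisely when moreover $F^m(\bP_i) = g^{-1} \bP_i g$ for all $i$. This is the description of the LHS I will use.

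For the RHS, I unfold $\Trace(g' X_{\bw\phi})$ in the basis $\{[\bP]\}_{\bP \in \cP_I^{F^m}}$. By the definition of $X_{\bw\phi}$, the matrix entry at $(\bP_1, \bP)$ equals the number of $F^m$-fixed sequences $(\bP_1, \ldots, \bP_k, F\bP) \in \cO(\bI, \bw)^{F^m}$ with first term $\bP_1$. Multiplying by $g'$ on the left and taking the trace, one obtains after the substitution $\bP = g' \bP_1$ that
\[
\Trace(g' X_{\bw\phi}) = \bigl|\{(\bP'_1, \ldots, \bP'_{k+1}) \in \cO(\bI, \bw)^{F^m} \mid \bP'_{k+1} = F(g' \bP'_1)\}\bigr|.
\]
Note that since $\bP'_1$ is $F^m$-fixed and $g' \in \bG^{F^m}$, the target $F(g'\bP'_1) = \lexp{F(g')} F\bP'_1$ is automatically $F^m$-fixed too.

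Next I construct the bijection. By Lang's theorem, pick $h \in \bG$ with $g' = hF(h^{-1})$, so that $g = h^{-1} F^m(h)$ as in the statement of Shintani descent. Set $\bP'_i := \lexp{F(h)} \bP_i$. Conjugation by $F(h)$ preserves every Bruhat relation, and a direct calculation, invoking the identity $F^{m+1}(h) F^m(h)^{-1} = F(h) h^{-1}$ (which is exactly the condition $F(g) = g$, i.e.\ $g \in \bG^F$), shows that the twisted equivariance $F^m(\bP_i) = \lexp{g^{-1}}\bP_i$ becomes $F^m$-stability, while the equality $\bP_{k+1} = F(\bP_1)$ translates into $\bP'_{k+1} = \lexp{F(g')} F\bP'_1$. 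Thus the map $(\bP_1, \ldots, \bP_{k+1}) \mapsto (\bP'_1, \ldots, \bP'_{k+1})$ bijects LHS configurations with the RHS-side configurations counted above, and its inverse is conjugation by $F(h)^{-1}$.

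Finally, one must check that the resulting equality is compatible with $\Sh_{F^m/F}$: different choices of $h$ (for the same $g'$) differ by right multiplication by elements of $\bG^{F^m}$, which transforms the pair $(g, g')$ into a $\bG^F$-conjugate of $g$ paired with an $F$-conjugate of $g'$, so both sides automatically descend to the level of classes. I expect the main source of friction to be precisely this book-keeping with the extra twist by $F$: the operator $X_{\bw\phi}$ produces the shape $\lexp{F(g')}$ on the RHS while the naive conjugation $\lexp h$ yields $\lexp{g'}$, and one must use $\lexp{F(h)}$ (or, equivalently, replace $g'$ by its $F$-conjugate $F(g')$) to match the two expressions precisely.
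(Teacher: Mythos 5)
Your proof is correct and follows essentially the same route as the paper's: both identify a Lang element $h$ with $g=h^{-1}F^m(h)$, $g'=hF(h^{-1})$, transport $gF^m$-fixed points of $\cO(\bI,\bw)$ to $F^m$-fixed points by inner conjugation, and match the result against the combinatorial expansion of $\Trace(g'X_{\bw\phi})$. The only (cosmetic) difference is that you conjugate by $F(h)$ rather than by $h$, which bundles the paper's final substitution $g'x\mapsto x$ into the change of variable; note a small slip in the closing paragraph, where the ambiguity in $h$ for a fixed $g'$ is right multiplication by $\bG^F$, not $\bG^{F^m}$.
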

\begin{proof}
Let $g=h\inv .\lexp{F^m} h$ and $g'=h.\lexp Fh\inv$, so that the class of $g$ is
the image by $\Sh_{F^m/F}$ of the $F$-class of $g'$; we have
$\bX(\bI,\bw\phi)^{gF^m}=\{x\in\cO(\bI,\bw)\mid \lexp {F^mh}x=\lexp hx
\text{ and } p''(\lexp hx)=\lexp {g'F}p'(\lexp hx)\}$.
Taking $\lexp hx$ as a variable in the last formula we get
$|\bX(\bI,\bw\phi)^{gF^m}|=|\{x\in\cO(\bI,\bw)^{F^m}\mid p''(x)=\lexp{g'F}p'(x)\}|$.
Putting $\bP=p'(x)$ this last number becomes
$\sum_{\bP\in\cP_I^{F^m}}
|\{x\in\cO(\bI,\bw)^{F^m}\mid  p'(x)=\bP\text{ and }p''(x)=\lexp{g'F}\bP\}|$. On
the  other hand the trace of $g'X_{\bw\phi}$ is the sum over $\bP\in\cP_I^{F^m}$
of   the   coefficient   of   $\bP$   in   
$\sum_{\{x\in\cO(\bI,\bw)^{F^m}\mid p''(x)=F(\bP)\}}g'p'(x)$. 
This coefficient is equal to
$|\{x\in\cO(\bI,\bw)^{F^m}\mid  g'p'(x)=\bP\text{  and  }p''(x)=\lexp F\bP\}|=
|\{x\in\cO(\bI,\bw)^{F^m}\mid  p'(x)=\bP\text{  and }p''(x)=\lexp{g'F}\bP\}|$,
this last equality by changing $g'x$ into $x$.
\end{proof}
The above computation can be done along
different  lines, without mentioning $\Qlbar\cP_I^{F^m}$;
one can use instead Corollary \ref{X(uwI)} for $\bu=\uw_I$, which gives
a $\bG^F$-equivariant morphism $\bX(\uw_I\bw\phi)\to\bX(\bI,\bw\phi)$ whose
fibers  are isomorphic  to the  variety of  Borel subgroups of $\bL_I$; the
action  of $F$ induces that  of $t(w\phi)$ on the  fibers. One may then use
directly  \cite[3.3.7]{DMR}  to  get $|\bX(\uw_I\bw\phi)^{gF^m}|= \Trace(g'
T_{\uw_I}T_\bw\phi\mid   \Qlbar\cB^{F^m})$,   where   $T_{\uw_I}=\sum_{v\in
W_I}T_v$.

By, for example, \cite[II, 3.1]{DM} 
the algebras $\cH_{q^m}(W)$ and $\cH_{q^m}(W)\rtimes\genby\phi$
split over $\Qlbar[q^{m/2}]$;
corresponding to the specialization $q^{m/2}\mapsto 1:\cH_{q^m}(W)\to
\Qlbar W$, there is a bijection $\chi\mapsto\chi_{q^m}:
\Irr(W)\to\Irr(\cH_{q^m}(W))$.
Choosing an extension $\tilde\chi$ to $W\rtimes\genby\phi$ of each
character in $\Irr(W)^\phi$, we get a corresponding extension 
$\tilde\chi_{q^m}\in \Irr(\cH_{q^m}(W)\rtimes\genby\phi)$ which takes its
values in $\Qlbar[q^{m/2}]$. If
$U_\chi\in\Irr(\bG^{F^m})$ is the corresponding character of $\bG^{F^m}$,
we get a corresponding extension $U_{\tilde\chi}$ of $U_\chi$ to
$\bG^{F^m}\rtimes\genby F$ (see \cite[III th\'eor\`eme 1.3 ]{DM}).
With these notations, the Shintani descent identity becomes

\begin{proposition}\label{shintani1}
$$(g\mapsto|\bX(\bI,\bw\phi)^{gF^m}|)=
\sum_{\chi\in\Irr(W)^\phi}
\tilde\chi_{q^m}(X_1 T_\bw\phi)\Sh_{F^m/F}U_{\tilde\chi}$$ and
the only characters $\chi$ in that sum which give a non-zero contribution are those which
are a component of $\Ind_{W_I}^W\Id$.
\end{proposition}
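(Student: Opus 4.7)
The plan is to start from the Shintani descent identity just proved and decompose the module $\Qlbar[(\bG/\bP_I)^{F^m}]$ via the $(\bG^{F^m},\cH_{q^m}(W))$-bimodule structure of $\Qlbar[(\bG/\bB)^{F^m}]$. First I would rewrite the operator $X_{\bw\phi}$ acting on $\Qlbar[(\bG/\bP_I)^{F^m}]=X_1\Qlbar[(\bG/\bB)^{F^m}]$ as $X_1 T_\bw\phi$ acting on $\Qlbar[(\bG/\bB)^{F^m}]$; since $X_1$ is an idempotent and $X_1 T_\bw\phi$ preserves the image of $X_1$, the trace of $g'X_{\bw\phi}$ on $\Qlbar[(\bG/\bP_I)^{F^m}]$ coincides with the trace of $g'X_1T_\bw\phi$ on $\Qlbar[(\bG/\bB)^{F^m}]$.

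Next I would invoke the standard Schur--Weyl decomposition
\[
\Qlbar[(\bG/\bB)^{F^m}]\cong\bigoplus_{\chi\in\Irr(W)}U_\chi\otimes V_{\chi_{q^m}}^*
\]
as $\bG^{F^m}\times\cH_{q^m}(W)$-bimodule, where $\bG^{F^m}$ acts on the first factor through the irreducible representation $U_\chi$ and $\cH_{q^m}(W)$ acts on the second factor through $\chi_{q^m}$. The Frobenius $F$ normalizes both actions, so it permutes the isotypic components according to $\chi\mapsto\lexp\phi\chi$; the contribution of a non-$\phi$-stable $\chi$ to the trace of an operator involving $\phi$ is therefore zero. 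For $\chi\in\Irr(W)^\phi$, the chosen extensions $U_{\tilde\chi}$ and $\tilde\chi_{q^m}$ are precisely what make $F$ act as $U_{\tilde\chi}(F)\otimes\tilde\chi_{q^m}(\phi)$ on $U_\chi\otimes V_{\chi_{q^m}}^*$ (this is the standard compatibility recalled in \cite[III]{DM}).

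Tensoring the actions, the operator $g'X_1T_\bw\phi$ restricted to the $\chi$-isotypic piece acts as $U_{\tilde\chi}(g'F)\otimes\tilde\chi_{q^m}(X_1T_\bw\phi)$, so its trace equals $U_{\tilde\chi}(g'F)\,\tilde\chi_{q^m}(X_1T_\bw\phi)$. Summing over $\chi\in\Irr(W)^\phi$ and applying $\Sh_{F^m/F}$ (which is linear in class functions on $\bG^{F^m}\cdot F$ and sends $U_{\tilde\chi}(g'F)$ to $\Sh_{F^m/F}U_{\tilde\chi}$) gives the claimed formula.

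Finally, for the vanishing assertion, I would observe that $X_1V_{\chi_{q^m}}^*$ is (by the specialization $q^{m/2}\mapsto 1$) isomorphic to the subspace of $W_I$-invariants in the $W$-module $\chi$, whose dimension equals $\langle\chi,\Ind_{W_I}^W\Id\rangle$; when this number is zero, the Hecke-side operator $\tilde\chi_{q^m}(X_1T_\bw\phi)$ factors through the trivial space and contributes nothing. The only place requiring care is fixing the compatibility of extensions $U_{\tilde\chi}$ and $\tilde\chi_{q^m}$ so that the action of $F$ on the bimodule really splits as the tensor product written above, but this is precisely the purpose of the parallel choice of extensions recalled just before the statement.
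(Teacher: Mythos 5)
Your proof is correct and follows the same route as the paper's own (which is much terser): identify the trace of $g'X_{\bw\phi}$ on $\Qlbar[(\bG/\bP_I)^{F^m}]$ with that of $g'X_1T_\bw\phi$ on $\Qlbar[(\bG/\bB)^{F^m}]$, then decompose the latter as a $\bG^{F^m}\times\cH_{q^m}(W)$-bimodule, keep only $\phi$-stable $\chi$'s, and observe that $X_1$ kills $V^*_{\chi_{q^m}}$ whenever $\chi$ is not a constituent of $\Ind_{W_I}^W\Id$. The paper compresses all of this into three lines; you have simply made explicit the bimodule decomposition, the compatibility of the extensions $U_{\tilde\chi}$ and $\tilde\chi_{q^m}$ with $F$, and the block-triangular argument justifying the trace identity.
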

\begin{proof}
We have $\Trace(g' X_{\bw\phi}\mid \Qlbar\cP_I^{F^m})=
\Trace(g' X_1T_\bw\phi\mid \Qlbar\cB^{F^m})$
since $X_1$ is the projector onto $\Qlbar\cP_I^{F^m}$.
Hence
$(g\mapsto|\bX(\bI,\bw\phi)^{gF^m}|)=\sum_{\chi\in\Irr(W)^\phi}
\tilde\chi_{q^m}(X_1 T_\bw\phi)\Sh_{F^m/F}U_{\tilde\chi}$.
Since $X_1$ acts by 0 on the representation of character $\chi$ if $\chi$ is
not a component of $\Ind_{W_I}^W\Id$, we get the second assertion.
\end{proof}

Finally, if $\lambda_\rho$ is the root of unity attached to
$\rho\in\cE(\bG^F,1)$ as in \cite[3.3.4]{DMR}, the above formula translates,
using \cite[III, 2.3(ii)]{DM} as
\begin{corollary}\label{shintani2}
$$|\bX(\bI,\bw\phi)^{gF^m}|=
\sum_{\rho\in\cE(\bG^F,1)}\lambda_\rho^{m/\delta}\rho(g)
\sum_{\chi\in\Irr(W)^\phi}
\tilde\chi_{q^m}(X_1 T_\bw\phi)\langle \rho,R_{\tilde\chi}\rangle_{\bG^F},$$
where $R_{\tilde\chi}=|W|\inv\sum_{w\in
W}\tilde\chi(w\phi)R^\bG_{\bT_w}(\Id)$.
The only characters $\chi$ in the above sum which give a non-zero contribution
are those which are a component of $\Ind_{W_I}^W\Id$.
\end{corollary}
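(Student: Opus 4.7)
The plan is to substitute the known expansion of the Shintani descent of the almost characters $U_{\tilde\chi}$ into the formula of Proposition \ref{shintani1} and collect terms.

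First, I apply Proposition \ref{shintani1}, which gives
$$|\bX(\bI,\bw\phi)^{gF^m}|=\sum_{\chi\in\Irr(W)^\phi}
\tilde\chi_{q^m}(X_1 T_\bw\phi)\,(\Sh_{F^m/F}U_{\tilde\chi})(g),$$
with the extra information that only those $\chi$ which are constituents of $\Ind_{W_I}^W\Id$ contribute (because $X_1$ acts as zero on the other isotypic components). Since the left-hand side and the coefficients $\tilde\chi_{q^m}(X_1T_\bw\phi)$ are already in place, the only ingredient still to be inserted is an expression for $(\Sh_{F^m/F}U_{\tilde\chi})(g)$.

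Second, I invoke \cite[III, 2.3(ii)]{DM}, which expresses the Shintani descent of an extension $U_{\tilde\chi}$ of a unipotent almost character in terms of the unipotent characters of $\bG^F$: explicitly,
$$(\Sh_{F^m/F}U_{\tilde\chi})(g)=\sum_{\rho\in\cE(\bG^F,1)}\lambda_\rho^{m/\delta}\,
\langle \rho, R_{\tilde\chi}\rangle_{\bG^F}\,\rho(g),$$
where $\lambda_\rho$ is the root of unity attached to $\rho$ in \cite[3.3.4]{DMR} and $R_{\tilde\chi}=|W|\inv\sum_{w\in W}\tilde\chi(w\phi)R^\bG_{\bT_w}(\Id)$ is the uniform almost character associated with $\tilde\chi$. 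Inserting this expansion into the previous formula and exchanging the order of summation over $\chi$ and $\rho$ yields
$$|\bX(\bI,\bw\phi)^{gF^m}|=
\sum_{\rho\in\cE(\bG^F,1)}\lambda_\rho^{m/\delta}\rho(g)
\sum_{\chi\in\Irr(W)^\phi}
\tilde\chi_{q^m}(X_1 T_\bw\phi)\langle \rho,R_{\tilde\chi}\rangle_{\bG^F},$$
which is the desired identity.

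Finally, the vanishing statement for $\chi\notin\Ind_{W_I}^W\Id$ transfers verbatim from Proposition \ref{shintani1}, since the coefficient $\tilde\chi_{q^m}(X_1T_\bw\phi)$ in the inner sum is exactly the one appearing there. The only potential obstacle is making sure the normalization conventions for $\tilde\chi$, $U_{\tilde\chi}$ and $\lambda_\rho$ in \cite{DM} and \cite{DMR} match those used here, but this is precisely what is ensured by the choices made just before the statement of \ref{shintani1}, so no further bookkeeping is required.
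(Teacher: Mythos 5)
Your proof is correct and follows exactly the same route as the paper: the paper itself passes from Proposition \ref{shintani1} to Corollary \ref{shintani2} by citing \cite[III, 2.3(ii)]{DM} to expand $\Sh_{F^m/F}U_{\tilde\chi}$ as $\sum_\rho\lambda_\rho^{m/\delta}\langle\rho,R_{\tilde\chi}\rangle_{\bG^F}\rho$, then reorders the sums and carries over the vanishing statement unchanged. Nothing is missing and nothing is done differently.
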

Using the Lefschetz formula and taking the ``limit for $m\to0$'' 
(see for example \cite[3.3.8]{DMR}) we get the equality of virtual characters
\begin{corollary}\label{Lefschetz character}
$$\sum_i (-1)^i H^i_c(\bX(\bI,\bw\phi),\Qlbar)=
\sum_{\{\chi\in\Irr(W)^\phi\mid \langle\Res^W_{W_I}\chi,\Id\rangle_{W_I}\ne 0\}}
\tilde\chi(x_1 w\phi)R_{\tilde\chi},$$
where $w$ is the image of $\bw$ in $W$ and $x_1=|W_I|\inv\sum_{v\in W_I}v$.
\end{corollary}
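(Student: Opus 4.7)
\medskip

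\noindent\textbf{Proof plan.} The strategy is to combine Corollary \ref{shintani2} with the Grothendieck--Lefschetz trace formula and then extract the coefficient of the ``$F^0$'' term by a specialization argument of the type used in \cite[3.3.8]{DMR}. First, the Lefschetz formula rewrites the left-hand side of Corollary \ref{shintani2} as
$$
|\bX(\bI,\bw\phi)^{gF^m}|=\sum_i(-1)^i\Trace(gF^m\mid H^i_c(\bX(\bI,\bw\phi),\Qlbar)),
$$
so that, setting $V=\sum_i(-1)^i H^i_c(\bX(\bI,\bw\phi),\Qlbar)$ as a virtual $\bG^F$-module-$\genby F$ and expanding along $\Irr(\bG^F)$, one gets $\sum_i(-1)^i\Trace(gF^m\mid H^i_c)=\sum_\rho\rho(g)\,c_\rho(m)$, where $c_\rho(m)$ is the $F^m$-trace on the $\rho$-isotypic component of $V$.

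Comparing with Corollary \ref{shintani2} and using linear independence of the characters $\rho\in\cE(\bG^F,1)$ (the non-unipotent characters do not contribute to $V$, cf.\ \cite[III, 2.3]{DM}), one deduces
$$
c_\rho(m)=\lambda_\rho^{m/\delta}\sum_{\chi\in\Irr(W)^\phi}\tilde\chi_{q^m}(X_1T_\bw\phi)\,\langle\rho,R_{\tilde\chi}\rangle_{\bG^F}.
$$
Both sides are, as functions of $m$, $\Qlbar$-linear combinations of expressions $\mu^m$ with $\mu$ in a finite set of algebraic numbers (because $\tilde\chi_{q^m}(X_1T_\bw\phi)$ is a polynomial in $q^{m/2}$ obtained by specialization from the generic Hecke algebra $\cH_q(W)\rtimes\genby\phi$ split over $\Qlbar[q^{1/2}]$). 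Since the equality holds for infinitely many values of $m$, it holds as an identity between such Laurent polynomials in $q^{m/2}$, and can therefore be specialized at $q^{m/2}=1$ (the ``$m\to 0$'' limit).

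Specializing at $q^{m/2}=1$, the factor $\lambda_\rho^{m/\delta}$ becomes $1$, the generic Hecke algebra maps to $\Qlbar[W\rtimes\genby\phi]$ sending $T_\bw\phi$ to $w\phi$ and the idempotent $X_1$ to $x_1=|W_I|\inv\sum_{v\in W_I}v$, so that $\tilde\chi_{q^m}(X_1T_\bw\phi)\mapsto\tilde\chi(x_1 w\phi)$. On the other hand $c_\rho(0)$ is precisely the multiplicity of $\rho$ in $V$ as a virtual $\bG^F$-module. Hence as virtual characters of $\bG^F$,
$$
V=\sum_\rho c_\rho(0)\,\rho=\sum_{\chi\in\Irr(W)^\phi}\tilde\chi(x_1w\phi)\sum_\rho\langle\rho,R_{\tilde\chi}\rangle_{\bG^F}\rho=\sum_{\chi\in\Irr(W)^\phi}\tilde\chi(x_1w\phi)R_{\tilde\chi}.
$$
The restriction to $\chi$ with $\langle\Res^W_{W_I}\chi,\Id\rangle_{W_I}\neq 0$ is immediate since $x_1$ is the projector onto $W_I$-invariants and annihilates any representation of $W$ whose restriction to $W_I$ contains no trivial summand, giving the stated formula.

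The main obstacle is justifying the passage from the identity of Corollary \ref{shintani2}, valid only for $m$ a sufficiently large multiple of $\delta$, to an identity that can be formally specialized at $m=0$. This requires knowing that the right-hand side is truly a polynomial expression in $q^{m/2}$ — equivalently, that the generic Hecke algebra $\cH_q(W)\rtimes\genby\phi$ admits the required splitting and character formula over $\Qlbar[q^{1/2}]$ — and that the number of exponentials $\mu^m$ appearing is bounded independently of $m$, so that the values at infinitely many $m$ determine the polynomial uniquely. Once this step is in place, the rest of the argument is bookkeeping between the two sides of the equality.
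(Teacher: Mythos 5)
Your proposal is correct and is essentially a fleshed-out version of the paper's own one-sentence proof, which invokes the Lefschetz fixed-point formula and the ``limit for $m\to 0$'' technique of \cite[3.3.8]{DMR} applied to Corollary \ref{shintani2}. You correctly identify that both sides of the Shintani identity are, for $m$ a multiple of $\delta$, exponential polynomials in $m$ (sums of $\mu^m$ for $\mu$ lying in a finite set of algebraic numbers) with fixed coefficients, so that the identity holding for infinitely many $m$ forces the polynomial identity and permits the specialization $q^{m/2}\mapsto 1$, $\lambda_\rho^{m/\delta}\mapsto 1$, $\tilde\chi_{q^m}(X_1T_\bw\phi)\mapsto\tilde\chi(x_1 w\phi)$; this specialization of $X_1T_\bw\phi$ to $x_1 w\phi$ is exactly the one used inside the proof of Lemma \ref{chi(X1TwF)}. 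The only stylistic remark is that your intermediate step of extracting the coefficients $c_\rho(m)$ via linear independence of characters is not needed: one can apply the ``$m\to 0$'' specialization directly to the identity $\sum_i(-1)^i\Trace(gF^m\mid H^i_c)=\sum_\rho\lambda_\rho^{m/\delta}\rho(g)\sum_\chi\tilde\chi_{q^m}(X_1T_\bw\phi)\langle\rho,R_{\tilde\chi}\rangle$ for each fixed $g$, which at $m=0$ immediately gives the equality of virtual characters $\sum_i(-1)^iH^i_c=\sum_\chi\tilde\chi(x_1w\phi)R_{\tilde\chi}$, and the restriction to $\chi$ with $\langle\Res^W_{W_I}\chi,\Id\rangle_{W_I}\ne 0$ is already part of Corollary \ref{shintani2}. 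Also, the unipotence of the cohomology is Corollary \ref{omega_rho}(i) in the paper rather than \cite[III, 2.3]{DM}, but this does not affect the validity of the argument.
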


\subsection*{Cohomology}
If $\pi$ is the projection of Lemma \ref{tildeX torsor}, the sheaf
$\pi_!\Qlbar$  decomposes  into  a  direct  sum  of  sheaves indexed by the
irreducible characters of $\bL_\bI^{t(\bw\phi)}$. We will denote by $\bchi$
the  subsheaf indexed by the character $\chi\in\Irr(\bL_\bI^{t(\bw\phi)})$,
and in particular by $\bSt$ the subsheaf indexed by the Steinberg character
$\St\in\Irr(\bL_\bI^{t(\bw\phi)})$. We have the isomorphism of
$\bG^F\times\LwF$-modules
$$H^i_c(\tilde\bX(\bI,\bw\phi),\Qlbar)=
\oplus_{\chi\in\Irr(\LwF)} H^i_c(\bX(\bI,\bw\phi),\bchi)\otimes V_\chi$$
where $V_\chi$ is an $\LwF$-module of character $\chi$ and
$H^i_c(\bX(\bI,\bw\phi),\bchi) $ is a $\bG^F$-module.
When $\chi$ is $F^\delta$-stable there is an action of $F^\delta$ on
$V_\chi$ such that the inclusion of $H^i_c(\bX(\bI,\bw\phi),\bchi)\otimes V_\chi$
into $H^i_c(\tilde\bX(\bI,\bw\phi),\Qlbar)$
is an inclusion of $\bG^F\times\LwF\rtimes\genby{F^\delta}$-modules

The  following corollary of Proposition \ref{produit fibre general} relates
the  cohomology of a general variety  $\bX(\bI,\bw\phi)$ to the case of the
varieties  $\bX(\bu\phi)$  considered  in  \cite{DMR};  its  part (ii) is a
refinement   of  Corollary  \ref{Lefschetz  character}.  In  the  following
corollary, if $M$ is a $\Qlbar$-vector space on wich $F$ acts, we denote by
$M(n)$ for $n\in\BZ$ the $n$-th Tate twist of $M$.

\begin{corollary}\label{inclusion des cohomologies}
Let $\bI\xrightarrow{\bw}\lexp\phi\bI\in B^+(\cI)$. Then
\begin{enumerate}
\item For any unipotent $F^\delta$-stable character $\chi\in\Irr(\LwF)$, for any
$\bu\in\uB_\bI^+$ and any $i,j$ we have the inclusion of
$\bG^F\times\genby   {F^\delta}$-modules   $$H^i_c(\bX(\bI,\bw\phi),\bchi)
\otimes (H^j_c(\bX_{\bL_\bI}(\bu w \phi),\Qlbar)
\otimes_\LwF V_{\overline\chi})\subset H^{i+j}_c(\bX(\bu\bw\phi),\Qlbar).$$
\item
For  all $\bv\in B^+_\bI$ and  all $i$ we have  the following inclusions of
$\bG^F\times\genby {F^\delta}$-modules:
$$H^i_c(\bX(\bI,\bw\phi),\Qlbar)\subset
H^{i+2l(\bv)}_c(\bX(\bv\bw\phi),\Qlbar)(-l(\bv))$$ and
$$H^i_c(\bX(\bI,\bw\phi),\bSt)\subset
H^{i+l(\bv)}_c(\bX(\bv\bw\phi),\Qlbar)$$
\item
For all $i$ we have the following equality of 
$\bG^F\times\genby{F^\delta}$-modules:  $$H^i_c(\bX(\uw_I\bw\phi),\Qlbar)=
\sum_{j+2k=i}H^j_c(\bX(\bI,\bw\phi),\Qlbar)\otimes \Qlbar^{n_{I,k}}(k)$$
where $n_{I,k}=|\{v\in W_I\mid l(v)=k\}|$.
\end{enumerate}
\end{corollary}
Note that in (iii) above we have $\bX(\uw_I\bw\phi)=
\bigcup_{\bv\in \bW_\bI}\bX(\bv\bw\phi)$.
\begin{proof}
We apply  the  K\"unneth  formula  to  the  isomorphism  of Corollary
\ref{X(uwI)} and decompose the equality obtained
according  to the characters of $\LwF$; we get that for any
$\bu\in\uB^+_\bI$, we have
$$
\bigoplus_{0\le j\le 2l(\bu)\atop  \chi\in\Irr(\LwF)}
H^{i-j}_c(\tilde\bX(\bI,\bw\phi),\Qlbar)_\chi\otimes_\LwF
H^j_c(\bX_{\bL_\bI}(\bu w \phi),\Qlbar)_{\overline\chi}
\simeq H_c^i(\bX(\bu\bw\phi),\Qlbar),
$$
which can be written
\begin{multline}\label{kunneth}
\bigoplus_{0\le j\le 2l(\bu)\atop  \chi\in\Irr(\LwF)}
H^{i-j}_c(\bX(\bI,\bw\phi),\bchi)\otimes(
H^j_c(\bX_{\bL_\bI}(\bu w \phi),\Qlbar)\otimes_\LwF V_{\overline\chi})
\\ \simeq H_c^i(\bX(\bu\bw\phi),\Qlbar).
\end{multline}
This gives (i).
We get also (ii) from equation \ref{kunneth}
and the facts that for $\bv\in B^+_\bI$
\begin{itemize}
\item
the  only  $j$  such  that  $H^j_c(\bX_{\bL_I}(\bv w\phi),\Qlbar)_{\Id}$ is
non-trivial  is  $j=2l(\bv)$  and  that isotypic component  is irreducible
and $t(\bw \phi)$ acts by $q^{l(\bv)}$ on it (see \cite[3.3.14]{DMR}) and $t(\bw\phi)^{k\delta}$
is equal to $F^{k\delta}$ for some $k$.
\item
the  only  $j$  such  that  $H^j_c(\bX_{\bL_I}(\bv w\phi),\Qlbar)_{\St}$ is
non-trivial  is $j=l(\bv)$ and  that isotypic component  is irreducible
with trivial action of $t(\bw \phi)$ (see \cite[3.3.15]{DMR}).
\end{itemize}
Hence the term $\bchi=\Qlbar$ in the LHS of \ref{kunneth} for
$\bu=\bv$ and $j=2l(\bv)$ is
$H^{i-2l(\bv)}_c(\bX(\bI,\bw\phi),  \Qlbar)\otimes
\Qlbar(-l(\bv))$ and is a submodule of
$H^i_c(\bX(\bI,\bw\bv\phi),\Qlbar)$.
Similarly the term $\bchi=\bSt$ in the LHS 
for $\bu=\bv$ and $j=l(\bv)$ is
$H^{i-l(\bv)}_c(\bX(\bI,\bw\phi), \bSt)$ and is a submodule of
$H^i_c(\bX(\bI,\bw\phi),\Qlbar)$.

We  now prove (iii). By Corollary  \ref{X(uwI)} applied with $\bu=\uw_I$ we
have     an     isomorphism     $\tilde\bX(\bI,\bw\phi)    \times_\LwF\cB_I
\xrightarrow\sim\bX(\uw_I\bw\phi)$  where $\cB_I$  is the  variety of Borel
subgroups of $\bL_I$. We get (iii) from the fact that $H^k_c(\cB_I,\Qlbar)$
is  0 if $k$ is odd and if $k=2k'$ is a trivial $\LwF$-module
of  dimension  $n_{I,k'}$,  where  $F^\delta$  acts  by  the scalar
$q^{\delta k'}$; this
results  for  example  from  the  cellular decomposition into affine spaces
given  by  the  Bruhat  decomposition  and  the  fact  that  the  action of
$\LwF$ extends to the connected group $\bL_I$ so that it acts
trivially on the cohomology.
\end{proof}
\begin{corollary}\label{omega_rho}
Let $\bI\xrightarrow{\bw\phi}\bI\in B^+\phi(\cI)$, let
$\chi\in\Irr(\LwF)$ be unipotent and $F^\delta$-stable, and let $i\in\BN$. Then
\begin{enumerate}
\item
The  $\bG^F$-module  $H^i_c(\bX(\bI,\bw\phi),\bchi)$  is  unipotent.  Given
$\rho\in\Irr(\bG^F)$   unipotent,   the   eigenvalues   of   $F^\delta$  on
$H^i_c(\bX(\bI,\bw\phi),\bchi)_\rho$ are in
$q^{\delta\BN}\lambda_\rho\omega_\rho$,   where  $\lambda_\rho$  is  as  in
Corollary   \ref{shintani2}   and   $\omega_\rho$   is   the   element   of
$\{1,q^{\delta/2}\}$  attached to $\rho$ as  in \cite[3.3.4]{DMR};
$\lambda_\rho$ and $\omega_\rho$ are independent of $i$ and $\bw$.
\item
The eigenvalues of $F^\delta$ on $H^i_c(\bX(\bI,\bw\phi),\bchi)$ have
absolute value at most $q^{\delta i/2}$.
\item
We have $H^i_c(\bX(\bI,\bw\phi),\bchi)=0$ unless $l(\bw)\le i \le 2l(\bw)$.
\item
The Steinberg representation does not occur in
$H^i_c(\bX(\bI,\bw\phi),\bchi)$  unless  $\bchi=\bSt$  and  $i=l(\bw)$,  in
which  case it occurs with multiplicity $1$, associated with the eigenvalue
$1$ of $F^\delta$.
\item
The trivial representation does not occur in
$H^i_c(\bX(\bI,\bw\phi),\bchi)$  unless $\bchi=\Qlbar$  and $i=2l(\bw)$, in
which  case it occurs with multiplicity $1$, associated with the eigenvalue
$q^{\delta l(\bw)}$ of $F^\delta$.
\end{enumerate}
\end{corollary}
\begin{proof}
(i)  is  a  straightforward  consequence  of  equation  \ref{kunneth}
applied for any $\bu$ such that some term
$H^j_c(\bX_{\bL_\bI}(\bu w \phi),\Qlbar)_{\overline\chi}$ is not $0$
for some $j$,
since  the  result  is   known  for  $  H^i_c(\bX(\bu\bw \phi),\Qlbar)$
(see \cite[3.3.4]{DMR} and \cite[3.3.10 (i)]{DMR}).

(ii) and (iii) are a consequence of \ref{kunneth} applied for $\bu\in B^+_\bI$
of minimal length such that $\overline\chi$ appears in some $H^j_c(\bX_{\bL_I}
(\bu w\phi),\Qlbar)$. Then by \cite[3.3.21]{DMR} 
$\overline\chi$ appears in $H^{l(\bu)}_c(\bX_{\bL_I}(\bu w\phi),\Qlbar)$
and the corresponding eigenvalue of $F^\delta$ has module $q^{\delta
l(\bu)/2}$. It follows then from \ref{inclusion des cohomologies}(i) applied
with $j=l(\bu)$ that $H^i_c(\bX(\bI,\bw\phi),\bchi)\otimes V
\subset H^{i+l(\bu)}(\bX(\bu\bw\phi),\Qlbar)$ where $V$ is an
$F^\delta$-module where the eigenvalues of $F^\delta$ are of module $q^{\delta
l(\bu)/2}$. The result follows from the facts that
$H^{i+l(\bu)}(\bX(\bu\bw\phi),\Qlbar)=0$ for $i<l(\bw)$ and that the
eigenvalues of $F^\delta$ on it have a module at most $q^{\delta(i+l(\bu))/2}$.

For (iv), we use
\begin{lemma}  If $\chi\in\Irr(\LwF)$  is unipotent  and $\chi\ne\St$ there
exists  $\bu\in\uB^+_\bI-B^+_\bI$ and $j$  such that $H^j_c(\bX_{\bL_I}(\bu
w\phi),\Qlbar)_\chi\ne 0$.
\end{lemma}
\begin{proof} First, assume that $\chi$ is not in the principal series, and
let  $\bv\in B^+_\bI$ be of minimal length such that $\chi$ appears in some
$H^j_c(\bX_{\bL_I}(\bv   w\phi),\Qlbar)$.  Since  $\chi$   is  not  in  the
principal  series  we  have  $l(\bv)>0$  thus there exists $\bs\in\bI$ and
$\bv'\in  B^+_\bI$ such that $\bv=\bs\bv'$. Then $H^j_c(\bX_{\bL_I}(\us\bv'
w\phi),\Qlbar)_\chi=  H^j_c(\bX_{\bL_I}(\bv w\phi),\Qlbar)_\chi\ne 0$ because
of the minimality of $\bv$ and the long exact sequence resulting from $\bX_{\bL_I}(\us\bv'
w\phi)=\bX_{\bL_I}(\bv   w\phi)\coprod\bX_{\bL_I}(\bv'  w\phi)$  where  the
first  (resp.\ second) term of the RHS  is an open (resp.\ closed) subvariety
of the LHS.

When $\chi$ is in the principal series, 
we use that if $J$ is a $w\phi$-stable
subset of $I$ and $\bu\in \uB_J^+$, then 
$H^j_c(\bX_{\bL_I}(\bu w\phi),\Qlbar)=R_{\bL_J}^{\bL_I}
H^j_c(\bX_{\bL_J}(\bu w\phi),\Qlbar)$.
It follows that if $\chi$ is of the form $\rho_\psi$
for $\psi\in\Irr(W_I^{w\phi})$ (see \cite[5.3.1]{DMR}), 
and $\psi_1$ is a component of $\Res^{W_I^{w\phi}}_{W_J^{w\phi}}\psi$ such that
$\scal{H^j_c(\bX_{\bL_J}(\bu w\phi),\Qlbar)}{\rho_{\psi_1}}{\bL_J^F}\ne 0$,
then $\scal{H^j_c(\bX_{\bL_I}(\bu w\phi),\Qlbar)}{\rho_\psi}{\bL_I^F}\ne 0$.
If $J$ is a $w\phi$-orbit in $I$, the group $W_J^{w\phi}$ is a Coxeter
group of type $A_1$ and the restriction to $W_J^{w\phi}$ of 
a character $\psi$ other than the sign character cannot be isotypic of type
sign for all orbits $J$ ($\psi$ would then be itself isotypic of type sign).
We are thus reduced to the case where $I$ is a single $w\phi$-orbit, so
that $L_I^{t(w\phi)}$ has only two unipotent characters, $\Id$
and $\St$. For such a group the identity character is a component of 
$H^2_c(\bX_{\bL_I}(\us w\phi)\Qlbar)$ where $W_I^{w\phi}=\genby s$,
so that the lemma is true.
\end{proof}
Since for $\bu$ as in the lemma we have $H^*_c(\bX(\bu\bw\phi),\Qlbar)_{\St}=0$
(see \cite[3.3.15]{DMR}),
by \ref{kunneth} we deduce that for $\bchi\ne\bSt$ we have
$H^i_c(\bX(\bI,\bw\phi),\bchi)_{\St}=0$ for all $i$.
Thus, for any $\bu\in B_\bI^+$, using that
$H^j_c(\bX_{\bL_\bI}(\bu w \phi),\Qlbar)\otimes_\LwF V_{\St}=0$ when
$j\ne l(\bu)$, the $\St$-part of \ref{kunneth} reduces to
$$
H^{i-l(\bu)}_c(\bX(\bI,\bw\phi),\bSt)_{\St}\otimes(
H^{l(\bu)}_c(\bX_{\bL_\bI}(\bu w \phi),\Qlbar)\otimes_\LwF V_{\St})
\simeq H_c^i(\bX(\bu\bw\phi),\Qlbar)_{\St}.
$$
We apply this for $\bu=\bv\in B^+_\bI$ in which case
$H^{l(\bv)}_c(\bX_{\bL_I}(\bv w\phi),\Qlbar)\otimes_\LwF V_{\St}=\Qlbar$ with
trivial action of $F^\delta$, which gives the isomorphism of $\bG^F\times\genby
F^\delta$-modules
$$
H^i_c(\bX(\bI,\bw\phi),\bSt)_{\St}
\simeq H_c^{i+l(\bv)}(\bX(\bv\bw\phi),\Qlbar)_{\St}.
$$
using the values of the RHS (known by \cite[3.3.15]{DMR})
$H^{i+l(\bv)}_c(\bX(\bv\bw\phi),\Qlbar)_{\St}
=\begin{cases}0&\text{if }i\ne l(\bw)\\ \Qlbar&\text{ with trivial action of
$F^\delta$ otherwise}
\end{cases}$, we get (iv).

For (v), we use
\begin{lemma}\label{degreeId}  If $\chi\in\Irr(\LwF)$  is unipotent  and $\chi\ne\Id$ there
exists  $\bu\in B^+_\bI$ and $j\ne 2l(\bu)$ such that $H^j_c(\bX_{\bL_I}(\bu
w\phi),\Qlbar)_\chi\ne 0$.
\end{lemma}
\begin{proof} First, assume that $\chi$ is not in the principal series, and
let  $\bu\in B^+_\bI$ be of minimal length such that $\chi$ appears in some
$H^j_c(\bX_{\bL_I}(\bu  w\phi),\Qlbar)$. Then by \cite[3.3.21 (ii)]{DMR} we
have  $H^{l(\bu)}_c(\bX_{\bL_I}(\bu w\phi),\Qlbar)_\chi\ne 0$. Since $\chi$
is  not in  the principal  series we  have $l(\bu)\ne  2l(\bu)$, whence the
lemma in this case.

Now  assume  $\chi$  in  the  principal  series and take $\bu=\bpi_\bI$. It
results  for example from \cite[3.3.8 (i)]{DMR}  that there exists $j$ such
that  $H^j_c(\bX_{\bL_I}(\bpi_\bI w\phi),\Qlbar)\ne 0$.  On the other hand,
it   results  for   example  from   Proposition  \ref{irreducibility}  that
$\bX_{\bL_I}(\bpi_\bI w\phi)$ is irreducible, thus
$H^{2l(\bpi_\bI)}_c(\bX_{\bL_I}(\bpi_\bI w\phi),\Qlbar)$ is a 1-dimensional
module  affording only the trivial representation of $\bG^F$. It follows that
$j\ne 2l(\bpi_\bI)$, whence the lemma.
\end{proof}
Applying \ref{kunneth} for an $\bu$ as in Lemma \ref{degreeId} and using that
$H^i_c(\bX(\bu\bw\phi),\Qlbar)_{\Id}=0$ for $i\ne 2(l(\bw)+l(\bu))$,
we deduce that for $\bchi\ne\Id$ we have
$H^i_c(\bX(\bI,\bw\phi),\bchi)_{\Id}=0$ for all $i$.
Taking now $\bu=1$ and using that
$H^0_c(\bX_{\bL_I}(w\phi),\Qlbar)\otimes_\LwF\Id=\Qlbar$, 
the $\Id$-part of \ref{kunneth} reduces to
$$
H^i_c(\bX(\bI,\bw\phi),\Id)_{\Id}
\simeq H_c^i(\bX(\bw\phi),\Qlbar)_{\Id}.
$$
whence the result using the value of the RHS given by \cite[3.3.14]{DMR}.
\end{proof}
\section{Eigenspaces and roots of $\bpi/\bpi_\bI$}
\label{eigenspaces and roots}
Let $\ell\ne p$ be a prime such that a Sylow $\ell$-subgroup $S$ of $\bG^F$ is
abelian.

Then  ``generic block theory'' (see \cite{BMM}) associates with $\ell$ a root
of unity $\zeta$ and some $w\phi\in W\phi$ such that its $\zeta$-eigenspace
$V$   in   $X:=X_\BR\otimes\BC$   is   non-zero   and   maximal  among
$\zeta$-eigenspaces  of elements  of $W\phi$;  for any  such $\zeta$, there
exists  a  unique  minimal  subtorus  $\bS$  of  $\bT$  such that 
$V\subset X(\bS)\otimes\BC$.  The space $X(\bS)\otimes\BC$ is
the  kernel  of  $\Phi(w\phi)$,  where,
if the coset $W\phi$  is rational (that is, $\phi$ preserves $X(\bT)$) then 
$\Phi$  is  the  $d$-th  cyclotomic polynomial, where $d$ is the order of 
$\zeta$.
Otherwise,  in the ``very  twisted'' cases $\lexp  2B_2, \lexp 2F_4$ (resp.
$\lexp  2G_2$) we  have to  take for $\Phi$ the irreducible cyclotomic
polynomial  over  $\BQ(\sqrt  2)$  (resp.\  $\BQ(\sqrt 3)$) of which
$\zeta$  is a root. The torus $\bS$  is $wF$-stable thus has an $F$-stable
$\bG$-conjugate $\bS'$ in a maximal torus of type $w$; the torus
$\bS'$ is called a $\Phi$-Sylow; we have $|\bS^{\prime F}|=\Phi(q)^{\dim V}$.

The relationship with $\ell$ is that $S$ is a subgroup of $\bS^{\prime F}$, 
and thus that $|\bG^F|/|\bS^{\prime F}|$ is prime to $\ell$; we have
$N_{\bG^F}(S)=N_{\bG^F}(\bS')=N_{\bG^F}(\bL)$  where $\bL:=C_\bG(\bS')$  is a
Levi  subgroup  of  $\bG$  whose  Weyl  group  is $C_W(V)$. Conversely, any
non-zero maximal  $\zeta$-eigenspace determines some primes $\ell$
giving an  abelian Sylow, those which  divide $\Phi(q)$ and  no other
cyclotomic factor of $|\bG^F|$.

The  classes $C_W(V)w\phi$, where $V=\Ker(w\phi-\zeta)$  is maximal, form a
single   orbit  under   $W$-conjugacy  [see   eg.  \cite[5.6(i)]{Br}];  the
maximality   implies  that   all  elements   of  $C_W(V)w\phi$   have  same
$\zeta$-eigenspace.

We  will see in Theorem \ref{bonne racine}(i) that  up to conjugacy  we may assume
that  $C_W(V)$  is  a  standard  parabolic  group  $W_I$;  then the Brou\'e
conjectures  predict that for an  appropriate choice of coset $C_W(V)w\phi$
in  its $N_W(W_I)$-conjugacy  class the  cohomology complex  of the variety
$\bX(\bI,\bw  \phi)$  should  be  a  tilting  complex  realizing  a derived
equivalence  between the unipotent parts  of the principal $\ell$-blocks of
$\bG^F$  and of $N_{\bG^F}(\bS')$. We want to describe explicitly what should be
a ``good'' choice of $w$ (see Conjectures \ref{conjecture}).

Since  it is no more effort  to have a result in  the context of any finite
real  reflection group than for a context which includes the Ree and Suzuki
groups,  we give a  more general statement.  Our situation generalizes that
studied  in \cite{BM},  which corresponds  to the  case $\bI=\emptyset$, or
$\zeta$-regular  elements,  that  is  elements  of  $W\phi$  which  have an
eigenvector  for the eigenvalue $\zeta$  outside the reflecting hyperplanes
(see  \cite[above 6.5]{Springer}); in particular Theorem \ref{bonne racine}
generalizes  \cite[3.11,  6.5]{BM}  and  Theorem  \ref{racine}  generalizes
\cite[3.12,  6.6]{BM};  in  the  \cite{BM}  case,  the  ``$d$-good periodic
maximal''  elements  we  consider  here  reduce  to  ``good $d$-th $\phi$-roots of
$\bpi$''.  Note that we  focus our study  on the $\ell$-principal block (or
$\Phi(q)$-principal  block), which corresponds  to the maximality condition
on  eigenspaces and to what we call ``non-extendable'' periodic elements.
Extendable periodic  elements would  be  needed  in considering more general blocks.

In  what follows we look at real reflection cosets $W\phi$ of finite order,
that  is $W$ is a  finite reflection group acting  on the real vector space
$X_\BR$  and $\phi$  is an  element of  $N_{\GL(X_\BR)}(W)$, such that
$W\phi$  is  of  finite  order  $\delta$,  that is $\delta$ is the smallest
integer  such  that  $(W\phi)^\delta=W$  (equivalently  $\phi$ is of finite
order).  
Since $W$  is transitive  on the  chambers of  the real hyperplane
arrangement it determines, one can always choose $\phi$ in its coset so that it
preserves  a  chamber  of  this  arrangement.  We will do this; thus $\phi$
is $1$-regular, since it has a fixed point outside the
reflecting hyperplanes, thus is of order $\delta$ since $1$ is the only
$1$-regular element of $W$.

\begin{theorem}\label{bonne  racine} Let $W\phi\subset\GL(X_\BR)$
be  a  finite  order  real  reflection  coset, such that $\phi$ preserves a
chamber  of the hyperplane  arrangement on $X_\BR$  determined by $W$, thus
induces  an automorphism of  the Coxeter system  $(W,S)$ determined by this
chamber.  We call again $\phi$ the  induced automorphism of the braid group
$B$  of $W$, and denote by $\bS,\bW$ the  lifts of $S,W$ to $B$ (see 
Example \ref{artin monoids}).

Let $\zeta=e^{2i\pi k/d}$, and let $V$ be a subspace of
$X:=X_\BR\otimes\BC$ on which some element of $W\phi$ acts by $\zeta$. Then
we may choose $V$ in its $W$-orbit such that:
\begin{enumerate}
\item $C_W(V)=W_I$ for some  $I\subset S$.
\item  If $W_Iw\phi$ is the $W_I$-coset of elements which act by $\zeta$ on
$V$, where  $w$  is  $I$-reduced, then $l((w\phi)^i)=(2ik/d)l(w_0w_I\inv)$
for $2ik\le d$, where  we  have extended the length
function to $W\rtimes\genby\phi$ by $l(w\phi^i)=l(w)$.
\end{enumerate}
Further,   we  may  lift  $w$  as  in   (ii)  to  $\bw\in  B^+$  such  that
$\lexp{\bw\phi}\bI=\bI$  and  $(\bw\phi)^d=\phi^d(\bpi/\bpi_\bI)^k$,  where
$\bI\subset\bS$ lifts $I$. Thus $\bI\xrightarrow{\bw\phi}\bI$ is a
$(d,2k)$-periodic element in $B^+\phi(\cI)$, where $\cI$ is the set of subsets
of $\bS$ conjugate to $\bI$.
\end{theorem}
Note  that  the  last  part  implies  that  for  $w$  as  in  (ii)  we have
$(w\phi)^d=\phi^d$.  Note also that  if $2k\le d$,  then (ii) is applicable
for  $i=1$ and we  get $l(\bw)=l(w)=(2k/d)l(w_0w_I\inv)$ thus  $\bw$ is the
unique lift of $w$ to $\bW$.

Since  we  assume  $W\phi$  real,  if  $e^{2i\pi  k/d}$ is an eigenvalue of
$w\phi$,  then  the  complex  conjugate  $e^{2i\pi  (d-k)/d}$  is  also  an
eigenvalue, for the complex conjugate eigenspace; thus we may always assume
that $2k\le d$, so that $\bw\in\bW$.

If  the coset $W\phi$ preserves a  $\BQ$-structure on $X_\BR$ (which is the
case  for cosets  associated with  finite reductive  groups, except for the
``very  twisted'' cases $\lexp 2B_2, \lexp 2G_2$ and $\lexp 2F_4$), we have
more  generally that  if $e^{2i\pi  k/d}$ is  an eigenvalue of $w\phi$,
with $k$ prime $d$, the
Galois  conjugate  $e^{2i\pi/d}$  is  also  an  eigenvalue,  for  a  Galois
conjugate eigenspace; in these cases we may assume $k=1$.

Recall that by our conventions, even though $\phi$ is a finite order
automorphism of $B^+$, in the semi-direct product $B^+\rtimes\genby\phi$ we
take $\genby\phi$ of infinite order.
\begin{proof}[Proof of  Theorem \ref{bonne racine}]
Since  $W\genby\phi$ is finite, we may find  a scalar product on $X_\BR$ 
(extending to an Hermitian product on $X$) invariant by  $W$  and  $\phi$.  
The subspace $X'_\BR$ of $X_\BR$ orthogonal to the fixed points of $W$
(the subspace spanned by the root lines of $W$) identifies with the
reflection representation of the Coxeter system $(W,S)$ (see for example
\cite[Chapitre V \S 3]{bbk}).
We  will  use  the root system $\Phi$ on $X'_\BR$ consisting of the
vectors  of norm $1$  (for the scalar  product) along the  root lines of $W$,
which  is thus preserved by $W\genby \phi$.
By \cite[Chapitre V \S 3 Proposition 1]{bbk} the centralizer of
any subspace of $X$ is a parabolic subgroup of $W$, 
hence conjugate to a standard parabolic subgroup, whence (i).

To  prove (ii) we reprove (i) by changing the order on  $\Phi$, which is
equivalent to do a conjugation by some element of $W$. 
Let $v$  be a regular  vector in $V$,  that is $v\in V$
such that $C_W(v)=C_W(V)$. Multiplying $v$ if needed by a complex number of
absolute  value $1$,  we may  assume that  for any  $\alpha\in\Phi$ we have
$\Re\langle  v,\alpha\rangle=0$ if and only if $\langle v,\alpha\rangle=0$.
Then   there   exists   an   order   on  $\Phi$  such  that  $\Phi^+\subset
\{\alpha\in\Phi\mid  \Re(\langle v,\alpha\rangle)\ge 0\}$. Let $\Pi$ be the
corresponding    basis;    the    subset   $I=\{\alpha\in\Pi|   \Re(\langle
v,\alpha\rangle)=0\}$ is such that $C_W(V)=C_W(v)=W_I$, and
$\Phi_I=\{\alpha\in\Phi\mid\langle  v,\alpha\rangle=0\}$  is  a root system
for $W_I$.

Note that $(w\phi)^d=\phi^d$. Indeed $(w\phi)^d$ fixes $v$, thus preserves the
sign  of any root not in $\Phi_I$; since $w$ is chosen $I$-reduced we have
$\lexp{w\phi}I=I$, so that $w\phi$ also preserves the
sign  of roots in $\Phi_I$.  It is thus equal  to the only element $\phi^d$ of
$W\phi^d$ which preserves the signs of all roots. We get also that
$\lexp{\phi^d}I=I$. If we notice that we may lift $\phi$ to
$\phi\bpi/\bpi_\bI$, this completes the proof in the case $d=1$.

We now assume that $d\ne 1$ and we first prove the theorem in the case $k=1$.
Since      $\langle     v,     \lexp{(w\phi)^m}     \alpha\rangle=     \langle
\lexp{(w\phi)^{-m}}v,\alpha\rangle=  \zeta^{-m} \langle  v, \alpha\rangle$, we
get that all orbits of $w\phi$ on $\Phi-\Phi_I$ have cardinality a multiple of
$d$;  it is  thus possible  by partitioning  suitably those  orbits, to  get a
partition   of  $\Phi-\Phi_I$  in   subsets  $\cO$  of   the  form  $\{\alpha,
\lexp{w\phi}\alpha,  \dots, \lexp{(w\phi)^{d-1}}  \alpha\}$; and  the numbers
$\{\langle  v,\beta\rangle\mid \beta\in\cO\}$ for a  given $\cO$ form  the 
vertices of a
regular  $d$-gon centered at $0\in\BC$; the action of $w\phi$ is the rotation
by  $-2\pi/d$ of this  $d$-gon. Looking at  the real parts  of the vertices of
this  $d$-gon, we see that for $m\le d/2$, exactly $m$ positive roots in $\cO$
are  sent to negative roots by $(w\phi)^m$. Since this holds for all $\cO$, we
get that for $m\le d/2$ we have
$l((w\phi)^m)=\frac{m|\Phi-\Phi_I|}d$;  thus if $\bw$  is the lift of
$w$ to $\bW$ we have $(\bw\phi)^i\in\bW\phi^i$ if $2i\le d$.

Now we finish the case $k=1, d\ne 1$ with the following
\begin{lemma}\label{bw good} Assume that $\lexp{w\phi}W_I=W_I$, that $w$ is
$I$-reduced, and that for some $d>1$ we have
$(w\phi)^d=\phi^d$  and  $l((w\phi)^i)=(2i/d)l(w_0w_I\inv)$ if
$2i\le d$. Then if $\bw$ is the lift of $w$ to $\bW$ we have
$\lexp{\bw\phi}\bI=\bI$ and $(\bw\phi)^d=\phi^d\bpi/\bpi_\bI$.
\end{lemma}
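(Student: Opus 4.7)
The plan is to prove the two conclusions in turn: the set-theoretic statement $\lexp{\bw\phi}\bI=\bI$ first, then the braid identity $(\bw\phi)^d=\phi^d\bpi/\bpi_\bI$ when $d\ne 1$.

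For $\lexp{\bw\phi}\bI=\bI$, I fix $\bs\in\bI$ with image $s\in I$ and let $s':=\lexp{w\phi}s\in I$, using the hypothesis $\lexp{w\phi}W_I=W_I$. Since $w$ is $I$-reduced, $s'$ is not a left descent of $w$, so $l(s'w)=l(w)+1$, and the identity $w\phi(s)=s'w$ in $W$ is a reduced expression. Both $\bw\cdot\phi(\bs)$ and $\bs'\bw$ are canonical lifts of this element, hence equal in $\bW$; right-cancelling $\bw$ gives $\lexp{\bw\phi}\bs=\bs'\in\bI$.

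For the periodicity formula, I set $\bb:=\bw\cdot\lexp\phi\bw\cdots\lexp{\phi^{d-1}}\bw\in B^+$ so that $(\bw\phi)^d=\bb\phi^d$, reducing the goal to $\bb=\bpi/\bpi_\bI$. Both sides map to $1$ in $W$ (since $(w\phi)^d=\phi^d$) and both have $B^+$-length $2l(w_0w_I\inv)$: indeed $l(\bb)=d\cdot l(w)=2l(w_0w_I\inv)$ by the length hypothesis at $i=1$, and $l(\bpi/\bpi_\bI)=2l(w_0)-2l(w_\bI)=2l(w_0w_I\inv)$. To upgrade this to equality I would instead prove the equivalent identity $\bb\cdot\bpi_\bI=\bpi$ in $B^+$: both products have length $2l(w_0)$ and image $1$ in $W$, so it suffices to show that $\bw_0$ left-divides $\bb\bpi_\bI$, since the complementary factor would then necessarily have length $l(w_0)$ and image $w_0$, forcing it to be $\bw_0$ itself and giving $\bb\bpi_\bI=\bw_0^2=\bpi$, hence $\bb=\bpi/\bpi_\bI$.

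The main obstacle is showing $\bw_0\preccurlyeq\bb\cdot\bpi_\bI$, i.e.\ that every atom of $\bS$ left-divides this product. I would split on the parity of $d$. For $d=2e$ even, the length hypothesis at $i=e$, giving $l((w\phi)^e\phi^{-e})=l(w_0w_I\inv)$, lets me write $\bb=\bu\cdot\lexp{\phi^e}\bu$ with $\bu\in\bW$ the canonical lift of some $u\in W$ of length $l(w_0w_I\inv)$ satisfying $\phi^e(u)=u\inv$; then $\lexp{\phi^e}\bu$ is also the canonical lift of a reduced expression. Atoms in $\bI$ are absorbed from $\bpi_\bI$ (which, as $\bw_\bI^2$, is divisible by every atom of $\bI$), while atoms in $\bS-\bI$ must be located as left divisors of $\bu$, exploiting the reduced-expression structure forced by the length hypothesis $l((w\phi)^i\phi^{-i})=(2i/d)l(w_0w_I\inv)$ for $2i\le d$. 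The odd case $d=2e+1$ needs a similar but asymmetric factoring via $(\bw\phi)^e\cdot\bw\phi\cdot(\bw\phi)^e$. This combinatorial divisibility analysis is where essentially all the work lies.
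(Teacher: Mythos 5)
Your treatment of $\lexp{\bw\phi}\bI=\bI$ is correct and essentially identical to the paper's (the identity $w\cdot\phi(s)=s'\cdot w$ with both sides reduced lifts verbatim to $\bW$). The problem is the second part, which you never actually complete, and your outlined strategy contains an error.

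Your reduction to ``show $\bw_0\preccurlyeq\bb\cdot\bpi_\bI$'' is sound (both sides then have length $2l(w_0)$ and image $1$, forcing equality with $\bpi$). But the case split you propose for establishing divisibility by every atom does not work. First, ``atoms in $\bI$ are absorbed from $\bpi_\bI$'' is not an argument for left divisibility: $\bpi_\bI$ sits at the right end of $\bb\bpi_\bI$, and $\bs\preccurlyeq\bpi_\bI$ gives no information about whether $\bs\preccurlyeq\bb\bpi_\bI$. Second, and more seriously, your claim that atoms in $\bS-\bI$ should be left divisors of $\bu$ is false. Once one knows (as the paper shows) that $u=w_0w_I\inv$, a short computation reveals that $s$ is a left descent of $u$ if and only if $w_0sw_0\notin I$, which is in general a different set from $S-I$. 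So there exist atoms outside $\bI$ that do \emph{not} left-divide $\bu$, and your strategy for them has no backup plan. You acknowledge yourself that ``this combinatorial divisibility analysis is where essentially all the work lies''; in fact the route you chose makes that work harder than it needs to be.

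The paper sidesteps the divisibility analysis entirely. The point is to recognize $y:=(w\phi)^{d'}\phi^{-d'}$ and $x:=\phi^{-d'}(w\phi)^{d'}$ as explicit elements of $W$. Since $w$ is $I$-reduced and $w\phi$ stabilizes $I$, the product $w\cdot\phi(w)\cdots\phi^{d'-1}(w)$ is $I$-reduced (this follows from the ribbon formalism, Proposition~\ref{ribbon}(ii)), and by the length hypothesis it has length $l(w_0)-l(w_I)$; the unique $I$-reduced element of that length is $w_I\inv w_0$, so $y=w_I\inv w_0$. Symmetrically $x=w_0w_I\inv$. Both equalities lift to $\bW$ because the lengths add, giving $(\bw\phi)^{d'}=\bw_\bI\inv\bw_0\phi^{d'}=\phi^{d'}\bw_0\bw_\bI\inv$, and then $(\bw\phi)^d$ is computed by a one-line multiplication using that $\phi^d$ stabilizes $\bI$. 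Try rebuilding your argument around the observation that $u$ is the unique element of its length with no right descent in $I$ (respectively no left descent in $I$) and you will find the rest collapses.
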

\begin{proof}
Since  $w$ is  $I$-reduced and  $w\phi$ normalizes  $W_I$ we  get that $w\phi$
stabilizes $I$; these properties imply in the braid monoid the equality
$\lexp{\bw\phi}\bI=\bI$.

Assume  first $d$  even and  let $d=2d'$  and $x=\phi^{-d'}(w\phi)^{d'}$. Then
$l(x)=(1/2) l(\bpi/\bpi_\bI)=l(w_0)-l(w_I)$ and since $x$ is reduced-$I$ it is
equal  to the only reduced-$I$ element of  that length which is $w_0 w_I\inv$.
Since  the lengths add  we can lift  the equality $(w\phi)^{d'} = \phi^{d'}w_0
w_I\inv  $ to the braid monoid as $(\bw\phi)^{d'}=\phi^{d'}\bw_0\bw_I\inv$. By
a   similar  reasoning  using  that  $(w\phi)^{d'}\phi^{-d'}$  is  the  unique
$I$-reduced element of its length, we get also
$(\bw\phi)^{d'}=\bw_I\inv\bw_0\phi^{d'}$. Thus
$(\bw\phi)^d=\bw_I\inv\bw_0\phi^{d'}\phi^{d'}\bw_0\bw_I\inv
=\phi^d\bpi/\bpi_\bI$,  where the  last equality  uses that $\phi^d=(w\phi)^d$
preserves $\bI$.

Assume  now that  $d=2d'+1$; then  $(w\phi)^{d'}\phi^{-d'}$ is $I$-reduced and
$\phi^{-d'}(w\phi)^{d'}$  is reduced-$I$. Using  that any reduced-$\bI$ element
of $\bW$ is  a  right-divisor  of $\bw_0\bw_I\inv$ (resp.\ any $\bI$-reduced
element of $\bW$ is a left-divisor of $\bw_I\inv\bw_0$), we get that there exists
$\bt,\bu\in\bW$  such  that $\phi^{d'}\bw_I\inv\bw_0=\bt(\bw\phi)^{d'}$ and
$\bw_0\bw_I\inv\phi^{d'}=(\bw\phi)^{d'}\bu$.      Thus     $\phi^d\bpi/\bpi_\bI=
\bw_0\bw_I\inv\phi^d\bw_I\inv\bw_0=(\bw\phi)^{d'}\bu\phi\bt(\bw\phi)^{d'}$,
the  first equality  since $\lexp{\phi^d}I=I$.  The image  in $W\phi^d$ of the
left-hand  side is $\phi^d$, and $(w\phi)^d=\phi^d$.  We deduce that the image
in  $W\phi$ of $\bu\phi\bt$ is $w\phi$. 
If $d\ne 1$ then $d'\ne 0$ and we have $l(\bu)=l(\bt)=l(\bw)/2$; thus
$\bu\phi\bt=\bw\phi$ and $(\bw\phi)^d=\phi^d\bpi/\bpi_\bI$.
\end{proof}
We  now consider the case $k\ne 1$, $d\ne 1$. We have seen (before assuming
$k=1$)  that (i) holds  and that the  $I$-reduced element $w$  of the coset
$W_I w\phi$ acting by $\zeta$ on $V$ satisfies $(w\phi)^d=\phi^d$.

We first consider the case when $k$ is prime to $d$.
Let   $d',k'$  be  positive   integers  such  that   $kd'=1+dk'$,  and  let
$w_1\phi_1=(w\phi)^{d'}$,  where $\phi_1=\phi^{d'}$.  Then $w_1\phi_1$ acts
on $V$ by $e^{2i\pi/d}$, so by the case $k=1$ we have
$l((w_1\phi_1)^i)=(2i/d)l(w_0w_I\inv)$ for $2i\le d$. Since
$(w_1\phi_1)^{ik}=(w\phi)^{ikd'}=(w\phi)^{i(1+dk')}=  (w\phi)^i\phi^{idk'}$,
we get (ii).

By Lemma \ref{bw good} the lift $\bw_1$ of $w_1$ to $B$
satisfies     $\lexp{\bw_1\phi_1}\bI=\bI$    and    $(\bw_1\phi_1)^d=\phi_1^d
\bpi/\bpi_\bI$, thus if we define $\bw$ by $(\bw_1\phi_1)^k=\bw\phi^{1+dk'}$,
then $\bw$  lifts $w$ and satisfies $(\bw\phi)^d=\phi^d(\bpi/\bpi_\bI)^k$,
using $\lexp{\phi^d}I=I$.

We finally consider the general case $d=\lambda d_1$, $k=\lambda k_1$ where
$d_1$  is prime to  $k_1$. The theorem  holds for $d_1,k_1$; statement (ii)
depends  only  on  $k/d$  thus  holds, and  we  just have to raise the
equation     $(\bw\phi)^{d_1}=(\bpi/\bpi_\bI)^{k_1}\phi^{d_1}$    to    the
$\lambda$-th power to get the desired equation
$(\bw\phi)^d=(\bpi/\bpi_\bI)^k\phi^d$.
\end{proof}

We give now a kind of converse of Theorem \ref{bonne racine}.

\begin{theorem}\label{racine}
Let $(W,S)$, $\phi$, $X_\BR$, $X$, $\bS, B, B^+$ be as in Theorem
\ref{bonne racine}. For $d\in\BN$, let $\bw\in B^+$ be such that
$(\bw \phi)^d=\phi^d(\bpi/\bpi_\bI)^k$ for some $\phi^d$-stable
$\bI\subset\bS$. Then
\begin{enumerate}
\item $\lexp{\bw\phi}\bI=\bI$, and $\bI\xrightarrow{\bw\phi}\bI$ is a
$(d,2k)$-periodic element in $B^+\phi(\cI)$, where $\cI$ is the set of subsets
of $\bS$ conjugate to $\bI$.
\end{enumerate}
Denote by $w$ and $I$ the images in $W$ of $\bw$ and $\bI$, let
$\zeta=e^{2i\pi k/d}$,   let  $V\subset  X$  be  the  $\zeta$-eigenspace  of
$w\phi$,   and  let  $X^{W_I}$  be  the  fixed  point  space  of  $W_I$;  then
\begin{itemize}
\item[(ii)] $W_I=C_W(X^{W_I}\cap V)$, in particular $C_W(V)\subset W_I$.
\end{itemize}
Further, the following two assertions are equivalent:
\begin{itemize}
\item[(iii)] No element of the coset $W_Iw\phi$ has a non-zero
$\zeta$-eigenvector on the subspace spanned by the root lines of $W_I$.
\item[(iv)] $\bw\phi$ is ``non-extendable'', that is,  there   do  not   exist  a
$\phi^d$-stable   $\bJ\subsetneq\bI$   and   $\bv\in   B_\bI^+$  such  that
$(\bv\bw\phi)^d=\phi^d(\bpi/\bpi_\bJ)^k$.
\end{itemize}
\end{theorem}
\begin{proof}
We will deduce the general case from the case $k=1$. 

So we first assume $k=1$. Then
(i) is already in Proposition \ref{periodic ribbons} which also states that
there    exists   $\bI\xrightarrow\bv\bJ\in   B^+(\cI)$    such   that   if
$\bw'\phi=(\bw\phi)^\bv$ then $\bw'\phi\in B^+\phi$,
$(\bw'\phi)^d=\phi^d\bpi/\bpi_\bJ$       and       $(\bw'\phi)^{\lfloor\frac
d2\rfloor}\in \bW \phi^{\lfloor\frac d2\rfloor}$.

As  (ii) and the equivalence of (iii) and (iv) are invariant by a conjugacy
in  $B$ which sends $\bw\phi$  to $B^+\phi$ and $\bI$  to another subset of
$\bS$,  we may replace $(\bw\phi,\bI)$ by the conjugate $(\bw'\phi,\bJ)$,
thus assume that $w$ and $I$ satisfy the assumptions of the next lemma.

\begin{lemma} \label{pas merdique}
Let $w\in W, I\subset S$ be such that
$(w\phi)^d=\phi^d$, $\lexp{w\phi}I=I$ and such that 
$l((w\phi)^i)=\frac{2i}d l(w_I\inv w_0)$ for any $i\le d/2$. We have
\begin{enumerate}
\item If $\Phi$ is a root system for $W$ and $\Phi^+$ is chosen such that
$\phi(\Phi^+)=\Phi^+$ (as in the proof of Theorem \ref{bonne racine}), then
$\Phi-\Phi_I$ is the disjoint union of sets of the form
$\{\alpha,\lexp{w\phi}\alpha,\dots,\lexp{(w\phi)^{d-1}}\alpha\}$ with
$\alpha,\lexp{w\phi}\alpha,\dots,\lexp{(w\phi)^{\lfloor d/2\rfloor-1}}
\alpha$ of same sign and
$\lexp{(w\phi)^{\lfloor d/2\rfloor}}\alpha,\dots,\lexp{(w\phi)^{d-1}}\alpha$
of the opposite sign.
\item The order of $w\phi$ is $\lcm(d,\delta)$.
\item If $d>1$, then $W_I=C_W(X^{W_I}\cap\ker(w\phi-\zeta))$.
\end{enumerate}
\end{lemma}
\begin{proof}
The statement is empty for $d=1$ so in the following proof we assume $d>1$.

For $x\in W\rtimes\langle\phi\rangle$ let
$N(x)=\{\alpha\in\Phi^+\mid \lexp x\alpha\in\Phi^-\}$; 
for $x\in W$ we have $l(x)=|N(x)|$ (see \cite[Chapitre VI \S1, Corollaire 2]{bbk}).
This still holds for $x=w\phi^i\in W\rtimes\langle\phi\rangle$
since $N(w\phi^i)=\lexp{\phi^{-i}}N(w)$. It follows that for $x,y\in
W\rtimes\langle\phi\rangle$ we have $l(xy)=l(x)+l(y)$ if and only if
$N(xy)=N(y)\coprod \lexp{y\inv}N(x)$.
In particular
$l((w\phi)^i)=i l(w\phi)$ for $i\le d/2$ implies
$\lexp{(w\phi)^{-i}}N(w\phi)\subset\Phi^+$ for $i\le d/2-1$. 

Let us partition each $w\phi$-orbit in $\Phi-\Phi_I$ into ``pseudo-orbits'' 
of the form $\{\alpha,\lexp{w\phi}\alpha,\dots,\lexp{(w\phi)^{k-1}}\alpha\}$,
where  $k$ is  minimal such  that $\lexp{(w\phi)^k}\alpha=\lexp{\phi^k}\alpha$  
(then $k$ divides  $d$); a pseudo-orbit is
an  orbit if $\phi=1$. The  action of $w\phi$ defines  a cyclic order on each
pseudo-orbit. The previous paragraph shows that when there is a sign change in
a  pseudo-orbit, at least  the next $\lfloor  d/2\rfloor$ roots for the cyclic
order  have the same sign. On the other hand, as $\phi^k$ preserves $\Phi^+$,
each pseudo-orbit contains an even number of sign changes. Thus if there is at
least  one sign change  we have $k\ge  2\lfloor d/2\rfloor$. Since $k$ divides
$d$,  we must have $k=d$ for pseudo-orbits  which have a sign change, and then
they  have exactly two  sign changes. As  the total number  of sign changes is
$2l(w)=2|\Phi-\Phi_I|/d$,  there are $|\Phi-\Phi_I|/d$ pseudo-orbits with sign
changes;  their total cardinality is $|\Phi-\Phi_I|$,  thus there are no other
pseudo-orbits  and  up  to  a  cyclic  permutation  we  may  assume  that each
pseudo-orbit  consists of  $\lfloor d/2\rfloor$  roots of  the same  sign
followed by $d- \lfloor d/2\rfloor$ of the opposite sign. We have proved (i).

Let $d'=\lcm(d,\delta)$.
The proof of (i) shows that the order of $w\phi$ is a multiple of $d$.
Since the order of $(w\phi)^d=\phi^d$ is $d'/d$, we get (ii). 

We  now  prove  (iii). Let $V=\ker(w\phi-\zeta)$.
Since  $W\genby\phi$ is finite, we may find  a scalar product on $X$ 
invariant by  $W$  and  $\phi$. We have then $X^{W_I}=\Phi_I^\perp$.
The map $p=\frac 1{d'}\sum_{i=0}^{d'-1}\zeta^{-i} (w\phi)^i$ is a
$w\phi$-invariant  projector  on  $V$,  thus  is  the  orthogonal
projector on $V$.

We  claim that  $p(\alpha)\not\in<\Phi_I>$ for  any $\alpha\in\Phi-\Phi_I$. As
$p((w\phi)^i\alpha)=\zeta^i  p(\alpha)$ it is enough to assume that $\alpha$
is  the  first  element  of  a  pseudo-orbit;  replacing if needed $\alpha$ by
$-\alpha$  we may even assume $\alpha\in  \Phi^+$. Looking at imaginary parts,
we   have  $\Im(\zeta^i)\ge   0$  for   $0\le  i<\lfloor   d/2\rfloor$,  and
$\Im(\zeta^i)<0$  for $\lfloor d/2\rfloor\le i<d$. Let $\lambda$ be a linear
form  such  that  $\lambda$  is  0  on  $\Phi_I$  and  is real strictly positive on
$\Phi^+-\Phi_I$;   we   have   $\lambda(\lexp{(w\phi)^i}\alpha)>0$  for  $0\le
i<\lfloor  d/2\rfloor$,  and  $\lambda(\lexp{(w\phi)^i}\alpha)<0$ for $\lfloor
d/2\rfloor\le i<d$; it follows that
$\Im(\lambda(\zeta^i\lexp{(w\phi)^i}\alpha))=
\Im(\zeta^i\lambda(\lexp{(w\phi)^i}\alpha))>0$  for  all  elements  of  the
pseudo-orbit.   If  $d'=d$   we  have   thus  $\Im(\lambda(p(\alpha)))>0$,  in
particular $p(\alpha)\not\in<\Phi_I>$. If $d'>d$, since $\phi^d\alpha$ is also
a  positive  root  and  the  first  term  of  the  next  pseudo-orbit the same
computation applies to the other pseudo-orbits and we conclude the same way.

Now  $C_W(X^{W_I}\cap V)$ is generated by  the reflections whose root is
orthogonal   to  $X^{W_I}\cap V$,   that  is   whose  root is in
$<\Phi_I>+V^\perp$. If $\alpha$ is such  a root we have $p(\alpha)\in<\Phi_I>$,
whence $\alpha\in\Phi_I$ by the above claim. This proves that
$C_W(X^{W_I}\cap V)\subset  W_I$. Since  the reverse  inclusion is true, we
get (iii).
\end{proof}

We return to the proof of the case $k=1$ of Theorem \ref{racine}.
Assertion (iii) of Lemma \ref{pas merdique} gives the second assertion of the theorem.
We now show $\neg$(iv)$\Rightarrow\neg$(iii). If $\bw\phi$ is extendable, 
there exists a $\phi^d$-stable $\bJ\subsetneq\bI$ and $\bv\in B_\bI^+$
such that $(\bv\bw\phi)^d=\phi^d\bpi/\bpi_\bJ$, which implies
$\lexp{\bv\bw\phi}\bJ=\bJ$.
If we denote by $\psi$ the automorphism of $B_\bI$ induced by the automorphism
$\bw\phi$ of $\bI$, we have $\lexp{\bv\psi}\bJ=\bJ$ and 
$(\bv\psi)^d=\psi^d\bpi_\bI/\bpi_\bJ$.
Let $X_I$ be the subspace of $X$ spanned by $\Phi_I$.
It follows from the first part of
the theorem applied with $X$, $\phi$, $\bw$, $w$ respectively
replaced with $X_I$, $\psi$, $\bv$, $v$ that $v\psi=vw\phi$
has a non-zero $\zeta$-eigenspace in $X_I$, since if $V'$ is the
$\zeta$-eigenspace of $vw\phi$ we get
$C_{W_I}(V')\subset W_J\subsetneq W_I$; this contradicts (iv).

We show finally that $\neg$(iii)$\Rightarrow\neg$(iv).
If some element of $W_I\psi$ has a non-zero $\zeta$-eigenvector on
$X_I$, by Theorem \ref{bonne racine} applied to $W_I\psi$ acting on $X_I$ we get the
existence of $\bJ\subsetneq\bI$ and $\bv\in B_\bI^+$ satisfying 
$\lexp{\bv\psi}\bJ=\bJ$ and $(\bv\psi)^d=\psi^d\bpi_\bI/\bpi_\bJ$. 
Using that $(\bw\phi)^d=\phi^d\bpi/\bpi_\bI$, it
follows that $(\bv\bw\phi)^d=(\bw\phi)^d\bpi_\bI/\bpi_\bJ=
\phi^d\bpi/\bpi_\bI\cdot\bpi_\bI/\bpi_\bJ=\phi^d\bpi/\bpi_\bJ$  so $\bw\phi$  is 
extendable.

We now deal with the general case $k\ne 1$. This time we use
\ref{ribbon-Bestvina}, which gives immediately (i).
Let us first consider the case when $k$ is prime to $d$.
Then, by \ref{ribbon-Bestvina},
up to conjugacy in $B^+(\cI)$, which we may as well do as observed at the
beginning of the proof, we get that with $d'$ and $k'$ as
in \ref{ribbon-Bestvina} we have $(\bw\phi)^{d'}\preccurlyeq
(\bpi/\bpi_\bI)^{k'}$ and the element $\bw_1$ defined by
$(\bw\phi)^{d'}\bw_1\phi^{-d'}=(\bpi/\bpi_\bI)^{k'}$ satisfies 
$ (\bw_1\phi^{-d'})^k=(\bw\phi)\phi^{-k'd}$
and $ (\bw_1\phi^{-d'})^d=\bpi/\bpi_{\bI}\phi^{-dd'}$.
Since $\bI$ is $\phi^{-dd'}$-stable the last equality shows that we may apply the case
$k=1$ to $\bw_1\phi^{-d'}$. Since $k$ is prime to $d$ the defining relation
for $\bw_1$ gives in $W$ that $(w\phi)^{-d'}=w_1\phi^{-d'}$, where $w_1$ is
the  image of $\bw_1$ in $W$, which (since $d'$ is prime to $d$) shows that
that  the $\zeta$-eigenspace of $w\phi$  is the $e^{2i\pi/d}$-eigenspace of
$w_1\phi^{-d'}$. This gives (ii).

Similarly the coset $W_I w_1\phi^{-d'}$ is the $-d'$-th power of the coset
$W_I w\phi$, so condition (iii) for $w\phi$ and $\zeta$ is
equivalent to (iii) for $w_1\phi^{-d'}$ and $e^{2i\pi/d}$.

Item (ii) of the
following lemma completes the proof of the case $\gcd(d,k)=1$ since by
Lemma \ref{pas Dirichlet} we may choose $d'$ prime to $\delta$;
\begin{lemma}\label{w and w1 extendable}
Let $k,d,k',d'$ be positive integers satisfying
$dk'=1+kd'$ with $d'$ prime to the order of $\phi$. Let $\bw_1\phi^{-d'}$
be $(d,2)$-periodic element. Define 
$\bw\phi$ by $\bw\phi= (\bw_1\phi^{-d'})^k\phi^{k'd}$. Then
\begin{enumerate}
\item $\bw\phi$ is $(d,2k)$-periodic.
\item 
$\bw\phi$ is non-extendable if and only if $\bw_1\phi^{-d'}$ is non-extendable.
\end{enumerate}
\end{lemma}
\begin{proof}
Assertion (i) is an immediate translation of
\ref{x1 periodic implies x periodic}.
Assume  $\bw_1\phi^{-d'}$  extendable,  that  is  there
exists $\bv_1$ such that $(\bv_1\bw_1\phi^{-d'})^d=
\phi^{-dd'}\bpi/\bpi_\bJ$  for some $\bJ\subsetneq\bI$. The $k$-th power of
this equality gives $(\bv(\bw_1\phi^{-d'})^k)^d=
(\bv\bw\phi\cdot\phi^{-k'd})^d=\phi^{-kdd'}(\bpi/\bpi_\bJ)^k$,  where $\bv$
is  defined by  $(\bv_1\bw_1\phi^{-d'})^k=\bv(\bw_1\phi^{-d'})^k$. Since
$\bw_1\phi^{-d'}$ is $(d,2)$-periodic, it is $\phi^{dd'}$-stable, and the
defining equality for $\bv_1$ shows that $\bv_1$ also is $\phi^{dd'}$-stable.
It follows that $\bv\bw\phi$ is also $\phi^{dd'}$-stable.
Since $d'$ is prime to $\delta$ any element commuting to $\phi^{dd'}$ commutes to
$\phi^d$,   in   particular   $(\bv\bw\phi\cdot\phi^{-k'd})^d\phi^{kdd'}=
(\bv\bw\phi)^d\phi^{-k'd^2+kdd'}=(\bv\bw\phi)^d\phi^{-d}$,    whence    the
result.

For the converse, if $w\phi$ denotes the automorphism of $B^+_\bI$
induced  by $\bw\phi$, using that $(\bw\phi)^d=\phi^d(\bpi/\bpi_\bI)^k$ and
that  $\bpi/\bpi_\bJ=(\bpi/\bpi_\bI)(\bpi_\bI/\bpi_\bJ)$  we  may write the
equation $(\bv\bw\phi)^d=\phi^d(\bpi/\bpi_\bJ)^k$ as $(\bv
w\phi)^d=\phi^d(\bpi_\bI/\bpi_\bJ)^k$.  We  now  use  a relative version of
\ref{ribbon-Bestvina},  where we replace $B^+(\cI)$ by $B^+_\bI(\cJ)$ where
$\cJ$  is  the  set  of  $B^+_\bI$-conjugates  of  $\bJ$, replace $\phi$ by
$w\phi$  and replace $\bb$ by  $\bv$; we get the  existence of $\bv_1$ such
that $(\bv_1(w\phi)^{-d'})^d=\bpi_\bI/\bpi_\bJ(w\phi)^{-dd'}$, which can be
written
$(\bv_1(\bw\phi)^{-d'})^d(\bpi/\bpi_\bI)^{kd'}=\bpi_\bI/\bpi_\bJ\phi^{-dd'}$
or   $(\bv_1(\bw\phi)^{-d'}(\bpi/\bpi_\bI)^{k'})^d=\bpi/\bpi_\bJ\phi^{-dd'}$
which   using   that   $(\bw\phi)^{d'}\bw_1\phi^{-d'}=(\bpi/\bpi_\bI)^{k'}$
transforms into the equality we seek
$(\bv_1\bw_1\phi^{-d'})^d=\bpi/\bpi_\bJ\phi^{-dd'}$.
\end{proof}

We   now  consider   the  case   when  $\lambda=\gcd(d,k)\ne   1$.  We  set
$d_1=d/\lambda$  and $k_1=k/\lambda$. Up to  cyclic conjugacy, which we may
as  well do, we may  assume by \ref{ribbon-Bestvina} that $(\bw\phi)^{d_1}=
(\bpi/\bpi_\bI)^{k_1}\phi^{d_1}$.  Since $e^{2i\pi  k_1/d_1}=e^{2i\pi k/d}$
we  have (i), (ii) of the theorem as  well as the equivalence of (iii) with
the  ``$d_1$-extendability''  of  $\bw$,  that  is the existence of $\bv\in
B_\bI^+$  such that  $(\bv\bw\phi)^{d_1}=\phi^{d_1} (\bpi/\bpi_\bJ)^{k_1}$.
The  $d_1$-extendability implies trivially  the $d$-extendability by raising
the equation to the $\lambda$-th power. Conversely, using as above that the
equation  $(\bv\bw\phi)^d=\phi^d (\bpi/\bpi_\bJ)^k$ is  equivalent to $(\bv
w\phi)^d=\phi^d(\bpi_\bI/\bpi_\bJ)^k$     the    relative     version    of
\ref{ribbon-Bestvina}  as used above  shows that up  to cyclic conjugacy we
have  $(\bv w\phi)^{d_1}=\phi^{d_1}(\bpi_\bI/\bpi_\bJ)^{k_1}$ which in turn
is equivalent to $(\bv\bw\phi)^{d_1}=\phi^{d_1} (\bpi/\bpi_\bJ)^{k_1}$.
\end{proof}

The non-extendability condition (iii) or (iv) of Theorem \ref{racine} is equivalent to
the conjunction of two others, thanks to the
following lemma which holds for any complex reflection coset and any $\zeta$.
For definitions and basic results on complex reflection groups we refer to
\cite{Br}. Recall that a complex reflection group is a finite group generated
by pseudo-reflections acting on a finite dimensional complex vector space and that
the fixator of a subspace is called a parabolic subgroup. It is still a
complex reflection group.

\begin{lemma} \label{equiv complex}
Let $W$ be finite a reflection group on the complex 
vector space $X$ and let $\phi$ be an automorphism of $X$ of finite order
which normalizes
$W$.  Let $V$ be the $\zeta$-eigenspace of an element $w\phi\in W\phi$.
Assume that $W'$ is a parabolic subgroup of $W$ which is $w\phi$-stable
and such that $C_W(V)\subset W'$, and let $X'$ denote the subspace of
$X$ spanned by the root lines of $W'$. Then the condition
\begin{itemize}
\item[(i)] $V\cap X'=0$.
\end{itemize}
is equivalent to
\begin{itemize}
\item[(ii)] $C_W(V)=W'$.
\end{itemize}
While the stronger condition
\begin{itemize}
\item[(iii)]
No element of the coset $W'w\phi$ has a non-zero
$\zeta$-eigenvector on $X'$.
\end{itemize}
is equivalent to the conjunction of (ii) and
\begin{itemize}
\item[(iv)] The space
$V$ is maximal among the $\zeta$-eigenspaces of elements of $W\phi$.
\end{itemize}
\end{lemma}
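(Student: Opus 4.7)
The plan is to set up a $w\phi$-equivariant orthogonal decomposition of $X$ and then dispatch the two equivalences. Since $W\langle\phi\rangle$ is finite, I fix a $W\langle\phi\rangle$-invariant Hermitian form on $X$; this yields the orthogonal splitting $X = X' \perp X^{W'}$, which $w\phi$ preserves because $w\phi$ normalizes $W'$ and hence permutes its reflecting hyperplanes. Note that $W'$ acts trivially on $X^{W'}$ and faithfully on $X'$. The equivalence (i)~$\iff$~(ii) now drops out: the $\zeta$-eigenspace $V$ of $w\phi$ splits as $(V \cap X') \oplus (V \cap X^{W'})$, so $V \cap X' = 0$ is equivalent to $V \subset X^{W'}$, which is equivalent to $W'$ fixing $V$ pointwise, i.e.\ to $W' \subset C_W(V)$; combined with the standing hypothesis $C_W(V) \subset W'$ this is (ii).

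For the second equivalence: (iv)~$\Rightarrow$~(ii) is obtained by taking $w' = 1$, which forces (i) and hence (ii). For (ii)+(iii)~$\Rightarrow$~(iv), suppose some $w'w\phi$ (with $w' \in W'$) has a nonzero $\zeta$-eigenvector $v \in X'$; since $w' \in W' = C_W(V)$ fixes $V$ pointwise, $V$ also lies in the $\zeta$-eigenspace of $w'w\phi$, and independence of $v \in X'$ from $V \subset X^{W'}$ shows that eigenspace strictly contains $V$, violating (iii). For (iv)+(ii)~$\Rightarrow$~(iii), I would first record the standard fact that $\tilde h \in W\phi$ has $V$ in its $\zeta$-eigenspace if and only if $(w\phi)^{-1}\tilde h \in C_W(V)$, so that all such $\tilde h$ lie in $C_W(V) w\phi = W' w\phi$. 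For any $\tilde h = w'w\phi \in W'w\phi$, decomposing its $\zeta$-eigenspace along $X = X' \perp X^{W'}$ and using $w'|_{X^{W'}} = \mathrm{Id}$ shows this eigenspace equals $\ker(w'w\phi|_{X'} - \zeta) \oplus V$, which by (iv) reduces to $V$. Hence $V$ is the only $\zeta$-eigenspace of an element of $W\phi$ that contains $V$.

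To finish, I would invoke the Lehrer--Springer theorem (in the form that every $\zeta$-eigenspace of $W\phi$ is contained in one of maximum dimension): combined with the uniqueness just established, this forces $V$ itself to attain the maximum dimension, giving (iii). This appeal to Lehrer--Springer is the main obstacle, since a purely elementary argument only yields that $V$ is maximal in the inclusion ordering of $\zeta$-eigenspaces and not that it attains the maximum dimension; everything else reduces to linear algebra on the $w\phi$-invariant orthogonal splitting $X = X' \perp X^{W'}$.
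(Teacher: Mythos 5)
Your proof is correct and follows essentially the same route as the paper: both set up the $w\phi$-stable orthogonal splitting $X = X' \perp X^{W'}$ coming from a $W\langle\phi\rangle$-invariant Hermitian form, prove (i)$\iff$(ii) by noting $V\cap X'=0 \iff V\subset X^{W'} \iff W'\subset C_W(V)$, get (ii)+(iii)$\Rightarrow$(iv) by exhibiting a strictly larger $\zeta$-eigenspace, and establish (iv)$\Rightarrow$(iii) by showing that any $\zeta$-eigenspace of an element of $W\phi$ containing $V$ must equal $V$. Your worry about Lehrer--Springer is unfounded: the paper makes exactly the same appeal, implicitly, in the opening sentence of its (iv)$\Rightarrow$(iii) argument (``there exists an element of $W\phi$ whose $\zeta$-eigenspace $V_1$ is maximal with $V\subset V_1$'' is precisely the statement that every $\zeta$-eigenspace lies in one of maximum dimension), and the relevant theorems of Springer and Lehrer--Springer are invoked elsewhere in the paper (e.g.\ in the proof of Lemma~\ref{zeta maximal are conjugate}), so the appeal is legitimate and expected. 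One cosmetic point: the condition you want is $\tilde h(w\phi)^{-1}\in C_W(V)$ rather than $(w\phi)^{-1}\tilde h\in C_W(V)$; the two happen to be equivalent here because $w\phi$ preserves $V$ and hence normalizes $C_W(V)$, but the former is the natural one and gives $\tilde h\in C_W(V)w\phi$ directly.
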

\begin{proof} 
Since $W\genby\phi$ is finite we may endow $X$ with a
$W\genby\phi$-invariant Hermitian scalar product, which we shall do.

We  show  (i)  $\Leftrightarrow$  (ii).  Assume  (i);  since $w\phi$ has no
non-zero  $\zeta$-eigenvector in $X'$  and $X'$ is  $w\phi$-stable, we have
$V\perp  X'$, so  that $W'\subset  C_W(V)$, whence  (ii) since  the reverse
inclusion  is true by  assumption. Conversely, (ii)  implies that $V\subset
X^{\prime\perp}$ thus $V\cap X'=0$.

We  show (iii) $\Rightarrow$ (iv). There exists an element of $W\phi$ whose
$\zeta$-eigenspace   $V_1$   is   maximal   with   $V\subset   V_1$.   Then
$C_W(V_1)\subset  C_W(V)\subset W'$ and the $C_W(V_1)$-coset of elements of
$W\phi$  which act  by $\zeta$  on $V_1$  is a  subset of the coset $C_W(V)
w\phi$  of elements which act by $\zeta$ on  $V$. Thus this coset is of the
form  $C_W(V_1)v  w\phi$  for  some  $v\in  W'$.  By (i) $\Rightarrow$ (ii)
applied with $w\phi$ replaced by $vw\phi$ we get $C_W(V_1)=W'$. Since $v\in
W'$  this implies that  $vw\phi$ and $w\phi$  have same action  on $V_1$ so
that $w\phi$ acts by $\zeta$ on $V_1$, thus $V_1\subset V$.

Conversely,  assume that (ii)  and (iv) are  true. If there  exists $v\in W'$
such  that $vw\phi$ has a non-zero $\zeta$-eigenvector in $X'$, then since $v$
acts trivially on $V$ by (ii), the element $vw\phi$ acts by $\zeta$ on $V$ and
on  a non-zero vector of $X'$ so has a $\zeta$-eigenspace strictly larger that
$V$, contradicting (iv).
\end{proof}

Let  us give now examples  which illustrate the need  for the conditions in
Theorem \ref{racine} and Lemma \ref{equiv complex}.

We  first give an example where $\bw\phi$ is a root of $\bpi/\bpi_\bI$ which
is extendable in the sense of Theorem \ref{racine}(iv) and $\ker(w\phi-\zeta)$
is not maximal: let us  take $W=W(A_3)$, $\phi=1$, $d=2$, $\zeta=-1$,
$\bI=\{\bs_2\}$  (where the conventions for the generators of $W$ are as in
the appendix, see Subsection \ref{An}), $\bw=\bw_\bI\inv\bw_0$. We have
$\bw^2=\bpi/\bpi_\bI$  but $\ker(w+1)$ is not maximal: its dimension is 1
and a 2-dimensional $-1$-eigenspace is obtained for $\bw=\bw_0$.

In  the above  example we  still have  $C_W(V)=W_I$ but  even this need not
happen;   at  the   same  time   we  illustrate   that  the  maximality  of
$V=\ker(w\phi-\zeta)$   does  not   imply  the non-extendability of   $\bw$  if
$C_W(V)\subsetneq  W_I$; we  take $W=W(A_3)$,  $\phi=1$, $d=2$, $\zeta=-1$,
but   this   time   $I=\{\bs_1,\bs_3\}$,   $\bw=\bw_I\inv\bw_0$.   We  have
$\bw^2=\bpi/\bpi_\bI$  and  $\ker(w+1)$  is  maximal  ($w$  is conjugate to
$w_0$, thus $-1$-regular) but $\bw$ is extendable. In this case $C_W(V)=\{1\}$.

The  smallest example with a non-extendable $\bw$ and non-trivial $\bI$ is for
$W=W(A_4)$,  $\phi=1$,  $d=3$,  $\bw=  \bs_1\bs_2\bs_3\bs_4\bs_3\bs_2$  and
$\bI=\{\bs_3\}$.   Then  $\bw^3=\bpi/\bpi_\bI$;  this  corresponds  to  the
smallest example with a non-regular eigenvalue (we call regular an
eigenvalue of a regular element for which the eigenspace has trivial
centralizer): $\zeta_3$ is not regular in $A_4$.

Finally we give an example which illustrates the necessity of the condition
$\phi^d(\bI)=\bI$ in Theorem \ref{racine}. We take $W\phi$ of
type $\lexp 3D_4$, thus $\phi$ is the
triality automorphism $\bs_1\mapsto\bs_4\mapsto\bs_2$. Let
$\bw=\bw_0\bs_1\inv\bs_2\inv\bs_4$.  Then, for $\bI=\{\bs_1\}$
we have $(\bw\phi)^2=\bpi/\bpi_\bI\phi^2$, but $\bI^{\bw\phi}=\{\bs_4\}$.
The other statements of Theorem \ref{racine} also fail: if $V$ is the $-1$-eigenspace
of $w\phi$ the group $C_W(V)$ is the parabolic subgroup generated by
$s_1, s_2$ and $s_4$.

\begin{lemma}\label{N(V)=N(WIwphi)}
Let $W\phi$ be a complex reflection coset and let $V$ be the
$\zeta$-eigenspace of $w\phi\in W\phi$; then
\begin{enumerate}
\item $N_W(V)=N_W(C_W(V)w\phi)$.
\item If $W\phi$ is real,  and $C_W(V)=W_I$ where $(W,S)$ is a Coxeter system 
and $I\subset S$, and $w$ is $I$-reduced, then the subgroup 
$\{v\in C_W(w\phi)\cap N_W(W_I)\mid \text{$v$ is $I$-reduced}\}$
is a section of $N_W(V)/C_W(V)$ in $W$.
\end{enumerate}
\end{lemma}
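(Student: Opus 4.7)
The plan is to prove (i) by a direct chase of eigenspaces, then bootstrap (ii) from (i) by combining a Howlett-type identification of $I$-reduced elements of $N_W(W_I)$ with the identity $w\inv(I)=\phi(I)$.

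For (i), both inclusions rest on the observation that an element $x\in W\phi$ lies in $C_W(V)w\phi$ if and only if $x$ acts by $\zeta$ on all of $V$: the ``only if'' direction is because $C_W(V)$ acts trivially and $w\phi$ acts by $\zeta$, while the ``if'' direction uses that $V$ is the \emph{full} $\zeta$-eigenspace of $w\phi$ (and hence, by dimension-invariance under conjugation, of any element of $W\phi$ having $V$ in its $\zeta$-eigenspace). If $v\in N_W(V)$, then $vC_W(V)v\inv$ centralizes $v(V)=V$ and so equals $C_W(V)$, while $vw\phi v\inv$ acts by $\zeta$ on $v(V)=V$ and so lies in $C_W(V)w\phi$; conversely, if $v\in N_W(C_W(V)w\phi)$, then $vw\phi v\inv$ lies in the coset, hence acts by $\zeta$ on $V$, while being a conjugate of $w\phi$ it has $v(V)$ as its $\zeta$-eigenspace, so $V\subseteq v(V)$ and equality follows by dimension.

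For (ii), part (i) yields $N_W(V)=N_W(W_Iw\phi)\subseteq N_W(W_I)$, the last inclusion from $(W_Iw\phi)(W_Iw\phi)\inv=W_I$. Given $v\in N_W(V)$, I factor it uniquely as $v=uv'$ with $u\in W_I$ and $v'$ the $I$-reduced representative of $W_Iv$; then $v'\in N_W(W_I)$. Two root-system lemmas are the engine of the argument. First, $v'(I)=I$ as a set of simple reflections: since $v'$ permutes $\Phi_I$ and $v'^{-1}(I)\subseteq\Phi^+\cap\Phi_I=\Phi_I^+$, the element $v'^{-1}$ sends the chamber $\Phi_I^+$ into itself and must therefore permute its walls. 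Second, $w\inv(I)=\phi(I)$: the relation $w\phi W_I(w\phi)\inv=W_I$ gives $w\inv W_Iw=W_{\phi(I)}$, and since $w$ is $I$-reduced the element $w\inv$ sends $\Phi_I^+$ into $\Phi^+\cap\Phi_{\phi(I)}=\Phi_{\phi(I)}^+$, making it a chamber isomorphism from $(\Phi_I^+,I)$ to $(\Phi_{\phi(I)}^+,\phi(I))$.

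Writing $v'w\phi(v'^{-1})=u_0w$ with $u_0\in W_I$ (which is the condition $v'\in N_W(W_Iw\phi)$), these two lemmas combine in the computation
\[
\bigl(v'w\phi(v'^{-1})\bigr)^{-1}(I)=\phi(v')w\inv v'^{-1}(I)=\phi(v')w\inv(I)=\phi(v'(I))=\phi(I)\subseteq\Phi^+,
\]
showing that $v'w\phi(v'^{-1})$ is itself $I$-reduced. Being in the $W_I$-coset $W_Iw$, which has $w$ as its unique $I$-reduced representative, we conclude $v'w\phi(v'^{-1})=w$, i.e., $v'\in C_W(w\phi)$. Since two distinct $I$-reduced elements of $W$ lie in distinct $W_I$-cosets on the left, the proposed set injects into $N_W(V)/C_W(V)=N_W(V)/W_I$, and the above argument shows it is surjective. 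I expect the identification $w\inv(I)=\phi(I)$ to be the main obstacle: it is precisely the point at which the hypotheses ``$w$ is $I$-reduced'' and $\lexp{w\phi}W_I=W_I$ are coupled, and without it the $I$-reducedness verification for $v'w\phi(v'^{-1})$ would not close.
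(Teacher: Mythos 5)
Your proof is correct and takes the same route as the paper's very terse argument: for (i) the eigenspace/dimension chase, and for (ii) passing to the $I$-reduced representative $v'$ of each $W_I$-coset of $N_W(W_Iw\phi)$ and showing that $v'(w\phi)v'^{-1}$ is again $I$-reduced, hence equals $w\phi$. The paper asserts this $I$-reducedness without proof, and your verification via the two root-system facts $v'(I)=I$ and $w^{-1}(I)=\phi(I)$ is exactly the missing detail. One minor wrinkle in (i): the parenthetical claim that any element of $W\phi$ whose $\zeta$-eigenspace contains $V$ has $V$ as its \emph{full} $\zeta$-eigenspace is false in general (it requires maximality of $V$ or a conjugacy hypothesis), but it is also unused --- the ``if'' direction of your coset characterization follows directly by writing $x=u\cdot(w\phi)$ and observing that $u=x(w\phi)^{-1}$ fixes $V$ pointwise.
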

\begin{proof}
Let $W_1$ denote the parabolic subgroup $C_W(V)$.
All  elements  of $W_1w\phi$  have the  same $\zeta$-eigenspace $V$, so
$N_W(W_1w\phi)$   normalizes  $V$;  conversely,   an  element  of  $N_W(V)$
normalizes  $W_1$ and conjugates  $w\phi$ to an  element $w'\phi$ with same
$\zeta$-eigenspace, thus $w$ and $w'$ differ by an element of $W_1$, whence (i).

For  the second  item, $N_W(W_Iw\phi)/W_I$  admits as  a section the set of
$I$-reduced  elements, and  such an  element will  conjugate $w\phi$ to the
element  of the coset  $W_Iw\phi$ which is  $I$-reduced, so will centralize
$w\phi$.
\end{proof}

We call {\em essential rank} of a (complex) reflection coset $W\phi\subset\GL(X)$ the
dimension  of the space generated  by its root lines  (the dimension of $X$
minus  the dimension of  the intersection of  the reflection hyperplanes of
$W$).

We  call {\em $\zeta$-rank} of  an element of $W\phi$  the dimension of its
$\zeta$-eigenspace, and $\zeta$-rank of $W\phi$ the maximal $\zeta$-rank of
its elements.

Let   us  say  that  a   $(d,2k)$-periodic  element  of  $B^+\phi(\cI)$  is
non-extendable   if  it   is  non-extendable   in  the   sense  of  Theorem
\ref{racine}(iv).  Another way to state the non-extendability of a periodic
element  $\bI\xrightarrow{\bw\phi}\bI\in B^+\phi(\cI)$  is to  require that
$|\bI|$  be  no  more  than  the  essential rank  of  the  centralizer  of  a maximal
$\zeta$-eigenspace   of  $W\phi$,  where  $\zeta=e^{2ik\pi/d}$:  indeed  if
$\bI\xrightarrow{\bw\phi}\bI$ is extendable there exists $\bJ$ and $\bv$ as
in  Theorem \ref{racine}(iv) and,  since condition \ref{racine}(iv) implies
Lemma   \ref{equiv   complex}(iii),   the   element  $vw\phi$  has  maximal
$\zeta$-rank,  and  the  centralizer  of  its  $\zeta$-eigenspace  has
essential rank
$|\bJ|<|\bI|$.  Note  that  the  notion of non-extendable $(d,2k)$-periodic
element  makes sense  without specifying  $\cI$, as $\zeta=e^{2ik\pi/d}$ is
determined  by  $k/d$,  and  $\cI$  in  turn  is determined as the class of
parabolic  subgroups  which  are  centralizers  of  $\zeta$-eigenspaces  of
elements of $W\phi$ of maximal $\zeta$-rank.

The correspondence between maximal eigenspaces and non-extendable periodic
elements, as described by Theorems \ref{bonne racine} and \ref{racine},
can be summarized as follows:
\begin{corollary}\label{maximal}
Let  $V'$ be the  $\zeta$-eigenspace of an  element of $W\phi$ of maximal
$\zeta$-rank,  where $\zeta=e^{2i\pi  k/d}$. Then  there is a $W$-conjugate
$V$ of $V'$ and $I\subset S$ such that $C_W(V)=W_I$ and the corresponding 
$I$-reduced $w\phi$ (see Theorem   \ref{bonne   racine}(ii)) lifts to a
non-extendable
$(d,2k)$-periodic element $\bI\xrightarrow{\bw\phi}\bI$.   Conversely,  for  a
$(d,2k)$-periodic  non-extendable $\bI\xrightarrow{\bw\phi}\bI$  the image $w\phi$
in $W\phi$ has maximal $\zeta$-rank.
\end{corollary}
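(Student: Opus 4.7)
The statement is essentially a dictionary between the language of maximal $\zeta_d$-eigenspaces and that of maximal periodic morphisms in $B^+(\cI)$, so the plan is simply to assemble Theorems~\ref{bonne racine} and \ref{racine} with Lemma~\ref{equiv complex}.

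For the forward direction, I would start from a $V'$ of maximal $\zeta_d$-rank and invoke Theorem~\ref{bonne racine} to replace $V'$ by a $W$-conjugate $V$ on which some $w\phi$ with $w$ being $I$-reduced acts by $\zeta_d$, with $C_W(V)=W_I$, $l(w)=(2/d)\,l(w_0\bw_I^{-1})$ and $l((w\phi)^i\phi^{-i})=i\,l(w)$ for $2i\le d$. The last clause of Theorem~\ref{bonne racine} then lifts this to a morphism $\bI\xrightarrow{\bw}\lexp\phi\bI$ of $B^+(\cI)$ with $(\bw\phi)^d=\phi^d\bpi/\bpi_\bI$, so $\bw$ is $(d,2)$-periodic. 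The length equality $l((\bw\phi)^{d'}\phi^{-d'})=d'\,l(\bw)$ shows the lifts multiply inside $\bW$, i.e.\ $(\bw\phi)^{d'}\in\bW\phi^{d'}$, which is the goodness condition preceding Proposition~\ref{70}; hence $\bw$ is good $d/2$-periodic.

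It remains to verify the maximality of $\bw$ in the sense of Theorem~\ref{racine}(iii). By Theorem~\ref{racine} itself, (iii) is equivalent to condition (iv): no element of $W_Iw\phi$ has a non-zero $\zeta_d$-eigenvector on the span $X_I$ of the root lines of $W_I$. Applying Lemma~\ref{equiv complex} to $W'=W_I$ (which is admissible since $C_W(V)\subset W_I=W_I$) reduces (iv) to the conjunction of $C_W(V)=W_I$ and the maximality of $V$ among $\zeta_d$-eigenspaces. Both hold by construction, so $\bw$ is maximal.

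For the converse, let $\bI\xrightarrow{\bw}\lexp\phi\bI$ be maximal $d/2$-periodic and set $V=\ker(w\phi-\zeta_d)\subset X$. Theorem~\ref{racine}(i)-(ii) gives $\lexp{\bw\phi}\bI=\bI$ and $C_W(V)\subset W_I$. The maximality assumption (iii) is equivalent to (iv) by Theorem~\ref{racine}, and Lemma~\ref{equiv complex} then turns (iv) into the conjunction of $C_W(V)=W_I$ and the maximality of $V$ as a $\zeta_d$-eigenspace in $W\phi$; in particular $w\phi$ has maximal $\zeta_d$-rank. The only subtle point in the whole argument is checking the hypotheses of Lemma~\ref{equiv complex} (in both directions), namely the $w\phi$-stability of $W_I$ and the inclusion $C_W(V)\subset W_I$; the first is Theorem~\ref{racine}(i), and the second is Theorem~\ref{racine}(ii) (respectively Theorem~\ref{bonne racine}(i) in the forward direction), so no real obstacle arises.
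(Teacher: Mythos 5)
Your proof is correct and follows essentially the same route the paper has in mind: the paper introduces the corollary with the phrase ``A particular case of Theorems \ref{bonne racine} and \ref{racine} is'', and your argument assembles exactly those two theorems together with Lemma~\ref{equiv complex} (which is what translates the maximality condition \ref{racine}(iv) into ``$V$ is a maximal $\zeta_d$-eigenspace''). You also correctly identify that the only things needing checking are the hypotheses of Lemma~\ref{equiv complex} (stability of $W_I$ under $w\phi$ and the inclusion $C_W(V)\subset W_I$), and you verify them from the right parts of Theorems~\ref{bonne racine} and \ref{racine}.
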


We conjecture that Bessis's theorem \cite[11.21]{bessis1} extends to
\begin{conjecture}\label{roots conjugate}
Two non-extendable $(d,2k)$-periodic elements of  $B^+\phi(\cI)$
are cyclically conjugate. 
\end{conjecture}
Note that because of Lemma \ref{equiv complex} the non-extendability
condition is necessary in the above.

By  \ref{power} a $(d,2k)$-periodic  element is cyclically  conjugate to an
element    which    satisfies    in    addition    $(\bw\phi)^{\lfloor\frac
d{2k}\rfloor}\in  \bW \phi^{\lfloor\frac d{2k}\rfloor}$.  We will call {\em
good}   a  $(d,2k)$-periodic   element  which   satisfies  this  additional
condition.

When $k=1$ we can give  conditions purely in  terms of $W$  for an element to
lift  to a  good $(d,2)$-periodic (resp.\  non-extendable good $(d,2)$-periodic)  
element. 

\begin{lemma}\label{good-zeta-maximal}
Let $W\phi\subset\GL(X_\BR)$ be a finite order real reflection coset such that
$\phi$  preserves  the  chamber  of  the  corresponding hyperplane arrangement
determining  the  Coxeter  system  $(W,S)$.  

Let $w\in W$ and $I\subset S$ and let $\bw\in\bW$ and $\bI\subset\bS$ be
their lifts; let $\cI$ be the conjugacy orbit of $\bI$, then $\bw$ induces a morphism 
$\bI\xrightarrow{\bw\phi}\bI\in B^+(\cI)$ if and only if:
\begin{itemize}
\item[(i)] $\lexp{w\phi}I=I$ and $w$ is $I$-reduced.
\end{itemize}

If $\bw$ satisfies (i), for $d>1$ the element $\bI\xrightarrow{\bw\phi}\bI$
is good $(d,2)$-periodic if  and only if the
following two additional conditions are satisfied.
\begin{itemize}
\item[(ii)] $l((w\phi)^i)=\frac{2i}d l(w_I\inv  w_0)$ for $0\leq 2i \le d$.
\item[(iii)] $(w\phi)^d=\phi^d$.
\end{itemize}
If, moreover,\begin{itemize}
\item[(iv)] $W_Iw\phi$  has $\zeta$-rank $0$ on the subspace spanned by the
root lines of  $W_I$ where $\zeta=e^{2i\pi/d}$,
\end{itemize}
then $\bw$ is non-extendable in the sense of Theorem \ref{racine}(iv).
\end{lemma}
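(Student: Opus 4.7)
The plan is to verify the three claims of the lemma in sequence, each by unwinding the relevant definitions and invoking an earlier result. First, the characterization of when $\bw$ induces a morphism in $B^+(\cI)$ follows directly from Definition \ref{B+(I)}: one must have $\bI^\bw=\lexp\phi\bI$ and $\alpha_\bI(\bw)=1$. Projecting the first condition to $W$ gives $w\inv I w=\phi(I)$, i.e.\ $\lexp{w\phi}I=I$, and Assumption \ref{conj atom is atom} ensures the converse lifts back to $\bW$ since conjugation by $\bw\in\bW$ permutes atoms. The condition $\alpha_\bI(\bw)=1$ for $\bw\in\bW$ means no atom of $\bI$ left-divides $\bw$ in $B^+$, equivalently that $w$ is $I$-reduced in $W$.

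For the equivalence between ``good $d/2$-periodic'' and conditions (ii)--(iii), the backward direction is essentially Lemma \ref{bw good}: under (i), (ii) and (iii) it yields $(\bw\phi)^d=\phi^d\bpi/\bpi_\bI$. The remaining ``good'' requirement $(\bw\phi)^{d'}\in\bW\phi^{d'}$ (with $d'=\lfloor d/2\rfloor$) follows because $(\bw\phi)^{d'}\phi^{-d'}=\bw\cdot\phi(\bw)\cdots\phi^{d'-1}(\bw)$ has $B^+$-length $d'l(w)$, and its image in $W$ has Coxeter length $(2d'/d)l(w_0 w_I\inv)=d'l(w)$ by applying (ii) at both $i=d'$ and $i=1$; hence the product lies in $\bW$.

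For the forward direction, projecting $(\bw\phi)^d=\phi^d\bpi/\bpi_\bI$ to $W$ immediately gives (iii), since $\bpi$ and $\bpi_\bI$ map to the identity. For (ii), I would invoke Proposition \ref{70}: in the even case $d=2d'$ it supplies $(\bw\phi)^{d'}=\bw_\bI\inv\bw_0\phi^{d'}$, whence each prefix $(\bw\phi)^i\phi^{-i}$ is a left divisor of $\bw_\bI\inv\bw_0$ in $\bW$ and therefore has length $il(w)$; taking $i=d'$ fixes $l(w)=(2/d)l(w_0 w_I\inv)$, giving (ii). In the odd case $d=2d'+1$, Proposition \ref{70} instead produces $\bu\in\bW$ with $\bw\phi=\bu\phi\cdot\lexp{\bw_0\phi^{d'}}\bu$ and $(\bw\phi)^{d'}\bu=\bw_\bI\inv\bw_0\phi^{d'}$; reading off lengths yields $l(\bw)=2l(\bu)$ from the first factorization and $d'l(\bw)+l(\bu)=l(w_0 w_I\inv)$ from the second, whence $l(\bw)=(2/d)l(w_0 w_I\inv)$, after which the same prefix-in-$\bW$ argument delivers (ii) for all $i\le d'$.

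The third claim is then immediate: our hypothesis (iv) is literally condition (iv) of Theorem \ref{racine}, which that theorem already proves equivalent to its condition (iii), i.e.\ to maximality. The main obstacle in the plan is the odd-$d$ length bookkeeping in the forward direction of the second equivalence; this is a finite computation rather than a conceptual difficulty, but it requires careful tracking of the auxiliary element $\bu$ through the two factorizations of Proposition \ref{70}.
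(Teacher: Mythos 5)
Your proof is correct and follows the same outline as the paper's (which is only four sentences long). The one place where you deviate is the forward direction of the equivalence "good $d/2$-periodic $\Leftrightarrow$ (ii) and (iii)": the paper disposes of it with "by definition again", meaning the following elementary unwinding. Since $\bW$ is closed under left divisor, $(\bw\phi)^{d'}\in\bW\phi^{d'}$ forces every prefix $(\bw\phi)^i\phi^{-i}=\bw\lexp\phi\bw\cdots\lexp{\phi^{i-1}}\bw$ to lie in $\bW$ for $0<i\le d'$, so its Coxeter length equals its $B^+$-length $il(w)$; and projecting the $B^+$-length of $(\bw\phi)^d\phi^{-d}=\bpi/\bpi_\bI$ gives $dl(w)=2(l(w_0)-l(w_I))$, hence $l(w)=\tfrac{2}{d}l(w_I\inv w_0)$. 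You instead route this through Proposition \ref{70}, which is logically valid (its hypothesis is exactly the "good" condition you are assuming) and makes the odd-$d$ bookkeeping explicit via the factorization $\bw\phi=\bu\phi\cdot\lexp{\bw_0\phi^{d'}}\bu$, but it is heavier machinery than needed: Proposition \ref{70} is itself a consequence of Lemma \ref{69g} and some work, whereas the direct argument only needs closure of $\bW$ under divisors and length additivity in $B^+$. Either way, the conclusion is the same, and your handling of parts one and three (direct unwinding of Definition \ref{B+(I)}, and the pointer to Theorem \ref{racine}(iii)$\Leftrightarrow$(iv)) matches the paper exactly.
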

\begin{proof}

By definition $w$ induces a morphism $\bI\xrightarrow{\bw\phi}\bI$
if and only if it satisfies (i).
By definition again if $\bI\xrightarrow{\bw\phi}\bI$ is good $(d,2)$-periodic 
then (ii) and (iii) are satisfied.
Conversely, Lemma \ref{bw good} shows that the morphism induced by the
lift of a $w$ satisfying
(i), (ii), (iii) is good $(d,2)$-periodic. 

Property (iv) means that no element $vw\phi$ with $v\in W_I$ has an eigenvalue
$\zeta$ on the subspace spanned by the root lines of $W_I$ which is exactly
the characterization of Theorem \ref{racine}(iv) of a non-extendable element.
\end{proof}
Note  that $d$ and  $I$ are uniquely  determined by $w\phi$ satisfying (i),
(ii),  (iii) above since  $d$ is the  smallest power of  $w\phi$ which is a
power of $\phi$ and $I$ is determined by $(\bw\phi)^d=\bpi/\bpi_\bI\phi^d$.

\begin{definition}\label{good}
We  say that $w\phi\in W\phi$ is  {\em $d$-good} 
if it satisfies (i), (ii), (iii) in Lemma \ref{good-zeta-maximal}.

We say  $w\phi$ is $d$-good {\em maximal} if it satisfies in addition
(iv) in Lemma \ref{good-zeta-maximal}.
\end{definition}

In  particular, $d$-good  elements lift  to good $(d,2)$-periodic elements,
and  $d$-good maximal elements lift to good non-extendable $(d,2)$-periodic
elements.   In   the   appendix,   we   will   construct  a  non-extendable
$(d,2k)$-periodic  element  for  each  $W\phi$,  each  $d$  and  each  $k$.
Actually,  we will do this only for $k=1$ (by constructing $d$-good maximal
elements of $W\phi$), which is sufficient by
\begin{lemma} 
\begin{enumerate}
\item If $\lambda=\gcd(d,k)$ and we set $d_1=d/\lambda$ and $k_1=k/\lambda$
and    $\bw\phi$    is    $(d_1,2k_1)$-periodic    (resp.\  non-extendable
$(d_1,2k_1)$-periodic)   then   $\bw\phi$   is   $(d,2k)$-periodic   (resp.
non-extendable $(d,2k)$-periodic).
\item  If $k$ is prime to $d$ there exists integers $k'$ and $d'$ such that
$dk'=1+kd'$  such  that  if  $\bw_1\phi^{-d'}$  is  $(d,2)$-periodic (resp.
non-extendable  $(d,2)$-periodic)  then  the  element  $\bw\phi$ defined by
$(\bw_1\phi^{-d'})^k=(\bw\phi)\phi^{-k'd}$   is   $(d,2k)$-periodic  (resp.
non-extendable $(d,2k)$-periodic).
\end{enumerate}
\end{lemma}
\begin{proof}  (i) is part of  what is proved in  the last paragraph of the
proof  of \ref{racine} and (ii) is Lemma \ref{w and w1 extendable}.
\end{proof}

Any element of $W\phi$ with a maximal $\zeta$-eigenspace is conjugate to an
element  of $C_W(V)w\phi$ since the  maximal eigenspaces are conjugate, see
\cite[Theorem  3.4(iii) and Theorem 6.2(iii)]{Springer}.  If $w\phi$ is the
image of a non-extendable $(d,2k)$-periodic element, where
$\zeta=e^{2ik\pi/d}$,   it  is   $1$-regular  in   this  coset  by  Theorem
\ref{racine}  (ii)  which  implies  that  it  preserves  a  chamber  of  the
corresponding  real  arrangement  (see  remarks  above  Theorem  \ref{bonne
racine}).  The  following  lemma  shows  that  the  images  in  $W\phi$  of
non-extendable  $(d,2k)$-periodic  elements  (thus  in  particular $d$-good
maximal   elements)  belong  to   a  single  conjugacy   class  under  $W$,
characterized by the above property.

\begin{lemma}\label{zeta maximal are conjugate}
Let  $W\phi$ be a finite order real reflection coset. The
elements  of  $W\phi$  which  have  a  $\zeta$-eigenspace  $V$  of  maximal
dimension  and among those, have the largest dimension of fixed points, are
conjugate.
\end{lemma}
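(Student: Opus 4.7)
My plan has three stages: reduce to elements sharing a common $\zeta$-eigenspace; decompose $X$ orthogonally so the fixed-point maximization depends only on the action on the reflection representation of the centralizer; and finally invoke Springer's theorem on regular elements for the resulting $1$-eigenspace problem.

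I would start by invoking the Brou\'e--Malle--Michel result cited above as \cite[5.6(i)]{Br}: $W$ acts transitively by conjugation on the set of maximal $\zeta$-eigenspaces of elements of $W\phi$. Given two elements $x_1,x_2$ of $W\phi$ satisfying the double maximality, after replacing $x_1$ by a $W$-conjugate we may assume $V:=\Ker(x_1-\zeta)=\Ker(x_2-\zeta)$. Setting $W_V:=C_W(V)$ --- a parabolic subgroup by Steinberg --- both elements lie in the common coset $W_V\cdot x_2$, so it suffices to show they are $W_V$-conjugate.

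Choose a $W\langle\phi\rangle$-invariant scalar product on $X_\BR$ and decompose $X=V\oplus V''\oplus V_I$ orthogonally, where $V_I$ is the span of the root lines of $W_V$ and $V''=V^\perp\cap X^{W_V}$. All three summands are stable under $x_2$ (which normalizes $W_V$ since it belongs to a coset of it) and under $W_V$; the latter acts trivially on $V\oplus V''$ and by its reflection representation on $V_I$. For $v\in W_V$, the element $vx_2$ then acts as $\zeta\cdot\Id$ on $V$, as $x_2|_{V''}$ on $V''$ (independently of $v$), and as $v\psi$ on $V_I$ with $\psi:=x_2|_{V_I}$. Consequently $\dim\Ker(vx_2-1)-\dim\Ker(v\psi-1)$ is a constant independent of $v$, so maximizing the fixed-space dimension amounts to maximizing the $1$-eigenspace dimension on $V_I$. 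Moreover the maximality of $V$ forces that no $v\psi$ has $\zeta$ as an eigenvalue on $V_I$, since any such eigenvector would enlarge $V$ as a $\zeta$-eigenspace of $vx_2$. The problem is thereby reduced to: in the real reflection coset $W_V\psi$ acting on $V_I$, the elements of maximal $1$-eigenspace dimension form a single $W_V$-conjugacy class.

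The main obstacle is this reduced $1$-eigenspace statement. Because $\phi$ preserves a chamber of $W$ on $X$, after a further twisted conjugation within the coset $W_V\cdot x_2$ (legitimate since we only need $W_V$-conjugacy) we may arrange $\psi$ to preserve a chamber of $W_V$ on $V_I$, and hence to fix some vector in its interior --- making $\psi$ a $1$-regular element in Springer's sense. By Springer's theorem on regular elements applied to the coset $W_V\psi$, all $1$-regular elements are $W_V$-conjugate and their $1$-eigenspaces have the same, maximal, dimension. To conclude I would verify that every element of $W_V\psi$ attaining the maximal $1$-eigenspace dimension is in fact $1$-regular: if the $1$-eigenspace $V_1$ of some $v\psi$ lay inside a reflection hyperplane of $W_V$, then the pointwise centralizer $C_{W_V}(V_1)$ would be a strictly larger parabolic than the one prescribed by the analysis, and tracing back through the decomposition of Step~2 would yield an element of $W\phi$ whose $\zeta$-eigenspace properly contains $V$, contradicting the maximality of $V$.
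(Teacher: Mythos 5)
Your proof follows the same strategy as the paper's: reduce via Springer's conjugacy of maximal $\zeta$-eigenspaces to a common $V$, observe that the remaining ambiguity lives in the coset $W_V\psi$ acting on the span $V_I$ of the root lines of $W_V=C_W(V)$, and then settle the $\zeta=1$ problem on $V_I$ by Springer's theory of regular elements (a chamber-preserving element of $W_V\psi$ is $1$-regular and has maximal $1$-rank; $1$-regular elements form a single $W_V$-class). Your orthogonal decomposition $X=V\oplus V''\oplus V_I$ spells out explicitly why only the $V_I$-component matters, which the paper leaves implicit --- that is a helpful elaboration.

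However, your final verification step is not correctly argued. You claim that if the $1$-eigenspace $V_1\subset V_I$ of some $v\psi$ lay inside a reflection hyperplane, then ``tracing back through the decomposition would yield an element of $W\phi$ whose $\zeta$-eigenspace properly contains $V$.'' This does not follow: $C_{W_V}(V_1)\neq\{1\}$ tells you that $v\psi$ has a nontrivial pointwise stabilizer of its $1$-eigenspace, but it gives no mechanism for producing an element with a $\zeta$-eigenvector outside $V$ (note that for any $u\in W_V$, the element $uvx_2$ acts on $V''$ independently of $u$, so nothing new appears there, and a $\zeta$-eigenvector on $V_I$ would already contradict maximality for $vx_2$ itself regardless of $V_1$). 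The step you are trying to justify --- that maximal $1$-rank implies $1$-regular --- is indeed true, but the correct route is the standard one: since $1$-regular elements exist in $W_V\psi$, the eigenvalue $1$ is regular for that coset, so by Springer \cite[6.4]{Springer} (or \cite[5.6]{Br}) all maximal $1$-eigenspaces are $W_V$-conjugate and each contains a regular vector, hence every element achieving the maximum is $1$-regular. With that substitution your argument becomes a fully correct, slightly more detailed version of the one in the paper.
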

\begin{proof}
As remarked above, up to $W$-conjugacy we may fix a $\zeta$-eigenspace $V$ and
consider only elements of the coset $C_W(V)w\phi$ where $w\phi$ is some element
with $\zeta$-eigenspace $V$; then $W$-conjugacy is reduced to
$C_W(V)$-conjugacy.
Since $C_W(V)$ is a parabolic subgroup of the Coxeter group
$W$  and is normalized by  $w\phi$, 
the  coset $C_W(V)w\phi$  is a real reflection
coset;  in this coset there are $1$-regular elements, which are those which
preserve  a chamber of  the corresponding real  hyperplane arrangement; the
$1$-regular  elements  have  maximal  $1$-rank,  that  is  have the largest
dimension  of fixed  points, and  they form  a single  $C_W(V)$-orbit under
conjugacy, whence the lemma.
\end{proof}

\begin{lemma}\label{regular in parabolic}
Let  $w\phi$ be the image in  $W\phi$ of a non-extendable $(d,2k)$-periodic
element  $\bI\xrightarrow{\bw\phi}\bI$, let $I$  be the image  of $\bI$ and
let  $V_1$ be the fixed  point subspace of $w\phi$  in the space spanned by
the   root  lines  of   $W_I$;  then  $w\phi$   is  regular  in  the  coset
$C_W(V_1)w\phi$.
\end{lemma}
\begin{proof}Let $W'=C_W(V_1)$; we first note that since $w\phi$
normalizes $V_1$ it normalizes also $W'$, so $W'w\phi$ is indeed a reflection
coset. We have thus only to prove that $C_{W'}(V)$
is trivial, where $V$ is the $e^{2ik\pi/d}$-eigenspace of $w\phi$. This last group is
generated by the reflections with
respect  to roots both  orthogonal to $V$  and to $V_1$,  which are the
roots of $W_I=C_W(V)$ orthogonal to $V_1$.
Since $w\phi$ preserves a chamber of
$W_I$, the sum $v$ of the positive roots of $W_I$ with respect to the order 
defined by this chamber is in $V_1$  and is in the chamber: this is well known
for a true root system; here we have taken all the roots to be of length
1 but the usual proof (see \cite[Chapitre VI \S1, Proposition 29]{bbk})
is still valid. Since no root is orthogonal to a vector $v$ inside a chamber,
$W_I$ has no root orthogonal to $V_1$, hence $C_{W'}(V)=\{1\}$.
\end{proof}
One could hope that the above lemma reduces the classification of
$d$-good maximal elements to that of regular elements; however the
map $C_{W'}(w\phi)=N_{W'}(V)\to  N_W(V)/C_W(V)$ with the notations of the
above proof  is injective, but  not always surjective: for example, 
if $W$ of type $E_7$, and
$\phi=\Id$  and $d=4$, then $N_W(V)/C_W(V)$ is
the  complex  reflection  group  $G_8$,  while  $W'$  is  of type $D_4$ and
$N_{W'}(V)/C_{W'}(V)$ is the complex reflection group $G(4,2,2)$. However,
there are only 3 such cases for irreducible groups $W$; the group  
$N_W(V)/C_W(V)$ was determined in appendix 1 in all other cases by the
equality $C_{W'}(w\phi)\simeq  N_W(V)/C_W(V)$, which is proved
by checking that $C_{W'}(w\phi)$ and $N_W(V)/C_W(V)$  have the same reflection 
degrees, a 
simple arithmetic check on the reflection degrees of $W$ and $W'$; indeed,
recall that when $V$ is a maximal $\zeta$-eigenspace, the group
$N_W(V)/C_W(V)$ is a complex reflection group acting on $V$, with
reflection degrees the reflection degrees of $W$ satisfying the arithmetic
condition given for instance in \cite[5.6]{Br}
(when $\phi=\Id$, the reflection degrees divisible by $d$).

\section{Conjectures}\label{section 10}
The following conjectures extend those of \cite[\S 2]{endo}.
They are a geometric form of Brou\'e conjectures.
\begin{conjectures}\label{conjecture}
Let $\bI\xrightarrow{\bw\phi}\bI\in B^+(\cI)\phi$ be 
non-extendable $(d,2k)$-periodic. Then
\begin{enumerate}
\item
The  group $B_\bw$ generated by the  monoid $B^+_\bw$ of Theorem \ref{desc endo} is
isomorphic   to   the   braid   group   of  the  complex  reflection  group
$W(w\phi):=N_W(W_Iw\phi)/W_I$.
\item
The natural morphism $\DI(\bI\xrightarrow{\bw\phi}\bI)
\to\End_{\bG^F}(\bX(\bI,\bw\phi))$ (see below Lemma \ref{D tilde to D})
gives rise to a morphism
$B_\bw\to\End_{\bG^F}H^*_c(\bX(\bI,\bw\phi))$ which factors through a 
special representation of a $\zeta$-cyclotomic Hecke algebra $\cH_\bw$
for $W(w\phi)$, where $\zeta=e^{2ik\pi/d}$.
\item
The odd and even $H^i_c(\bX(\bI,\bw\phi))$ are disjoint $\bG^F$-modules, and the above morphism
extends to a surjective morphism $\Qlbar[B_\bw]\to
\End_{\bG^F}(H^*_c(\bX(\bI,\bw\phi)))$.
\end{enumerate}
\end{conjectures}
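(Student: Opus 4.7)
The strategy for this tripartite conjecture is to leverage the interplay between the Garside-theoretic machinery of Part I and the geometric machinery of Part II, handling each item in turn while respecting the reduction from periodic braids to centralizers of regular elements. For (i), the plan is to use Corollary \ref{70g}/Proposition \ref{70} to reduce to the case $\bI=\emptyset$ and $p=1$: a good $d/2$-periodic morphism $\bI\xrightarrow{\bw}\lexp\phi\bI$ is cyclically conjugate to one in which either $(\bw\phi)^{d/2}=\Delta_\cI$ (even case) or $\bw\phi=(\bs\Lambda)^2\Lambda^{-1}$ for a $d$-periodic morphism $\bs\Lambda$ in an enlarged semi-direct product category (odd case). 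Combined with Theorem \ref{desc endo}, this identifies $B^+_\bw$ with the centralizer of a regular periodic element in an Artin-Tits-type monoid, after which Bessis's results on periodic elements of dual braid monoids (used as in the proof of the theorem just before Section 6) give the isomorphism with the braid group of the complex reflection group $W_w=N_W(W_Iw\phi)/W_I$. The maximality hypothesis of Corollary \ref{maximal} is exactly what is needed to guarantee that $W_w$ is the reflection group acting on the maximal $\zeta_d$-eigenspace.

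For (ii), the plan is first to observe that the morphism $B^+_\bw\to\End_{\bG^F}(\bX(\bI,\bw\phi))$, coming from the action of the $D_\bv$ of Definition \ref{Dv}, extends to $B_\bw$ at the level of $\ell$-adic cohomology because $D_\bv$ induces an equivalence of étale sites and hence an isomorphism of cohomology groups. The key geometric input is that the generators $v(\bJ,\bI)$ of Proposition \ref{atoms C(I)} for the Garside subfamily, together with the lifts $v(\bs,\bI)$ described in Proposition \ref{godelle}, should give operators on cohomology satisfying quadratic relations of the form $(D_\bv-a)(D_\bv+b)=0$ with $a,b$ monomials in $q^{1/2}$ determined by the $\zeta_d$-cyclotomic Hecke parameters of $W_w$. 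To identify these eigenvalues one would compute the trace of $D_\bv$ on $H^*_c$ via the Shintani descent identity (Proposition \ref{shintani1}) and Corollary \ref{shintani2}, reducing the computation to the action of $X_1 T_\bw \phi$ on $\Qlbar[(\bG/\bP_I)^{F^m}]$, which in turn can be matched with the action of the cyclotomic Hecke algebra on its standard module via character-theoretic arguments analogous to those in \cite{BM} and \cite{BR2}.

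For (iii), the plan is to exploit affineness, which we have established in the corollary to Proposition \ref{tildeX affine}: by the Artin vanishing theorem $H^i_c(\bX(\bI,\bw\phi),\Qlbar)=0$ for $i>\dim\bX(\bI,\bw\phi)=l(\bw)$ taken together with Corollary \ref{omega_rho}(ii) which confines nontrivial cohomology to degrees $l(\bw)\le i\le 2l(\bw)$. Disjointness of even and odd cohomology would follow from showing that the eigenvalues $\lambda_\rho q^{\delta k}$ of $F^\delta$ appearing in odd degrees are distinct (as algebraic numbers, or after an appropriate specialization) from those in even degrees; this would follow from a careful analysis of $\omega_\rho$ and the Frobenius eigenvalues allowed by Corollary \ref{omega_rho}(i,iii), combined with the compatibility between the action of $F$ and the Hecke algebra action constructed in (ii). The surjectivity of $\Qlbar[B_\bw]\to\End_{\bG^F}H^*_c$ should then follow by dimension count: the Hecke algebra $\cH_\bw$ acts by its split semisimple quotient on each graded piece and the unipotent characters of $\bG^F$ in a given $d$-Harish-Chandra series are parametrized by $\Irr(W_w)$, so the image has dimension at least $\sum_{\chi\in\Irr(W_w)}\chi(1)^2=|W_w|$.

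The main obstacle, however, is item (ii): even granting (i) and the affineness/vanishing results, constructing the cyclotomic Hecke algebra action rigorously requires identifying the \emph{quadratic} (or higher, for complex reflections of order $>2$) relations satisfied by the generators $D_{v(\bJ,\bI)}$ on cohomology, with precise specializations of the Hecke parameters to powers of $q$. Outside the ``spetsial'' case and a handful of worked examples, no direct geometric computation of these eigenvalues is known; the Shintani approach above yields the traces but not a priori the minimal polynomials, and one must separately rule out degenerations of the Hecke algebra. This is the heart of the Broué conjecture for non-principal $d$-series and is precisely the point at which current techniques stop short of a proof.
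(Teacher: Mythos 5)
The statement you are asked to prove is a \emph{Conjectures} environment (\ref{conjecture}), not a theorem: the paper explicitly declares these to be ``a geometric form of Brou\'e conjectures'' extending those of \cite[\S 2]{endo}, offers no proof, and in later results (Lemma \ref{conjecture+}, Corollary \ref{eigenvalue of F} and the corollary following it) \emph{assumes} them as hypotheses and derives consequences. There is thus no proof in the paper to compare yours against, and, to your credit, your proposal is candid about this: you state outright that ``current techniques stop short of a proof'' for part (ii), which is indeed the crux.

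Your discussion is well-informed and correctly locates the ingredients and the obstacle, but it is a survey of strategies, not a proof. A few specific points. For (i), your reduction via Corollary \ref{70g}/Proposition \ref{70} to centralizers of periodic elements, combined with Theorem \ref{desc endo}, is the right picture; but the step invoking Bessis's work identifies the centralizer of a periodic element in a braid group with a braid group of the centralizing complex reflection group only when $\bI=\emptyset$, and the passage to general $\bI$ (i.e.\ showing the centralizer in $B^+(\cI)$ is the braid group of $N_W(W_Iw\phi)/W_I$) is precisely part of what is conjectured, not something the cited theorems give for free. For (ii), your Shintani-descent plan would at best yield virtual-character identities and traces; as you acknowledge, it cannot pin down the minimal polynomial of the $D_\bv$ on individual cohomology groups, so the specialization to a $\zeta_d$-cyclotomic Hecke algebra remains unestablished. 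For (iii), the argument is somewhat circular: the paper shows (Lemma \ref{conjecture+}) that the pairwise disjointness of all $H^i_c$ is a \emph{consequence} of (ii) and (iii), and your proposal to derive disjointness of even/odd degrees from Frobenius-eigenvalue separation runs into the same problem --- Corollary \ref{omega_rho} constrains eigenvalues but does not separate them across degrees without extra input. Your surjectivity argument also quietly invokes the Brou\'e--Malle--Michel parametrization of the $d$-Harish-Chandra series by $\Irr(W_w)$, itself a non-trivial input. In short: you have correctly reconstructed the scaffolding the authors have in mind and have honestly identified that the scaffolding does not yet support a proof; the statement remains a conjecture.
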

The group $W(w\phi)$ above is a complex reflection group by the
remarks at the end of last section and Lemma \ref{N(V)=N(WIwphi)} (i).

The  condition that the periodic elements we consider are non-extendable is
necessary  for  assertion  (ii)  above  to  hold; in the case of extendable
periodic  elements  the  endomorphism  algebra  should,  instead of being a
deformation  of the  group algebra  of $W(w\phi)$,  be a  deformation of an
endomorphism algebra of an induced representation from a complex reflection
group   to  another.   Whenever  a   periodic  element   is  extendable,  a
decomposition as in Theorem \ref{produit fibre general} can be applied. See
\cite[1.3]{dudas} for such computations.

David Craven has made (iii) above more specific by giving a conjectural
formula  computing  the  cohomology  degree  in  which  a  given  unipotent
character  should  occur  (see  \cite{Craven});  Craven's formula should be
valid  for any $(d,2k)$-periodic element, not only the non-extendable ones.
In  the  current  paper  we  focus  on the study of non-extendable periodic
elements;  this should  be a  start for  the general  study of all periodic
elements.

\begin{lemma}\label{conjecture+}
Let $\bI\xrightarrow{\bw\phi}\bI\in B^+\phi(\cI)$ be non-extendable
$(d,2k)$-periodic and assume Conjectures \ref{conjecture}; then
for any $i\ne j$ the $\bG^F$-modules
$H^i_c(\bX(\bI,\bw\phi))$ and $H^j_c(\bX(\bI,\bw\phi))$ are disjoint.
\end{lemma}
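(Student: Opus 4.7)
The plan is to use the surjectivity statement in Conjecture \ref{conjecture}(iii) combined with the fact that the endomorphisms in the image are automatically graded with respect to the cohomological degree.

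First I would observe that for every $\bb\in B_\bw$, the endomorphism $D_\bb$ of $\bX(\bI,\bw\phi)$ is an equivalence of \'etale sites (as noted after Definition \ref{Dv}), so the induced action of $\Qlbar[B_\bw]$ on $H^*_c(\bX(\bI,\bw\phi),\Qlbar)$ preserves each cohomological degree. Consequently the image of the morphism $\Qlbar[B_\bw]\to\End_{\bG^F}(H^*_c(\bX(\bI,\bw\phi)))$ of Conjecture \ref{conjecture}(iii) lies in the graded subalgebra $\prod_i \End_{\bG^F}(H^i_c(\bX(\bI,\bw\phi)))\subset \End_{\bG^F}(H^*_c(\bX(\bI,\bw\phi)))$.

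Next I would invoke the surjectivity part of Conjecture \ref{conjecture}(iii): this forces the inclusion above to be an equality, i.e.
\[
\End_{\bG^F}(H^*_c(\bX(\bI,\bw\phi)))=\prod_i \End_{\bG^F}(H^i_c(\bX(\bI,\bw\phi))).
\]
If some irreducible $\bG^F$-module $\rho$ appeared both in $H^i_c$ and in $H^j_c$ with $i\ne j$, then the nonzero $\bG^F$-equivariant map sending a copy of $\rho$ in $H^i_c$ to one in $H^j_c$ (and everything else to $0$) would be an element of $\End_{\bG^F}(H^*_c)$ not lying in the graded subalgebra, a contradiction. Hence $H^i_c$ and $H^j_c$ have no common irreducible constituent, which is the required disjointness.

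I expect essentially no obstacle once Conjecture \ref{conjecture}(iii) is granted; the only subtle point is checking that the $B_\bw$-action really preserves cohomological degree, which is immediate from the construction of $D_\bv$ in Definition \ref{Dv} as a morphism of varieties (hence of \'etale sites). Note that the weaker odd/even disjointness stated in Conjecture \ref{conjecture}(iii) is not by itself enough; it is precisely the surjectivity onto $\End_{\bG^F}(H^*_c)$ that upgrades the parity-disjointness to full degree-disjointness.
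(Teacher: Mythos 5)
Your proof is correct and follows essentially the same path as the paper's: both use the observation that the $D_\bv$ are equivalences of \'etale sites (hence act degree by degree on cohomology) together with the surjectivity in Conjecture \ref{conjecture}(iii) to force disjointness. The only cosmetic difference is the final step: the paper phrases the contradiction via the fact that each $\rho$-isotypic part must afford an irreducible $\cH_\bw$-module (a Schur--Weyl/double-centralizer style observation), whereas you phrase it more directly by noting that surjectivity forces $\End_{\bG^F}(H^*_c)=\prod_i\End_{\bG^F}(H^i_c)$ and exhibiting an off-diagonal $\bG^F$-equivariant map as the contradiction; these two formulations are equivalent and your version is, if anything, slightly more elementary in that it avoids invoking irreducibility of the multiplicity space.
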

\begin{proof}
Since the image of the morphism of Conjecture \ref{conjecture}(ii) consists of 
equivalences of \'etale sites, it follows that the action of $\cH_\bw$ on 
$H^*_c(\bX(\bI,\bw\phi))$ preserves individual cohomology groups.
The surjectivity of the morphism of (iii) implies that for $\rho\in\Irr(\bG^F)$,
the $\rho$-isotypic part of $H^*_c(\bX(\bI,\bw\phi))$ affords an irreducible
$\cH_\bw$-module; this would not be possible if this $\rho$-isotypic part was
spread over several distinct cohomology groups.
\end{proof}

We will now explore the information given by the Shintani descent identity
on the above conjectures
\begin{lemma}\label{chi(X1TwF)}
Let $\bI\xrightarrow{\bw\phi}\bI\in B^+\phi(\cI)$ be $(d,2k)$-periodic
With   the   notations   of  Proposition  \ref{shintani1},  
we have
$\tilde\chi_{q^m}(X_1 T_\bw\phi)=q^{m\frac
kd(l(\bpi/\bpi_\bI)-a_\chi-A_\chi)} \tilde\chi(e_I w F)$
for $\chi\in\Irr(W)^\phi$,
where $a_\chi$ (resp.\ $A_\chi$) is the valuation
(resp.\ the degree) of the generic degree of $\chi$ and 
$e_I=|W_I|\inv\sum_{v\in W_I}v$.
\end{lemma}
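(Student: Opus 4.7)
The plan is to compute $\tilde\chi_{q^m}(X_1T_\bw\phi)$ by first computing its $d$-th power on $V_\chi$ via the $d/2$-periodicity of $\bw\phi$, and then extracting the appropriate $d$-th root by an eigenvalue-deformation argument.

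First I would show that $X_1$ commutes with $T_\bw\phi$ in $\cH_{q^m}(W)\rtimes\genby\phi$. The braid $\bw$ being $\bI$-reduced with $\bI^\bw=\lexp\phi\bI$ forces $W_I\bw=\bw W_{\lexp\phi I}$ with lengths adding both ways, hence in the Hecke algebra $T_\bw\cdot e_{\lexp\phi\bI}=e_\bI\cdot T_\bw$. Combined with the obvious $\phi\cdot e_\bI=e_{\lexp\phi\bI}\cdot\phi$, this yields $T_\bw\phi\cdot X_1=X_1\cdot T_\bw\phi$, and idempotence of $X_1$ gives $(X_1T_\bw\phi)^d=X_1(T_\bw\phi)^d$.

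Next, the $d/2$-periodicity $(\bw\phi)^d=\phi^d\bpi/\bpi_\bI$ translates into $(T_\bw\phi)^d=T_{\bpi/\bpi_\bI}\phi^d$. The central element $T_\bpi$ acts on the irreducible $V_\chi$ as the scalar $q^{m(l(\bpi)-a_\chi-A_\chi)}$ (classical action of $T_{\bw_0}^2$ on a Hecke algebra irreducible, in the paper's convention for $a_\chi,A_\chi$; compare \cite{DM,DMR}). The relation $T_\bs e_\bI=q^m e_\bI$ for $\bs\in\bI$ extends to $T_\bv X_1=q^{ml(\bv)}X_1$ for $\bv\in B^+_\bI$, so in particular $X_1T_{\bpi_\bI}=q^{ml(\bpi_\bI)}X_1$. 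Factoring $T_\bpi=T_{\bpi/\bpi_\bI}T_{\bpi_\bI}$ then gives $X_1T_{\bpi/\bpi_\bI}=q^{m(l(\bpi)-l(\bpi_\bI)-a_\chi-A_\chi)}X_1$ on $V_\chi$, hence on $V_\chi$:
\[
(X_1T_\bw\phi)^d=q^{m(l(\bpi)-l(\bpi_\bI)-a_\chi-A_\chi)}X_1\phi^d.
\]

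Set $c=(l(\bpi)-l(\bpi_\bI)-a_\chi-A_\chi)/d$ and $M(q^m):=q^{-mc}X_1T_\bw\phi$ viewed as an operator on $X_1V_\chi$. The previous step gives $M(q^m)^d=\phi^d|_{X_1V_\chi}$, an operator of finite order, so every eigenvalue of $M(q^m)$ is a root of unity. Since these eigenvalues are algebraic functions of $q^{m/2}$ (after passing to a suitable root extension) taking values in the discrete set of roots of unity, they are constant in $q^m$. At the specialization $q^m=1$ the Hecke algebra becomes $\Qlbar W$ and $M(1)=e_Iw\phi|_{e_IV_\chi}$; so the eigenvalues of $M(q^m)$ coincide with those of $e_Iw\phi$ on $e_IV_\chi$. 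Since $X_1T_\bw\phi$ vanishes on $(1-X_1)V_\chi$, taking traces gives $\tilde\chi_{q^m}(X_1T_\bw\phi)=q^{mc}\tilde\chi(e_IwF)$, the asserted formula.

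The main obstacle is the continuity argument: one needs to verify carefully that the eigenvalues of $M(q^m)$ on the $q$-dependent subspace $X_1V_\chi$ are algebraic over the appropriate field (a finite extension of $\Qlbar(q^{m/2})$) and that an algebraic function with values in the finite set of roots of unity must be constant. A secondary delicate point is pinning down the right formula for the central scalar by which $T_\bpi$ acts on $V_\chi$, matching the paper's convention for $a_\chi$ and $A_\chi$ as unipotent-degree invariants.
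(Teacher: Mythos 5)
Your proposal is correct and follows essentially the same route as the paper's own proof: compute $(X_1T_\bw\phi)^d=q^{-ml(\bpi_\bI)}X_1T_\bpi\phi^d$ using the commutation of $X_1$ with $T_\bw\phi$, use the scalar action of the central element $T_\bpi$ cited from Brou\'e--Michel, deduce that all eigenvalues are $q^{mc}$ times roots of unity, and determine the sum of those roots of unity by the specialization $q^{m/2}\mapsto 1$. You merely spell out more explicitly two steps the paper leaves as one-line remarks, namely the verification that $X_1$ commutes with $T_\bw\phi$ and the algebraic-continuity argument justifying the specialization.
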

\begin{proof}
We have
$(X_1T_\bw\phi)^d=X_1(T_\bpi/T_{\bpi_\bI})^k\phi^d=
q^{-kl(\bpi_\bI)}X_1T_\bpi^k \phi^d$
since $X_1$ commutes with $T_\bw\phi$ and
since for any $v\in W_I$ we have $X_1 T_v=q^{l(v)}X_1$.
Since $T_\bpi$ acts on the representation of character $\chi_{q^m}$ as the 
scalar $q^{m(l(\bpi)-a_\chi-A_\chi)}$ (see \cite[Corollary 4.20]{BM}),
it follows that all the non-zero eigenvalues
of $X_1T_\bw\phi$ on this representation are equal to 
$q^{m\frac kd(l(\bpi/\bpi_\bI)-a_\chi-A_\chi)}$ times a root of unity.
To compute the sum of these roots of unity, we may use the specialization
$q^{m/2}\mapsto 1$, through which $\tilde\chi_{q^m}(X_1 T_\bw\phi)$ specializes
to $\tilde\chi(e_I w \phi)$.
\end{proof}
\begin{proposition}\label{shintani3}
Let $\bI\xrightarrow{\bw\phi}\bI\in B^+\phi(\cI)$ be $(d,2k)$-periodic.
For any $m$ multiple of $\delta$, we have $$
|\bX(\bI,\bw\phi)^{gF^m}|=\sum_{\rho\in\cE(\bG^F,1)}\lambda_\rho^{m/\delta}
q^{m\frac kd(l(\bpi/\bpi_\bI)-a_\rho-A_\rho)}\langle\rho,R^{\bG,F}_{\bL_I,
t(\bw\phi)}\Id \rangle_{\bG^F}\rho(g),$$
where $a_\rho$  and $A_\rho$ are respectively the valuation
and the degree of the generic degree of $\rho$.
\end{proposition}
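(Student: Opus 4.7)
The plan is to combine the three preceding statements (Corollary \ref{shintani2}, Lemma \ref{chi(X1TwF)}, and a standard decomposition of Lusztig induction of the trivial character) and then invoke the invariance of the $a$ and $A$ invariants on Lusztig families. Specifically, Corollary \ref{shintani2} already gives
$$|\bX(\bI,\bw\phi)^{gF^m}|=\sum_{\rho\in\cE(\bG^F,1)}\lambda_\rho^{m/\delta}\rho(g)\sum_{\chi\in\Irr(W)^\phi}\tilde\chi_{q^m}(X_1 T_\bw\phi)\langle \rho,R_{\tilde\chi}\rangle_{\bG^F},$$
and Lemma \ref{chi(X1TwF)} rewrites the inner coefficient as
$$\tilde\chi_{q^m}(X_1 T_\bw\phi)=q^{m\frac{l(\bpi)-l(\bpi_\bI)-a_\chi-A_\chi}d}\tilde\chi(e_I w\phi).$$
Substituting produces the desired formula except that the exponent of $q$ depends on $\chi$ rather than on $\rho$, so the whole point is to see that this distinction is irrelevant in the terms that actually contribute.

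Next I would recognize the remaining sum over $\chi$ as the scalar product of $\rho$ with $R^{\bG,F}_{\bL_I,\dot wF}(\Id)$. By the standard formula for Lusztig induction of the trivial character (transitivity from $\bT$ and the fact that the trivial representation of $\bL_I^{\dot wF}$ equals $|W_I|^{-1}\sum_{v\in W_I}R^{\bL_I}_{\bT_v}(\Id)$), one has
$$R^{\bG,F}_{\bL_I,\dot wF}(\Id)=|W_I|^{-1}\sum_{v\in W_I}R^{\bG}_{\bT_{vw}}(\Id).$$
Inverting the definition $R_{\tilde\chi}=|W|^{-1}\sum_w\tilde\chi(w\phi)R^{\bG}_{\bT_w}(\Id)$ using the orthogonality of extended class functions on $W\phi$ gives $R^{\bG}_{\bT_w}(\Id)=\sum_{\chi\in\Irr(W)^\phi}\tilde\chi(w\phi)R_{\tilde\chi}$; averaging over $v\in W_I$ and using the definition of $e_I$ yields
$$R^{\bG,F}_{\bL_I,\dot wF}(\Id)=\sum_{\chi\in\Irr(W)^\phi}\tilde\chi(e_I w\phi)R_{\tilde\chi},$$
so that $\langle\rho,R^{\bG,F}_{\bL_I,\dot wF}\Id\rangle_{\bG^F}=\sum_\chi\tilde\chi(e_I w\phi)\langle\rho,R_{\tilde\chi}\rangle_{\bG^F}$.

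The last step, which I expect to be the main conceptual obstacle even though it is folkloric, is to pull the factor $q^{m(l(\bpi)-l(\bpi_\bI)-a_\chi-A_\chi)/d}$ out of the sum, replacing $(a_\chi,A_\chi)$ by $(a_\rho,A_\rho)$. This is legitimate because whenever $\langle\rho,R_{\tilde\chi}\rangle_{\bG^F}\neq 0$ the unipotent character $\rho$ and the Weyl group character $\chi$ lie in the same Lusztig family, and the $a$ and $A$ invariants are constant on a Lusztig family; this is Lusztig's classification (see \cite[Chap.~4]{DMR} and the references to Lusztig's book given there, already invoked in the proof of Corollary \ref{omega_rho}). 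Once this substitution is made, the inner sum is precisely $\langle\rho,R^{\bG,F}_{\bL_I,\dot wF}\Id\rangle_{\bG^F}$ and the stated formula falls out. No further use of the $d/2$-periodicity assumption is needed beyond what is already built into Lemma \ref{chi(X1TwF)}.
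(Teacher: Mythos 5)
Your proof is correct and follows essentially the same route as the paper's: combine Corollary \ref{shintani2} with Lemma \ref{chi(X1TwF)}, replace $(a_\chi,A_\chi)$ by $(a_\rho,A_\rho)$ for the contributing terms, and identify the remaining sum $\sum_\chi\tilde\chi(e_I w\phi)R_{\tilde\chi}$ with $R^{\bG,F}_{\bL_I,\dot wF}(\Id)$ via transitivity of Lusztig induction. The only cosmetic difference is the order in which the last two steps are presented and the reference cited for the constancy of $a$ and $A$ (the paper points to \cite{BM} around (2.4) rather than to Lusztig family theory via \cite{DMR}).
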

\begin{proof}We start with Corollary \ref{shintani2}, whose statement reads,
using the value of $\tilde\chi_{q^m}(X_1 T_\bw\phi)$ given by
Lemma \ref{chi(X1TwF)}:
\begin{multline*}
|\bX(\bI,\bw\phi)^{gF^m}|=
\sum_{\rho\in\cE(\bG^F,1)}\lambda_\rho^{m/\delta}\rho(g)\hfill\\
\hfill\sum_{\chi\in\Irr(W)^\phi}
q^{m\frac kd(l(\bpi/\bpi_\bI)-a_\chi-A_\chi)}\tilde\chi(e_I w \phi)
\langle \rho,R_{\tilde\chi}\rangle_{\bG^F}.
\end{multline*}
Using that for any $\rho$ such that 
$\langle \rho,R_{\tilde\chi}\rangle_{\bG^F}\ne 0$ we have $a_\rho=a_\chi$ and
$A_\rho=A_\chi$ (see \cite{BM} around (2.4))
the right-hand side can be rewritten
$$
\sum_{\rho\in\cE(\bG^F,1)}\lambda_\rho^{m/\delta}
q^{m\frac kd(l(\bpi/\bpi_\bI)-a_\rho-A_\rho)}\rho(g)
\langle \rho, \sum_{\chi\in\Irr(W)^\phi}
\tilde\chi(e_I w \phi)R_{\tilde\chi}\rangle_{\bG^F}.
$$

The proposition is now just a matter of observing that
\begin{multline*}
\sum_{\chi\in\Irr(W)^\phi}\tilde\chi(e_I w \phi)R_{\tilde\chi}=
|W_I|\inv\sum_{v\in W_I}\sum_{\chi\in\Irr(W)^\phi}\tilde\chi(v w \phi)R_{\tilde\chi}=
\hfill\\
\hfill |W_I|\inv \sum_{v\in W_I}R_{\bT_{vw}}^\bG(\Id)=
R_{\bL_I,t(\bw\phi)}^{\bG,F}(\Id).
\end{multline*}
Where the last equality is obtained by transitivity of $R_\bL^\bG$ and the
equality $\Id_\LwF=|W_I|\inv\sum_{v\in W_I}R_{\bT_{vw}}^{\bL_I,
t(\bw\phi)}(\Id)$, a torus $\bT$ of $\bL_I$ of type $v$ for the isogeny $t(\bw\phi)$ being
conjugate to $\bT_{vw}$ in $\bG$.
\end{proof}
\begin{corollary}\label{eigenvalue of F}
Let $\bI\xrightarrow{\bw\phi}\bI\in B^+\phi(\cI)$ be  non-extendable
$(d,2k)$-periodic and assume Conjectures \ref{conjecture}; then
for any $\rho\in\Irr(\bG^F)$ such that
$\langle \rho,R_{\bL_I,t(\bw\phi)}^{\bG,F}(\Id)\rangle_{\bG^F}\ne 0$ the isogeny $F^\delta$
has a single eigenvalue on the $\rho$-isotypic part of 
$H^*_c(\bX(\bI,\bw\phi))$, equal to  $\lambda_\rho
q^{\delta\frac kd(l(\bpi/\bpi_\bI)-a_\rho-A_\rho)}$.
\end{corollary}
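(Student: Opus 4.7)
The plan is to combine the Shintani descent identity with the disjointness of cohomology provided by the conjectural framework, then recover the eigenvalues by a power-sum argument.

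First I would start from Proposition \ref{shintani3} and compare it term-by-term with the Lefschetz trace formula. Decomposing each $H^i_c(\bX(\bI,\bw\phi),\Qlbar)$ as $\bigoplus_\rho \rho\otimes W^i_\rho$, where $F$ acts on the multiplicity space $W^i_\rho$, the Lefschetz formula gives
$$|\bX(\bI,\bw\phi)^{gF^m}|=\sum_\rho\rho(g)\sum_i(-1)^i\Trace(F^m\mid W^i_\rho).$$
By linear independence of the irreducible characters $\rho\in\Irr(\bG^F)$, comparison with Proposition \ref{shintani3} (applied to all $g\in\bG^F$) yields, for each unipotent $\rho$,
$$\sum_i(-1)^i\Trace(F^m\mid W^i_\rho)=\lambda_\rho^{m/\delta}q^{m\frac{l(\bpi)-l(\bpi_\bI)-a_\rho-A_\rho}d}\langle\rho,R^{\bG,F}_{\bL_I,\dot wF}(\Id)\rangle_{\bG^F},$$
valid for every multiple $m$ of $\delta$.

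Next I would invoke Lemma \ref{conjecture+}, a consequence of Conjectures \ref{conjecture}, which asserts that the $\bG^F$-modules $H^i_c(\bX(\bI,\bw\phi))$ for distinct $i$ are pairwise disjoint. Hence for a fixed $\rho$ with $\langle\rho,R^{\bG,F}_{\bL_I,\dot wF}(\Id)\rangle\ne0$ there is a unique index $i_0$ with $W^{i_0}_\rho\ne0$, and the alternating sum above collapses to $(-1)^{i_0}\Trace(F^m\mid W^{i_0}_\rho)$. On the other hand, the identity $\sum_i(-1)^iH^i_c(\bX(\bI,\bw\phi),\Qlbar)=R^{\bG,F}_{\bL_I,\dot wF}(\Id)$ (Corollary \ref{Lefschetz character} together with the computation in the proof of Proposition \ref{shintani3}) gives $\langle\rho,R^{\bG,F}_{\bL_I,\dot wF}(\Id)\rangle=(-1)^{i_0}m_\rho$, where $m_\rho=\dim W^{i_0}_\rho$. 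Substituting, we obtain
$$\Trace(F^m\mid W^{i_0}_\rho)=m_\rho\,\lambda_\rho^{m/\delta}q^{m\frac{l(\bpi/\bpi_\bI)-a_\rho-A_\rho}d}$$
for every $m\in\delta\BN$, using that $l(\bpi/\bpi_\bI)=l(\bpi)-l(\bpi_\bI)$.

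Finally, writing the eigenvalues of $F^\delta$ on $W^{i_0}_\rho$ as $\mu_1,\ldots,\mu_{m_\rho}$, the displayed equality reads $\sum_j\mu_j^k=m_\rho\bigl(\lambda_\rho q^{\delta(l(\bpi/\bpi_\bI)-a_\rho-A_\rho)/d}\bigr)^k$ for all $k\ge1$. Since the power sums determine a multiset of complex numbers, all $\mu_j$ must equal $\lambda_\rho q^{\delta(l(\bpi/\bpi_\bI)-a_\rho-A_\rho)/d}$, giving the conclusion. No step is a serious obstacle once the conjectural disjointness of the $H^i_c$ is in hand; the only minor point to check is that the sign $(-1)^{i_0}$ cancels correctly, which it does automatically because the multiplicity is by definition nonnegative.
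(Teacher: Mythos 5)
Your proposal is correct and is exactly the argument the paper has in mind: compare Proposition \ref{shintani3} with the Lefschetz formula, use Lemma \ref{conjecture+} to collapse the alternating sum to a single cohomological degree, and then recover the eigenvalue by a power-sum (Newton's identities) argument. The paper states this in one sentence; you have merely spelled out the details (including the sign bookkeeping via $\langle\rho,R\rangle=(-1)^{i_0}m_\rho$), all of which are sound.
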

\begin{proof}
This follows immediately, in view of Lemma \ref{conjecture+}, from the comparison
between Proposition \ref{shintani3} and the Lefschetz formula:
$$|\bX(\bI,\bw\phi)^{gF^m}|=\sum_i (-1)^i\Trace(gF^m\mid
H^i_c(\bX(\bI,\bw\phi),\Qlbar)).$$
\end{proof}

In view of Corollary \ref{omega_rho}(i) it follows that if 
$\langle \rho,R_{\bL_I}^\bG(\Id)\rangle_{\bG^F}\ne 0$ then
if $\omega_\rho=1$ then $\frac kd(l(\bpi/\bpi_\bI)-a_\rho-A_\rho)\in\BN$,
and if $\omega_\rho=\sqrt{q^\delta}$ then
$\frac kd(l(\bpi/\bpi_\bI)-a_\rho-A_\rho)\in\BN+1/2$.

Assuming Conjectures \ref{conjecture}, we choose once and for all a
specialization $q^{1/a}\mapsto\zeta^{1/a}$, where $a\in\BN$ is large enough
such that $\cH_\bw\otimes\Qlbar[q^{1/a}]$ is split. This gives a bijection
$\varphi\mapsto\varphi_q:\Irr(W(w\phi))\to\Irr(\cH_\bw)$, and the conjectures give
a further bijection $\varphi\mapsto\rho_\varphi$ between $\Irr(W(w\phi))$ and the
set $\{\rho\in\Irr(\bG^F)\mid
\langle \rho,R_{\bL_I}^\bG(\Id)\rangle_{\bG^F}\ne 0\}$, which is such that
$\langle \rho_\varphi,R_{\bL_I}^\bG(\Id)\rangle_{\bG^F}=\varphi(1)$.

\begin{corollary}
Under the assumptions of Corollary \ref{eigenvalue of F},
if $\omega_\varphi$ is the central character of $\varphi$, then
$$\lambda_{\rho_\varphi}=\omega_\varphi((w\phi)^\delta)\zeta^{-\delta
\frac kd(l(\bpi/\bpi_\bI)-a_{\rho_\varphi}-A_{\rho_\varphi})}.$$
\end{corollary}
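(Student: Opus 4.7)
The plan is to compute the scalar by which $F^\delta$ acts on the $\rho_\varphi$-isotypic part of $H^*_c(\bX(\bI,\bw\phi))$ in two ways and equate them. Corollary \ref{eigenvalue of F} gives one expression: the eigenvalue is $\lambda_{\rho_\varphi}\, q^{\delta(l(\bpi/\bpi_\bI) - a_{\rho_\varphi} - A_{\rho_\varphi})/d}$. For the second, I will identify $F^\delta$ with the image in $\End_{\bG^F}(\bX(\bI,\bw\phi))$ of the element $(\bw\phi)^\delta = \bw\cdot\phi(\bw)\cdots\phi^{\delta-1}(\bw)\in B^+_\bw$ (the equality $\phi^\delta = 1$ holds in the semi-direct product). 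Concretely, under the descent isomorphism of Proposition \ref{descente}, each factor $D_{\phi^i(\bw)}$ is an $r$-fold shift of the sequence $(\bP_1,\ldots,\bP_{r+1})$ which represents one application of $F$; composing $\delta$ such factors completes $\delta$ revolutions, so $D_{(\bw\phi)^\delta} = F^\delta$.

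Next, $(\bw\phi)^\delta$ lies in $B_\bw$, and its image in $W_w = N_W(W_Iw\phi)/W_I$ is central since it acts as the scalar $\zeta_d^\delta$ on the reflection representation $V$ of $W_w$. By Conjecture \ref{conjecture}(ii), the action of $(\bw\phi)^\delta$ on $H^*_c(\bX(\bI,\bw\phi))$ factors through the cyclotomic Hecke algebra $\cH_\bw$ acting on $\varphi_q$; by Schur's lemma (using that the image in $W_w$ is central), this action is a scalar $c_\varphi(q) \in \Qlbar[q^{\pm 1/a}]$.

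To pin down $c_\varphi(q)$, observe that in $B^+$ we have $(\bw\phi)^{d\delta} = (\bpi/\bpi_\bI)^\delta$, whose image $1\in W_w$ is trivial; thus $c_\varphi(q)^d$ is a pure $q$-power, obtained from the generic Hecke algebra formula (compare the computation in Lemma \ref{chi(X1TwF)}, where $T_{\bpi/\bpi_\bI}$ acts on $\chi_{q^m}$ as $q^{m(l(\bpi)-l(\bpi_\bI)-a_\chi-A_\chi)}$, together with the matching of Hecke algebra generic degree invariants with those of $\rho_\varphi$). This yields $c_\varphi(q)^d = q^{\delta(l(\bpi/\bpi_\bI) - a_{\rho_\varphi} - A_{\rho_\varphi})}$, so $c_\varphi(q) = q^{\delta(l(\bpi/\bpi_\bI) - a_{\rho_\varphi} - A_{\rho_\varphi})/d}\cdot\mu_\varphi$ for some $d$-th root of unity $\mu_\varphi$. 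Evaluating at the chosen specialization $q^{1/a}\mapsto\zeta^{1/a}$, the Hecke algebra collapses to the group algebra of $W_w$, so $c_\varphi(\zeta) = \omega_\varphi((w\phi)^\delta)$; this determines $\mu_\varphi = \omega_\varphi((w\phi)^\delta)\,\zeta^{-\delta(l(\bpi/\bpi_\bI)-a_{\rho_\varphi}-A_{\rho_\varphi})/d}$.

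Finally, equating $c_\varphi(q)$ (the Hecke algebra scalar at the actual value of $q$) with the cohomology eigenvalue from Corollary \ref{eigenvalue of F} and canceling the common factor $q^{\delta(l(\bpi/\bpi_\bI)-a_{\rho_\varphi}-A_{\rho_\varphi})/d}$ yields $\lambda_{\rho_\varphi} = \mu_\varphi$, which is the stated formula. The main technical obstacles are the identification $D_{(\bw\phi)^\delta}=F^\delta$ (requiring a careful bookkeeping through the descent model of Proposition \ref{descente}) and the assertion that the scalar $c_\varphi(q)$ has the precise monomial-in-$q$ shape claimed; this latter point genuinely uses the structure of the cyclotomic Hecke algebra $\cH_\bw$ provided by Conjecture \ref{conjecture} and the $q$-analog of the central character formula.
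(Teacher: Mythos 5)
Your overall route is the same as the paper's: identify $F^\delta$ with the image of the central element $(\bw\phi)^\delta\in B_\bw$, read off the central character $\omega_{\varphi_q}((\bw\phi)^\delta)$ on the $\varphi_q$-isotypic part, compare with Corollary \ref{eigenvalue of F} to get $\omega_{\varphi_q}((\bw\phi)^\delta)=\lambda_{\rho_\varphi}q^{\delta(l(\bpi/\bpi_\bI)-a_{\rho_\varphi}-A_{\rho_\varphi})/d}$, and then specialize $q^{1/a}\mapsto\zeta^{1/a}$. The bookkeeping you supply for $D_{(\bw\phi)^\delta}=F^\delta$ via Proposition \ref{descente} is a correct elaboration of the paper's bare assertion, but otherwise the skeleton is identical.

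The detour through $d$-th powers, however, contains a genuine error. You assert that $c_\varphi(q)^d$ is a pure $q$-power (monomial with coefficient $1$) because $(\bpi/\bpi_\bI)^\delta$ has trivial image in $W_w$, and from that you deduce that $\mu_\varphi$ -- and hence $\lambda_{\rho_\varphi}$ -- is a $d$-th root of unity. That is false in general. Raising the equality of Corollary \ref{eigenvalue of F} to the $d$-th power gives $c_\varphi(q)^d=\lambda_{\rho_\varphi}^d\,q^{\delta(l(\bpi/\bpi_\bI)-a-A)}$, and by the remark after Corollary \ref{eigenvalue of F}, when $\omega_\rho=q^{\delta/2}$ one has $(l(\bpi/\bpi_\bI)-a_\rho-A_\rho)/d\in\BN+\tfrac12$, in which case the target formula forces $\lambda_{\rho_\varphi}^d=\zeta_d^{-\delta(l(\bpi/\bpi_\bI)-a-A)}=(-1)^\delta$, which is $-1$ when $\delta$ is odd (e.g.\ already for split groups). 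So $c_\varphi(q)^d$ can be $-q^{\delta(\ldots)}$, not a pure $q$-power, and the asserted intermediate identity is wrong. The reference to Lemma \ref{chi(X1TwF)} does not repair this: that lemma computes the eigenvalue of $T_{\bpi/\bpi_\bI}$ in $\cH_{q^m}(W)$ acting on $\Qlbar[(\bG/\bP_I)^{F^m}]$, which is a different operator on a different module from $(\bpi/\bpi_\bI)^\delta$ acting via $\cH_\bw$, and the phrase ``matching of generic degree invariants'' is not an argument. The paper avoids all of this: the only input needed is that $\omega_{\varphi_q}((\bw\phi)^\delta)$, as an element of $\Qlbar[q^{\pm1/a}]$, is a monomial in $q^{1/a}$ times a constant (a general fact about central characters of central braid elements in a cyclotomic Hecke algebra), so that the single numerical equality at the real value $q>1$ determines both the exponent and the constant by comparing absolute values, after which the specialization is legitimate. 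No passage through $d$-th powers is needed, and as you wrote it the step is incorrect.
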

\begin{proof}
We first note that it makes sense to apply $\omega_\varphi$ to
$(w\phi)^\delta$, since $(w\phi)^\delta$ is a central element of  $W(w\phi)$.
Actually $(\bw\phi)^\delta$ is a central element of $B_\bw$ and maps by the 
morphism of Conjecture \ref{conjecture}(iii) to $F^\delta$, thus the eigenvalue of
$F^\delta$ on the $\rho_\varphi$-isotypic part of
$H^*_c(\bX(\bI,\bw\phi))$ is equal to $\omega_{\varphi_q}((\bw\phi)^\delta)$;
thus $\omega_{\varphi_q}((\bw\phi)^\delta)=\lambda_{\rho_{\varphi}}
q^{\delta\frac kd(l(\bpi/\bpi_\bI)-a_{\rho_\varphi}-A_{\rho_\varphi})}$.
The statement follows by applying the specialization
$q^{1/a}\mapsto\zeta^{1/a}$ to this equality.
\end{proof}
\section{Appendix: $d$-good maximal elements in finite Coxeter cosets.}

We  will describe,  in a  finite Coxeter  coset, for  each $d$,  a $d$-good
maximal element.

As  explained the introduction of Section \ref{eigenspaces and roots}, when
the  Coxeter coset is attached to a  reductive group $\bG$, such an element
defines  a parabolic Deligne-Lusztig  variety whose cohomology  should be a
tilting  complex  for  the  Brou\'e  conjectures  for  an  $\ell$  dividing
$\Phi_d(q)$.  The properties of  this variety do  not depend on the isogeny
type, thus it is sufficient to study the case when $\bG$ is semi-simple and
simply connected. Now, a semi-simple and simply connected group is a direct
product of restrictions of scalars of simply connected quasi-simple groups.
A restriction of scalars is a group of the form $\bG^n$, with an isogeny
$F_1$  such that  $F_1(x_0,\dots,x_{n-1})=(x_1,\dots,x_{n-1},F(x_0))$. Then
$(\bG^n)^{F_1}\simeq\bG^F$.  If $F$ induces $\phi$ on the Weyl group $W$ of
$G$ then $(\bG^n,F_1)$ corresponds to the reflection coset
$W^n\cdot\sigma$, where 
$\sigma(x_0,\dots,x_{n-1})=(x_1,\dots,x_{n-1},\phi(x_0))$.

\subsection{Restrictions of scalars}

Restrictions  of scalars as above appear in the classification of arbitrary
complex  reflection cosets. Arbitrary cosets $W\phi$ are direct products of
cosets  where $\phi$ is transitive on the irreducible components of $W$; we
call {\em restriction of scalars} a complex reflection coset with this last
property. It is of the form $W^n\cdot\sigma\subset\GL(V^n)$, where $V$ is a
complex vector space and $W\phi\subset\GL(V)$ is a complex reflection coset
and where $\sigma(x_0,\dots,x_{n-1})=(x_1,\dots,x_{n-1},\phi(x_0))$. We say
that  $W^n\sigma$ is a {\em restriction  of scalars} of $W\phi$, by analogy
with the terminology for reductive groups.

We  first look at the  invariant theory of a restriction of scalars. Recall (see
for  example \cite{Br})  that, if  $S_W$ is  the coinvariant algebra of $W$
(the  quotient of the symmetric algebra of  $V^*$ by the ideal generated by
the  $W$-invariants of positive degree), for  any $W$-module $X$ the graded
vector  space  $(S_W\otimes  X^*)^W$  admits  a homogeneous basis formed of
eigenvectors  of  $\phi$.  The  degrees  of  the elements of this basis are
called the $X$-exponents of $W$ and the corresponding eigenvalues of $\phi$
the  $X$-factors  of  $W\phi$.  For  $X=V$, the $V$-exponents $n_i$ satisfy
$n_i=d_i-1$  where the $d_i$'s  are the reflection  degrees of $W$, and the
$V$-factors  $\varepsilon_i$ are called  the {\em factors}  of $W\phi$. For
$X=V^*$,  the $n_i-1$  where $n_i$  are the  $V^*$-exponents are called the
{\em   codegrees}  $d_i^*$  of  $W$  and  the  corresponding  $V^*$-factors
$\varepsilon_i^*$  are called the  {\em cofactors} of  $W\phi$. By Springer
\cite[6.4]{Springer},  for  a  root  of  unity $\zeta$, the $\zeta$-rank of
$W\phi$ is equal to $|\{i\mid \zeta^{d_i}=\varepsilon_i\}|$. By analogy, we
define the $\zeta$-corank of $W\phi$ as $|\{i\mid
\zeta^{d^*_i}=\varepsilon^*_i\}|$.  By  for  example  \cite[5.19.2]{Br}  an
eigenvalue is regular if it has same rank and corank.

\begin{proposition}\label{reg}
Let $W^n\cdot\sigma$ be a restriction of scalars of the complex reflection
coset $W\phi$. Then
the $\zeta$-rank (resp.\ corank) of $W^n\cdot\sigma$ is equal to the 
$\zeta^n$-rank (resp.\ corank) of $W\phi$.

In particular, $\zeta$ is regular for $W^n\cdot\sigma$ if and
only if $\zeta^n$ is  regular for $W\cdot\phi$.
\end{proposition}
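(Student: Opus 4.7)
\textbf{Proof plan for Proposition~\ref{reg}.} The plan is to compute the reflection degrees and factors (and, dually, the codegrees and cofactors) of the coset $W^n\cdot\sigma$ explicitly in terms of those of $W\phi$, and then read off the $\zeta$-(co)rank from the formula $|\{i\mid\zeta^{d_i}=\varepsilon_i\}|$ recalled just above the statement.

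First I would use that the coinvariant algebra of the direct product factors as $S_{W^n}=S_W^{\otimes n}$, and that the reflection representation of $W^n$ decomposes as $V^n=\bigoplus_{i=1}^n V^{(i)}$ where $V^{(i)}$ is acted on only by the $i$-th factor of $W^n$. Since $W$-invariants in $S_W$ reduce to the scalars in degree $0$, this yields a canonical graded decomposition
$$(S_{W^n}\otimes (V^n)^*)^{W^n}=\bigoplus_{i=1}^n M_i,$$
where each $M_i$ is isomorphic, as a graded vector space, to $(S_W\otimes V^*)^W$ via placement of the $V^*$ factor in the $i$-th slot.

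Next I would track the action of $\sigma$. Writing $[\eta]_i$ for the image of $\eta\in(S_W\otimes V^*)^W$ in the $i$-th summand, the definition $\sigma(x_1,\dots,x_n)=(x_2,\dots,x_n,\phi(x_1))$ gives $\sigma\cdot[\eta]_i=[\eta]_{i-1}$ for $2\le i\le n$ and $\sigma\cdot[\eta]_1=[\phi\cdot\eta]_n$. Consequently, if $\eta$ is a homogeneous $\phi$-eigenvector of eigenvalue $\varepsilon$ in degree $d-1$ (with $d$ a reflection degree of $W$ and $\varepsilon$ the associated factor of $W\phi$), the span of $[\eta]_1,\dots,[\eta]_n$ is $\sigma$-stable, concentrated in degree $d-1$, with $\sigma^n$ acting as multiplication by $\varepsilon$ and $\sigma$ itself given, in the basis $[\eta]_1,\dots,[\eta]_n$, by a cyclic shift twisted by $\varepsilon$ in a single slot; its eigenvalues on this span are therefore the $n$ distinct $n$-th roots of $\varepsilon$, each with multiplicity one. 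Running over a $\phi$-eigenbasis of $(S_W\otimes V^*)^W$, I conclude that the reflection degrees of $W^n$ are each $d_i$ repeated $n$ times, and that the $n$ factors paired with each occurrence of $d_i$ are precisely the $n$ roots of $X^n=\varepsilon_i$.

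It then remains to compute the $\zeta$-rank. For each $i$, some $n$-th root of $\varepsilon_i$ equals $\zeta^{d_i}$ iff $(\zeta^{d_i})^n=\varepsilon_i$, i.e.\ $(\zeta^n)^{d_i}=\varepsilon_i$, and in that case exactly one of the $n$ roots does the job. Hence the $\zeta$-rank of $W^n\cdot\sigma$ equals $\#\{i\mid(\zeta^n)^{d_i}=\varepsilon_i\}$, which is by definition the $\zeta^n$-rank of $W\phi$. The identical argument applied to $(S_{W^n}\otimes V^n)^{W^n}$, using codegrees and cofactors in place of degrees and factors, gives the corank identity, and the regularity statement follows at once since $\zeta$ is regular iff rank and corank agree. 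The only delicate point is the eigenvalue analysis of $\sigma$ on the span $\langle[\eta]_1,\dots,[\eta]_n\rangle$; once the cyclic-shift-with-a-single-$\phi$-twist structure is correctly identified, the rest is routine bookkeeping with the Springer formula.
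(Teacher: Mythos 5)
Your proof is correct and follows the same strategy as the paper: compute the degree/factor and codegree/cofactor pairs of $W^n\cdot\sigma$ and apply Springer's formula for the $\zeta$-rank. The one place you add genuine content is the derivation of those pairs — identifying $(S_{W^n}\otimes(V^n)^*)^{W^n}\cong\bigoplus_i M_i$ with the twisted cyclic action of $\sigma$, so that each $\phi$-eigenvalue $\varepsilon_i$ in degree $d_i-1$ contributes the $n$ distinct $n$-th roots of $\varepsilon_i$ in the same degree — whereas the paper simply asserts this formula for the degrees and factors of a restriction of scalars and proceeds directly to the counting argument, which is identical in both.
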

\begin{proof}
It  is easy  to see  from the  construction that  the pairs of a reflection
degree   and   the   corresponding   factor   of  $\sigma$  for  the  coset
$W^n\cdot\sigma$ are the pairs $(d_i,\eta_{i,j})$, where
$i\in\{1,\dots,r\}$  and  where  $\{\eta_{i,j}\}_{j\in\{1\ldots  n\}}$ run
over  the  $n$-th  roots  of  $\varepsilon_i$.  Similarly,  the  pairs of a
reflection codegree and the corresponding cofactor are
$(d_i^*,\eta^*_{i,j})$  where  $\{\eta^*_{i,j}\}_{j\in\{1\ldots  n\}}$ run
over the $n$-th roots of $\varepsilon^*_i$.

In   particular  the  $\zeta$-rank  of  $W^n\cdot\sigma$  is  $|\{(i,j)\mid
\zeta^{d_i}=\eta_{i,j}\}|$   and   the   $\zeta$-corank   is  $|\{(i,j)\mid
\zeta^{d_i^*}=\eta^*_{i,j}\}|$.

Given  $d$, there  is at  most one  $j$ such that $\zeta^d=\eta_{i,j}$, and
there is one if and only if $\zeta^{nd}=\varepsilon_i$. Thus 
$|\{(i,j)\mid \zeta^{d_i}=\eta_{i,j}\}|=|\{i\mid \zeta^{nd_i}=\varepsilon_i\}|$
and similarly for the corank, whence the two assertions of the statement.
\end{proof}

The  next lemma can also be used to give a direct proof of the statement on
$\zeta$-ranks.
\begin{lemma}\label{descent w}
Let $W^n\cdot\sigma$ be a restriction of scalars of $W\phi$. Then
\begin{enumerate}
\item[(i)]
Any element of $W^n\sigma$ is conjugate
to an element of the form $(1,\ldots,1,w)\sigma$.
\item[(ii)] The vector $(x_0,\ldots,x_{n-1})\in V^n$ is a
$\zeta$-eigenvector  of $(1,\ldots,1,w)\sigma$  if and  only if  $x_0$ is a
$\zeta^n$-eigenvector of $w\phi$ and $x_i=\zeta^ix_0$ for $i=1,\dots,n-1$.
\end{enumerate}
\end{lemma}
\begin{proof}
The element $(1,w_0,w_0w_1,\ldots,w_0w_1\dots w_{n-2})$ conjugates
$(w_0,\ldots,w_{n-1})\sigma$ to $(1,\ldots,1,w_0\dots w_{n-1})\sigma$, whence
(i). Property (ii) results from an immediate computation.
\end{proof}
In view of Lemma \ref{reg}, the following proposition is enough to determine
all possible non-extendable $(d,2k)$-periodic elements of $W^n\sigma$.
\begin{proposition}
Let $W^n\cdot\sigma$ be a restriction of scalars of the finite Coxeter coset
$W\phi$. Let $(B^+)^n\sigma$ and $B^+\phi$ be the corresponding cosets
of braid monoids. Then
\begin{enumerate}
\item[(i)]
Any element in $(B^+)^n\sigma$
is conjugate under $(B^+)^n$ to an element of the form $(1,\ldots,1,\bw)\sigma$.
\item[(ii)]
The element $(1,\ldots,1,\bw)\sigma\in (B^+)^n\sigma$ is $(nd,2k)$-periodic 
if and only if $\bw\phi$ is $(d,2k)$ periodic. Moreover the latter is  non-extendable
if and only if the former is non-extendable.
\end{enumerate}
\end{proposition}
\begin{proof}
The element $(1,\bw_0,\bw_0\bw_1,\ldots,\bw_0\bw_1\dots \bw_{n-2})$ conjugates
$(\bw_0,\ldots,\bw_{n-1})\sigma$ to $(1,\ldots,1,\bw_0\dots \bw_{n-1})\sigma$, whence
(i).

For  (ii),  we  have  $((1,\ldots,1,\bw)\sigma)^{nd}=((\bw\phi)^d\phi^{-d},
\ldots,(\bw\phi)^d\phi^{-d})\sigma^{nd}$,   whence  the   first  assertion:
$(\bw\phi)^d=(\bpi/\bpi_\bI)^k\phi^d$ is equivalent to
$((1,\ldots,1,\bw)\sigma)^{nd}=
((\bpi/\bpi_\bI)^k,\ldots,(\bpi/\bpi_\bI)^k)\sigma^{nd}$.

If  the last  equalities hold  and $\bw\phi$  is extendable,  that is there
exist $\bv\in B^+_\bI$ and $\bJ\subsetneq\bI$ such that
$(\bv\bw)^d=(\bpi/\bpi_\bJ)^k\phi^d$, then
$((1,\ldots,1,\bv)(1,\ldots,1,\bw)\sigma)^{nd}=
((\bpi/\bpi_\bJ)^k,\ldots,(\bpi/\bpi_\bJ)^k)\sigma^{nd}$,      so      that
$(1,\ldots,1,\bw)\sigma$ is extendable.

Conversely assume that
$(1,\ldots,1,\bw)\sigma$ is extendable, that is, there exist
$(\bv_0\ldots,\bv_{n-1})\in (B^+_\bI)^n$ and
$\bJ_0\times\cdots\times\bJ_{n-1}\subsetneq\bI^n$ such that
$$((\bv_0,\ldots,\bv_{n-2},\bv_{n-1}\bw)\sigma)^{nd}=(\bpi/\bpi_{\bJ_0},\ldots,
\bpi/\bpi_{\bJ_{n-1}})^k\sigma^{nd}.$$
Then since $(\bv_0,\ldots,\bv_{n-2},\bv_{n-1}\bw)\sigma$ stabilizes
$\bJ_0\times\cdots\times\bJ_{n-1}$, we have  $\bJ_i=\lexp{\bv_i}\bJ_{i+1}$
for $i<n-1$ so that $\bJ_i\subsetneq\bI$ for all $i$.
By the same conjugation as in the first line of the proof (by 
$(1,\bv_0,\bv_0\bv_1,\ldots,\bv_0\bv_1\dotsm\bv_{n-2})$) the above equality
conjugates to
$((1,\ldots,\bv_0\dotsm\bv_{n-1}\bw)\sigma)^{nd}=
(\bpi/\bpi_{\bJ_0},\ldots,\bpi/\bpi_{\bJ_0})^k\sigma^{nd}$, or equivalently
$(\bv_0\dotsm\bv_{n-1}\bw\phi)^d=(\bpi/\bpi_{\bJ_0})^k\phi^d$, thus
$\bw\phi$ is extendable.
\end{proof}

\subsection{Case of irreducible Coxeter cosets}
\label{irreducible cosets}
We  are going to give, for each  irreducible finite Coxeter group $W$, each
possible  corresponding coset $W\phi$  where $\phi$ preserves  a chamber of
the   corresponding  hyperplane  arrangement,  and  each  possible  $d$,  a
representative   $w\phi$   of   the   $d$-good   maximal   elements.  Since
conjecturally all non-extendable $(d,2)$-periodic elements are conjugate in
the  ribbon category  (see Conjecture  \ref{roots conjugate}),  this should
describe also these elements.

We   also  describe  the  corresponding   $\zeta_d$-eigenspace  $V$  where
$\zeta_d=e^{2i\pi/d}$,  the  set  $I$  and  the relative complex reflection
group   $W(w\phi):=N_W(V)/C_W(V)$.  In   the  cases   where  the  injection
$C_{W'}(w\phi)\to   N_W(V)/C_W(V)=W(w\phi)$  of  the   remark  after  Lemma
\ref{regular in parabolic}, is surjective, where $W'=C_W(V_1)$ and $V_1$ is
the  fixed point subspace of $w\phi$ in the space spanned by the root lines
of  $W_I$,  we  use  it  to  deduce $W(w\phi)$ from $W'=C_W(V_1)$ using
the description of centralizers of regular elements in \cite[Annexe 1]{BM}.

\subsection*{Types $A_n$ and $\protect\lexp 2A_n$
$\nnode{s_1}\edge\nnode{s_2}\cdots\nnode{s_n}$}\label{An}
$\lexp 2A_n$ is defined by
the diagram automorphism $\phi$ which exchanges $s_i$ and $s_{n+1-i}$.

For any integer $1<d\leq n+1$, we define $$v_d=s_1s_2\dotsm
s_{n-\lfloor\frac d2\rfloor} s_ns_{n-1}\dotsm s_{\lfloor
\frac{d+1}2\rfloor}\text{ and }
J_d=\{s_i\mid\lfloor\frac{d+1}2\rfloor+1\leq    i\leq    n    -\lfloor\frac
d2\rfloor\}.$$  If  $d$  is  odd  we  have  $v_d=v'_d\lexp\phi v'_d$, where
$v'_d=s_1s_2\dotsm s_{n-\lfloor\frac d2\rfloor}$.

Now,  for $1<d\leq  n+1$, let $kd$  be the largest  multiple of $d$ less
than   or   equal   to   $n+1$,   so   that  $\frac{n+1}2<kd\leq  n+1$  and
$k=\lfloor\frac{n+1}d\rfloor$.
We  then define  $w_d=v_{kd}^k$, $I_d=J_{kd}$  and if  $d$ is odd we define
$w'_d$  by  $$w'_d\phi=\begin{cases}(v'_{kd}\phi)^k&\text{  if  }k\text{ is
odd,}\\  v_{kd}^{k/2}\phi&\text{ if  }k\text{ is  even,}\end{cases}$$

\begin{theorem}\label{type A} 
For $W=W(A_n)$, $d$-good maximal elements exist for
$1<d\leq n+1$; a representative is $w_d$, with  $I=I_d$ and
$W(w_d)=G(d,1,\lfloor\frac{n+1}d\rfloor)$.

For $W\phi$, $d$-good maximal elements exist for the following
$d$ with representatives as follows:
\begin{itemize}
\item $d\equiv 0\pmod 4$, $1<d\le n+1$; a representative is
$w_d\phi$ with $I=I_d$
and $W(w_d\phi)=G(d,1,\lfloor\frac{n+1}d\rfloor)$.
\item $d\equiv 2\pmod 4$, $1<d\leq 2(n+1)$; a representative is
$w'_{d/2}\phi$ with $I=I_{d/2}$
and $W(w'_{d/2}\phi)=G(d/2,1,\lfloor\frac{2(n+1)}d\rfloor)$.
\item $d$ odd, $1<d\leq \frac{n+1}2$. If $d\ne 1$ a representative is
$w_{2d}^2\phi$  with $I=I_{2d}$
and $W(w_{2d}^2\phi)=G(2d,1,\lfloor\frac{n+1}{2d}\rfloor)$.
\end{itemize}
\end{theorem}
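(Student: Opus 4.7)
The plan is to apply Lemma \ref{good-zeta-maximal}, which reduces the theorem to verifying four explicit conditions and then separately identifying $W(w\phi)=N_W(W_Iw\phi)/W_I$. Uniqueness of a representative up to $W$-conjugacy follows from Lemma \ref{zeta maximal are conjugate}. Throughout I use the realization $W(A_n)=\Sgot_{n+1}$ with $s_i=(i,i+1)$, under which lengths are inversion counts, orders and eigenvalues are controlled by cycle type, and the diagram automorphism $\phi$ acts on $W$ as conjugation by $w_0$.

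For the untwisted case $W=W(A_n)$, the elementary cycle identities $s_1 s_2\cdots s_a=(1,a{+}1,a,\ldots,2)$ and $s_n s_{n-1}\cdots s_b=(b,b{+}1,\ldots,n{+}1)$ imply that $v_{kd}$ is a single $kd$-cycle, so that $w_d=v_{kd}^k$ is a product of $k$ disjoint $d$-cycles whose support is the complement of the support of $W_{I_d}$. Conditions (iii) $(w_d)^d=1$ and (i) ($w_d$ normalizes $W_{I_d}$ and is $I_d$-reduced) then follow directly from this cycle description. Condition (ii) reduces to the inversion count $l(w_d^i)=ik(2n-kd+1)/d$ for $i\leq d/2$, which can be obtained from the cycle structure of $w_d^i$. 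Condition (iv) follows from Springer's theorem: the $\zeta_d$-rank of $W(A_n)$ equals the number $k=\lfloor(n{+}1)/d\rfloor$ of reflection degrees divisible by $d$ and is attained by $w_d$; combined with $W_{I_d}=C_W(V)$ via Lemma \ref{equiv complex} this gives maximality. Finally, $W(w_d)=N_W(W_{I_d}w_d)/W_{I_d}$ is the classical wreath-product normalizer $\mu_d\wr\Sgot_k=G(d,1,k)$ of a product of $k$ disjoint $d$-cycles.

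For the twisted case $W\phi$, the three congruence classes of $d$ are handled separately. When $d\equiv 0\pmod 4$, the cycle structure of $w_d$ interacts trivially with conjugation by $w_0$ and the untwisted verification goes through with only a trivial modification in the length formula. When $d$ is odd, the relation $\zeta_d=\zeta_{2d}^2$ together with $(w_{2d}\phi)^2=w_{2d}\cdot\lexp\phi w_{2d}$ (and $\phi^2=1$) reduces conditions (i)--(iv) for $w_{2d}^2\phi$ to the already-verified conditions for $w_{2d}$ in the untwisted case. When $d\equiv 2\pmod 4$, so $d/2$ is odd, Proposition \ref{70} supplies a factorization $\bu\phi\cdot\lexp{\bw_0\phi^{d'}}\bu$ of a good $d/2$-periodic element, and $w'_{d/2}\phi$ by construction realizes this factorization in terms of $v'_{d/2}$. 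In all three subcases, the rank calculation (iv) invokes Propositions \ref{reg} and \ref{reg2} to compare $\zeta_d$-ranks of $W\phi$ (with factors $\varepsilon_i=(-1)^{i+1}$) to $\zeta_e$-ranks of $W$ at appropriate $e$, and $W(w\phi)$ is identified with the appropriate $G(d,1,\cdot)$, $G(d/2,1,\cdot)$, or $G(2d,1,\cdot)$ through the inclusion of the remark after Lemma \ref{regular in parabolic}, verified by matching reflection degrees.

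The main obstacle is the inversion count in (ii), which requires showing that lengths add through all powers $i\leq\lfloor d/2\rfloor$; this parallels the sign-change analysis of Lemma \ref{pas merdique}(i) and amounts to checking that no positive root outside $\Phi_{I_d}$ is sent to a negative root prematurely by iterates of $w_d\phi$. A secondary difficulty is ensuring that the injection $C_{W'}(w\phi)\hookrightarrow N_W(V)/C_W(V)$ of the remark after Lemma \ref{regular in parabolic} is surjective in the twisted subcases, which is a routine check by comparison of reflection degrees.
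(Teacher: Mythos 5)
Your proposal is correct and follows essentially the same route as the paper: realize $W(A_n)$ as $\Sgot_{n+1}$, read off the cycle structure of $v_d$ and $w_d$, verify conditions (i)--(iii) of Lemma \ref{good-zeta-maximal} via inversion counts culminating in the identity $v_d^{d/2}=w_{J_d}\inv w_0$ (resp.\ the analogue for $v'_d$), split the twisted case by $d\bmod 4$ using the $\phi$-stability of $v_d$ for even $d$ and $w_d=w'_d\lexp\phi w'_d$ for odd $d$, and identify $W(w\phi)$ via $C_W(V_1)$ and the remark after Lemma \ref{regular in parabolic}. The only noticeable variation is in the maximality check (iv): you derive it from Springer's rank formula plus Lemma \ref{equiv complex}(ii)\&(iii), whereas the paper determines the type of the coset $W_Iw\phi$ explicitly and observes directly that $d$ exceeds the range where its $\zeta_i$-rank is nonzero; these are logically equivalent and of comparable difficulty.
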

\begin{proof}
We identify the Weyl group of type
$A_n$ as usual with $\Sgot_{n+1}$ by $s_i\mapsto (i,i+1)$;
the automorphism $\phi$ maps to the exchange of $i$ and $n+2-i$.
An easy computation shows that the element $v_d$ maps to the $d$-cycle
$(1,2,\dots,\lfloor\frac{d+1}2\rfloor,n+1,n,
\dots,n+2-\lfloor\frac d2\rfloor)$
and that for  $d$  odd $v'_d$ maps to the cycle
$(1,2,\dots,n-\frac{d-3}2)$.
\begin{lemma}
If $d$ is even $v_d$ and $w_d$ are $\phi$-stable.
If $d$ is odd we have $w_d=w'_d.\lexp\phi w'_d$.
\end{lemma}
\begin{proof} 
That    $d$   is   even   implies   $\lfloor\frac{d+1}2\rfloor=\lfloor\frac
d2\rfloor$,  thus in  the above  cycle $\phi$  exchanges the  two sequences
$1,2,\dots,\lfloor\frac{d+1}2\rfloor$    and   $n+1,n,\dots,n+2-\lfloor\frac
d2\rfloor$,  thus $v_d$ is $\phi$-stable. The same follows for $w_d$, with $k=\lfloor\frac{n+1}d\rfloor$,
since $kd$ is even if $d$ is even.

For $d$ odd we have
$$w'_d.\lexp\phi w'_d=(w'_d\phi)^2= \begin{cases}(v'_{kd}\phi)^{2k}& \text{
if   }k\text{  is  odd,}\\  v_{kd}^{k/2}.\lexp\phi(v_{kd}^{k/2})&\text{  if
}k\text{ is even.}\end{cases}$$ If $k$ is odd we have
$(v'_{kd}\phi)^{2k}=(v'_{kd}\lexp\phi  v'_{kd})^k= v_{kd}^k=w_d$. If $k$
is even then $v_{kd}$ is $\phi$-stable thus
$v_{kd}^{k/2}.\lexp\phi(v_{kd}^{k/2})=v_{kd}^k=w_d$.
\end{proof}
\begin{lemma}   For $1<d\le n+1$,
\begin{itemize}
\item the element $v_d$ is $J_d$-reduced and  stabilizes $J_d$.
\item the element $w_d$ is $I_d$-reduced and  stabilizes $I_d$.
\item for $d$ odd, the element
$v'_d$ is $J_d$-reduced and  $v'_d\phi$ stabilizes $J_d$.
\item for $d$ odd, the element
$w'_d$ is $I_d$-reduced and  $w'_d\phi$ stabilizes $I_d$.
\end{itemize}
\end{lemma}
\begin{proof}
The  property for $w_d$  (resp.\ $w'_d$) follows  from that for $v_d$ (resp.
$v'_d$) and the definitions since being $I_d$-reduced and stabilizing $I_d$
are properties stable by taking a power.

It  is clear on  the expression of  $v_d$ as a  cycle that it fixes $i$ and
$i+1$  if $s_i\in  J_d$ thus  it fixes  the simple  roots corresponding to
$J_d$, whence the lemma for $v_d$.

For  $d$  odd,  $1< d\leq  n+1$,  an easy computation shows that
$v'_d=(1,2,\dots,n-\frac{d-3}2)$,  and that  $v'_d\phi$ preserves  the simple
roots  corresponding to  $J_d$. 
\end{proof}
\begin{lemma}\label{lengths add} For $1<d\le n+1$ and
for $0\leq i \le \lfloor\frac d2\rfloor$, we have
\begin{itemize}
\item  $l(v_d^i)=\frac{2i}d l(w_{J_d}\inv w_0)$ and
$l(w_d^i)=\frac{2i}d l(w_{I_d}\inv w_0)$
\item (for $d$ odd) $l((v'_d\phi)^i\phi^{-i})=\frac id l(w_{J_d}\inv w_0)$
and $l((w'_d\phi)^i\phi^{-i})=\frac id l(w_{I_d}\inv w_0)$.
\end{itemize}
\end{lemma}
\begin{proof}
It is straightforward to see that the result for $w_d$ (resp.\ $w'_d$) results
from the result for $v_d$ (resp.\ $v'_d$ or $v_d$) and the definitions.

Note  that the  group $W_{J_d}$  is of  type $A_{n-d}$, thus $l(w_{J_d}\inv
w_0)=\frac{n(n+1)}2-\frac{(n-d)(n-d+1)}2=\frac{(2n-d+1)d}2$. 

We  first prove the result for $v_d$ and  $v'_d$ when $i=1$. For odd $d$ we
have  by  definition  $l(v'_d)=n-\frac{d-1}2=  \frac{2n-d+1}2$ which is the
formula  we want for $v'_d$.  To find the length  of $v_d$ one can use that
$s_ns_{n-1}\dotsm s_{\lfloor\frac{d+1}2\rfloor}$ is
$\{s_1,s_2,\dots,s_{n-1}\}$-reduced,     thus    adds     to    $s_1s_2\dotsm
s_{n-\lfloor\frac  d2\rfloor}$, which gives $l(v_d)=2n-d+1$, the result for
$v_d$.

We now show by direct computation that when $d$ is even
$v_d^{d/2}=w_{J_d}\inv   w_0$.   Raising   the   cycle   $(1,2,\dots,\frac
d2,n+1,n,\dots,n+2-\frac d2)$ to the $d/2$-th power we get
$(1,n+1)(2,n)\dotsm(\frac  d2,n+2-\frac  d2)$  which  gives  the  result since
$w_{J_d}=(\frac d2+1,n+1-\frac d2)\dotsm(\lfloor\frac
n2\rfloor,\lfloor\frac{n+1}2\rfloor)$. The lemma follows for $v_d$ with $d$
even since its truth for  $i=1$ and $i=\frac d2$  implies its truth for all
$i$ between these values.

We  show now similarly  that for odd  $d$ we have $(v'_d\phi)^d=w_{J_d}\inv
w_0\phi^d$.  Since $\phi$  acts on  $W$ by  the inner automorphism given by
$w_0$,   this  is  the  same  as   $(v'_d  w_0)^d=w_{J_d}$.  We  find  that
$(1,2,\dots,n-\frac{d-3}2)w_0=
(1,n+1,2,n,3,n-1,\dots,n-\frac{d-5}2,\frac{d+1}2)
(\frac{d+3}2,n-\frac{d-3}2)\dotsm(\lfloor\frac{n+3}2\rfloor,
\lfloor\frac{n+4}2\rfloor)$  as a  product of  disjoint cycles, which gives
the  result since $(1,n+1,2,n,3,n-1,\dots,n-\frac{d-5}2,\frac{d+1}2)$ is a
$d$-cycle   and  $(\frac{d+3}2,n-\frac{d-3}2)\dotsm(\lfloor\frac{n+3}2\rfloor,
\lfloor\frac{n+4}2\rfloor)=w_{J_d}$. This proves the lemma for $w'_d$ by
interpolating the other values of $i$ as above.

It  remains the  case of  $v_d$ for  odd $d$.  We then have $v_d=(v'_d\phi)^2$
where  the lengths add, and we deduce the result for $v_d$ from the result for
$v'_d$.
\end{proof}
\begin{lemma} The following elements are $d$-good
\begin{itemize}
\item For $1<d\le n+1$, the elements $v_d$ and $w_d$.
\item For $d\equiv 0\pmod 4, d\le n+1$ the elements $v_d\phi$ and $w_d\phi$.
\item For $d\equiv 2\pmod 4, d\le 2(n+1)$ the elements $v'_{d/2}\phi$ and
$w'_{d/2}\phi$.
\item For $d$ odd, $d\le \frac{n+1}2$ the elements $v_{2d}^2\phi$ and
$w_{2d}^2\phi$.
\end{itemize}
\end{lemma}
\begin{proof}
In view of the previous lemmas, the only thing left to check is that
in each case, the chosen element $x$ in $W$ (resp.\ $W\phi$) satisfies 
$x^d=1$ (resp.\ $(x\phi)^d=\phi^d$). Once again, it is easy
to check that the property for $w_d$ (resp.\ $w'_d$) results from that for
$v_d$ (resp.\ $v'_d$ or $v_d$) and the definitions.

It is clear that $v_d^d=1$ since then it is a $d$-cycle, from which it follows
that when $d\equiv 2\pmod 4$ we have $(v'_{d/2}\phi)^d=v_{d/2}^{d/2}=1$.
The other cases are obvious.
\end{proof}

To prove the theorem, it remains to check that:

$\bullet$  The  possible  $d$  for  which  the $\zeta_d$-rank of $W$ (resp.\ 
$W\phi$) is non-zero are as described in the theorem. In the untwisted case
they  are the divisors of one of  the degrees, which are $2,\dots,n+1$. In
the twisted case the pairs of degrees and factors are
$(2,1),\dots,(i,(-1)^i),\dots,(n+1,(-1)^{n+1})$ and we get the given list
by the formula for the $\zeta_d$-rank recalled above Proposition \ref{reg}.

$\bullet$ The coset $W_Iw\phi$ has $\zeta_d$-rank 0 on the subspace spanned
by  the root lines of $W_I$. For this we first have to describe the type of
the  coset, which  is a  consequence of  the analysis  we did  to show that
$w\phi$ stabilizes $I$. We may assume $I$ non-empty.

Let us look first at the untwisted case. We found that $w_d$ acts trivially on
$I_d$,  so  the  coset  is  of  untwisted type $A_{n-kd}$ where $k=\lfloor
\frac{n+1}d\rfloor$. Since $1+n-kd<d$  by construction, this coset has
$\zeta_d$-rank 0.

In  the twisted case, if $d\equiv 0\pmod 4$, the coset is $W_{I_d}w_d\phi$,
which  since  $w_d$  acts  trivially  on  $I_d$  and  $\phi$  acts  by  the
non-trivial  diagram  automorphism,  is  of  type  $\lexp  2A_{n-kd}$ where
$k=\lfloor  \frac {n+1}d\rfloor$. Since $n-kd=n-\lfloor \frac {n+1}d\rfloor
d<d-1$, this coset has $\zeta_d$-rank 0.

If  $d$ is odd, the coset is $W_{I_{2d}}w_{2d}^2\phi$, which since $w_{2d}$
acts  trivially  on  $I_{2d}$  and  $\phi$  acts by the non-trivial diagram
automorphism,   is  of  type  $\lexp  2A_{n-2kd}$  where  $k=\lfloor  \frac
{n+1}{2d}\rfloor$.  Since  $n-2kd=n-\lfloor  \frac {n+1}{2d}\rfloor 2d<2d$,
this coset has $\zeta_d$-rank 0.

Finally, if $d\equiv 2$ modulo 4, the coset is $W_{I_{d/2}}w'_{d/2}\phi$.
Let $k=\lfloor\frac{2(n+1)}d\rfloor$; then $W_{I_{d/2}}$ is of type
$A_{n-kd/2}$.  If $k$ is even  then $w'_{d/2}=w_{kd/2}^{k/2}$ and the coset
is  of  type  $\lexp  2A_{n-kd/2}$.  Since  $n-kd/2<d/2-1$,  this coset has
$\zeta_d$-rank  0. Finally if  $k$ is odd $w'_{d/2}\phi=(w'_{kd/2}\phi)^k$.
Since  $kd/2$  is  odd,  we  found  that  $w'_{kd/2}\phi$ acts trivially on
$I_{d/2}$  so  the  coset  is  of  type  $A_{n-kd/2}$,  and  has  also  has
$\zeta_d$-rank 0.

$\bullet$  Determine the group $W(w\phi)$ (resp.\ $W(w)$) in each case,
We first give $V_1$ and the coset $C_W(V_1)w\phi$ or $C_W(V_1)w$. In
the  untwisted case $w_d$  acts trivially on  the roots of $W_{I_d}$, hence
$V_1$  is  spanned  by  these  roots  and  $C_W(V_1)$  is  generated by the
reflection with respect to the roots orthogonal to those, which gives that
$C_W(V_1)$  is of type  $A_{d\lfloor\frac{n+1}d\rfloor-1}$ if $d\not|n$ and
$A_n$ otherwise. In the twisted case if $d\equiv 0\pmod 4$ since $w_d$ acts
trivially  on the roots of $W_{I_d}$ the space $V_1$ is spanned by the sums
of  the  orbits  of  the  roots  under  $\phi$  which  is  the  non-trivial
automorphism   of  that   root  system.   Hence  the   type  of  the  coset
$C_W(V_1)w_d\phi$  is $\lexp  2A_{d\lfloor\frac{n+1}d\rfloor-1}$ if  $n$ is
odd  and $\lexp 2A_{d\lfloor\frac{n+1}d\rfloor}$ if $n$  is even. If $d$ is
odd a similar computation gives that the type of the coset
$C_W(V_1)w_{2d}^2\phi$ is $\lexp 2A_{2d \lfloor\frac{n+1}{2d}\rfloor-1}$ if
$n$ is odd and $\lexp 2A_{2d \lfloor\frac{n+1}{2d}\rfloor}$ if $n$ is even.
If   $d\equiv  2\pmod  4$  $w'_{d/2}\phi$  acts  also  by  the  non-trivial
automorphism on $W_{I_{d/2}}$ and we get that the coset
$C_W(V_1)w'_{d/2}\phi$ is of type $\lexp 2A_{\frac
d2\lfloor\frac{2(n+1)}d\rfloor}$  if $n$ and $\lfloor\frac{2(n+1)}d\rfloor$
have the same parity and $\lexp 2A_{\frac
d2\lfloor\frac{2(n+1)}d\rfloor-1}$ otherwise.

Knowing  the type of the coset in each case, we deduce the group $W(w\phi)$
(resp.\ $W(w)$) as in the remark at the beginning of Subsection
\ref{irreducible cosets}.
\end{proof} 
\subsection*{Type $B_n$
$\nnode{s_1}
{\rlap{\vrule width10pt height2pt depth-1pt}
\vrule width10pt height4pt depth-3pt} 
\nnode{s_2}\edge\nnode{s_3}\cdots\nnode{s_n}$}
For $d$ even, $2\leq d\leq 2n$ we define
$$v_d=s_{n+1-d/2}\dotsm s_2s_1s_2\dotsm s_n
\text{ and }J_d=\{s_i\mid 1\le i\le n-d/2\}.$$
Note  that $v_{2n}$ is the Coxeter element $s_1s_2\dotsm s_n$. Now for
$1\leq  d  \leq  2n$,  that  we  require  even if $d>n$, we define $w_d$ as
follows: let $kd$ be the largest even multiple of $d$ less than or equal to
$2n$   so   that   $k=\lfloor\frac{2n}d\rfloor$   if   $d$   is   even  and
$k=2\lfloor\frac  nd\rfloor$ is $d$ is  odd. We define $w_d=v_{kd}^{k}$ and
$I_d=J_{kd}$.

\begin{theorem}\label{good for B_n}
For $W=W(B_n)$, $d$-good maximal elements exist for odd $d$ less
than or equal to $n$ and even $d$ less than or equal to $2n$. A
representative is $w_d$, with $I=I_d$; we have
$W(w_d)=G(d,1,\lfloor\frac {2n}d\rfloor)$  if $d$ is even and
$W(w_d)=G(2d,1,\lfloor\frac nd\rfloor)$ if $d$ is odd.
\end{theorem}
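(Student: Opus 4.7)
The plan is to follow the strategy of Theorem \ref{type A} closely, substituting the signed-permutation realization of $W(B_n)$ for the symmetric-group picture. Identify $W(B_n)$ with the group of signed permutations of $\{1,\ldots,n\}$: $s_1$ is the sign change on the first coordinate and $s_i$ for $i\ge 2$ swaps coordinates $i-1$ and $i$ (with signs). The reflection degrees are $2,4,\ldots,2n$, so the values of $d$ with nonzero $\zeta_d$-rank of $W$ are exactly those dividing some $2i$ with $1\le i\le n$, matching the list in the theorem.

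The first key step is a direct computation of the signed permutation represented by $v_d$ for even $d$: $v_d=s_m s_{m-1}\cdots s_2 s_1 s_2\cdots s_n$ (with $m=n+1-d/2$) acts as the negative $(d/2)$-cycle sending $j\mapsto j+1$ for $n+1-d/2\le j<n$ and $n\mapsto -(n+1-d/2)$, fixing the remaining coordinates. From this one reads off immediately that $v_d^d=1$, that $v_d$ stabilizes $J_d$ and is $J_d$-reduced, and that $v_d^{d/2}$ is the sign change on the last $d/2$ coordinates --- which equals $w_{J_d}\inv w_0$ since $w_0$ negates everything and $w_{J_d}$ negates only the first $n-d/2$ coordinates. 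A signed-inversion count gives $l(v_d)=2n-d/2$, and because the cycle structure of $v_d^i$ for $1<i<d/2$ is explicit, interpolating between the endpoints $i=1$ and $i=d/2$ yields $l(v_d^i)=(2i/d)\,l(w_{J_d}\inv w_0)$ as required.

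The passage to $w_d=v_{kd}^k$ is then straightforward. For $d$ even take $k=\lfloor 2n/d\rfloor$: since $v_{kd}$ is a negative $(kd/2)$-cycle of order $kd$ on the last $kd/2$ coordinates, $v_{kd}^k$ decomposes into $k$ disjoint negative $(d/2)$-cycles there, hence has order $d$ and commutes with $W_{I_d}=W(B_{n-kd/2})$ acting on the first $n-kd/2$ coordinates. For $d$ odd take $k=2\lfloor n/d\rfloor$ (so $kd$ is even and the preceding analysis of $v_{kd}$ applies); then $v_{kd}^k$ is a product of $k$ disjoint \emph{positive} $d$-cycles (grouped in sign-paired sets of two), again of order $d$ and commuting with $W_{I_d}$. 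The length formulas for $w_d^i$ transfer from those for $v_{kd}$ via the disjoint-cycle decomposition, so $w_d$ satisfies conditions (i)--(iii) of Lemma \ref{good-zeta-maximal}.

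Maximality follows because $w_d$ acts trivially on $\langle I_d\rangle$, so the $\zeta_d$-rank of $W_{I_d}w_d$ on that subspace equals the $\zeta_d$-rank of $W(B_{n-kd/2})$, whose degrees are at most $2(n-kd/2)<d$ by the choice of $k$, hence $0$. For the relative group, $V_1=\langle I_d\rangle$ and $C_W(V_1)$ is the copy of $W(B_{kd/2})$ acting on the coordinates $\{n-kd/2+1,\ldots,n\}$; by Lemma \ref{regular in parabolic} $w_d$ is regular with respect to $\zeta_d$ in this smaller coset. The centralizer $C_{C_W(V_1)}(w_d)$ is then read off the cycle structure: permuting the $k$ cycles and taking independent cyclic powers within each gives $G(d,1,k)=G(d,1,\lfloor 2n/d\rfloor)$ for $d$ even; for $d$ odd the additional freedom of independent sign changes on each positive $d$-cycle replaces $d$ by $2d$ and folds the $k=2\lfloor n/d\rfloor$ cycles into $\lfloor n/d\rfloor$ sign-paired blocks, yielding $G(2d,1,\lfloor n/d\rfloor)$; a comparison of orders against $N_W(V)/C_W(V)$ (see the remark after Lemma \ref{regular in parabolic}) confirms that the natural injection is an isomorphism. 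The most delicate step will be the reducedness of the expression for $v_d$ and the length-additivity for iterated products $v_d^i$ and $w_d^i$, requiring a careful accounting of signed inversions analogous to Lemma \ref{lengths add} in type $A$; the rest is bookkeeping on cycle structures.
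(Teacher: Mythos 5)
Your proposal follows the paper's proof essentially step for step: the same signed-permutation realization, the same explicit computation of $v_d$ as a negative $(d/2)$-cycle, the same argument that $v_d^{d/2}=w_{J_d}\inv w_0$ with interpolation of lengths, the same observation that $w_d=v_{kd}^k$ inherits the length formulas from $v_{kd}$ because $w_d^i=v_{kd}^{ki}$ and $ki\le kd/2$ when $i\le d/2$, the same maximality argument (degrees of $W(B_{n-kd/2})$ are all $<d$), and the same identification of $C_W(V_1)$ as $W(B_{kd/2})$. The only cosmetic difference is at the final step: where the paper invokes the known description of centralizers of regular elements in $W(B_{kd/2})$ (via the remark after Lemma \ref{regular in parabolic}), you make the same identification but by reading off the cycle structure of $w_d$ directly ($k$ negative $(d/2)$-cycles for $d$ even, $k/2$ sign-paired positive $d$-cycles for $d$ odd) — a slightly more explicit version of the same computation.
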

\begin{proof}
We identify as usual the Weyl group of type $B_n$ with the group of signed
permutations on $\{1,\dots,n\}$ by $s_i\mapsto (i-1,i)$ for $i\geq 2$
and $s_1\mapsto (1,-1)$. 
The element $v_d$ maps to the $d$-cycle (or signed $d/2$-cycle) given by
$(n+1-d/2,n+2-d/2,\dots,n-1,n,d/2-n-1,d/2-n-2,\dots,-n)$. This element
normalizes $J_d$ and acts trivially on the corresponding roots, so is 
$J_d$-reduced. The same is thus true for $w_d$ and $I_d$, since these
properties carry to powers.
\begin{lemma}
For $0\leq i\leq \lfloor \frac d2\rfloor$ we have
$l(v_d^i)=\frac {2i}dl(w_{J_d}\inv w_0)$ and
$l(w_d^i)=\frac {2i}dl(w_{I_d}\inv w_0)$.
\end{lemma}
\begin{proof}
As in Lemma \ref{lengths add} it is sufficient to prove the lemma for
$v_d$,  which  we  do  now.  To  find  the  length  of  $v_d$  we note that
$s_1s_2\dotsm  s_n$ is $\{s_2,s_3,\dots,s_n\}$-reduced so that the lengths
of   $s_{n+1-d/2}\dotsm s_2$   and  of   $s_1s_2\dotsm s_n$  add,  whence
$l(v_d)=2n-d/2$.  Since  $l(w_0)=n^2$  and  $l(w_{J_d})=(n-d/2)^2$  we have
$l(w_{I_d}\inv  w_0)= nd-d^2/4$, which gives  the result for $i=1$. Written
as  permutations $w_0$ is the product of  all sign changes and $w_{I_d}$ is
the  product of all sign changes on the set $\{1,\dots,n-d/2 \}$; a direct
computation  shows that $v_d^{d/2}$  is the product  of all sign changes on
$\{n+1-d/2,\dots,n\}$,   hence  $v_d^{d/2}=w_{I_d}\inv   w_0$.  The  lemma
follows for the other values of $d$.
\end{proof}
Since  $v_d^{d/2}=w_{I_d}\inv   w_0$ we have $v_d^d=1$, so the same property is
true  for $w_d$, thus the above lemma shows that $v_d$ and $w_d$ are
$d$-good elements.

Note also that Theorem \ref{good  for B_n}  describes all $d$ such that $W$
has non-zero
$\zeta_d$-rank since the degrees of $W(B_n)$ are all the even integers from
2 to $2n$. We prove now the maximality property \ref{good-zeta-maximal}(iv)
for  $w_d$. If $k$ is as in the definition of $w_d$, the group $W_{I_d}$ is
a Weyl group of type $B_{n-kd/2}$ and $w_d$ acts trivially on $I_d$. Since
$n-kd/2<d$  the  $\zeta_d$-rank  of  $W_{I_d}w_d$  is  zero on the subspace
spanned by the roots corresponding to $I_d$.

It remains to get the type of $W(w_d)$. Since $w_d$ acts trivially on $I_d$
the  space $V_1$ of Lemma \ref{regular in parabolic} is spanned by the root lines
of $W_{I_d}$ and $C_W(V_1)$ is spanned by the roots orthogonal to those, so
is  of type $B_{kd/2}$. We then deduce  the group $W(w_d)$ as in the remark
at the beginning of Subsection \ref{irreducible cosets}, as the centralizer
of a $\zeta_d$-regular element in a group of type $B_{kd/2}$.
\end{proof}
\subsection*{Types $D_n$ and $\protect\lexp 2D_n$
$\nnode{s_1}\edge\vertbar{s_3}{s_2}\edge\nnode{s_4}\cdots\nnode{s_n}$}
$\lexp 2D_n$ is defined by the diagram automorphism $\phi$ which exchanges 
$s_1$ and $s_2$ and fixes $s_i$ for $i>2$.

For $d$ even, $2\le d\le 2(n-1)$ we define
$$v_d=s_{n+1-d/2}\dotsm s_3s_2s_1s_3\dotsm s_n\text{ and }
J_d=\begin{cases}
\emptyset \text{ if } d=2(n-1)\\
\{s_i\mid  1\le i\le n-d/2\}\text{ otherwise.}
\end{cases}$$
Note that $v_{2(n-1)}$ is a Coxeter element. 
Then  for $1\le d\le 2(n-1)$, that we require even if $d>n$, we let $kd$ be
the largest even multiple of $d$ less than $2n$, so that
$k=\lfloor\frac{2n-2}d\rfloor$ if $d$ is even and
$k=2\lfloor\frac{n-1}d\rfloor$ if $d$ is odd, and define $w_d=v_{kd}^k$ and
$I_d=J_{kd}$.

Note that $v_d$, and thus $w_d$, are $\phi$-stable.
\begin{theorem}\label{good for D_n}
\begin{itemize}
\item
For $W=W(D_n)$ there exist $d$-good maximal elements for odd $d$ less
than or equal to $n$ and even $d$ less than or equal to $2(n-1)$. When
$d$ does not divide $n$ a
representative is $w_d$, with $I=I_d$; in this case, if $d$ is odd
$W(w_d)=G(2d,1,\lfloor\frac{n-1}d\rfloor)$ and if $d$ is even
$W(w_d)=G(d,1,\lfloor\frac{2n-2}d\rfloor)$.

If $d|n$ a representative is $w_n^{n/d}$ where $w_n=s_1s_2s_3\dotsm
s_ns_2s_3\dotsm s_{n-1}$. In this case $I=\emptyset$ and
$W(w_n^{n/d})=G(2d,2,n/d)$.
\item
For $W\phi$ there exist $d$-good maximal elements for odd $d$ less than
$n$, for even $d$ less than $2(n-1)$ and for $d=2n$. 
Except in the case when $d$ divides $2n$ and $2n/d$ is odd
a representative is $w_d\phi$, with
$I=I_d$ and $W(w_d\phi)=G(2d,1,\lfloor\frac{n-1}d\rfloor)$ if $d$ is odd
and $W(w_d\phi)=G(d,1,\lfloor\frac{2n-2}d\rfloor)$ if $d$ is even.
In the excluded case a
representative is $(w_{2n}\phi)^{2n/d}$ where $w_{2n}=s_1s_3s_4\dotsm
s_n$. In this case $I=\emptyset$ and $W((w_{2n}\phi)^{2n/d})=G(d,2,2n/d)$.
\end{itemize}
\end{theorem}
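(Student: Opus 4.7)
We follow the strategy of the proof of Theorem \ref{good for B_n}: realize $W(D_n)$ as the group of signed permutations of $\{1,\ldots,n\}$ with an even number of sign changes, exhibit each proposed element concretely in that model, and verify in turn the conditions of Lemma \ref{good-zeta-maximal} together with the maximality condition (iv). Under this identification, $s_i=(i-1,i)$ for $i\ge 3$, $s_2=(1,2)$, and $s_1$ is the signed transposition swapping $1\leftrightarrow -2$ and $2\leftrightarrow -1$; $\phi$ acts by exchanging $s_1$ and $s_2$, which amounts to negating the first coordinate.

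For the generic cases (namely $d\nmid n$ in the untwisted case, and in the twisted case whenever we are not in the excluded configuration $d\mid 2n$ with $2n/d$ odd), we take $v_d$ and $w_d=v_{kd}^k$ as in the statement. A direct cycle computation, parallel to the $B_n$ case, shows that $v_d$ fixes $\{1,\ldots,n-d/2\}$ pointwise, is $J_d$-reduced and stabilizes $J_d$, and satisfies $v_d^{d/2}=w_{J_d}^{-1}w_0$; this yields both $v_d^d=1$ and the length formula $l(v_d^i)=\frac{2i}{d}l(w_{J_d}^{-1}w_0)$ for $0<i\le d/2$ by interpolation between $i=1$ and $i=d/2$. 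Since $v_d$ is $\phi$-stable (its defining word contains the $\phi$-symmetric piece $s_3 s_2 s_1 s_3$ in place of the $B_n$ piece $s_2 s_1 s_2$), the same properties transfer to $w_d$ and to $w_d\phi$ after checking $(w_d\phi)^d=\phi^d$. Condition (iv) holds because $W_{I_d}$ is of Dynkin type $D_{n-kd/2}$ (or a lower-rank boundary variant) with $n-kd/2<d$, so its $\zeta_d$-rank is zero. The identification of $W(w_d)$ and $W(w_d\phi)$ with the stated wreath products follows by applying Lemma \ref{regular in parabolic}: $C_W(V_1)$ is of type $D_{kd/2}$ (resp.\ a corresponding twisted type), and the relative group is the centralizer of a $\zeta_d$-regular element there, read off from \cite[Annexe 1]{BM}.

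The main obstacle is the exceptional cases $d\mid n$ in $W$, and $d\mid 2n$ with $2n/d$ odd in $W\phi$. Here the generic construction fails to be $\zeta_d$-maximal: the degree $n$ of $D_n$ (with factor $1$ untwisted, $-1$ twisted) contributes an additional $\zeta_d$-eigenvector not captured by $v_d$, so the maximal $\zeta_d$-eigenspace is strictly larger than that of $v_d$, and in fact has trivial pointwise stabilizer (forcing $I=\emptyset$). We replace $w_d$ by the ``twisted Coxeter'' elements $w_n=s_1 s_2 s_3\cdots s_n s_2 s_3\cdots s_{n-1}$ in the untwisted case and $w_{2n}\phi$ with $w_{2n}=s_1 s_3 s_4\cdots s_n$ in the twisted case. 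A direct computation of these as signed permutations shows that $w_n$ projects to an element of order $2n$ in $W(D_n)/\{\pm 1\}$ whose $\zeta_{2n}$-eigenspace is one-dimensional and regular, and that $w_{2n}\phi$ is a $\zeta_{2n}$-regular Coxeter element of the coset $W(D_n)\phi$; then $w_n^{n/d}$ and $(w_{2n}\phi)^{2n/d}$ are respectively $\zeta_d$-regular in $W$ and $W\phi$, giving $V_1=0$ so that $C_W(V_1)=W$. The hardest single verification is that the resulting reflection groups $W(w_n^{n/d})$ and $W((w_{2n}\phi)^{2n/d})$ are the index-$2$ subgroups $G(2d,2,n/d)$ and $G(d,2,2n/d)$ of $G(2d,1,n/d)$ and $G(d,1,2n/d)$ respectively; we do this by comparing reflection degrees (via the order of the ambient cyclic center and the Springer regularity formula) with the degrees of the candidate group, the index-$2$ discrepancy reflecting the inclusion $W(D_n)\subset W(B_n)$.

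Finally, we verify that the list of $d$ in the theorem exhausts those for which $W$ (resp.\ $W\phi$) has non-zero $\zeta_d$-rank, using the formula for the $\zeta$-rank recalled before Proposition \ref{reg} applied to the degrees $2,4,\ldots,2(n-1),n$ of $W(D_n)$ and their factors ($1$'s in the untwisted case; $1,1,\ldots,1,-1$ in the twisted case, where the $-1$ corresponds to the degree $n$). This accounts in particular for the appearance of the single value $d=2n$ in the twisted case (from the factor $-1$ on degree $n$) and of odd $d\le n$ in both cases (from the degree $n$).
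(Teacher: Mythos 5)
Your overall strategy is the same as the paper's for the generic cases, and the organization (verify the conditions of Lemma \ref{good-zeta-maximal}, then identify $W(w)$ via $C_W(V_1)$) is correct. However, two specific points in your sketch would break down in type $D$, and both are exactly where the $D_n$ case is genuinely harder than $B_n$.

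First, you assert that ``$v_d$ fixes $\{1,\ldots,n-d/2\}$ pointwise,'' in parallel with $B_n$. This is false in $D_n$: because of the parity constraint on sign changes, $v_d=s_{n+1-d/2}\cdots s_3 s_2 s_1 s_3\cdots s_n$ maps to the signed permutation $(1,-1)\,(n+1-d/2,\ldots,n,d/2-n-1,\ldots,-n)$, which \emph{negates} the first coordinate. Consequently $v_d$ does not act trivially on $J_d$; it acts by the diagram automorphism swapping $s_1\leftrightarrow s_2$. This is not cosmetic: when you pass to $w_d=v_{kd}^k$, the induced automorphism of $I_d$ is trivial or non-trivial according to the parity of $k$, so the coset $W_{I_d}w_d$ is of type $D_{n-kd/2}$ or $\lexp 2D_{n-kd/2}$ depending on $k\bmod 2$ (and the opposite parity in the twisted case). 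Verifying the maximality condition Lemma~\ref{good-zeta-maximal}(iv) — that the $\zeta_d$-rank of this parabolic coset is zero — requires keeping track of which twisted form you are in, because the sets of $\zeta_d$ with non-zero rank differ between $D_m$ and $\lexp 2D_m$ (the factor $\pm 1$ on the degree $m$). Your sketch never makes this distinction and would miss, e.g., the boundary case where the extra degree-$n$ contribution could make the parabolic coset have positive $\zeta_d$-rank; this is why the $d\mid n$ (and $d\mid 2n$ with $2n/d$ odd) cases must be treated separately at all.

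Second, your identification of $W(w_d)$ as the centralizer of a regular element of $C_W(V_1)$ read off from \cite[Annexe 1]{BM} is exactly the point where the morphism from the remark after Lemma~\ref{regular in parabolic} can fail to be surjective, and it does fail for $D_n$: when $d$ is even and $\lfloor\frac{2n-2}d\rfloor$ is even in the untwisted case (odd in the twisted case), the centralizer $C_{W'}(w)$ is $G(d,2,\lfloor\frac{2n-2}d\rfloor)$, a \emph{proper} reflection subgroup of $W(w_d)=G(d,1,\lfloor\frac{2n-2}d\rfloor)$. You need an additional argument here — the paper uses irreducibility of $W(w_d)$ plus uniqueness among irreducible complex reflection groups with the given reflection degrees that contain $G(d,2,\cdot)$ as a reflection subgroup. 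Your sketch, as written, would output the wrong group in those subcases.

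Finally, for the exceptional cases ($d\mid n$, or $d\mid 2n$ with $2n/d$ odd) your proposal undertakes an explicit verification that the elements $w_n^{n/d}$ and $(w_{2n}\phi)^{2n/d}$ are $\zeta_d$-regular. This is a more laborious route than the paper's, which simply invokes \cite{BM} for regular elements, but it is a legitimate alternative if done carefully. However, your parenthetical description (``order $2n$ in $W(D_n)/\{\pm1\}$,'' ``$\zeta_{2n}$-eigenspace one-dimensional'') conflates $\zeta_n$- and $\zeta_{2n}$-eigenspaces and is only even meaningful when $-1\in W(D_n)$, i.e.\ when $n$ is even; the correct statement concerns the $\zeta_n$-regularity of $w_n$ (with eigenspace of dimension $1$ or $2$ according to the parity of $n$). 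This part would need to be rewritten precisely.
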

\begin{proof}
The  cases $D_n$  with $d|n$  or $\lexp  2D_n$ with  $d|2n$ and  $2n/d$ odd
involve  regular elements, so are dealt  with in \cite{BM}. We thus consider
only the other cases.

We identify the Weyl group of type $D_n$ with the group of signed
permutations  on $\{1,\dots,n\}$  with an even  number of sign changes, by
mapping  $s_i$ to $(i-1,i)$ for $i\neq  2$ and $s_2$ to $(1,-2)(-1,2)$. For
$d$ even $v_d$ maps to $(1,-1)
(n+1-d/2,n+2-d/2,\dots,n-1,n,d/2-n-1,\dots,1-n,-n)$. This element
normalizes  $J_d$: when  $J_d\neq\emptyset$, it  exchanges the simple roots
corresponding  to $s_1$  and $s_2$  and acts  trivially on the other simple
roots  indexed by $J_d$, so it is  $J_d$-reduced. It follows that $w_d$
normalizes $I_d$ and is $I_d$-reduced.
\begin{lemma}
For $0\leq i\leq \lfloor \frac d2\rfloor$ we have
$l(v_d^i)=\frac {2i}dl(w_{J_d}\inv w_0)$ and
$l(w_d^i)=\frac {2i}dl(w_{I_d}\inv w_0)$.
\end{lemma}
\begin{proof}
As  in Lemma  \ref{lengths add}  it is  sufficient to  prove the  lemma for
$v_d$. To find the length of $v_d$ we note that $s_2s_1s_3s_4\dotsm s_n$ is
$\{s_3,\dots,s_n\}$-reduced so that the lengths of $s_{n+1-d/2}\dotsm s_3$
and   of  $s_2s_1s_3\dotsm  s_n$  add,   whence  $l(v_d)=2n-1-d/2$.  Since
$l(w_0)=n^2-n$  and $l(w_{J_d})=(n-d/2)^2-(n-d/2)$,  we have $l(w_{J_d}\inv
w_0)=d/2(2n-1-d/2)$.   which  gives  the  result   for  $i=1$.  Written  as
permutations $w_0=(1,-1)^n(2,-2)\dotsm(n,-n)$ and
$w_{J_d}=(1,-1)^{n-d/2}(2,-2)\dotsm(n-d/2,d/2-n)$;   a  direct  computation
shows   that  $v_d^{d/2}=(1,-1)^{d/2}(n+1-d/2,d/2-n-1)\dotsm(n,-n)$,  hence
$v_d^{d/2}=w_{J_d}\inv w_0$. The lemma follows for smaller $i$.
\end{proof}
Since  $v_d^{d/2}=w_{J_d}\inv w_0$ and $J_d$ is $w_0$ stable we have
$v_d^d=1$,  so the same  property follows for  $w_d$ which shows that $v_d$
and $w_d$ are $d$-good elements.

We  also note that the theorem describes  all $d$ such that the $\zeta_d$-rank
is not zero, since the degrees of $W(D_n)$ are all the even integers from 2 to
$2n-2$  and $n$, and in  the twisted case the  factor associated with the degree
$n$ is -1 and the other factors are equal to 1.

Since $w_d$ is $\phi$-stable the element $w_d\phi$ is also $d$-good.

We  now check Lemma \ref{good-zeta-maximal}(iv), that  is that the $\zeta_d$-rank
of  $W_{I_d}w_d$  in  the  untwisted  case,  resp.\ $W_{I_d}w_d\phi$ in the
twisted  case is $0$ on the subspace  spanned by the roots corresponding to
$I_d$. This property is clear if $I_d=\emptyset$. Otherwise:

$\bullet$  In the untwisted case  the type of the  coset is $D_{n-kd/2}$ if
$k$  is even and $\lexp 2 D_{n-kd/2}$ if $k$ is odd, where $k$ is as in the
definition  of $w_d$.  In both  cases the  set of  values $i$ such that the
$\zeta_i$-rank  is not $0$ consists of the  even $i$ less than $2n-kd$, the
odd  $i$ less than  $n-kd/2$ and in  the twisted case  ($k$ odd) $i=2n-kd$.
Since  if $d$  is even  we have  $2n-kd\leq d$  and if  $d$ is  odd we have
$n-kd/2\leq d$, the only case where $d$ could be in this set is $k$ odd and
$d=2n-kd$,  which means  that $\frac{k+1}2d=n$.  But $d$  is assumed not to
divide $n$, so this case does not happen.

$\bullet$  In the twisted case the type of the coset is $D_{n-kd/2}$ if $k$
is  odd and $\lexp 2 D_{n-kd/2}$  if $k$ is even. In  both cases the set of
values  $i$ such that the $\zeta_i$-rank is not $0$ consists of the even $i$
less  than $2n-kd$, the odd $i$ less  than $n-kd/2$ and in the twisted case
($k$ even) $i=2n-kd$. Since if $d$ is even we have $2n-kd\leq d$ and if $d$
is odd we have $n-kd/2\leq d$, the only case where $d$ could be in this set
is  $k$  even  and  $d=2n-kd$,  which  means  that $(k+1)d=2n$. But this is
precisely the excluded case.

We now give $C_W(V_1)$, where $V_1$ is as in Lemma \ref{regular in parabolic}, in
each case where $I$ is not empty. In the untwisted case, if $d$ is odd the
group  $C_W(V_1)$ is  of type  $D_{d\lfloor\frac{n-1}d\rfloor}$; if  $d$ is
even the group $C_W(V_1)$ is of type $D_{\frac
d2\lfloor\frac{2n-2}d\rfloor+1}$ if $\lfloor\frac{2n-2}d\rfloor$ is odd and
$D_{\frac d2\lfloor\frac{2n-2}d\rfloor}$ if $\lfloor\frac{2n-2}d\rfloor$ is
even.  In the twisted case,  if $d$ is odd  the coset $C_W(V_1)w\phi$ is of
type $\lexp 2D_{d\lfloor\frac{n-1}d\rfloor+1}$ and if $d$ is even the coset
is    of    type    $\lexp2D_{\frac   d2\lfloor\frac{2n-2}d\rfloor+1}$   if
$\lfloor\frac{2n-2}d\rfloor$ is even and $D_{\frac
d2\lfloor\frac{2n-2}d\rfloor}$  if $\lfloor\frac{2n-2}d\rfloor$  is odd. In
all  cases except if  $d$ is even  and $\lfloor\frac{2n-2}d\rfloor$ is even
(resp.\ odd) in the untwisted case (resp.\ twisted case) we then deduce the
group   $W(w\phi)$ (resp.\ $W(w)$) as  in  the  remarks  at  the  beginning 
of  Subsection \ref{irreducible   cosets}  and   after  Lemma
\ref{regular  in  parabolic},  since  in  these  cases  the centralizer of the
regular  element $w\phi$ (resp.\ $w$)  in the parabolic subgroup $W'=C_W(V_1)$
has  the  (known)  reflection  degrees  of  $W(w\phi)$ (resp.\ $W(w)$). In the
excluded  cases  the  group  $C_{W'}(w\phi)$  or  $C_{W'}(w)$ is isomorphic to
$G(d,2,\lfloor\frac{2n-2}d\rfloor)$ which does not have the reflection degrees
of $W(w\phi)$, resp.\ $W(w)$. This means that the morphism of the remark after
Lemma  \ref{regular in parabolic} is not surjective. We can prove in this case
that  $W(w\phi)$ or $W(w)$ is  $G(d,1,\lfloor\frac{2n-2}d\rfloor)$ since it is
an irreducible complex reflection group by \cite[5.6.6]{Br} and it is the only
one  which has the right reflection degrees apart from  the exceptions in low
rank  given by  $G_5,G_{10},G_{15},G_{18},G_{26}$; we  can exclude these since
they do not have $G(d,2,\lfloor\frac{2n-2}d\rfloor)$ as a reflection subgroup.
\end{proof}
\subsection*{Types $I_2(n)$ and $\protect\lexp 2I_2(n)$}
All eigenvalues $\zeta$ such that the $\zeta$-rank is non-zero are regular, so
this case can be found in \cite{BM}.
\subsection*{Exceptional types}
Below  are tables for exceptional finite  Coxeter groups giving information on
$d$-good maximal elements for each $d$. They were obtained with the {\tt
GAP} package {\tt Chevie} (see \cite{Chevie}): first, the conjugacy class of 
good $\zeta_d$-maximal  elements as described in Lemma
\ref{zeta maximal are conjugate} was determined; then we
determined  $I$ for an  element of that  class, which gave  $l(w_I)$. The next
step was to determine the elements of the right length $2(l(w_0)-l(w_I))/d$ in
that  conjugacy class; this required care in large groups like $E_8$. The best
algorithm is to start from an element of minimal length in the class (known by
\cite{geck-pfeiffer}) and conjugate by Coxeter generators until all elements of 
the right length are reached.

In  the following tables, we give for  each possible $d$ and each possible $I$
for  that $d$ a representative  good $w\phi$, and give  the number of possible
$w\phi$. We then describe the coset $W_Iw\phi$ by giving, if $I\ne\emptyset$,
in the column $I$ the
permutation  induced by $w\phi$ of the nodes of the Coxeter diagram indexed by
$I$. Then  we describe  the isomorphism  type of  the complex reflection
group $N_W(W_Iw\phi)/W_I=N_W(V)/C_W(V)$, where $V$ is the $\zeta_d$-eigenspace
of $w\phi$.
Finally, in the cases where $I\ne\emptyset$, we give the isomorphism type
of $W'=C_W(V_1)$, where $V_1$ is the $1$-eigenspace of $w\phi$ on the subspace
spanned by the root lines of $I$. We note that there are 3 cases where
$N_{W'}(V)/C_{W'}(V)\lneq N_W(V)/C_W(V)$: for 
$d=4$ or $5$ in $E_7$ and for $d=9$ in $E_8$.

$H_3$: $\nnode1\overbar5\nnode2\edge\nnode3$
The reflection degrees are $2,6,10$.
$$
\begin{array}{|l|ccc|}
\hline 
d&\text{representative }w&\#\text{good }w&C_W(w)\\
\hline 
10&w_{10}={123}&4&Z_{10}\\
6&w_6={32121}&6&Z_6\\
5&w_{10}^2&4&Z_{10}\\
3&w_6^2&6&Z_6\\
2&w_0&1&H_3\\
1&\cdot&1&H_3\\
\hline 
\end{array}
$$
$H_4$: $\nnode1\overbar5\nnode2\edge\nnode3\edge\nnode4$
The reflection degrees are $2,12,20,30$.
$$
\begin{array}{|l|ccc|}
\hline 
d&\text{representative }w&\#\text{good }w&C_W(w)\\
\hline 
30&w_{30}={1234}&8&Z_{30}\\
20&w_{20}={432121}&12&Z_{20}\\
15&w_{30}^2&8&Z_{30}\\
12&w_{12}={2121432123}&22&Z_{12}\\
10&w_{30}^3\text{ or }w_{20}^2&24&G_{16}\\
6&w_{30}^5\text{ or }w_{12}^2&40&G_{20}\\
5&w_{30}^6\text{ or }w_{20}^4&24&G_{16}\\
4&w_{20}^5\text{ or }w_{12}^3&60&G_{22}\\
3&w_{30}^{10}\text{ or }w_{12}^4&40&G_{20}\\
2&w_0&1&H_4\\
1&\cdot&1&H_4\\
\hline 
\end{array}
$$
$\lexp 3D_4$: $\nnode1\edge\vertbar32\edge\nnode4$
$\phi$ does the permutation $(1,2,4)$.
The reflection degrees are $2,4,4,6$ with corresponding factors
$1,\zeta_3,\zeta_3^2,1$.
$$
\begin{array}{|l|ccc|}
\hline 
d&\text{representative }w\phi&\#\text{good }w\phi&C_W(w\phi)\\
\hline 
12&w_{12}\phi={13}\phi&6&Z_4\\
6&w_6\phi={1243}\phi&8&G_4\\
3&w_6^2\phi&8&G_4\\
2&w_0\phi&1&G_2\\
1&\phi&1&G_2\\
\hline 
\end{array}
$$
$F_4$:
$\nnode1\edge\nnode2{\rlap{\vrule width10pt height2pt depth-1pt}
\vrule width10pt height4pt depth-3pt}\nnode3\edge\nnode4$
The reflection degrees are $2,6,8,12$.
$$
\begin{array}{|l|ccc|}
\hline 
d&\text{representative }w&\#\text{good }w&C_W(w)\\
\hline 
12&w_{12}={1234}&8&Z_{12}\\
8&w_8={214323}&14&Z_8\\
6&w_{12}^2&16&G_5\\
4&w_{12}^3\text{ or }w_8^2&12&G_8\\
3&w_{12}^4&16&G_5\\
2&w_0&1&F_4\\
1&\cdot&1&F_4\\
\hline 
\end{array}
$$
$\lexp 2F_4$:
$\phi$ does the permutation $(1,4)(2,3)$.
The factors, in increasing order of the degrees, are $1,-1,1,-1$.
$$
\begin{array}{|l|ccc|}
\hline 
d&\text{representative }w\phi&\#\text{good }w\phi&C_W(w\phi)\\
\hline 
24&w_{24}\phi={12}\phi&6&Z_{12}\\
12&w_{12}\phi={3231}\phi&10&Z_6\\
8&(w_{24}\phi)^3&12&G_8\\
4&(w_{12}\phi)^3&24&G_{12}\\
2&w_0\phi&1&I_2(8)\\
1&\phi&1&I_2(8)\\
\hline 
\end{array}
$$
$E_6$: $\nnode1\edge\nnode3\edge\vertbar42\edge\nnode5\edge\nnode6$
The reflection degrees are $2,5,6,8,9,12$.
$$
\begin{array}{|l|ccccc|}
\hline 
d&\text{representative }w&\#\text{good }w&I&N_W(W_Iw)/W_I&C_W(V_1)\\
\hline 
12&w_{12}={123654}&8&&Z_{12}&\\
9&w_9={12342654}&24&&Z_9&\\
8&w_8={123436543}&14&&Z_8&\\
6&w_{12}^2&16&&G_5&\\
5&24231454234565&8&(3)&Z_5&A_5\\
&12435423456543&8&(4)&&\\
&12314235423654&8&(5)&&\\
4&w_8^2\text{ or }w_{12}^3&12&&G_8&\\
3&w_{12}^4\text{ or }w_{9}^3&80&&G_{25}&\\
2&w_0&1&&F_4&\\
1&\cdot&1&&E_6&\\
\hline 
\end{array}
$$
$\lexp 2E_6$:
$\phi$ does the permutation $(1,6)(3,5)$.
The factors, in increasing order of the degrees, are $1,-1,1,1,-1,1$.
$$
\begin{array}{|l|ccccc|}
\hline 
d&\text{representative }w\phi&\#\text{good}w\phi&I&N_W(W_Iw\phi)/W_I&C_W(V_1)w\phi\\
\hline 
18&w_{18}\phi={1234}\phi&24&&Z_{9}&\\
12&w_{12}\phi={123654}\phi&8&&Z_{12}&\\
10&2431543\phi&8&(3)&Z_5&\lexp 2A_5\\
&5423145\phi&8&(4)&&\\
&3143542\phi&8&(5)&&\\
8&w_8\phi={123436543\phi}&14&&Z_8&\\
6&(w_{18}\phi)^3&80&&G_{25}&\\
4&(w_{12}\phi)^3&12&&G_8&\\
3&w_{12}^4\phi&16&&G_5&\\
2&w_0\phi&1&&E_6&\\
1&\phi&1&&F_4&\\
\hline 
\end{array}
$$
$E_7$: $\nnode1\edge\nnode3\edge\vertbar42\edge\nnode5\edge\nnode6\edge\nnode7$
The reflection degrees are $2,6,8,10,12,14,18$.
$$
\begin{array}{|l|ccccc|}
\hline 
d&\text{representative }w&\#\text{good }w&I&N_W(W_Iw)/W_I&C_W(V_1)\\
\hline 
18&w_{18}={1234567}&64&&Z_{18}&\\
14&w_{14}={123425467}&160&&Z_{14}&\\
12&w_{12}={1342546576}&8&(2,5,7)&Z_{12}&E_6\\
10&w_{10a}={134254234567}&8&(2,4)&Z_{10}&D_6\\
&w_{10b}={243154234567}&8&(3,4)&&\\
&w_{10c}={124354265437}&8&(4,5)&&\\
9&w_{18}^2&64&&Z_{18}&\\
8&134234542346576&14&(2)(5,7)&Z_8&D_5\\
7&w_{14}^2&160&&Z_{14}&\\
6&w_{18}^3\text{ or }w_{12}^2&800&&G_{26}&\\
5&w_{10a}^2&8&(2)(4)&Z_{10}&A_5\\
&w_{10b}^2&8&(3)(4)&&\\
&w_{10c}^2&8&(4)(5)&&\\
4&w_8^2\text{ or }w_{12}^3&12&(2)(5)(7)&G_8&D_4\\
3&w_{18}^6\text{ or }w_{12}^4&800&&G_{26}&\\
2&w_0&1&&E_7&\\
1&\cdot&1&&E_7&\\
\hline 
\end{array}
$$
$E_8$:
$\nnode1\edge\nnode3\edge\vertbar42\edge\nnode5\edge\nnode6\edge\nnode7\edge\nnode8$
The reflection degrees are $2,8,12,14,18,20,24,30$.
$$
\begin{array}{|l|ccccc|}
\hline 
d&\text{representative }w&\#\text{good }w&I&N_W(W_Iw)/W_I&C_W(V_1)\\
\hline 
30&w_{30}={12345678}&128&&Z_{30}&\\
24&w_{24}={1234254678}&320&&Z_{24}&\\
20&w_{20}={123425465478}&624&&Z_{20}&\\
18&w_{18a}={1342542345678}&16&(2,4)&Z_{18}&E_7\\
&w_{18b}={2431542345678}&16&(3,4)&&\\
&w_{18c}={1243542654378}&16&(4,5)&&\\
15&w_{30}^2&128&&Z_{30}&\\
14&w_{14a}={13423454234565768}&128&(2)&Z_{14}&E_7\\
&w_{14b}={24231454234565768}&88&(3)&&\\
&w_{14c}={12435423456543768}&108&(4)&&\\
&w_{14d}={12342543654276548}&68&(5)&&\\
12&w_{24}^2&2696&&G_{10}&\\
10&w_{30}^3\text{ or }w_{20}^2&3370&&G_{16}&\\
9&w_{18a}^2&16&(2)(4)&Z_{18}&E_6\\
&w_{18b}^2&16&(3)(4)&&\\
&w_{18c}^2&16&(4)(5)&&\\
8&w_{24}^3&7748&&G_9&\\
7&w_{14a}^2&128&(2)&Z_{14}&E_7\\
&w_{14b}^2&88&(3)&&\\
&w_{14c}^2&108&(4)&&\\
&w_{14d}^2&68&(5)&&\\
6&w_{30}^5\text{ or }w_{24}^4&4480&&G_{32}&\\
5&w_{30}^6\text{ or }w_{20}^4&3370&&G_{16}&\\
4&w_{24}^6\text{ or }w_{20}^5&15120&&G_{31}&\\
3&w_{30}^{10}\text{ or }w_{24}^8&4480&&G_{32}&\\
2&w_0&1&&E_8&\\
1&\cdot&1&&E_8&\\
\hline 
\end{array}
$$

\end{document}